\newtheorem{theorem}{Theorem}[section]
\newtheorem{definition}[theorem]{Definition}
\newtheorem{lemma}[theorem]{Lemma}
\newtheorem{proposition}[theorem]{Proposition}
\newtheorem{remark}[theorem]{Remark}
\newenvironment{proof}[1][Proof]{\textbf{#1.} }{\hfill\rule{0.5em}{0.5em}}
{\catcode`\@=11\global\let\AddToReset=\@addtoreset
\AddToReset{equation}{section}

\AddToReset{theorem}{section}

\input{tcilatex}
\begin{document}

\title{Elliptic Hamilton-Jacobi systems and Lane-Emden Hardy-H\'{e}non
equations }
\author{Marie-Fran\c{c}oise Bidaut-V\'{e}ron \and Marta Garcia Huidobro}
\maketitle

\begin{abstract}
Here we study the solutions of any sign of the system 
\begin{equation*}
\left\{ 
\begin{array}{c}
{-\Delta u}_{1}=\left\vert \nabla u_{2}\right\vert ^{p}, \\ 
{-\Delta u}_{2}=\left\vert \nabla u_{1}\right\vert ^{q},%
\end{array}%
\right.
\end{equation*}%
in a domain of $\mathbb{R}^{N},$ $N\geqq 3$ and $p,q>0,$ $pq>1..$ We show
their relation with Lane-Emden Hardy-H\'{e}non equations 
\begin{equation*}
-{\Delta }_{\mathbf{p}}^{\mathbf{N}}w{=\varepsilon r}^{\sigma }w^{\mathbf{q}%
},\qquad \varepsilon =\pm 1,
\end{equation*}%
where $u\mapsto {\Delta }_{\mathbf{p}}^{\mathbf{N}}{u}$ $(\mathbf{p}>1)$ is
the $\mathbf{p}$-Laplacian in dimension $\mathbf{N,}$ $\mathbf{q}>\mathbf{p}%
-1$ and $\sigma \in \mathbb{R}.$ This leads us to explore these equations in
not often tackled ranges of the parameters $\mathbf{N,p},\sigma $. We make a
complete description of the radial solutions of the system and of the
Hardy-Henon equations and give nonradial a priori estimates and Liouville
type results for the system.
\end{abstract}

\tableofcontents

\section{Introduction}

Here we study the solutions of any sign of the Hamilton-Jacobi type system 
\begin{equation}
\left\{ 
\begin{array}{c}
{-\Delta u}_{1}=\left\vert \nabla u_{2}\right\vert ^{p}, \\ 
{-\Delta u}_{2}=\left\vert \nabla u_{1}\right\vert ^{q},%
\end{array}%
\right.  \label{one}
\end{equation}%
in a domain $\Omega $ of $\mathbb{R}^{N},$ $N\geqq 3$, and 
\begin{equation*}
p,q>0,\quad pq>1,\text{ and we can assume }p\geq q,
\end{equation*}%
so that $p>1.$ Our purpose is to describe the behaviour and the existence of
solutions when $\Omega =B_{r_{0}}\backslash \left\{ 0\right\} ,$ $\mathbb{R}%
^{N}\backslash \left\{ 0\right\} ,\mathbb{R}^{N}\backslash \overline{%
B_{r_{0}}}$ or $\mathbb{R}^{N}.$ Note that if $({u}_{1},{u}_{2})$ is a
solution, then $({u}_{1}+C_{1},{u}_{2}+C_{2})$ is a solution, in particular
the constants are solutions. The study of the radial solutions is
fundamental for the understanding of the system. As shown below, it appears
that they are governed by the solutions of Lane-Emden Hardy-H\'{e}non
equations 
\begin{equation}
-{\Delta }_{\mathbf{p}}^{\mathbf{N}}w{=\varepsilon r}^{\sigma }w^{\mathbf{q}%
},\qquad \varepsilon =\pm 1,  \label{hql}
\end{equation}%
where $u\mapsto {\Delta }_{\mathbf{p}}^{\mathbf{N}}{u}$ $(\mathbf{p}>1)$ is
the $\mathbf{p}$-Laplacian in dimension $\mathbf{N,}$ $\mathbf{q}>\mathbf{p}%
-1$ and $\sigma \in \mathbb{R}.$ This justifies to make the point on the
actual knowledge of these equations, and give a complete study in any range
of the parameters $\mathbf{N,p},\sigma $. \bigskip

In the last decades, a great number of elliptic systems deal with positive
solutions of semilinear or quasilinear, with source terms in the right hand
side, involving powers of the solutions. Our study is motivated by the well
known Lane-Emden system 
\begin{equation}
\left\{ 
\begin{array}{c}
{-\Delta u}_{1}=\left\vert x\right\vert ^{a}u_{2}^{p}, \\ 
{-\Delta u}_{2}=\left\vert x\right\vert ^{b}u_{1}^{q},%
\end{array}%
\right.  \label{LE}
\end{equation}%
which has developed an extremely rich literature, starting from the
conjecture of nonexistence of solutions in $\mathbb{R}^{N}$ for $a=b=0$ when 
\begin{equation*}
\frac{1}{p+1}+\frac{1}{q+1}>\frac{N-2}{N},
\end{equation*}%
first proved in the radial case for any $N\geq 3$ in \cite{Mi}, obtained in
of \cite{SeZo2} for $N=3,$ extended to $N=4$ in \cite{So2}, and still open
for $N\geq 5.$ The existence of solutions in $\mathbb{R}^{N}\backslash
\left\{ 0\right\} $ of such elliptic systems with a possible singularity at
the origin, or in an exterior domain $\mathbb{R}^{N}\backslash \overline{%
B_{r_{0}}},$ and the question of a priori estimates has been developed in
many works, with extensions to quasilinear operators, with possible weight
functions, or to analogous problems on manifolds, and it is impossible to
quote all of them. Let us also mention a significant amount of results of
non existence of supersolutions in $\mathbb{R}^{N}$, started in \cite{MiPo}, 
\cite{BiPo}.\bigskip

In contrast, only few results are available in the literature for elliptic
systems with gradient terms. A general study of nonexistence of positive
supersolutions of systems with source terms is given in \cite{Fi}. We can
also mention some works on more specific systems in \cite{FiVi}, \cite%
{GhGiSi}, \cite{BaGiWa}, \cite{BuGa}, and \cite{DiLaSc}, \cite{AtBe}, \cite%
{AbAtLa}, \cite{Si}. Up to our knowledge the study system (\ref{one}) began
very recently, with the publication of \cite{CoRa}, where the existence of
positive singular solutions of the Dirichlet problem in $\Omega \backslash
\left\{ 0\right\} $ was proved when $\Omega $ is a small perturbation of a
ball of center $0,$ and the work of \cite{AtBeLa}, where existence and non
existence results are given for the system with forcing terms.\medskip
\medskip

In the sequel, we distinguish three types of solutions: the positive ones,
solutions of (\ref{one}), with source type gradient terms 
\begin{equation}
\left\{ 
\begin{array}{c}
{-\Delta u}_{1}=\left\vert \nabla u_{2}\right\vert ^{p}, \\ 
{-\Delta u}_{2}=\left\vert \nabla u_{1}\right\vert ^{q},%
\end{array}%
\right. \qquad {u}_{1},{u}_{2}\geq 0,  \label{sou}
\end{equation}%
the negative ones, equivalently, setting $\widetilde{u}_{1}=-{u}_{1},%
\widetilde{u}_{2}=-{u}_{2},$ the solutions of a system with absorption terms 
\begin{equation}
\left\{ 
\begin{array}{c}
{-\Delta }\widetilde{u}_{1}+\left\vert \nabla \widetilde{u}_{2}\right\vert
^{p}=0, \\ 
{-\Delta }\widetilde{u}_{2}+\left\vert \nabla \widetilde{u}_{1}\right\vert
^{q}=0,%
\end{array}%
\right. \qquad \widetilde{u}_{1},\widetilde{u}_{2}\geq 0,  \label{abs}
\end{equation}%
and the mixed ones, equivalently setting $\widehat{u_{1}}=u_{1},$ $\widehat{%
u_{2}}=-{u}_{2},$ the solutions of the mixed type system 
\begin{equation}
\left\{ 
\begin{array}{c}
{-\Delta }\widehat{u_{1}}=\left\vert \nabla \widehat{u_{2}}\right\vert ^{p},
\\ 
{\Delta }\widehat{u_{2}}=\left\vert \nabla \widehat{u_{1}}\right\vert ^{q},%
\end{array}%
\right. \qquad \widehat{u_{1}},\widehat{u_{2}}\geq 0.  \label{mixt}
\end{equation}%
$\bigskip $

In \textbf{Section \ref{prop}} we study the existence of particular
solutions of system (\ref{one}). We briefly mention the main results
concerning the scalar case of the well known Hamilton-Jacobi equation 
\begin{equation}
-{\Delta u}=\left\vert \nabla u\right\vert ^{q},  \label{eq}
\end{equation}%
where $q>1.$ The existence of particular positive or negative radial
solutions $u^{\ast }(r)=A^{\ast }r^{\frac{q-2}{q-1}},$ $r=\left\vert
x\right\vert ,$ for $q\neq 2,$ where $A^{\ast }$ has the sign of $%
(2-q)((N-1)q-N),$ puts in evidence two critical values $q=\frac{N}{N-1}$ and 
$q=2.$ Concerning system (\ref{one}), these two critical values are replaced
by four critical conditions linking the parameters, namely $q=q_{i}=q_{i}(p)$%
, $i=1,2,3,4,$ defined by the relations 
\begin{equation}
(N-1)pq_{1}=N+q_{1},\qquad (N-1)pq_{2}=N+p,\qquad q_{3}(p-1)=2,\qquad
p(q_{4}-1)=2.  \label{com}
\end{equation}%
In the problem, moreover another value is involved, defined by the relation 
\begin{equation}
(N-1)pq^{\ast }=N+\frac{p+q^{\ast }}{2},  \label{comstar}
\end{equation}%
corresponding to the Sobolev exponent relative to equation (\ref{hql}%
).\bigskip

In \textbf{Section \ref{rad}} we study the radial signed solutions of system
(\ref{one}). We show that they can be completely described. Indeed the
system is invariant by the scaling transformation $T_{\ell },\ell >0,$
defined by 
\begin{equation*}
T_{\ell }\left[ (u_{1},u_{2})\right] (x)=(l^{\frac{p(q-1)-2}{pq-1}%
}u_{1}(\ell x),l^{\frac{q(p-1)-2}{pq-1}}u_{2}(\ell x)),
\end{equation*}%
and thus the radial case reduces to an autonomous system of order 4. Due to
the particular form of system (\ref{one}), which involves only the gradients
of the functions, we can reduce to a system of order 2.\ The radial study
offers a double interest:\bigskip

$\bullet $ \textit{The radial system reduces to a quadratic Lotka-Volterra
type system: } 
\begin{equation*}
\left\{ 
\begin{array}{c}
S_{t}=S(N-(N-1)p+S+pZ), \\ 
Z_{t}=Z(N-(N-1)q-qS-Z).%
\end{array}%
\right.
\end{equation*}%
where 
\begin{equation*}
S=r\frac{\left\vert u_{2}^{\prime }\right\vert ^{p}}{u_{1}^{\prime }}=-r%
\frac{(r^{N-1}u_{1}^{\prime })^{\prime }}{r^{N-1}u_{1}^{\prime }},\qquad Z=-r%
\frac{\left\vert u_{1}^{\prime }\right\vert ^{q}}{u_{2}^{\prime }}=-r\frac{%
(r^{N-1}u_{2}^{\prime })^{\prime }}{r^{N-1}u_{2}^{\prime }},\qquad t=\ln r.
\end{equation*}

Moreover system (\ref{one}) is governed by the Hardy-H\'{e}non equations (%
\ref{hql})in the following way:\bigskip

$\bullet $ \textit{For any radial solution }$(u_{1},u_{2})$\textit{\ of
system (\ref{one}), the function }$w=r^{N-1}\left\vert u_{1}^{\prime
}\right\vert $\textit{\ satisfies equation (\ref{hql}) with the specific
values } 
\begin{equation*}
\mathbf{N}=1+\frac{(N-1)(p-1)}{p},\qquad \mathbf{p}=1+\frac{1}{p},\qquad 
\mathbf{q}=q,\qquad \sigma =(N-1)\frac{1-pq}{p},\qquad \varepsilon =-\text{%
sign}(u_{1}^{\prime }u_{2}^{\prime }).
\end{equation*}

As an unexpected and remarkable fact, the system (\ref{sou}), which presents
two \textit{source} terms $\left\vert \nabla u_{2}\right\vert
^{p},\left\vert \nabla u_{1}\right\vert ^{q},$ is not linked with a Hardy-H%
\'{e}non equation with a source term ($\varepsilon =1$) but an equation with
an \textit{absorption} term ($\varepsilon =-1$), see Remark \ref{souab}%
.\bigskip

\textbf{Section \ref{app}} is devoted to a complete study of the radial
local and global positive solutions of the Hardy-Henon equations (\ref{hql})
for $\varepsilon =1$. They are well known in the case $\mathbf{N>p}>-\sigma ,
$ and we cannot quote the inmense literature on the subject, starting from
the works of \cite{GiSp} and \cite{CaGiSp}. The main purposes of the
articles are the obtention of Liouville type results, symmetry properties
and a priori estimates for the equation with source ($\varepsilon =1$), and
the study of large solutions for the equation with absorption ($\varepsilon
=-1$),. The other ranges of the parameters $\mathbf{N,p},\sigma ,$ such as $%
\mathbf{p}>\mathbf{N}$ or $\sigma <-\mathbf{p,}$ $\sigma <-\mathbf{N}$ seem
to be less considered. We first mention the remarkable work of \cite{SeZo}
valid for $\sigma =0$ and any $\mathbf{N,p.}$ Let us also quote the
nonexistence of possibly nonradial solutions, even in a very weak sense,
proved in the pioneer paper \cite{BrCa} for $\varepsilon =1,$ $\mathbf{N}%
\geq 3,$ $\mathbf{p}=2,$ $\sigma \leq -2$; and recent interesting symmetry
results of \cite{AvBr} in the case $\mathbf{p}=2.$ see also  \cite{Ga} for
an extension to any extension to any $\mathbf{p}>1$, $\sigma \leq -\max (%
\mathbf{N},\mathbf{p})$ under some regularity assumptions.\medskip\ 

Here we treat all the cases of distinct $\mathbf{N,p},\sigma $ at Theorems %
\ref{orange}, \ref{hypF}, \ref{rose}, \ref{white}, \ref{yellow}, \ref{hypE}.
The limit cases $\sigma =-\mathbf{p,}$ $\sigma =-\mathbf{N,}$ $\mathbf{p}=%
\mathbf{N,}$ where logarithmic type solutions appear, are described at
Theorems \ref{sigmap},\ref{sigman}, \ref{pegaln}, and the critical case $%
\mathbf{p}=\mathbf{N=-}\sigma $ at Theorem \ref{pnsigma}. Independently of
their application to system (\ref{one}), they offer a remarkable diversity
of types of behaviour. We hope that they constitute a solid basis for a
future nonradial analysis. \bigskip 

Our study is based on the analysis of the phase plane of the associated
quadratic Lotka-Volterra\textit{\ }system, 
\begin{equation*}
\left\{ 
\begin{array}{c}
s_{t}=s(\frac{\mathbf{p}-\mathbf{N}}{\mathbf{p}-1}+s+\frac{z}{\mathbf{p}-1}),
\\ 
z_{t}=z(\mathbf{N}+\sigma -\mathbf{q}s-z),%
\end{array}%
\right.
\end{equation*}%
where $s=-r\frac{w^{\prime }}{w}$ and $z=-\varepsilon r^{1+\sigma }w^{%
\mathbf{q}}\left\vert w^{\prime }\right\vert ^{-\mathbf{p}}w^{\prime },$
already started in \cite{BiGi}. We observe that this system is neither
competitive, nor cooperative as soon as $(\mathbf{p}-1)\mathbf{q}>0$. Note
also that our study gives local existence results without involving fixed
point methods, and global ones without introducing energy functions. Among
all the results, let us mention a few striking examples:\medskip

$\bullet $ For $\mathbf{p>N}>-\sigma $ and $\varepsilon =\pm 1$, we get
existence of solutions of equation (\ref{hql}) in $C^{0}(B_{r_{0}})\cap
C^{2}(B_{r_{0}}\backslash \left\{ 0\right\} )$ of three types : such that $%
\lim_{r\rightarrow 0}w=c>0,$ and either $\lim_{r\longrightarrow 0}r^{-\frac{%
\sigma +1}{\mathbf{p}-1}}w^{\prime }=$ $d\neq 0$ or $\lim_{r\rightarrow 0}r^{%
\frac{\mathbf{N}-1}{\mathbf{p}-1}}w^{\prime }=d\neq 0,$ and solutions such
that $\lim_{r\rightarrow 0}w=0$ and $\lim_{r\rightarrow 0}r^{\frac{\mathbf{N}%
-\mathbf{p}}{\mathbf{p}-1}}w=k>0.$\medskip

$\bullet $ For $-\sigma >\max (\mathbf{p},\mathbf{N),}$ there exist
solutions in an exterior domain $\mathbb{R}^{N}\backslash \overline{B_{r_{0}}%
}$ such that $\lim_{r\rightarrow \infty }w=c>0$, see Theorems \ref{hypF}, %
\ref{white}, \ref{pegaln}.\medskip

$\bullet $ It is well known that for $\mathbf{N>p}>-\sigma $ there exist
explicit solutions in $C^{0}(\mathbb{R}^{N})\cap C^{2}(\mathbb{R}%
^{N}\backslash \left\{ 0\right\} ),$ often called ground states, when $q$ is
the critical Sobolev exponent $q_{S}=\frac{\mathbf{N}(\mathbf{p}-1)+\mathbf{p%
}+\mathbf{p}\sigma }{\mathbf{N}-\mathbf{p}},$ given by $w=c(d+r^{\frac{%
\mathbf{p}+\sigma }{\mathbf{p}-1}})^{-\frac{\mathbf{N}-\mathbf{p}}{\mathbf{p}%
+\sigma }},$ where $c>0$ and $d=d(c,\mathbf{N,p},\sigma ).$ In fact the same
phenomena holds when $-\sigma >\mathbf{p}>\mathbf{N}$ (see Theorem \ref{hypF}%
).\medskip

$\bullet $ For $\varepsilon =-1,$ $\sigma =-\frac{\mathbf{p}(N-1)}{p-1},$ in
particular when $\mathbf{p}=\mathbf{N=-}\sigma ,$ there also exists a family
of explicit solutions, see Proposition \ref{expli}.\bigskip

For convenience the proofs of the main results of Section \ref{app}, using
various techniques of dynamical systems, are given in the Appendix. $%
\medskip $

In \textbf{Section \ref{exi}} we describe the behaviour of the radial
solutions of system (\ref{one}), from the results of Section \ref{app}.
Because of the great diversity of the possible solutions, we concentrate our
study on constant sign solutions in $B_{r_{0}}\backslash \left\{ 0\right\} $
or $\mathbb{R}^{N}\backslash \overline{B_{r_{0}}},$ and above all on global
solutions in $\mathbb{R}^{N}\backslash \left\{ 0\right\} .\medskip $

Comparing to the scalar case, the situation is much more intricated: even in
the case $p=q,$ where we show that the system can be completely integrated,
there exists an infinity of solutions such that $u_{1}\neq u_{2}$. In the
general case $p\geq q$ we give all the local and global nonconstant
solutions of systems (\ref{sou}), (\ref{abs}) and (\ref{mixt}). It appears
that the components $u_{1},u_{2}$ of a solution in $(0,\infty )$ can admit
different kinds of singularties near $0$: either $\lim_{r\longrightarrow
0}\left\vert u_{i}\right\vert =\infty ,$ and we say that $u_{i}$ is $\infty $%
\textit{-singular}, or if $\lim_{r\longrightarrow 0}u_{i}=c\in \mathbb{R}$
and $\lim_{r\longrightarrow 0}\left\vert u_{i}^{\prime }\right\vert =\infty
, $ and we say that $u_{i}$ is a \textit{cusp-solution, }see Remark \ref{2.3}%
. Concerning (\ref{sou}), we get the following, from Theorems \ref{Dirac}, %
\ref{thsou} and \ref{thall}:\ 

\begin{theorem}
\label{posi}Let $pq>1,p\geq q,$ and let $q_{1},q_{2},q_{3},q_{4}$ be defined
in (\ref{com}). Consider the system (\ref{sou}).

(1) Local solutions near 0: There exists radial solutions in $%
B_{r_{0}}\backslash \left\{ 0\right\} $ such that 
\begin{equation*}
\left\{ 
\begin{array}{c}
{-\Delta u}_{1}=\left\vert \nabla u_{2}\right\vert ^{p}+C_{1}\delta _{0}, \\ 
{-\Delta u}_{2}=\left\vert \nabla u_{1}\right\vert ^{q}+C_{2}\delta _{0},%
\end{array}%
\right.
\end{equation*}%
with $C_{1},C_{2}>0$ when $p<\frac{N}{N-1},$ with $C_{1}=0,C_{2}>0$ when $%
q<q_{1},$ with $C_{1}>0,C_{2}=0$ when $q<q_{2}.$

(2) Global solutions: up to additive constants,

$\bullet $ For $q_{2}<q<q_{3},$ there exists a $\infty $-singular solution $%
(u_{1}^{\ast },u_{2}^{\ast })=(A_{1}^{\ast }r^{\frac{p(q-1)-2}{pq-1}%
},A_{2}^{\ast }r^{\frac{q(p-1)-2}{pq-1}}),$ with $A_{1}^{\ast },A_{2}^{\ast
}>0,$ and also solutions such that 
\begin{equation*}
(u_{1},u_{2})\sim _{r\rightarrow 0}(u_{1}^{\ast },u_{2}^{\ast }),\;\left\{ 
\begin{array}{c}
\lim_{r\rightarrow \infty }r^{N-2}u_{1}=c_{1}>0,\;\lim_{r\rightarrow \infty
}r^{(N-1)q-2}u_{2}=c(c_{1})>0,\;\text{if }q<\frac{N}{N-1}, \\ 
\lim_{r\rightarrow \infty }r^{N-2}u_{1}=c_{1}>0,\quad \lim_{r\rightarrow
\infty }r^{N-2}u_{2}=c_{2}>0,\quad \quad \;\text{if }q>\frac{N}{N-1}.%
\end{array}%
\right.
\end{equation*}

$\bullet $ For $q>q_{4},$ there exist cusp-solutions $(u_{1},u_{2})$ such
that 
\begin{equation*}
(u_{1},u_{2})\sim _{r\rightarrow 0}(\left\vert A_{1}^{\ast }\right\vert r^{%
\frac{p(q-1)-2}{pq-1}},\left\vert A_{2}^{\ast }\right\vert r^{\frac{q(p-1)-2%
}{pq-1}}),\qquad \lim_{r\rightarrow \infty }u_{1}=c_{1}>0,\quad
\lim_{r\rightarrow \infty }u_{2}=c_{2}>0,
\end{equation*}%
and there is no other global solution.
\end{theorem}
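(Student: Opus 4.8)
The plan is to reduce the whole analysis to the phase-plane study of the Lotka-Volterra system that governs system (\ref{one}), and then transport each statement through the change of variables $(S,Z)$ defined in Section \ref{rad}. Recall that for a radial solution of (\ref{sou}) we have $u_1,u_2\ge 0$, so $u_1',u_2'$ are determined in sign by the equations: since $-\Delta u_i\ge 0$, both $r^{N-1}u_i'$ are nonincreasing, which pins down the relevant region of the phase plane. The quantities $S=r|u_2'|^p/u_1'$ and $Z=-r|u_1'|^q/u_2'$ are exactly the dynamical variables, and the critical exponents $q_1,q_2,q_3,q_4$ of (\ref{com}) are precisely the conditions under which the stationary points and the coordinate axes of the Lotka-Volterra system change nature. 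So the first step is to identify, in the $(S,Z)$ phase plane, the fixed points and their linearizations, and to match each fixed point with an asymptotic behaviour of $(u_1,u_2)$ via the relations $S=-r(r^{N-1}u_1')'/(r^{N-1}u_1')$, $Z=-r(r^{N-1}u_2')'/(r^{N-1}u_2')$ and $t=\ln r$.

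For part (1), the singular solutions with Dirac masses correspond to trajectories emanating from the origin of the phase plane (i.e. $S,Z\to 0$ as $t\to-\infty$, meaning $r\to0$), which translates into $r^{N-1}u_i'\to -C_i/|\mathbb{S}^{N-1}|$ for constants $C_i$. Whether $C_i$ is positive or zero is decided by the integrability of $|\nabla u_{3-i}|^{p\text{ or }q}$ near the origin, and the thresholds $p<N/(N-1)$, $q<q_1$, $q<q_2$ are exactly the integrability conditions $\int_0 r^{N-1}|u_i'|^{\text{exponent}}\,dr<\infty$ once one inserts the leading behaviour $u_i'\sim c\,r^{-(N-1)}$ of a fundamental-solution-type singularity. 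I would invoke Theorems \ref{Dirac} directly here: the construction of these local solutions and the computation of the $C_i$ is what that theorem provides, so part (1) is essentially a citation plus the verification that $q<q_1$ (resp.\ $q<q_2$) is the condition making the right-hand side $|\nabla u_1|^q$ (resp.\ $|\nabla u_2|^p$) non-integrable, forcing the corresponding $C_i$ to vanish.

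For part (2), the explicit $\infty$-singular solution $(A_1^*r^{\alpha_1},A_2^*r^{\alpha_2})$ with $\alpha_1=\tfrac{p(q-1)-2}{pq-1}$, $\alpha_2=\tfrac{q(p-1)-2}{pq-1}$ is the unique stationary (in the $t$ variable, after the scaling $T_\ell$) solution; it corresponds to the interior fixed point of the Lotka-Volterra system, and one checks $A_1^*,A_2^*>0$ exactly when $q_2<q<q_3$ by computing the signs forced by the fixed-point coordinates. The remaining global solutions are the heteroclinic trajectories: for the singular-at-zero, decaying-at-infinity solutions one takes orbits leaving the interior fixed point (as $t\to-\infty$, i.e.\ $r\to0$, giving $(u_1,u_2)\sim(u_1^*,u_2^*)$) and converging as $t\to+\infty$ to the fixed point on an axis that encodes the harmonic behaviour $u_i\sim c_i r^{-(N-2)}$. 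The dichotomy $q\lessgtr N/(N-1)$ in the decay of $u_2$ reflects whether the limiting rate is controlled by its own fundamental solution or slaved to that of $u_1$ through the second equation — I would read this off from which eigenvalue of the linearization at the relevant fixed point is dominant. The cusp solutions for $q>q_4$ correspond to orbits approaching the origin from a different fixed point where $|u_i'|\to\infty$ but $u_i\to$ const; here $q>q_4$, i.e.\ $p(q_4-1)=2$ rewritten as $q>1+2/p$, is the sign condition making $\alpha_1,\alpha_2<0$ so that $|u_i|\to$ const while $|u_i'|\to\infty$.

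The main obstacle will be the \emph{exhaustiveness} claim — proving there is no global solution other than those listed. This is not a local matching computation but a global statement about the phase portrait: I would need to establish that every bounded trajectory of the Lotka-Volterra system that stays in the admissible quadrant (dictated by the signs $u_1',u_2'<0$ coming from the superharmonicity of $u_1,u_2$) must be a heteroclinic orbit connecting two of the finitely many fixed points, with no periodic orbits or more exotic $\omega$-limit sets. The key tools are a Dulac/Bendixson-type argument or an explicit invariant (a conserved or monotone quantity of the quadratic system) ruling out limit cycles, together with a careful analysis of the behaviour at infinity in the phase plane (compactification) to show no trajectory escapes. This global trapping-and-classification argument, which presumably is carried out in Theorems \ref{thsou} and \ref{thall} and ultimately in the Appendix, is where all the real work lies; once the phase portrait is fully classified, reading off the list in the theorem is mechanical.
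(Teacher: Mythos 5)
Your strategy is the paper's own: the published proof of Theorem \ref{posi} consists precisely of citing Theorem \ref{Dirac} for part (1) and Theorems \ref{thsou} and \ref{thall} for part (2), all of which rest on the phase-plane classification you describe (and your $(S,Z)$ system is literally the paper's $(s,z)$ system for the Hardy-H\'enon equation, since $S\equiv s$ and $Z\equiv z$ by Proposition \ref{nom}). You also correctly identify exhaustiveness as the real difficulty; the paper settles it not by a Dulac--Bendixson argument or a compactification at infinity, but by Lemma \ref{glo}: every trajectory coming from a local solution is bounded (via the Osserman bound of Lemma \ref{osse}, which makes $\Phi=|s|^{\mathbf{p}-1}|z|$ bounded, plus monotonicity arguments at putative extrema of $|s|$ and $|z|$) and hence converges to a fixed point, a limit cycle around $M_0$ being possible only when $\mathbf{q}=\mathbf{q}_S$, i.e. $q=q^{\ast}$, which cannot occur in the ranges $q_2<q<q_3$ and $q>q_4$ because $q^{\ast}\leq q_2$.

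Two of your supporting computations are wrong as stated, though both are repairable and neither poisons the architecture, since you defer the constructions to the cited theorems. First, in the cusp case you write that $q>q_4$ ``makes $\alpha_1,\alpha_2<0$ so that $|u_i|\to$ const while $|u_i'|\to\infty$.'' The signs are backwards: $q>q_4$ gives $\alpha_1=\frac{p(q-1)-2}{pq-1}\in(0,1)$, and since $q_4\geq q_3$ also $\alpha_2=\frac{q(p-1)-2}{pq-1}\in(0,1)$; it is the positivity of these exponents (together with $\alpha_i<1$) that makes the components bounded with unbounded derivative, whereas negative exponents would give $\infty$-singular components (Remark \ref{2.3}), contradicting your own conclusion. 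Second, in part (1) you attach all three Dirac-mass behaviours to trajectories through the origin of the phase plane and claim the thresholds follow by inserting $u_i'\sim cr^{1-N}$ into the integrability conditions; but inserting $r^{1-N}$ only yields $p,q<\frac{N}{N-1}$, and only the case $C_1,C_2>0$ corresponds to $(S,Z)\to(0,0)$. The mixed cases correspond to the fixed points $N_0$ (for $C_1>0,C_2=0$) and $A_0$ (for $C_1=0,C_2>0$), where the component without a Dirac mass has the \emph{slaved} rate $|u_2'|\sim cr^{1-(N-1)q}$, resp. $|u_1'|\sim cr^{1-(N-1)p}$, and the thresholds $q<q_2$, resp. $q<q_1$, are the integrability conditions for $|u_2'|^p$, resp. $|u_1'|^q$, computed on those slaved rates; this two-step bootstrap is exactly how the nonexistence half of Theorem \ref{Dirac} is proved in the paper.
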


We also give the existence of solutions of (\ref{sou}) in $\mathbb{R}%
^{N}\backslash \overline{B_{r_{0}}}$ at Theorem \ref{exterior}. \bigskip

The cases of system (\ref{abs}) and (\ref{mixt}), treated at Theorems \ref%
{Dirac}, \ref{thabs}, \ref{thmix} when $q<q_{4}$ are even richer. Note the
existence of a family of explicit solutions of system (\ref{mixt}) in the
case $q=q^{\ast }$, which corresponds to the critical Sobolev exponent for
the Hardy-Henon equation, see Theorem \ref{orange}. The case $q>q_{4},$
treated at Theorem \ref{thall} is remarkable, since it gives the existence
of bounded solutions of any of the three systems (\ref{sou}), (\ref{abs})
and (\ref{mixt}). \bigskip

In \textbf{Section \ref{nonrad}} we extend the study of system (\ref{one})
to the nonradial case. Our main aim is finding upper estimates, which
appears to be a good challenge. Indeed the system is not variational, and
does not offer monotony or comparison properties which could be used as it
was done in \cite{Bi2}, \cite{So2}. Moreover the Bernstein technique,
developped in \cite{GiSp}, \cite{BiVe}, \cite{SeZo} for semilinear or
quasilinear equations, appears not to be efficient for systems, as in the
case of system (\ref{LE}), except in special cases, as in \cite{BiRa}.
Finally we cannot use the very performant methods of moving planes or moving
spheres, see the pioneer results of \cite{CaGiSp}, \cite{ReZo}, even with
boundedness assumptions on the gradients, because in $\mathbb{R}^{N}$ there
aways exist constant nontrivial solutions. Here we give a first estimate,
and in the same way a nonexistence result, see Theorem \ref{esti},
Proposition \ref{esto} and Theorem \ref{Osserman}:

\begin{theorem}
\label{estexi}Let $pq>1,p\geq q\geq 1.$

(i) If $(N-1)pq<\max (N+p,N+q),$ \textbf{\ }then any supersolution of system
(\ref{one}) in $\Omega =\mathbb{R}^{N}$, without condition of sign, is
constant.\medskip

(ii) Let $(u_{1},u_{2})$ be any supersolution of system (\ref{sou}) in $%
B_{r_{0}}\backslash \left\{ 0\right\} $ (resp. in $\mathbb{R}^{N}\backslash 
\overline{B_{r_{0}}}).$ Then there exists $\rho \in (0,r_{0})$ and $C>0$,
depending on $u_{1},u_{2}$, such that for any $r\in \left( 0,\rho \right) ,$ 
\begin{eqnarray*}
\left\vert \overline{u_{1}}(r)\right\vert &\leq &C\left\{ 
\begin{array}{c}
\max (1,r^{-\frac{2-p(q-1)}{pq-1}})\text{ if }p(q-1)\neq 2, \\ 
\left\vert \ln r\right\vert \text{ if }p(q-1)=2,%
\end{array}%
\right. \text{ } \\
\left\vert \overline{u_{2}}(r)\right\vert &\leq &C\left\{ 
\begin{array}{c}
\max (1,r^{-\frac{2-q(p-1)}{pq-1}})\text{ if }q(p-1)\neq 2, \\ 
\left\vert \ln r\right\vert \text{ if }q(p-1)=2.%
\end{array}%
\right.
\end{eqnarray*}

(iii) Let $(\widetilde{u_{1}},\widetilde{u_{2}})$ be any positive
subsolution of system (\ref{abs}) in $B_{r_{0}}\backslash \left\{ 0\right\} $
(resp. in $\mathbb{R}^{N}\backslash \overline{B_{r_{0}}}).$ Then there
exists $\rho \in (0,r_{0})$ and $C>0$, depending on $\widetilde{u_{1}},%
\widetilde{u_{2}}$, such that for $0<\left\vert x\right\vert <\rho ,$ 
\begin{eqnarray*}
\widetilde{u_{1}}(x) &\leq &C\left\{ 
\begin{array}{c}
\max (1,\left\vert x\right\vert ^{-\frac{2-p(q-1)}{pq-1}})\text{ if }%
p(q-1)\neq 2, \\ 
\left\vert \ln \left\vert x\right\vert \right\vert \text{ if }p(q-1)=2,%
\end{array}%
\right. \\
\widetilde{\text{ }u_{2}}(x) &\leq &C\left\{ 
\begin{array}{c}
\max (1,\left\vert x\right\vert ^{-\frac{2-q(p-1)}{pq-1}})\text{ if }%
q(p-1)\neq 2, \\ 
\left\vert \ln \left\vert x\right\vert \right\vert \text{ if }q(p-1)=2.%
\end{array}%
\right.
\end{eqnarray*}
\end{theorem}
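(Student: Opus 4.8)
The plan is to prove the three parts of Theorem~\ref{estexi} by a combination of Keller–Osserman-type pointwise estimates and integral/averaging arguments adapted to the gradient system. Throughout I write $\overline{u_i}(r)$ for the spherical average of $u_i$ over $\partial B_r$, which is the natural object because averaging the equations $-\Delta u_1=|\nabla u_2|^p$, $-\Delta u_2=|\nabla u_1|^q$ turns the Laplacians into radial second-order operators while, via Jensen's inequality applied to the convex maps $t\mapsto t^p$ and $t\mapsto t^q$ (using $p\ge q\ge 1$), controls the averaged right-hand sides from below by powers of the averaged gradients. The key structural feature I would exploit is that $r^{N-1}\overline{u_i}'(r)=-\frac{1}{\omega_{N-1}}\int_{B_r}|\nabla u_{j}|^{\cdots}\,dx$ is monotone, so the averaged system behaves like a one-dimensional cooperative ODE system for $|\overline{u_i}'|$.

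\textbf{Part (i): Liouville for supersolutions in $\mathbb{R}^N$.} First I would assume $(u_1,u_2)$ is a supersolution, so $-\Delta u_i\ge 0$, i.e.\ each $u_i$ is superharmonic, whence $\overline{u_i}(r)$ is nonincreasing and $r^{N-1}\overline{u_i}'(r)$ is nonincreasing. The plan is to set $m_i(r)=-r^{N-1}\overline{u_i}'(r)\ge 0$ and derive, after averaging and applying Jensen, the coupled differential inequalities $m_1'(r)\ge c\,r^{-(N-1)(p-1)}m_2(r)^p$ and $m_2'(r)\ge c\,r^{-(N-1)(q-1)}m_1(r)^q$. Multiplying these and integrating, or equivalently forming a single inequality for the product, produces a Riccati/Bernoulli-type obstruction that forces $m_1\equiv m_2\equiv 0$ precisely when the integrability exponent crosses the threshold $(N-1)pq<\max(N+p,N+q)$; the two alternatives in the $\max$ correspond to which of the two equations dominates, and since $p\ge q$ one reduces to the condition $(N-1)pq<N+p$. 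The subcriticality condition is exactly what makes the relevant integral $\int^\infty r^{\alpha}\,dr$ diverge, contradicting boundedness of the nonincreasing averaged derivatives unless they vanish identically; then $u_i$ is harmonic and bounded-below-gradient-controlled, hence constant.

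\textbf{Parts (ii) and (iii): local upper bounds.} For the singularity at $0$ (the exterior case is symmetric under $r\mapsto 1/r$-type inversion or a direct repetition), I would again average and obtain from $m_i(r)=-r^{N-1}\overline{u_i}'(r)$ the same system of differential inequalities, but now integrate them \emph{near the origin} to bound $|\overline{u_i}'(r)|$ by the self-similar rate. The natural comparison is with the explicit singular solution $u_i^\ast\sim r^{\beta_i}$ with $\beta_1=\frac{p(q-1)-2}{pq-1}$, $\beta_2=\frac{q(p-1)-2}{pq-1}$ identified in Section~\ref{prop}; the exponents $-\frac{2-p(q-1)}{pq-1}=\beta_1$ and $-\frac{2-q(p-1)}{pq-1}=\beta_2$ in the statement are exactly $\beta_1,\beta_2$. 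Thus the claim is that $|\overline{u_i}'|\le C r^{\beta_i-1}$, which upon integration gives the asserted bounds on $|\overline{u_i}|$, with the logarithmic case arising precisely in the borderline $p(q-1)=2$ (resp.\ $q(p-1)=2$) where $\beta_i=0$ and $\int r^{-1}\,dr$ produces $|\ln r|$. For part (iii) I would use the subsolution structure of the absorption system (\ref{abs}): positivity of $\widetilde u_i$ together with $-\Delta\widetilde u_i+|\nabla\widetilde u_j|^{\cdots}\le 0$ gives a \emph{pointwise} Keller–Osserman bound directly, not merely on averages, by a standard radial barrier comparison on small balls $B_{r/2}(x)$.

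\textbf{The main obstacle} will be closing the coupled differential inequalities without a maximum principle for the system: because the system is neither cooperative nor competitive (as the paper emphasizes for the associated Lotka–Volterra system), I cannot directly compare $(u_1,u_2)$ to a supersolution pair. The decisive technical point is therefore to \emph{decouple} the two integral inequalities — multiplying $m_1'\ge c\,r^{a}m_2^p$ by $m_2'\ge c\,r^{b}m_1^q$ and integrating by parts to get a single scalar inequality for $m_1^{q+1}+$ (a power of) $m_2^{p+1}$ — so that the critical-exponent arithmetic ($\beta_i$ above, and the threshold $(N-1)pq=N+p$) emerges from one Gronwall/Bernoulli integration. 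Handling the three borderline cases $p(q-1)=2$, $q(p-1)=2$ uniformly, and verifying that the constant $C$ depends only on $u_1,u_2$ (via their values on a fixed sphere $\partial B_\rho$), requires care but is routine once the scalar inequality is in hand.
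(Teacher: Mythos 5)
Your architecture for parts (i) and (ii) is essentially sound, and for (ii) it coincides with the paper's route: average the system and use Jensen's inequality (valid because $p\geq q\geq 1$) to see that $(\overline{u_1},\overline{u_2})$ is a radial supersolution (this is Lemma \ref{mean}), bound the averaged derivatives by the self-similar rates $|\overline{u_1}'|\leq Cr^{-\frac{p+1}{pq-1}}$, $|\overline{u_2}'|\leq Cr^{-\frac{q+1}{pq-1}}$ (Proposition \ref{uprad}), and integrate, the logarithm appearing exactly at $p(q-1)=2$, resp. $q(p-1)=2$. For (i) you genuinely diverge from the paper: the paper proves the Liouville statement by a nonlinear-capacity/test-function argument (Theorem \ref{esti}), which in addition yields the annulus integral estimates (\ref{mil}); your averaging route also works, since under $(N-1)pq<N+p$ the radial bound forces the nonnegative, nondecreasing quantities $m_i=-r^{N-1}\overline{u_i}'$ to tend to $0$ at infinity, hence to vanish identically, whence $\nabla u_2\equiv 0$ and then $u_1$ is constant. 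That is a legitimate, more elementary alternative, but note it leans entirely on the radial derivative estimate that you still have to prove.

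Two steps, however, are genuine gaps. First, the technical core — the sharp derivative bound — is not obtained by the decoupling device you propose. From $m_1'\geq cr^{(N-1)(1-p)}m_2^{p}$ and $m_2'\geq cr^{(N-1)(1-q)}m_1^{q}$, multiplying the inequalities (or forming $(m_1^{q+1})'$, $(m_2^{p+1})'$, or a weighted product $m_1^{\mu}m_2$) and integrating gives, after an AM--GM step, a Riccati inequality for a single \emph{product} quantity; because the two weights differ when $p\neq q$, this yields at best sharp rates for the product, and individual bounds on $m_1$ and $m_2$ cannot be extracted from it, since a supersolution provides no lower bound on the other factor. The paper instead runs an iterative doubling/bootstrap (proof of Proposition \ref{uprad}, using the technique of \cite{BiGr}, \cite{BiGaYa}): one derives $w_1(r_0)\geq c\varepsilon^{p+1}w_1^{pq}(r_0(1-2\varepsilon))\,r_0^{N+p-(N-1)pq}$ and iterates over a summable sequence of $\varepsilon$'s, the condition $pq>1$ closing the loop. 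Second, in (iii) your claim that the absorption system yields a pointwise bound "directly, \dots\ by a standard radial barrier comparison on small balls" fails: this system has no comparison principle (it is neither cooperative nor competitive, as the paper itself stresses), and at a touching point of $\widetilde{u_1}$ with a candidate barrier $V_1$ one only learns that $|\nabla\widetilde{u_2}|\leq|\nabla V_2|$ at that single point, which cannot be propagated. The correct step — and the paper's — is much simpler: both components are subharmonic (since $\Delta\widetilde{u_1}\geq|\nabla\widetilde{u_2}|^{p}\geq 0$ and similarly for $\widetilde{u_2}$), so pointwise values are controlled by nearby spherical means, $\widetilde{u_i}(x)\leq C(N)\max\bigl(\overline{\widetilde{u_i}}(|x|/2),\overline{\widetilde{u_i}}(2|x|)\bigr)$ (see \cite[Lemma 2.1]{BiGr}), after which the mean-value bounds you already established in part (ii) finish the proof (this is how the paper proves Theorem \ref{Osserman}).
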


Note that the range $(N-1)pq<\max (N+p,N+q)$ in assertation (i) is optimal
for supersolutions. Nevertheless we think that the result is much more
general concerning solutions: \medskip

\textbf{Conjecture}: for any $p,q$ such that $pq>1,$ all the \textit{%
solutions }of system (\ref{one}) in $\Omega =\mathbb{R}^{N}$, without
condition of sign\textit{, }are constant. \medskip

We end this section by the study of the existence of supersolutions of
system (\ref{sou}) in an exterior domain, see Theorem \ref{nonext}, and the
existence of solutions in a domain $\Omega $ with measure data, see Theorem %
\ref{orig}.\bigskip

In \textbf{Section \ref{exten} }we conclude the paper by suggesting some
extensions of our results to more general systems.

\section{Critical exponents of the system\label{prop}}

We first recall some results on the scalar case of equation (\ref{eq}),
which can be found for example in \cite{BiGaVe}.

\subsection{On the scalar case}

Clearly a function $u$ is a negative solution of (\ref{eq}) if and only if $%
\widetilde{u}=-u$ is a positive solution of the equation with absorption%
\begin{equation}
-{\Delta }\widetilde{u}+\left\vert \nabla \widetilde{u}\right\vert ^{q}=0.
\label{eqa}
\end{equation}%
As we mentioned above, two critical values are involved: $q=\frac{N}{N-1},$
and $q=2.$ When $q=2$ the equation is equivalent to $\Delta (e^{u})=0,$ so
all the solutions are decribed; in particular all the solutions in $\mathbb{R%
}^{N}\backslash \left\{ 0\right\} $ are radial, and given by $u=\ln
(cr^{2-N}+d),$ $c,d\in \mathbb{R}.$\medskip

(1) For $q\neq 1,\frac{N}{N-1},2,$ there exist particular solutions of (\ref%
{eq}) on $(0,\infty )$, given by 
\begin{equation}
u^{\ast }(r)=A^{\ast }r^{\frac{q-2}{q-1}}+c,\qquad r=\left\vert x\right\vert
,\quad c\in \mathbb{R},  \label{hli}
\end{equation}%
with $\left\vert A^{\ast }\right\vert ^{q-2}A^{\ast
}=(2-q)((N-1)q-N)\left\vert 2-q\right\vert ^{1-q}\left\vert q-1\right\vert
^{2-q}.$ This function is $\infty $-singular for $q<2:$ positive for $\frac{N%
}{N-1}<q<2,$ such that $\lim_{r\rightarrow 0}u^{\ast }=\infty ,$ and
negative either for $1<q<\frac{N}{N-1}$ where $\lim_{r\rightarrow 0}u^{\ast
}=-\infty ;$ it is a cusp-solution for $q>2$ : $\lim_{r\rightarrow 0}u^{\ast
}=0,$ $\lim_{r\rightarrow 0}u^{\prime \ast }=-\infty .$\medskip

(2) In the radial case, equation (\ref{eq}) only depends on the derivative,
and it can be completely integrated: it reduces to 
\begin{equation*}
W^{\prime }=r^{(N-1)(1-q)}\left\vert W\right\vert ^{q},\text{ where }%
W=-r^{N-1}u^{\prime },
\end{equation*}%
hence there exists a union of disjoint intervals $(\rho ,R)$ where for $%
q\neq \frac{N}{N-1}$ setting $b=\frac{q-1}{(N-1)q-N},$%
\begin{equation}
u^{\prime }=-r^{1-N}(C+br^{N-(N-1)q})^{-\frac{1}{q-1}}<0,\qquad \text{or }%
u^{\prime }=r^{1-N}(C-br^{N-(N-1)q})^{-\frac{1}{q-1}}>0.  \label{eur}
\end{equation}%
Then we deduce $u$ by integration. The case $C=0$ corresponds to the
particular solutions.

When $q=\frac{N}{N-1},$ there exist negative solutions near $0$ with a
logarithmic behaviour, given by 
\begin{equation}
u^{\prime \ast }(r)=r^{1-N}((q-1)\left\vert \ln r\right\vert +C)^{\frac{-1}{%
q-1}}.  \label{logi}
\end{equation}%
From (\ref{eur}) we get the existence of other radial solutions in $\mathbb{R%
}^{N}\backslash \left\{ 0\right\} :\medskip $

$\bullet $ for $\frac{N}{N-1}<q<2$, for any $c\geq 0,k>0,$ there exists
positive solutions of (\ref{eq}) such that 
\begin{equation*}
\lim_{r\rightarrow 0}r^{\frac{2-q}{q-1}}(u-c)=A^{\ast },\qquad
\lim_{r\rightarrow \infty }r^{N-2}(u-c)=k>0,
\end{equation*}%
$\bullet $ for $1<q<\frac{N}{N-1}$, and for any $c<0,k>0,$ there exists a
negative solution $u$ such that 
\begin{equation*}
\lim_{r\rightarrow 0}r^{N-2}(u-c)=-k,\qquad \lim_{r\rightarrow \infty }r^{%
\frac{2-q}{q-1}}(u-c)=-\left\vert A^{\ast }\right\vert ,
\end{equation*}%
$\bullet $ for $q>2,$ and for any $c>d$ there exist decreasing bounded
solutions of (\ref{eq}), such that 
\begin{equation*}
\lim_{r\rightarrow 0}u=c,\quad \lim_{r\rightarrow 0}r^{\frac{2-q}{q-1}%
}(u-c)=-\left\vert A^{\ast }\right\vert ,\qquad \lim_{r\rightarrow \infty
}u=d,
\end{equation*}%
and they are either positive for $d>0$ or negative for $c<0$.\medskip

(3) In the nonradial case, the first upperestimates are due to \cite{Lio}:
if $u$ is any solution (with no condition of sign) in a domain $\Omega ,$
then for any $q>1,$ 
\begin{equation*}
\left\vert \nabla u(x)\right\vert \leq C_{N,q}\text{dist}(x,\partial \Omega
)^{-\frac{1}{q-1}}.
\end{equation*}%
As a consequence, if $\Omega =\mathbb{R}^{N},$ then $u$ is constant. Note
that the result is false if $q<1,$ since $u^{\ast }$ given at (\ref{hli})
still exists and $u^{\ast }\in C^{2}(\mathbb{R}^{N}).$ The estimates have
been extended to a quasilinear equation $-{\Delta }_{m}{u=-\func{div}(}%
\left\vert \nabla u\right\vert ^{m-2}\nabla u)=\left\vert \nabla
u\right\vert ^{q},$ $m>1$ in \cite{BiGaVe}, where one can find a complete
classification of the negative solutions $u\in C^{1}(\Omega \backslash
\left\{ 0\right\} )$ near $0,$ and a partial classification of the positive
ones. The negative solutions of the equation (\ref{eq}) have been studied in 
\cite{NgTa}. When $q\geq 2,$ the first author gives in \cite{Bi2} a detailed
behaviour of the positive solutions in $B_{r_{0}}\backslash \left\{
0\right\} $ or in $\mathbb{R}^{N}\backslash \overline{B_{r_{0}}}$ in a more
general context of equation $-{\Delta }_{m}{u}=u^{p}\left\vert \nabla
u\right\vert ^{q},$ $p\geq 0.$

\subsection{Particular solutions and critical values of the parameters}

When searching particular solutions of system (\ref{one}), some critical
values of the parameters $p,q$ are involved in the problem, showing the
complexity of the system compared to the scalar case :

\begin{definition}
Let $pq>1,p\geq q.$ Following (\ref{com}) and (\ref{comstar}), we define 
\begin{eqnarray*}
q_{1} &=&q_{1}(p)=\frac{N}{(N-1)p-1},\qquad q_{2}=q_{2}(p)=\frac{N+p}{(N-1)p}%
, \\
q_{3} &=&q_{3}(p)=\frac{2}{p-1},\qquad q_{4}=q_{4}(p)=\frac{p+2}{p},\qquad
q^{\ast }=q^{\ast }(p)=\frac{2N+p}{2(N-1)p-1}.
\end{eqnarray*}%
corresponding to five curves in the set $\left\{ (p,q):pq>1,p\geq
q>0\right\} $, namely 
\begin{eqnarray*}
\mathcal{L}_{1} &=&\left\{ q=q_{1}\right\} =\left\{ N-1)pq=N+q\right\}
,\qquad \mathcal{L}_{2}=\left\{ q=q_{2}\right\} =\left\{ N-1)pq=N+p\right\} ,
\\
\mathcal{L}_{3} &=&\left\{ q=q_{3}\right\} =\left\{ q(p-1)=2\right\} ,\qquad 
\mathcal{L}_{4}=\left\{ q=q_{4}\right\} =\left\{ p(q-1)=2\right\} , \\
\mathcal{L}^{\ast } &=&\left\{ q=q^{\ast }\right\} =\left\{ (N-1)pq=N+\frac{%
p+q}{2}\right\} .
\end{eqnarray*}%
Note that $\mathcal{L}_{1},\mathcal{L}_{2},\mathcal{L}^{\ast }$ intersect
the diagonal $\left\{ p=q\right\} $ for $p=\frac{N}{N-1},$ and $\mathcal{L}%
_{3},\mathcal{L}_{4}$ for $p=2.$ And $\mathcal{L}_{2}$ intersect $\mathcal{L}%
_{3}$ for $p=2(N-1),$ $\mathcal{L}^{\ast }$ intersects $\mathcal{L}_{3}$ for 
$p=2(N-1),$ see the figures below in Section \ref{exi}.
\end{definition}

\begin{proposition}
\label{parti}Let $pq>1,p\geq q.$ Then for $q\neq q_{1},q_{2},$ the system (%
\ref{one}) admits particular solutions such that 
\begin{equation}
u_{1}^{\ast \prime }=\eta _{1}a_{1}r^{-\frac{p+1}{pq-1}},\qquad u_{2}^{\ast
\prime }=\eta _{2}a_{2}r^{-\frac{q+1}{pq-1}},  \label{pri}
\end{equation}%
where $a_{i}=a_{i}(N,p,q)>0$ for $i=1,2,$ and 
\begin{equation}
\eta _{1}=\text{sign}(q_{2}-q),\qquad \eta _{2}=\text{sign}(q_{1}-q).
\label{pro}
\end{equation}%
If moreover $q\neq q_{3},q_{4}$, it has solutions 
\begin{equation}
u_{1}^{\ast }=\varepsilon _{1}A_{1}^{\ast }r^{\frac{p(q-1)-2}{pq-1}%
}+c_{1}=\varepsilon _{1}A_{1}^{\ast }r^{\frac{p(q-q_{4})}{pq-1}%
}+c_{1},\qquad u_{2}^{\ast }=\varepsilon _{2}A_{2}^{\ast }r^{\frac{q(p-1)-2}{%
pq-1}}+c_{2}=\varepsilon _{2}A_{2}^{\ast }r^{\frac{(p-1)(q-q_{3})}{pq-1}%
}+c_{2},  \label{fun}
\end{equation}%
where $A_{i}^{\ast }=A_{i}^{\ast }(N,p,q)>0$, $c_{i}\in \mathbb{R}$ for $%
i=1,2,$ and 
\begin{equation*}
\varepsilon _{1}=\text{sign}(q_{2}-q)(q-q_{4}),\qquad \varepsilon _{2}=\text{%
sign}(q_{1}-q)(q-q_{3}).
\end{equation*}
\end{proposition}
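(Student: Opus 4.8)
The plan is to look directly for radial solutions on $(0,\infty)$ and to exploit the fact that, since the system (\ref{one}) involves only the gradients, it suffices to determine $u_1'$ and $u_2'$ as pure powers of $r$. Writing $\Delta u=r^{1-N}(r^{N-1}u')'$ for radial functions, I would insert the ansatz $u_1'=\eta_1 a_1 r^{\gamma_1}$, $u_2'=\eta_2 a_2 r^{\gamma_2}$, with $a_i>0$ and $\eta_i=\pm1$, into the two equations. Since $-r^{1-N}(r^{N-1}u_1')'=-\eta_1 a_1(N-1+\gamma_1)r^{\gamma_1-1}$ while $|u_2'|^p=a_2^p r^{p\gamma_2}$ (and symmetrically for the second equation), matching the powers of $r$ forces the linear system $\gamma_1-1=p\gamma_2$, $\gamma_2-1=q\gamma_1$. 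Because $pq\neq1$ this has the unique solution $\gamma_1=-\tfrac{p+1}{pq-1}$, $\gamma_2=-\tfrac{q+1}{pq-1}$, which are exactly the exponents in (\ref{pri}).

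Next I would match coefficients. As the right-hand sides $|u_2'|^p,|u_1'|^q$ are nonnegative, the sign of each left-hand side is forced, giving $\eta_1=-\text{sign}(N-1+\gamma_1)$ and $\eta_2=-\text{sign}(N-1+\gamma_2)$. A short computation using the defining relations (\ref{com}) of $q_1,q_2$ yields
\[
N-1+\gamma_1=\frac{(N-1)p}{pq-1}\,(q-q_2),\qquad N-1+\gamma_2=\frac{(N-1)p-1}{pq-1}\,(q-q_1);
\]
since $pq>1$, $p>1$ and $N\geq3$, both prefactors are positive, whence $\eta_1=\text{sign}(q_2-q)$ and $\eta_2=\text{sign}(q_1-q)$, as claimed in (\ref{pro}), provided $q\neq q_1,q_2$ so that these quantities are nonzero. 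The remaining coefficient equations then read $B_1a_1=a_2^p$, $B_2a_2=a_1^q$ with $B_i=|N-1+\gamma_i|>0$. Eliminating $a_2$ gives $a_1^{(pq-1)/p}=B_2B_1^{1/p}$, and since $pq>1$ the exponent $(pq-1)/p$ is positive, so there is a unique $a_1>0$, hence a unique $a_2>0$. This produces the derivative solutions (\ref{pri}).

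For the second assertion I would integrate. Again by (\ref{com}) one finds $\gamma_1+1=\tfrac{p(q-q_4)}{pq-1}$ and $\gamma_2+1=\tfrac{(p-1)(q-q_3)}{pq-1}$, so these are nonzero precisely when $q\neq q_4$ and $q\neq q_3$. In that case, integrating (\ref{pri}) gives $u_i^\ast=\tfrac{\eta_i a_i}{\gamma_i+1}r^{\gamma_i+1}+c_i$, which is (\ref{fun}) with $A_i^\ast=a_i/|\gamma_i+1|>0$ and $\varepsilon_i=\eta_i\,\text{sign}(\gamma_i+1)$. Since $\text{sign}(\gamma_1+1)=\text{sign}(q-q_4)$ and $\text{sign}(\gamma_2+1)=\text{sign}(q-q_3)$, combining with the values of $\eta_i$ yields $\varepsilon_1=\text{sign}((q_2-q)(q-q_4))$ and $\varepsilon_2=\text{sign}((q_1-q)(q-q_3))$, as required.

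The computation is elementary once the ansatz is fixed; the only delicate part is the sign bookkeeping, which is exactly where the four critical values enter. The conditions $q\neq q_1,q_2$ are needed at the coefficient-matching stage, as they guarantee $N-1+\gamma_i\neq0$ (equivalently, that a nonzero power solution for the derivative exists), whereas $q\neq q_3,q_4$ are needed so that the antiderivative of $r^{\gamma_i}$ is again a power rather than a logarithm. I expect the main care to lie in verifying the sign identities for $N-1+\gamma_i$ and $\gamma_i+1$ in terms of $q-q_j$, but these follow mechanically from the relations in (\ref{com}).
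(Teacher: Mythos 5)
Your proof is correct and follows essentially the same route as the paper's: insert the power-law ansatz for $(u_1',u_2')$, match exponents to get $\gamma_1=-\frac{p+1}{pq-1}$, $\gamma_2=-\frac{q+1}{pq-1}$, match coefficients and signs (your identities $N-1+\gamma_1=\frac{(N-1)p}{pq-1}(q-q_2)$ and $N-1+\gamma_2=\frac{(N-1)p-1}{pq-1}(q-q_1)$ are exactly the paper's $\beta_1,\beta_2$ rewritten), and integrate, with $q\neq q_3,q_4$ ensuring the antiderivative is a power. Your write-up is in fact more explicit than the paper's on the sign bookkeeping and on solvability of the coefficient system, but there is no substantive difference in method.
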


\begin{proof}
We search particular solutions of the form $(u_{1}^{\prime },u_{2}^{\prime
})\equiv (C_{1}r^{-\lambda _{1}},C_{2}r^{-\lambda _{2}})$. Then 
\begin{equation*}
C_{1}(\lambda _{1}-N+1)r^{-\lambda _{1}-1}=r^{-p\lambda _{2}}\left\vert
C_{2}\right\vert ^{p},\qquad C_{2}(\lambda _{2}-N+1)r^{-\lambda
_{2}-1}=r^{-q\lambda _{1}}\left\vert C_{2}\right\vert ^{p},
\end{equation*}%
hence we find $\lambda _{1}=\frac{p+1}{pq-1},\lambda _{2}=\frac{q+1}{pq-1},$
and setting 
\begin{equation*}
\beta _{1}=\lambda _{1}-N+1=\frac{N+p-(N-1)pq}{pq-1},\qquad \beta
_{2}=\lambda _{2}-N+1=\frac{N+q-(N-1)pq}{pq-1},
\end{equation*}%
we get the results with $\left\vert C_{1}\right\vert =(\left\vert \beta
_{1}\right\vert \left\vert \beta _{2}\right\vert ^{p})^{\frac{1}{pq-1}%
},\;\left\vert C_{2}\right\vert =(\left\vert \beta _{2}\right\vert
\left\vert \beta _{1}\right\vert ^{q})^{\frac{1}{pq-1}}\mathrm{.}$ By
integration, if the denominator is nonzero, we get in particular the
solution $(u_{1},u_{2}),$ with $\left\vert A_{i}^{\ast }\right\vert
=\left\vert \frac{C_{i}(pq-1)}{p(q-1)-2}\right\vert ,$ $i=1,2.$
\end{proof}

\begin{remark}
\label{2.3}Note that $u_{1}^{\ast }$ is $\infty $-singular for $q<q_{4}$ and
it is a cusp-solution for $q>q_{4}.$ And $u_{2}^{\ast }$ is $\infty $%
-singular for $q<q_{3}$ and it is a cusp-solution for $q>q_{3};$ so there
exist solutions $(u_{1},u_{2})$ of system (\ref{one}) with components of
different types when $q_{3}<q<q_{4}.$ $\medskip $
\end{remark}

\begin{remark}
\label{rela}It is clear that the exponents $q_{1},q_{2},q_{3},q_{4},$ are
strongly involved in the existence of particular solutions. The exponent $%
q^{\ast }$ corresponds to the Sobolev exponent for equation (\ref{hql}),
offering other types of explicit solutions, see Theorem \ref{regA}. Note the
relations 
\begin{eqnarray*}
q &\leq &q_{1}\Longleftrightarrow (N-1)pq\leq N+q,\qquad q\leq
q_{2}\Longleftrightarrow (N-1)pq\leq N+p, \\
q &\leq &q_{3}\Longleftrightarrow q(p-1)\leq 2,\qquad q\leq
q_{4}\Longleftrightarrow p(q-1)\leq 2, \\
q &\leq &q^{\ast }\Longleftrightarrow (N-1)pq\leq N+\frac{p+q}{2},
\end{eqnarray*}%
\begin{eqnarray*}
q_{1} &\leq &\min (q_{2},q_{3})\leq \max (q_{2},q_{3})\leq q_{4}, \\
q_{1} &\leq &\frac{N}{N-1}\Longleftrightarrow q_{2}\leq \frac{N}{N-1}%
\Longleftrightarrow p\geq \frac{N}{N-1}, \\
q_{2} &\leq &q_{3}\Longleftrightarrow p\leq N, \\
q_{1} &\leq &q^{\ast }\leq q_{2},\qquad q^{\ast }\leq
q_{3}\Longleftrightarrow p\leq 2(N-1).
\end{eqnarray*}%
When $q=p,$ we find again the two critical values of the scalar case: 
\begin{equation*}
q_{1}=q^{\ast }=q_{2}=\frac{N}{N-1},\qquad q_{3}=q_{4}=2.
\end{equation*}
\end{remark}

\section{First properties in the radial case\label{rad}}

In the radial case, system (\ref{one}) is reduced to 
\begin{equation}
\left\{ 
\begin{array}{c}
{-(u}_{1}^{\prime \prime }+\frac{N-1}{r}{u}_{1}^{\prime })=\left\vert
u_{2}^{\prime }\right\vert ^{p}, \\ 
{-(u}_{2}^{\prime \prime }+\frac{N-1}{r}{u}_{2}^{\prime })=\left\vert
u_{1}^{\prime }\right\vert ^{q},%
\end{array}%
\right.  \label{SA}
\end{equation}%
so it only involves the derivatives. Moreover, setting 
\begin{equation}
w_{1}=-r^{N-1}{u}_{1}^{\prime },\qquad w_{2}=-r^{N-1}{u}_{2}^{\prime },
\label{ww}
\end{equation}%
system (\ref{SA}) is equivalent to a first order system: 
\begin{equation}
\left\{ 
\begin{array}{c}
w_{1}^{\prime }=r^{(N-1)(1-p)}\left\vert w_{2}\right\vert ^{p}, \\ 
w_{2}^{\prime }=r^{(N-1)(1-q)}\left\vert w_{1}\right\vert ^{q}.%
\end{array}%
\right.  \label{Sw}
\end{equation}%
Consider any solution $({u}_{1},{u}_{2})\in C^{2}(0,r_{0}),$ $r_{0}\leq
\infty .$ We say that $u_{i}$ is \textit{regular} if $u_{i}^{\prime }$ has a
limit at $0$ and $\lim_{r\longrightarrow 0}u_{1}^{\prime
}=\lim_{r\longrightarrow 0}u_{2}^{\prime }=0$; if $u_{1}$ and $u_{2}$ are
regular, then $({u}_{1},{u}_{2})$ extends as a function in $%
C^{2}(B_{r_{0}}\times B_{r_{0}})$, from the equations. We say that $u_{i}$
is \textit{singular} if it is not regular. Among the singular solutions we
distinguish the \textbf{\ }$\infty $\textit{-singular} ones, such that $%
\lim_{r\longrightarrow 0}\left\vert u_{i}\right\vert =\infty ,$ and the 
\textit{cusp-solutions} $u_{i}$ ($\lim_{r\longrightarrow 0}u_{i}=c\in 
\mathbb{R}$ and $\lim_{r\longrightarrow 0}\left\vert u_{i}^{\prime
}\right\vert =\infty ),$ where the singularity is at the level of the
gradient.

\subsection{Local existence and uniqueness}

\begin{proposition}
\label{locun}Let $pq>1,p\geq q.$ (i) Let $r_{0}\geq 0$ and $%
c_{1},c_{2},b_{1},b_{2}\in \mathbb{R}$ such that $b_{2}\neq 0$ if $q<1.$
Then there exists a unique solution $(u_{1},u_{2})$ of system (\ref{SA}) in
the neighborhood of $r_{0},$ such that 
\begin{equation*}
u_{1}(r_{0})=c_{1},\qquad u_{2}(r_{0})=c_{2},\qquad u_{1}^{\prime
}(r_{0})=b_{1},\qquad u_{2}^{\prime }(r_{0})=b_{2}.
\end{equation*}%
(ii) For any $R>0,$ the only radial solutions $(u_{1},u_{2})\in
C^{1}(B_{R}\times B_{R})$ of system (\ref{one}) are the constants.
\end{proposition}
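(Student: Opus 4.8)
The plan is to route everything through the first–order reformulation (\ref{ww})--(\ref{Sw}) of the radial system (\ref{SA}), recovering the components by $u_i(r)=c_i+\int_{r_0}^r\bigl(-s^{1-N}w_i(s)\bigr)\,ds$. For part (i) I would first fix $r_0>0$ and prescribe $w_i(r_0)=-r_0^{N-1}b_i$. On a neighborhood of such an $r_0$ the coefficients $r^{(N-1)(1-p)}$ and $r^{(N-1)(1-q)}$ are smooth and positive, so the whole question reduces to the regularity of the nonlinearities $\xi\mapsto|\xi|^p$ and $\xi\mapsto|\xi|^q$. Since $pq>1$ with $p\ge q$ forces $p>1$, the first is $C^1$; if in addition $q\ge 1$ the second is locally Lipschitz, and applying the Cauchy--Lipschitz theorem to (\ref{Sw}) yields a unique local $(w_1,w_2)$, hence a unique $(u_1,u_2)$, with no sign restriction on the data (consistent with the hypothesis imposing a condition only when $q<1$).

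The delicate case is $q<1$, where $|w_1|^q$ fails to be Lipschitz at $w_1=0$; existence still follows from Cauchy--Peano, but uniqueness is not automatic. Here I would exploit the hypothesis $b_2\neq 0$: it gives $w_2(r_0)\neq 0$, so by continuity $w_2\neq 0$ near $r_0$ and the first equation forces $w_1'=r^{(N-1)(1-p)}|w_2|^p>0$; thus $w_1$ is a strictly increasing $C^1$ function near $r_0$, \emph{even as it passes through} $0$. Taking $\tau=w_1$ as the new independent variable turns (\ref{Sw}) into
\[
\frac{dr}{d\tau}=\frac{r^{(N-1)(p-1)}}{|w_2|^p},\qquad \frac{dw_2}{d\tau}=r^{(N-1)(p-q)}\frac{|\tau|^q}{|w_2|^p},
\]
in which the only non-Lipschitz factor $|\tau|^q$ now sits on the \emph{independent} variable, where mere continuity is harmless; the right–hand sides are locally Lipschitz in $(r,w_2)$ because $p>1$ and $w_2$ stays away from $0$. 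Cauchy--Lipschitz in $\tau$ then gives a unique $(r(\tau),w_2(\tau))$, and inverting the increasing map $r(\tau)$ returns a unique $(w_1,w_2)(r)$. This is precisely why the natural assumption is $b_2\neq 0$ rather than $b_1\neq 0$: it is $w_2$ that controls the monotonicity of the variable $w_1$ carrying the singular power. Finally I would note that at $r_0=0$ the equations force $u_1'(0)=u_2'(0)=0$, so the only admissible data are regular and the corresponding solutions are the constants produced by (ii).

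For part (ii), a radial $u_i\in C^1(B_R)$ necessarily has $u_i'(0)=0$, hence $w_i(0)=0$; since the right–hand sides of (\ref{Sw}) are nonnegative, $w_i\ge 0$ and the Volterra form $w_1(r)=\int_0^r s^{(N-1)(1-p)}|w_2|^p\,ds$, $w_2(r)=\int_0^r s^{(N-1)(1-q)}|w_1|^q\,ds$ holds down to $0$. Writing $M(\delta)=\sup_{[0,\delta]}|u_1'|$ and $n(\delta)=\sup_{[0,\delta]}|u_2'|$, both tending to $0$ as $\delta\to 0^+$, and using $|w_i(s)|=s^{N-1}|u_i'(s)|$, the exponents collapse to $s^{N-1}$ and
\[
|u_1'(r)|=r^{1-N}\left|\int_0^r s^{(N-1)(1-p)}|w_2|^p\,ds\right|\le \frac{r}{N}\,n(\delta)^p,
\]
so $M(\delta)\le\frac{\delta}{N}n(\delta)^p$ and symmetrically $n(\delta)\le\frac{\delta}{N}M(\delta)^q$. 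Combining gives $M(\delta)\le C\,\delta^{1+p}M(\delta)^{pq}$; since $pq>1$ and $M(\delta)\to 0$, this is impossible unless $M(\delta)=0$ for small $\delta$, whence $n(\delta)=0$ and $(u_1,u_2)$ is constant near $0$.

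To globalize I would run a continuation argument: letting $\rho$ be the supremum of radii up to which $(u_1,u_2)$ is constant, if $\rho<R$ then $w_i(\rho)=0$ and the same estimate on $[\rho,\rho+\delta]$ (where the coefficients are now bounded) gives $\widetilde M(\delta)\le C\,\delta^{1+p}\widetilde M(\delta)^{pq}$ with $\widetilde M(\delta)\to 0$, forcing $\widetilde M=0$ for small $\delta$ and contradicting maximality; hence $\rho=R$ and $(u_1,u_2)$ is constant on $B_R$. I expect the genuinely delicate point to be the non-Lipschitz regime $q<1$ in part (i): the change of independent variable to $w_1$, legitimated by $b_2\neq 0$, is the crux, since it is what transfers the singular power $|\cdot|^q$ onto the independent variable. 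In part (ii) the corresponding subtlety is that a naive bootstrap only yields vanishing to infinite order at $0$; it is the superlinearity $pq>1$ in the combined inequality $M\le C\delta^{1+p}M^{pq}$ that upgrades this to $M\equiv 0$.
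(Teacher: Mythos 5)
Your proposal is correct and takes essentially the same route as the paper: reduction to the first-order system (\ref{Sw}), Cauchy--Lipschitz when $q\geq 1$, the change of independent variable $\tau =w_{1}$ (legitimized by $b_{2}\neq 0$) to absorb the non-Lipschitz power $\left\vert w_{1}\right\vert ^{q}$ when $q<1$, and a superlinear bootstrap combined with an open-closed (continuation) argument for part (ii). The only cosmetic difference is in the bootstrap: you use sup-norms, $M(\delta )\leq C\delta ^{1+p}M(\delta )^{pq}$ with $M(\delta )\rightarrow 0$, while the paper works with $F(r)=\int_{r_{0}}^{r}\left\vert u_{1}^{\prime }\right\vert ^{pq}d\theta $ and the monotonicity of $F^{1-pq}+cr$; both exploit $pq>1$ in the same way.
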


\begin{proof}
(i) First suppose $r_{0}>0.$ Since the system only depends on the
derivatives, it is equivalent to consider system (\ref{Sw}), with initial
data $(w_{01},w_{0,2})\in \mathbb{R}^{2}$ If $q>1,$ the classical
Cauchy-Lipschitz theorem applies. Next suppose $q<1;$ the theorem still
applies if $w_{01}\neq 0$. Consider the case $w_{01}=0,w_{0,2}\neq 0$. In a
neighborhood $\left[ r_{0}-\eta ,r_{0}+\eta \right] $ of $r_{0}.$ there
holds $0<C_{1}<\left\vert w_{2}\right\vert <C_{2},$ $C_{3}<w_{1}^{\prime
}<C_{4},$ $0<\left\vert w_{1}\right\vert <C_{5}.$ hence we can take $w_{1}$
as a new variable, and consider that $r$ is a function of $w_{1}.$ Then we
get the system 
\begin{equation*}
\left\{ 
\begin{array}{c}
\frac{dr}{dw_{1}}=f(w_{1},r,w_{2})=r^{(N-1)(1-p)}\left\vert w_{2}\right\vert
^{p}, \\ 
\frac{dw_{2}}{dw_{1}}=g(w_{1},r,w_{2})=\left\vert w_{1}\right\vert
^{q}r^{(N-1)(2-p-q)}\left\vert w_{2}\right\vert ^{p},%
\end{array}%
\right.
\end{equation*}%
for which we can apply the Cauchy-Lipschitz theorem in $\left[
0,C_{5}\right) \times \left( r_{0}-\eta ,r_{0}+\eta \right) \times
(C_{1},C_{2})$, since $f$ is continuous, and locally Lipschitz with respect
to $(r,w_{2}).\medskip $

(ii) Next suppose that $b_{1}=b_{2}=0,$ and $r_{0}\geq 0.$ Then 
\begin{equation*}
u_{1}^{\prime }(r)=r^{1-N}\dint_{r_{0}}^{r}s^{N-1}\left\vert u_{2}^{\prime
}\right\vert ^{p}ds,\qquad u_{2}^{\prime
}(r)=r^{1-N}\dint_{r_{0}}^{r}s^{N-1}\left\vert u_{1}^{\prime }\right\vert
^{q}ds,
\end{equation*}%
\begin{equation}
\left\vert u_{1}^{\prime }(r)\right\vert \leq r^{1-N}\left\vert
\dint_{r_{0}}^{r}s^{N-1}\left\vert u_{2}^{\prime }\right\vert
^{p}ds\right\vert \leq r^{1-N}\left\vert
\dint_{r_{0}}^{r}s^{N-1}(s^{1-N}\left\vert \dint_{r_{0}}^{s}\theta
^{N-1}\left\vert u_{1}^{\prime }\right\vert ^{q}d\theta \right\vert
)^{p})ds\right\vert ,  \label{cil}
\end{equation}%
$\bullet $ First assume $r_{0}>0.$ In a neigborhood $\left[ r_{0}-\eta
,r_{0}+\eta \right] $, there holds 
\begin{equation}
\left\vert u_{1}^{\prime }(r)\right\vert \leq C\left\vert
\dint_{r_{0}}^{r}\left\vert \dint_{r_{0}}^{s}\left\vert u_{1}^{\prime
}\right\vert ^{q})^{p}d\theta \right\vert ds\right\vert \leq C^{\prime
}\dint_{r_{0}}^{r}\dint_{r_{0}}^{s}\left\vert u_{1}^{\prime }\right\vert
^{pq}d\theta )ds\leq C^{\prime \prime }\left\vert
\dint_{r_{0}}^{r}\left\vert u_{1}^{\prime }\right\vert ^{pq}d\theta
\right\vert .  \label{col}
\end{equation}%
Let $F(r)=\dint_{r_{0}}^{r}\left\vert u_{1}^{\prime }\right\vert
^{pq}d\theta .$ Then $F^{\prime }(r)\leq C^{\prime \prime }F^{pq}$ on $%
(r_{0},r_{0}+\eta ).$ If $u_{1}^{\prime }$ is not identically $0$ on $%
(r_{0},r_{0}+\eta ),$ then $F>0$ on $(r_{0},r_{0}+\rho )$ for some $\rho $
small enough, and $F^{1-pq}+cr$ is nondecreasing, which is contradictory,
since $F(r_{0})=0$; thus $u_{1}^{\prime }\equiv 0$ and then $u_{2}^{\prime
}\equiv 0$ on $(r_{0},r_{0}+\rho ),$ and similarly on $(r_{0}-\rho ,r_{0})$
for $\rho $ small enough. Then $u_{1}\equiv c_{1},u_{2}\equiv c_{2}$ in $%
\left[ r_{0}-\eta ,r_{0}+\eta \right] .\medskip $

\noindent $\bullet $ Next assume $r_{0}=0.$ Then from (\ref{cil}), and since 
$p>1,$ 
\begin{eqnarray*}
\left\vert u_{1}^{\prime }(r)\right\vert  &\leq
&r^{1-N}\dint_{0}^{r}s^{(N-1)(1-p)}(\dint_{0}^{s}\theta ^{N-1}\left\vert
u_{1}^{\prime }\right\vert ^{q}d\theta )^{p}ds \\
&\leq &r^{1-N}\dint_{0}^{r}s^{(N-1)(1-p)+(N-1)p}(\dint_{0}^{s}\left\vert
u_{1}^{\prime }\right\vert ^{q}d\theta )^{p}ds\leq
\dint_{0}^{r}(\dint_{0}^{s}\left\vert u_{1}^{\prime }\right\vert ^{q}d\theta
)^{p}ds \\
&\leq &\dint_{0}^{r}s^{p-1}(\dint_{0}^{s}\left\vert u_{1}^{\prime
}\right\vert ^{pq}d\theta )ds\leq r^{p}\dint_{0}^{r}\left\vert u_{1}^{\prime
}\right\vert ^{pq}d\theta \leq C\dint_{0}^{r}\left\vert u_{1}^{\prime
}\right\vert ^{pq}d\theta .
\end{eqnarray*}%
Defining $F(r)=\dint_{0}^{r}\left\vert u_{1}^{\prime }\right\vert
^{pq}d\theta $ we conclude as above that $u_{1}^{\prime }\equiv
0,u_{2}^{\prime }\equiv 0$ on $\left( 0,\rho \right) $ for $\rho $ small
enough. Finally let $(u_{1},u_{2})\in C^{1}(B_{R}\times B_{R})$ be any
radial solution. Then the set $\left\{ (0,\rho ):u_{1}^{\prime }\equiv
u_{2}^{\prime }\equiv 0\text{ on }\left( 0,\rho \right) \right\} $ is closed
and open in $\left( 0,R\right) ,$ thus equal to $(0,R).$
\end{proof}

\begin{remark}
The result would be false for $pq<1.$ Indeed in that case there exist
solutions of system (\ref{abs}): 
\begin{equation*}
\widetilde{u_{1}}^{\ast }=A_{1}^{\ast }r^{\frac{2+p(1-q)}{1-pq}}=A_{1}^{\ast
}r^{2+\frac{p(q+1)}{1-pq}},\qquad \widetilde{u_{2}}^{\ast }=A_{2}^{\ast
}r^{2+\frac{q(p+1)}{1-pq}},
\end{equation*}%
belonging to $C^{2}(\mathbb{R}^{N})$ which proves the existence of entire
nontrivial solutions, and gives a counter-example to uniqueness.\medskip
\end{remark}

\begin{remark}
Note a consequence of Proposition \ref{locun}: for any $r_{0}>0,$ there
exist local radial solutions $(u_{1},u_{2})$ such that $u_{1}^{\prime
}(r_{0})=0,$ and $u_{2}^{\prime }(r_{0})=b\neq 0.$ This shows the great
richness of the solutions of the system: for system (\ref{sou}) (resp. (\ref%
{abs}) for example, it means that only one of the functions has a minimum
(resp. a maximum) at $r_{0}.\ \medskip $
\end{remark}

\begin{remark}
\label{tre}From Proposition \ref{locun}, one can divide the maximal
existence interval $(\rho ,R)$ of $(u_{1},u_{2})$ in intervals $(a,b)$ where 
$u_{1}^{\prime }$ and $u_{2}^{\prime }$ have a constant sign. Indeed there
exists at most one point $\rho <s<R$ where $w_{1}(s)=0;$ $w_{1}<0$ on $(\rho
<s)$ and $w_{1}>0$ on $(s<R);$ in the same way, there is at most one point $%
\rho <\tau <R$ where $w_{2}(\tau )=0;$ $w_{2}<0$ on $(\rho ,\tau )$ and $%
w_{1}>0$ on $(\tau ,R).$
\end{remark}

\subsection{Upperestimates on the radial supersolutions}

\begin{proposition}
\label{uprad}Let $pq>1,$ and $(u_{1},u_{2})$ be any radial supersolution of
system (\ref{one}), defined on an interval $(0,r_{0})$ (resp. $(r_{0},\infty
)$ . There exists a constant $C=C(N,p,q)>0$ such that for $r>0$ small enough
(resp. large enough), 
\begin{equation*}
\left\vert u_{1}^{\prime }(r)\right\vert \leq Cr^{-\frac{p+1}{pq-1}},\qquad
\left\vert u_{2}^{\prime }(r)\right\vert \leq Cr^{-\frac{q+1}{pq-1}}.
\end{equation*}
\end{proposition}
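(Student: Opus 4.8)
The plan is to prove an a priori upper bound on the gradients of a radial supersolution using only the integral formulation of the system, following the same philosophy as the local uniqueness argument but now extracting the decay rate rather than triviality. Let me think about what a supersolution means here: since these are source-type systems with the sign conventions above, a supersolution of (\ref{one}) should satisfy the reversed inequalities $-\Delta u_1 \geq |\nabla u_2|^p$ and $-\Delta u_2 \geq |\nabla u_1|^q$. In radial form, setting $w_1 = -r^{N-1}u_1'$ and $w_2 = -r^{N-1}u_2'$ as in (\ref{ww}), the supersolution inequalities become $w_1' \geq r^{(N-1)(1-p)}|w_2|^p$ and $w_2' \geq r^{(N-1)(1-q)}|w_1|^q$, so the $w_i$ are nondecreasing and in particular eventually of one sign.

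Let me work this out. The key observation is that the particular solutions (\ref{pri}) already have exactly the claimed rates, so the result is sharp and the proof should amount to a comparison/bootstrap.

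First I would reduce to the interior case $(0,r_0)$ (the exterior case $(r_0,\infty)$ is symmetric under $r \mapsto 1/r$-type considerations, or handled by the same integral estimates run in the opposite direction). Using the integral representation
\begin{equation*}
|u_1'(r)| \leq r^{1-N}\Bigl|\int_{a}^{r} s^{N-1}|u_2'|^p\,ds\Bigr|, \qquad |u_2'(r)| \leq r^{1-N}\Bigl|\int_{a}^{r} s^{N-1}|u_1'|^q\,ds\Bigr|,
\end{equation*}
valid because $w_1,w_2$ are monotone so $|u_i'(r)| = r^{1-N}|w_i(r)|$ is controlled by the integral of the right-hand side. The plan is to substitute the second inequality into the first to get a closed inequality for $|u_1'|$ alone, and then run a self-improving (bootstrap) argument: start from some crude a priori bound (for instance a pointwise bound coming from monotonicity of $w_1$ near a fixed reference radius, or simply an assumed finite bound $|u_i'(r)| \leq M(r)$ on a compact piece), plug it in, integrate the homogeneous kernel $s^{(N-1)(1-p)}$ and $\theta^{N-1}$, and observe that the resulting power of $r$ improves toward the fixed point exponent.

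The cleanest route is a scaling/homogeneity argument combined with a doubling estimate. Because the system is invariant under $T_\ell$, I would fix $r$, rescale so that the estimate at radius $r$ becomes an estimate at radius $1$ on the rescaled supersolution restricted to an annulus $(\tfrac12, 2)$, and show that on a fixed annulus any supersolution has gradients bounded by a universal constant $C(N,p,q)$. This localizes the difficulty: on the annulus $\{\tfrac12 \le |x| \le 2\}$ the weights $r^{(N-1)(1-p)}$ etc.\ are bounded above and below, so the system looks like the constant-coefficient integral inequalities $|u_1'| \leq C\int |u_2'|^p$, $|u_2'| \leq C\int |u_1'|^q$ over a unit interval, and one derives $\sup|u_1'| \leq C(\sup|u_1'|)^{pq}$ type relations via a cutoff/telescoping of sub-annuli. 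Undoing the scaling $T_\ell$ with $\ell = 1/r$ then restores the weights $r^{-(p+1)/(pq-1)}$ and $r^{-(q+1)/(pq-1)}$, which are exactly the scaling weights of the transformation, matching (\ref{pri}).

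The main obstacle will be the bootstrap/fixed-point step: closing the nonlinear integral inequality to obtain the sharp exponent rather than a weaker one. Because $pq>1$, the iteration $|u_1'|(r) \lesssim r^{\text{(stuff)}} (|u_1'|)^{pq}$ has a genuine fixed-point exponent $-(p+1)/(pq-1)$, and one must check that the iteration converges to it (from above) rather than diverging; the condition $pq>1$ is precisely what makes the scaling exponent negative and the fixed point attractive, while $p>1$ (which holds since $p \geq q$ and $pq>1$) guarantees the weight integrals $\int_0^r s^{(N-1)(1-p)+ (N-1)p}\,ds = \int_0^r s^{N-1}\,ds$ converge near $0$, as already exploited in the proof of Proposition~\ref{locun}. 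I would need to verify carefully that the iteration does not lose a logarithmic factor at the borderline weights and that the constant $C$ stays uniform across iterations, which is where a clean scaling normalization (rather than a direct Gronwall-type integration) pays off.
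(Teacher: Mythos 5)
Your starting point is correct (for $w_{i}=-r^{N-1}u_{i}^{\prime }$ the supersolution property gives $w_{1}^{\prime }\geq r^{(N-1)(1-p)}\left\vert w_{2}\right\vert ^{p}$, $w_{2}^{\prime }\geq r^{(N-1)(1-q)}\left\vert w_{1}\right\vert ^{q}$, hence monotonicity and eventual constancy of sign), and your guiding idea --- that the bound is a scaling-invariant, universal estimate whose sharpness is witnessed by the particular solutions --- is also the paper's. But the quantitative mechanism you propose fails at two points, both concerning the \emph{direction} of inequalities. First, the integral representation
\begin{equation*}
\left\vert u_{1}^{\prime }(r)\right\vert \leq r^{1-N}\Bigl\vert
\int_{a}^{r}s^{N-1}\left\vert u_{2}^{\prime }\right\vert ^{p}ds\Bigr\vert
\end{equation*}
is false for supersolutions: the differential inequality only yields
\begin{equation*}
w_{1}(r)\geq w_{1}(a)+\int_{a}^{r}s^{(N-1)(1-p)}\left\vert w_{2}\right\vert
^{p}ds,
\end{equation*}
a \emph{lower} bound on $w_{1}$, never an upper one, since $w_{1}^{\prime }$ may exceed the right-hand side by an arbitrary amount. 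The equality-based representation you are importing from the uniqueness argument of Proposition \ref{locun} is available only for exact solutions, so the Gronwall-type iteration does not transfer to supersolutions. Second, even granting the annulus reduction, the closing relation you aim for, $\sup \left\vert u_{1}^{\prime }\right\vert \leq C(\sup \left\vert u_{1}^{\prime }\right\vert )^{pq}$, cannot produce an upper bound: since $pq>1$, every sufficiently large value of the supremum satisfies it, so it only excludes an intermediate range of values.

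What the supersolution structure actually furnishes is the reversed inequality --- the value at a larger radius dominates a \emph{power} of the value at a smaller radius. Integrating the two inequalities in cascade and using monotonicity, one gets (in the case $w_{1},w_{2}>0$)
\begin{equation*}
w_{1}(r_{0})\geq c\,\varepsilon
^{p+1}r_{0}^{N+p-(N-1)pq}\,w_{1}^{pq}(r_{0}(1-2\varepsilon )),
\end{equation*}
which rearranges to $w_{1}(r_{0}(1-2\varepsilon ))\leq C\varepsilon ^{-\frac{
p+1}{pq}}r_{0}^{-\frac{N+p-(N-1)pq}{pq}}\,w_{1}(r_{0})^{\frac{1}{pq}}$ with exponent $\frac{1}{pq}<1$. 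This does not close at a single radius; one must iterate it along a sequence of shrinking radii and show the accumulated constants stay under control --- this is precisely the bootstrap lemma the paper invokes (\cite[Lemma 2.2]{BiGr}, \cite[Lemma 2.8]{BiGaYa}). Only in the mixed-sign case $w_{2}>0>w_{1}$ does the inequality close at one radius, giving $\left\vert w_{1}\right\vert \geq c\left\vert w_{1}\right\vert ^{pq}$ and hence an immediate bound without bootstrap. Your plan contains no step playing the role of this reversed inequality plus iteration, so as written it would not close; it becomes the paper's proof once the integral inequalities are turned around and the bootstrap is made explicit.
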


\begin{proof}
Consider any supersolution of system (\ref{one}). Equivalently, the
functions $w_{1},w_{2}$ defined by (\ref{ww}) satisfy 
\begin{equation*}
\left\{ 
\begin{array}{c}
w_{1}^{\prime }\geq r^{(N-1)(1-p)}\left\vert w_{2}\right\vert ^{p}, \\ 
w_{2}^{\prime }\geq r^{(N-1)(1-q)}\left\vert w_{1}\right\vert ^{q}.%
\end{array}%
\right.
\end{equation*}%
As in Remark \ref{tre}, $w_{1},w_{2}$ do not vanish for $r$ small enough
(resp. large enough), depending on $u_{1},u_{2}.$ Consider any interval $%
(R_{1},R_{2})$ where $w_{1},w_{2}$ do not vanish. Let $r_{0}\in
(R_{1},R_{2}) $ and $\varepsilon _{0}\in \left( 0,\frac{1}{4}\right] $ such
that $R_{1}<r_{0}(1-2\varepsilon _{0})<r_{0}(1+2\varepsilon _{0})<R_{2}.$
First assume that $w_{1}>0,w_{2}>0.$ Since $w_{i}$ is nondecreasing, for any 
$\varepsilon \in \left( 0,\varepsilon _{0}\right] ,$ we have $r\in
(r_{0}(1-\varepsilon )<r_{0}(1+\varepsilon ))$ there holds, since $w_{1}>0$
and $\varepsilon _{0}<1,$ with $c_{i}=c_{i}(N,p)$ 
\begin{eqnarray*}
w_{1}(r_{0}) &\geq &\int_{r_{0}(1-\varepsilon )}^{r_{0}}\theta
^{(N-1)(1-p)}w_{2}^{p}(\theta )d\theta \\
&\geq &w_{2}^{p}(r_{0}(1-\varepsilon ))\int_{r_{0}(1-\varepsilon
)}^{r_{0}}\theta ^{(N-1)(1-p)}d\theta =c_{1}\varepsilon
w_{2}^{p}(r_{0}(1-\varepsilon ))r_{0}^{N-(N-1)p},
\end{eqnarray*}%
and symmetrically, since $w_{1}>0,$ changing $r_{0}$ into $r_{0}-\varepsilon
,$ 
\begin{equation*}
w_{2}(r_{0}-\varepsilon )\geq c_{2}\varepsilon
w_{1}^{q}(r_{0}(1-2\varepsilon ))(r_{0}(1-\varepsilon ))^{N-(N-1)q}\geq
c_{3}\varepsilon w_{1}^{q}(r_{0}-2\varepsilon )r_{0}^{N-(N-1)q}.
\end{equation*}%
Then 
\begin{equation*}
w_{1}(r_{0})\geq c_{4}\varepsilon ^{p+1}w_{1}^{pq}(r_{0}(1-2\varepsilon
))r_{0}^{N+p-(N-1)pq},
\end{equation*}%
\begin{equation*}
w_{1}(r_{0}(1-2\varepsilon ))\leq c_{5}\varepsilon ^{-\frac{p+1}{pq}%
}(w_{1}(r_{0})^{\frac{1}{pq}}r_{0}^{-\frac{N+p-(N-1)pq}{pq}}.
\end{equation*}%
Setting $\overline{r_{0}}=r_{0}(1-2\varepsilon ),$ and using by the
bootstrap technique developed at \cite[Lemma 2.2]{BiGr}, \cite[Lemma 2.8]%
{BiGaYa} and \cite{Bi3}, we obtain, since $d=\frac{1}{pq}<1,$ and 
\begin{equation*}
w_{1}(r_{0})=r_{0}^{N-1}\left\vert u_{1}^{\prime }(r_{0})\right\vert \leq
c_{6}r_{0}^{\frac{N+p-(N-1)pq)}{1-d}}=c_{6}r_{0}^{-\frac{N+p-(N-1)pq}{pq-1}},
\end{equation*}%
equivalently%
\begin{equation*}
\left\vert u_{1}^{\prime }(r_{0})\right\vert \leq c_{7}r_{0}^{-\frac{p+1}{%
pq-1}}.
\end{equation*}%
If $w_{1}<0,w_{2}<0,$ the same conclusion holds by changing $1+\varepsilon $
into $1-\varepsilon .$ Finally if $w_{2}>0>w_{1},$ we get 
\begin{equation*}
w_{1}(r_{0})\geq c_{1}\varepsilon w_{2}^{p}(r_{0}(1-\varepsilon
))r_{0}^{N-(N-1)p},
\end{equation*}%
\begin{equation*}
w_{2}(r_{0}(1-\varepsilon )\geq c_{2}\varepsilon
w_{1}^{q}(r_{0})(r_{0}(1-\varepsilon ))^{N-(N-1)q}\geq c_{3}\varepsilon
w_{1}^{q}(r_{0})r_{0}^{N-(N-1)q},
\end{equation*}%
\begin{equation*}
w_{1}(r_{0})\geq c_{4}\varepsilon ^{p+1}w_{1}^{pq}(r_{0})r_{0}^{N+p-(N-1)pq},
\end{equation*}%
hence we obtain the same conclusion without using any bootstrap.
\end{proof}

\subsection{Reduction to a quadratic system of order 2 and formulation as a
Hardy-H\'{e}non equation}

In the sequel we show that system (\ref{SA}) can be reduced to a polynomial
system order 2, and equivalently to an equation of order 2 relative to \ $%
w_{1}:$

\begin{proposition}
\label{bigi}Let $(u_{1},u_{2})$ be any radial solution of system (\ref{one}%
). At any point $r$ where $u_{1}^{\prime }(r)\neq 0,$ $u_{2}^{\prime
}(r)\neq 0,$ we define 
\begin{equation}
S=r\frac{\left\vert u_{2}^{\prime }\right\vert ^{p}}{u_{1}^{\prime }},\qquad
Z=-r\frac{\left\vert u_{1}^{\prime }\right\vert ^{q}}{u_{2}^{\prime }}.
\label{dif}
\end{equation}%
Then system (\ref{SA}) is equivalent to a Lodka-Volterra type system: 
\begin{equation}
\left\{ 
\begin{array}{c}
S_{t}=S(N-(N-1)p+S+pZ), \\ 
Z_{t}=Z(N-(N-1)q-qS-Z).%
\end{array}%
\right. \qquad t=\ln r,  \label{SZ}
\end{equation}%
and we recover $u_{1}^{\prime }$ and $u_{2}^{\prime }$ in function of $r,S,Z$
by the formulas 
\begin{equation}
u_{1}^{\prime }=(r^{-(p+1)}\left\vert S\right\vert \left\vert Z\right\vert
^{p})^{\frac{1}{pq-1}}\text{sign}S,\qquad u_{2}^{\prime
}=-(r^{-(q+1)}\left\vert S\right\vert ^{q}\left\vert Z\right\vert )^{\frac{1%
}{pq-1}}\text{sign}Z.  \label{reco}
\end{equation}
\end{proposition}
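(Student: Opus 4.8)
The plan is to verify that the definitions of $S$ and $Z$ transform the first-order system (\ref{Sw}) (equivalently (\ref{SA})) into (\ref{SZ}), and to check that (\ref{reco}) correctly inverts the definitions. The natural route is to start from the logarithmic derivatives, since $S$ and $Z$ are scale-invariant quantities expressed via the new time variable $t=\ln r$. I would first rewrite $S$ and $Z$ using the equivalent expressions given in the introduction, namely
\begin{equation*}
S=-r\frac{(r^{N-1}u_{1}^{\prime })^{\prime }}{r^{N-1}u_{1}^{\prime }}=-r\frac{w_{1}^{\prime }}{w_{1}},\qquad Z=-r\frac{(r^{N-1}u_{2}^{\prime })^{\prime }}{r^{N-1}u_{2}^{\prime }}=-r\frac{w_{2}^{\prime }}{w_{2}},
\end{equation*}
where $w_{1},w_{2}$ are as in (\ref{ww}); these identities follow immediately from (\ref{Sw}), since $w_{1}^{\prime }=r^{(N-1)(1-p)}|w_{2}|^{p}$ gives $-rw_{1}^{\prime }/w_{1}=r\,r^{(N-1)(1-p)}|w_{2}|^{p}/(r^{N-1}u_{1}^{\prime })$, and one checks this equals $r|u_{2}^{\prime }|^{p}/u_{1}^{\prime }$ by substituting $w_{2}=-r^{N-1}u_{2}^{\prime }$.

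**Next I would** compute $S_{t}=r\,dS/dr$ and $Z_{t}=r\,dZ/dr$ by logarithmic differentiation. Writing $S=-rw_{1}^{\prime }/w_{1}$, differentiating the logarithm gives
\begin{equation*}
\frac{S_{t}}{S}=1+r\frac{w_{1}^{\prime \prime }}{w_{1}^{\prime }}-r\frac{w_{1}^{\prime }}{w_{1}},
\end{equation*}
so the task reduces to expressing $rw_{1}^{\prime \prime }/w_{1}^{\prime }$ in terms of $S,Z$. From $w_{1}^{\prime }=r^{(N-1)(1-p)}|w_{2}|^{p}$ one differentiates again to get $rw_{1}^{\prime \prime }/w_{1}^{\prime }=(N-1)(1-p)+p\,rw_{2}^{\prime }/w_{2}=(N-1)(1-p)-pZ$. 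Combining with $rw_{1}^{\prime }/w_{1}=-S$ yields $S_{t}/S=1+(N-1)(1-p)-pZ+S=N-(N-1)p+S-pZ$, and here I must be careful with the sign of the $Z$-term: the clean way is to track signs through $Z=-rw_{2}^{\prime }/w_{2}$ so that $rw_{2}^{\prime }/w_{2}=-Z$, giving the $+pZ$ in the stated equation. The symmetric computation for $Z_{t}$ starting from $w_{2}^{\prime }=r^{(N-1)(1-q)}|w_{1}|^{q}$ produces $Z_{t}=Z(N-(N-1)q-qS-Z)$ in the same manner.

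**Finally I would** establish the recovery formulas (\ref{reco}). Multiplying the defining relations gives $SZ=-r^{2}|u_{1}^{\prime }|^{q}|u_{2}^{\prime }|^{p}/(u_{1}^{\prime }u_{2}^{\prime })$, and combining $|S|$ and $|Z|$ with suitable exponents isolates $|u_{1}^{\prime }|$ and $|u_{2}^{\prime }|$; explicitly, $|S||Z|^{p}=r^{p+1}|u_{1}^{\prime }|^{pq-1}$ after substituting and simplifying the powers of $|u_{1}^{\prime }|,|u_{2}^{\prime }|$, which inverts to the first formula in (\ref{reco}), with $\mathrm{sign}\,S=\mathrm{sign}\,u_{1}^{\prime }$ recovering the sign; the second follows symmetrically, with the extra minus sign coming from $Z=-r|u_{1}^{\prime }|^{q}/u_{2}^{\prime }$. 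The main obstacle here is purely bookkeeping: keeping the absolute values, the signs of $u_{1}^{\prime },u_{2}^{\prime }$ (which may each be positive or negative, as noted in Remark \ref{tre}), and the fractional exponents $1/(pq-1)$ consistent throughout. Since the definitions of $S,Z$ require $u_{1}^{\prime },u_{2}^{\prime }\neq 0$, the equivalence is only claimed on intervals where both derivatives keep a fixed sign, which is exactly the decomposition provided by Remark \ref{tre}, so no difficulty arises from vanishing denominators.
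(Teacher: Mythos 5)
Your overall strategy (logarithmic differentiation of $S,Z$ along the flow of (\ref{Sw}), then algebraic inversion to get (\ref{reco})) is sound and close in spirit to the paper's, and your treatment of the recovery formulas is correct. However, the proof as written contains a genuine sign error that makes the derivation of (\ref{SZ}) fail. The identity you quote for $Z$, namely $Z=-rw_{2}^{\prime }/w_{2}$, is false: since $w_{2}^{\prime }=r^{(N-1)(1-q)}\left\vert w_{1}\right\vert ^{q}=r^{N-1}\left\vert u_{1}^{\prime }\right\vert ^{q}$ and $w_{2}=-r^{N-1}u_{2}^{\prime }$, one has
\begin{equation*}
r\frac{w_{2}^{\prime }}{w_{2}}=-r\frac{\left\vert u_{1}^{\prime }\right\vert ^{q}}{u_{2}^{\prime }}=Z,
\end{equation*}
so in fact $Z=+rw_{2}^{\prime }/w_{2}$. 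The signs are asymmetric ($S=-rw_{1}^{\prime }/w_{1}$ but $Z=+rw_{2}^{\prime }/w_{2}$) precisely because $S$ carries a plus sign in (\ref{dif}) while $Z$ carries a minus sign, whereas both $w_{i}$ are defined with minus signs. This is how the paper itself states it in Proposition \ref{nom}, formula (\ref{res}); the displayed formula in the Introduction indeed has a sign typo, which you copied without verifying---something a blind proof must do.

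The error is not cosmetic: with your convention the computation yields $S_{t}/S=N-(N-1)p+S-pZ$ and, by the same route, $Z_{t}/Z=N-(N-1)q-qS+Z$, which is not system (\ref{SZ}). You noticed the mismatch in the $S$-equation, but your proposed repair is circular: ``tracking signs through $Z=-rw_{2}^{\prime }/w_{2}$ so that $rw_{2}^{\prime }/w_{2}=-Z$'' is exactly the substitution that produced the $-pZ$ term in the first place, so it cannot convert it into $+pZ$. The correct repair is to replace the false identity by $rw_{2}^{\prime }/w_{2}=+Z$; then $rw_{1}^{\prime \prime }/w_{1}^{\prime }=(N-1)(1-p)+pZ$ and $rw_{2}^{\prime \prime }/w_{2}^{\prime }=(N-1)(1-q)-qS$, and both equations of (\ref{SZ}) follow. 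For comparison, the paper's proof avoids $w_{1},w_{2}$ altogether: it sets $X=ru_{1}^{\prime \prime }/u_{1}^{\prime }$, $Y=ru_{2}^{\prime \prime }/u_{2}^{\prime }$, reads off $X+N-1=-S$ and $Y+N-1=Z$ directly from (\ref{SA}), and then uses $S_{t}/S=1+pY-X$, $Z_{t}/Z=1+qX-Y$; that bookkeeping makes the signs come out automatically.
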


\begin{proof}
Following the ideas of \cite{BiGi}, at each point where $u_{1}^{\prime }\neq
0$ and $u_{2}^{\prime }\neq 0$, we set 
\begin{equation*}
X=r\frac{u_{1}^{\prime \prime }}{u_{1}^{\prime }},\qquad Y=r\frac{%
u_{2}^{\prime \prime }}{u_{2}^{\prime }},\qquad S=r\frac{\left\vert
u_{2}^{\prime }\right\vert ^{p}}{u_{1}^{\prime }},\qquad Z=-r\frac{%
\left\vert u_{1}^{\prime }\right\vert ^{q}}{u_{2}^{\prime }},\qquad t=\ln r,
\end{equation*}%
Then 
\begin{equation*}
X+N-1=-S,\qquad Y+N-1=Z.
\end{equation*}%
Next we differentiate with respect to $t.$ Since $X=\frac{(u_{1}^{\prime
})_{t}}{u_{1}^{\prime }}.$ $Y=\frac{(u_{2}^{\prime })_{t}}{u_{2}^{\prime }},$
we obtain 
\begin{equation*}
\frac{S_{t}}{S}=1+p\frac{(u_{2}^{\prime })_{t}}{u_{2}^{\prime }}-\frac{%
(u_{1}^{\prime })_{t}}{u_{1}^{\prime }}=1+pY-X=N-(N-1)p+S+pZ,
\end{equation*}%
\begin{equation*}
\frac{Z_{t}}{Z}=1+q\frac{(u_{1}^{\prime })_{t}}{u_{1}^{\prime }}-\frac{%
(u_{2}^{\prime })_{t}}{u_{2}^{\prime }}=1+qX-Y=N-(N-1)q-qS-Z.
\end{equation*}%
So we obtain the quadratic system (\ref{SZ}) valid in any case of sign of
the unknown, and deduce (\ref{reco}).
\end{proof}

\begin{remark}
The fixed points of the system are 
\begin{equation}
\begin{array}{c}
M_{0}=(S_{0},Z_{0})=(\frac{(N-1)p(q_{2}-q)}{pq-1},-\frac{((N-1)p-1)(q_{1}-q)%
}{pq-1}), \\ 
(0,0),\quad \quad N_{0}=(0,N-(N-1)q),\quad \quad A_{0}=((N-1)p-N,0).%
\end{array}
\label{fixs}
\end{equation}%
We easily check that the particular solutions $(u_{1}^{\prime
},u_{2}^{\prime })$ of system (\ref{SA}) given at (\ref{pri}) correspond to
the fixed point $M_{0}$.
\end{remark}

Next we show the precise link with a Hardy-H\'{e}non equation:\medskip

When looking at the system (\ref{SZ}) for $pq\neq 1,$ and comparing to the
systems introduced in \cite[Section 3]{BiGi}, we observe that the system is
exactly linked to the positive solutions $w$ of a radial quasilinear
equation of Hardy-H\'{e}non type in dimension $\mathbf{N}$, of the form 
\begin{equation}
-\Delta _{\mathbf{p}}^{\mathbf{N}}w=-(\left\vert w^{\prime }\right\vert ^{%
\mathbf{p}-2}w^{\prime })^{\prime }-\frac{\mathbf{N}-1}{r}\left\vert
w^{\prime }\right\vert ^{\mathbf{p}-2}w^{\prime }=\varepsilon r^{\sigma }w^{%
\mathbf{q}},\qquad \varepsilon =\pm 1,  \label{eqo}
\end{equation}%
where $q>0,\sigma \in \mathbb{R},$ $\mathbf{p}>1,$ $\mathbf{q}\neq \mathbf{p}%
-1,$and $\mathbf{N}$ is not necessarily an integer. Indeed in \cite[Section 3%
]{BiGi}, this equation is reduced to a system of order 2, valid for $%
\varepsilon =\pm 1,$ by setting 
\begin{equation}
s(t)=-r\frac{w^{\prime }}{w},\qquad z(t)=-\varepsilon r^{1+\sigma }w^{%
\mathbf{q}}\left\vert w^{\prime }\right\vert ^{-\mathbf{p}}w^{\prime
},\qquad t=\ln r,  \label{siga}
\end{equation}%
and obtained the system%
\begin{equation}
\left\{ 
\begin{array}{c}
s_{t}=s(\frac{\mathbf{p}-\mathbf{N}}{\mathbf{p}-1}+s+\frac{z}{\mathbf{p}-1}),
\\ 
z_{t}=z(\mathbf{N}+\sigma -\mathbf{q}s-z),%
\end{array}%
\right.  \label{psz}
\end{equation}%
and one recovers $w$ by the formula%
\begin{equation}
w=r^{-\gamma }(\left\vert s^{\mathbf{p}-1}\right\vert \left\vert
z\right\vert )^{\frac{1}{\mathbf{q}+1-\mathbf{p}}},\qquad \gamma =\frac{%
\mathbf{p}+\sigma }{\mathbf{q}+1-\mathbf{p}}.  \label{recw}
\end{equation}%
It is precisely the case, as we show below.

\begin{proposition}
\label{nom}Let $(u_{1},u_{2})$ be any radial solution of system (\ref{one}),
and let $w_{1},w_{2}$ be defined by (\ref{ww}). Then the function $%
w=\left\vert w_{1}\right\vert $ satisfies the equation%
\begin{equation}
-\Delta _{\mathbf{p}}^{\mathbf{N}}w=\varepsilon r^{\sigma }w^{\mathbf{q}%
},\qquad \varepsilon =-\text{sign}(u_{1}^{\prime }u_{2}^{\prime })
\label{eqm}
\end{equation}%
where $\Delta _{\mathbf{p}}^{\mathbf{N}}$ is the $\mathbf{p}$-Laplace
operator in dimension $\mathbf{N},$ for specific values of $\mathbf{p}%
,\sigma $ and $\mathbf{N}:$%
\begin{equation}
\mathbf{p}=1+\frac{1}{p}>1,\qquad \mathbf{q}=q,\qquad \sigma =(N-1)\frac{1-pq%
}{p},\qquad \mathbf{N}=1+\frac{(N-1)(p-1)}{p}.  \label{mns}
\end{equation}%
Moreover at each point where $w\neq 0,$ 
\begin{equation}
S(t)=s(t)=-r\frac{w_{1}^{\prime }}{w_{1}},\qquad Z(t)=z(t)=r\frac{%
w_{2}^{\prime }}{w_{2}}=-\text{sign}(u_{2}^{\prime })\;r^{1+\sigma
}\left\vert w_{1}\right\vert ^{q}\left\vert w_{1}^{\prime }\right\vert ^{%
\frac{1}{p}}w_{1}^{\prime }  \label{res}
\end{equation}
\end{proposition}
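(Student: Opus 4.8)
The plan is to verify directly that $w=|w_1|$ solves equation (\ref{eqm}) by computing the quasilinear operator $\Delta_{\mathbf p}^{\mathbf N}w$ using the first-order system (\ref{Sw}) that $w_1,w_2$ satisfy, and then to identify the change of variables (\ref{res}) with the general one (\ref{siga}) so that the Lotka-Volterra system (\ref{SZ}) matches (\ref{psz}) under the parameter values (\ref{mns}). Since the claim is an explicit algebraic identity, the work is essentially a bookkeeping computation, and the main point is to carry the exponents through consistently.

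First I would work on an interval where $u_1',u_2'$ have constant sign, so that $w_1=-r^{N-1}u_1'$ and $w_2=-r^{N-1}u_2'$ are nonvanishing and $w=|w_1|$ is smooth. From (\ref{Sw}) we have $w_1'=r^{(N-1)(1-p)}|w_2|^p$. The plan is to express everything in terms of $w=|w_1|$ and its derivative. With $\mathbf p=1+\tfrac1p$ we have $\mathbf p-1=\tfrac1p$, so $|w'|^{\mathbf p-2}w'=|w_1'|^{\frac1p-1}w_1'=\mathrm{sign}(w_1')\,|w_1'|^{1/p}$, and I would use the relation $|w_1'|^{1/p}=r^{(N-1)(1-p)/p}|w_2|$ coming from the first equation of (\ref{Sw}) to eliminate $w_2$ in favor of $w_1'$. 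Then
\begin{equation*}
\Delta_{\mathbf p}^{\mathbf N}w=(|w'|^{\mathbf p-2}w')'+\frac{\mathbf N-1}{r}|w'|^{\mathbf p-2}w'=r^{1-\mathbf N}\bigl(r^{\mathbf N-1}|w'|^{\mathbf p-2}w'\bigr)'
\end{equation*}
can be differentiated using the second equation $w_2'=r^{(N-1)(1-q)}|w_1|^q$. The second-order term reproduces $w_2'$, which produces a factor $|w_1|^q=w^q$; the lower-order correction is where the precise value $\mathbf N=1+\tfrac{(N-1)(p-1)}p$ must be chosen to cancel the stray powers of $r$, and the leftover power of $r$ is exactly $\sigma=(N-1)\tfrac{1-pq}{p}$. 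The sign that multiplies $w^q$ is $-\mathrm{sign}(u_1'u_2')$, which I would track by keeping $\mathrm{sign}(w_1')$ and $\mathrm{sign}(w_2)$ explicit throughout.

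For the identification (\ref{res}), I would substitute the values (\ref{mns}) into the general formulas (\ref{siga}): the definition $s=-r\,w'/w=-r\,w_1'/w_1$ matches $S=r|u_2'|^p/u_1'$ once $w_1'=r^{(N-1)(1-p)}|w_2|^p$ and $w_1=-r^{N-1}u_1'$ are inserted, and similarly $z$ unwinds to $Z$. The cleanest route is to observe that both $(S,Z)$ and $(s,z)$ satisfy the same quadratic system — (\ref{SZ}) versus (\ref{psz}) — under the parameter dictionary (\ref{mns}): one checks $\tfrac{\mathbf p-\mathbf N}{\mathbf p-1}=N-(N-1)p$, $\tfrac1{\mathbf p-1}=p$, $\mathbf N+\sigma=N-(N-1)q$, and $\mathbf q=q$, so the coefficients coincide termwise. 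The main obstacle — really the only delicate part — is sign management: because $w=|w_1|$ involves an absolute value and $\varepsilon=-\mathrm{sign}(u_1'u_2')$ can be either sign, one must verify that the derivation goes through on each sign interval and that the extracted $\varepsilon$ is consistent, rather than producing competing cases. Once the signs are handled, both (\ref{eqm}) and (\ref{res}) follow by matching the computed operator and the change of variables against the known reduction from \cite{BiGi}.
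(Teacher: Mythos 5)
Your proposal is correct and follows essentially the same route as the paper: eliminate $w_{2}$ via the first equation of (\ref{Sw}), differentiate using the second, let the exponent bookkeeping produce $\mathbf{N}$ and $\sigma$ as in (\ref{mns}), and track signs through $w_{1}=-\mathrm{sign}(u_{1}^{\prime })\,w$ to extract $\varepsilon =-\mathrm{sign}(u_{1}^{\prime }u_{2}^{\prime })$. One small caution: your ``cleanest route'' for the identification (matching the coefficients of systems (\ref{SZ}) and (\ref{psz})) is not by itself conclusive, since identical systems can carry distinct solutions; the direct unwinding of the definitions, which you state first, is the argument that actually proves $s\equiv S$, $z\equiv Z$ and is what the paper means by ``we check easily.''
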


\begin{proof}
Computing $w_{2}$ from the first equation of (\ref{Sw}) and reporting in the
second one, we get 
\begin{equation*}
-(r^{\frac{(N-1)(p-1)}{p}}(w_{1}^{\prime })^{\frac{1}{p}})^{\prime }=\text{%
sign}(u_{2}^{\prime })\;r^{(N-1)(1-q)}\left\vert w_{1}\right\vert ^{q},
\end{equation*}%
that is in a developed form: 
\begin{equation*}
-((w_{1}^{\prime })^{\frac{1}{p}})^{\prime }-\frac{(N-1)(p-1)}{pr}%
(w_{1}^{\prime })^{\frac{1}{p}}=\text{sign}(u_{2}^{\prime })\;r^{(N-1)\frac{%
1-pq}{p}}\left\vert w_{1}\right\vert ^{q}
\end{equation*}%
So we get an equation with the form (where we recall that $w_{1}^{\prime }>0$%
)%
\begin{equation*}
-(\left\vert w_{1}^{\prime }\right\vert ^{\mathbf{p}-2}w_{1}^{\prime
})^{\prime }-\frac{\mathbf{N}-1}{r}\left\vert w_{1}^{\prime }\right\vert ^{%
\mathbf{p}-2}w_{1}^{\prime })=-\Delta _{\mathbf{p}}^{\mathbf{N}}w_{1}=\text{%
sign}(u_{2}^{\prime })\;r^{\sigma }\left\vert w_{1}\right\vert ^{q}
\end{equation*}%
where $\mathbf{p},\sigma ,\mathbf{N}$ are defined at (\ref{mns}); and the
fact that $w_{1}=-$sign$(u_{1}^{\prime })\;w$ leads to equation (\ref{eqm}),
and we check easily that $s\equiv S$ and $z\equiv Z.$
\end{proof}

\begin{remark}
\label{comp}From the definitions (\ref{mns}) and (\ref{recw}) we get the
relations 
\begin{equation}
\frac{\mathbf{p}-\mathbf{N}}{\mathbf{p}-1}=N-(N-1)p,\qquad \mathbf{N}+\sigma
=N-(N-1)q,  \label{red}
\end{equation}%
\begin{equation}
\gamma =\frac{\mathbf{p}+\sigma }{q-\mathbf{p}+1}=\frac{p+N-(N-1)pq}{pq-1}=%
\frac{(N-1)p(q_{2}-q)}{pq-1}  \label{red2}
\end{equation}%
so 
\begin{equation}
\mathbf{N}>\mathbf{p}\Longleftrightarrow p>\frac{N}{N-1},\qquad \mathbf{N}%
+\sigma >0\Longleftrightarrow q<\frac{N}{N-1},\qquad \mathbf{p}+\sigma
>0\Longleftrightarrow q<q_{2}.  \label{reg}
\end{equation}
\end{remark}

\begin{remark}
For $pq>1,$ $p\geq q,$ there holds $1<\mathbf{p}<2,$ $\mathbf{q}>\mathbf{p}%
-1,$ and $\mathbf{N}>1.$ The map $(N,p)\in \left[ 1,\infty \right) \times
(1,\infty )\longmapsto (\mathbf{N},\mathbf{p})\in (1,\infty )\times (1,2)$
is injective, the reciprocal application is $(\mathbf{N},\mathbf{p}%
)\longmapsto (\frac{\mathbf{N}+1-\mathbf{p}}{2-\mathbf{p}},\frac{1}{\mathbf{p%
}-1}),$ and then $\sigma $ is \textbf{fixed} by the relation $\sigma =-\frac{%
(\mathbf{N}-1)(q+1-\mathbf{p})}{2-\mathbf{p}}.\ \medskip $
\end{remark}

\begin{remark}
\label{souab}Equation (\ref{eqm}) is of source type for $\varepsilon =1,$ of
absorption type for $\varepsilon =-1.$ One could think that problem (\ref%
{sou}), where $u_{1},u_{2}$ are positive superharmonic functions, with two 
\textbf{source }terms $\left\vert \nabla u_{2}\right\vert ^{p},\left\vert
\nabla u_{1}\right\vert ^{q}$ is linked to a Hardy-H\'{e}non equation with
source term $\varepsilon =1.$ In fact \textbf{it is not the case}: the
solutions of system (\ref{sou}) on an interval $(0,\rho )$ correspond to a
Hardy-H\'{e}non equation of \textbf{absorption} type. Indeed consider any
positive solution of system (\ref{sou}). As it is well known, any positive
solution $(u_{1},u_{2})$ satisfies $u_{1}^{\prime },u_{2}^{\prime }\leq 0$
in $(0,\rho ).$ Indeed $r^{N-1}u_{1}^{\prime }$ is decreasing; if there is $%
r_{0}$ such that $u_{1}^{\prime }(r_{0})>0,$ then $r^{N-1}u_{1}^{\prime
}\geq C_{0}>0$ on $(0,r_{0}),$ thus $u_{1}+\frac{C_{0}}{N-2}r^{2-N}$ is
increasing, which is impossible. Then $u_{1}^{\prime }\leq 0$ on $(0,\rho );$
moreover if there exists $r_{1}$ such that $u_{1}^{\prime }(r_{1})=0,$ then
it is unique and $r_{1}$ is a maximum point, so that $u_{1}^{\prime }\geq 0$
on $(0,r_{1})$ which is contradictory, unless $u_{1}$ is constant. Then the
nonconstant solutions satisfy\textbf{\ }$u_{1}^{\prime },u_{2}^{\prime }<0$%
\textbf{\ }in\textbf{\ }$(0,\rho ).$\medskip
\end{remark}

\begin{remark}
Another way to get an autonomous system is more common in the literature:
the change of unknown 
\begin{equation*}
u_{1}^{\prime }=-r^{-\frac{p+1}{pq-1}}x(t),\qquad u_{2}^{\prime }=-r^{-\frac{%
q+1}{pq-1}}y(t),\qquad t=\ln r.
\end{equation*}%
leads to the system%
\begin{equation*}
\left\{ 
\begin{array}{c}
x_{t}=b_{1}x-\left\vert y\right\vert ^{p}, \\ 
y_{t}=b_{2}y-\left\vert x\right\vert ^{q},%
\end{array}%
\right.
\end{equation*}%
where $b_{1}=\frac{(N-1)pq-(p+N)}{pq-1}$ and $b_{2}=\frac{(N-1)pq-(q+N)}{pq-1%
}.$ However this system gives less information on the solutions of system (%
\ref{SA}): it admits at most two fixed points, namely $(0,0),$ and $%
P_{0}=((\left\vert b_{1}\right\vert \left\vert b_{2}\right\vert ^{p})^{\frac{%
1}{pq-1}}$sign$(b_{1}),(\left\vert b_{2}\right\vert \left\vert
b_{1}\right\vert ^{q})^{\frac{1}{pq-1}}$sign$(b_{2}))$ which corresponds to
the particular solutions. Moreover it is singular at $(0,0)$ whenever $q<1.$
The quadratic system (\ref{SZ}) gives a great amount of information, because
it is obtained by differentiation of the equations of (\ref{SA}). It has
four fixed points, and each of them corresponds to a type of behaviour near $%
0$ or $\infty $.
\end{remark}

\section{Solutions of the Hardy-H\'{e}non equations \label{app}}

Here we consider the \textbf{positive} solutions of the radial equation%
\begin{equation}
-\Delta _{\mathbf{p}}^{\mathbf{N}}w=-\frac{d}{dr}(\left\vert \frac{dw}{dr}%
\right\vert ^{\mathbf{p}-2}\frac{dw}{dr})-\frac{\mathbf{N}-1}{r}\left\vert 
\frac{dw}{dr}\right\vert ^{\mathbf{p}-2}\frac{dw}{dr}=\varepsilon r^{\sigma
}w^{\mathbf{q}},\qquad \varepsilon =\pm 1,  \label{sca}
\end{equation}%
in dimension $\mathbf{N},$ where $\mathbf{q}>\mathbf{p}-1>0.$ In the sequel, 
$\mathbf{N}$ is not necessarily an integer.$\bigskip $

\subsection{General formulation by a quadratic system}

0n any interval where $w^{\prime }\neq 0,$ we define 
\begin{equation}
s(t)=-r\frac{w^{\prime }}{w},\qquad z(t)=-\varepsilon r^{1+\sigma }w^{%
\mathbf{q}}\left\vert w^{\prime }\right\vert ^{-\mathbf{p}}w^{\prime
},\qquad t=\ln r,  \label{fid}
\end{equation}%
and obtain the system, valid for the two equations, 
\begin{equation}
\left\{ 
\begin{array}{c}
s_{t}=s(\frac{\mathbf{p}-\mathbf{N}}{\mathbf{p}-1}+s+\frac{z}{\mathbf{p}-1}),
\\ 
z_{t}=z(\mathbf{N}+\sigma -\mathbf{q}s-z).%
\end{array}%
\right.  \label{pspz}
\end{equation}%
We recover $w$ by the formula%
\begin{equation}
w=r^{-\gamma }(\left\vert s\right\vert ^{\mathbf{p}-1}\left\vert
z\right\vert )^{\frac{1}{\mathbf{q}+1-\mathbf{p}}},\qquad w^{\prime
}=r^{-(\gamma +1)}(\left\vert z\right\vert \left\vert s\right\vert ^{\mathbf{%
q}})^{\frac{1}{\mathbf{q}+1-\mathbf{p}}}\text{sign}(-\varepsilon z),\quad
\gamma =\frac{\mathbf{p}+\sigma }{\mathbf{q}+1-\mathbf{p}}.  \label{wwp}
\end{equation}%
In the plane $(s,z)$ we define the quadrants 
\begin{equation*}
Q_{1}=\left\{ (s,z)\in \mathbb{R}^{2}:s>0,z>0\right\} ,\quad Q_{2}=\left\{
(s,z)\in \mathbb{R}^{2}:s<0,z>0\right\} ,\quad Q_{3}=-Q_{1},\quad
Q_{4}==-Q_{2.}
\end{equation*}

\begin{remark}
\label{deri}Observe that $sz$ has the sign of $\varepsilon .\ $If $%
\varepsilon >0$ (equation with source), then $(s,z)\in Q_{1}\cup Q_{3},$ If $%
\varepsilon <0$ (equation with absorption) then $(s,z)\in Q_{2}\cup Q_{4}.$
Moreover, if $\mathbf{p}<\mathbf{N},$ and if $w$ is defined on an interval $%
(0,\rho ),$ then it is always decreasing, hence $(s,z)\in Q_{1}.$ Indeed $%
(r^{\mathbf{N}-1}\left\vert w^{\prime }\right\vert ^{\mathbf{p}-2}w^{\prime
})$ is decreasing. If there exists $r_{0}$ such that $w^{\prime }(r_{0})>0,$
then for $r<r_{0}$,$w^{\prime \mathbf{p}-1}\geq C_{0}r^{1-\mathbf{N}},$ thus 
$w$ is bounded, which by integration contradicts the assumption $\mathbf{p}<%
\mathbf{N}$. In case $\mathbf{p}>\mathbf{N},$ it can happen, as we see in
the sequel, that $w^{\prime }>0$ near $0.$
\end{remark}

In this paragraph we exclude the limit cases $\mathbf{p=N},\sigma =-\mathbf{%
p,}\sigma =-\mathbf{N,}$ which will be studied at paragraph \ref{limit}%
.\bigskip

We define two possible critical values 
\begin{equation}
\mathbf{q}_{c}=\frac{(\mathbf{N}+\sigma )(\mathbf{p}-1)}{\mathbf{N}-\mathbf{p%
}},\quad \quad \mathbf{q}_{S}=\frac{\mathbf{N}(\mathbf{p}-1)+\mathbf{p}+%
\mathbf{p}\sigma }{\mathbf{N}-\mathbf{p}},  \label{qcqs}
\end{equation}%
which are the well known Serrin's exponent and Sobolev exponent
respectively, in case $\mathbf{N}>\mathbf{p}>-\sigma $. Note that $\mathbf{q}%
_{c}<0$ when $(\mathbf{N}+\sigma )(\mathbf{N}-\mathbf{p)<0.}$

\medskip We first observe that the equation admits particular solutions 
\begin{equation}
w^{\ast }=a^{\ast }r^{-\gamma },\qquad \gamma =\frac{\mathbf{p}+\sigma }{%
\mathbf{q}+1-\mathbf{p}},\qquad a^{\ast \mathbf{q}-\mathbf{p}+1}=\varepsilon
\left\vert \gamma \right\vert ^{\mathbf{p}-2}\gamma (\mathbf{N}-\mathbf{p}-(%
\mathbf{p}-1)\gamma ),  \label{weto}
\end{equation}%
well defined for $\varepsilon \gamma (\mathbf{N}-\mathbf{p}-(\mathbf{p}%
-1)\gamma )>0$ that is $\mathbf{q}-\mathbf{q}_{c}$ has the sign of $%
\varepsilon (\mathbf{p}+\sigma )(\mathbf{N}-\mathbf{p})$. Then $w^{\ast }$
is $\infty $-singular at $0$ if $\mathbf{p}+\sigma >0,$ and $C^{0}$-regular
if $\mathbf{p}+\sigma <0$. $\bigskip $

\begin{definition}
In the following we say that a positive solution $w$ of equation (\ref{sca})
on an interval $(0,\rho )$ is $C^{0}$-\textbf{regular} if $w\in C^{2}(0,\rho
)\cap C^{0}\left[ 0,\rho \right) $, that means $\lim_{r\rightarrow 0}w=c\geq
0.$ If it exists, such a solution satisfies $(r^{\mathbf{N}-1}\left\vert
w^{\prime }\right\vert ^{\mathbf{p}-1})^{\prime }\sim _{r\longrightarrow
0}r^{\mathbf{N}-1+\sigma }c^{p},$ then $r^{\mathbf{N}-1}\left\vert w^{\prime
}\right\vert ^{\mathbf{p}-1}\sim _{r\longrightarrow 0}Cr^{\mathbf{N}+\sigma
}c^{p},$ $\left\vert w^{\prime }\right\vert \sim _{r\longrightarrow 0}Cr^{%
\frac{\sigma +1}{\mathbf{p}-1}},\left\vert w^{\prime \prime }\right\vert
\sim _{r\longrightarrow 0}Cr^{\frac{\sigma +2-\mathbf{p}}{\mathbf{p}-1}}$.
Then $w\in C^{2}\left[ 0,\rho \right) $ if $\sigma \geq \mathbf{p}-2,$ $w\in
C^{1}\left[ 0,\rho \right) $ if $\sigma \geq -1,$ and $w$ presents a cusp at
0 if $\sigma <-1.$ We say that $w$ is a $C^{0}$-\textbf{ground state} if
moreover $\rho =\infty .$
\end{definition}

Next we divide the study in 6 regions relative to $(\mathbf{N},\sigma )\in $ 
$\mathbb{R}^{2}$, and we show that we can reduce the study to only 3
regions, for which one of them is well known. This is due to a
transformation proved in \cite[Remark 2.4]{BiGi}:

\begin{lemma}
\label{reduc} For fixed $\mathbf{p}>0,$ consider the sets defined by 
\begin{eqnarray}
\mathbf{A} &=&\left\{ (\mathbf{N},\sigma )\in \mathbb{R}^{2}:\mathbf{N>p}%
>-\sigma \right\} ,\qquad \mathbf{B}=\left\{ (\mathbf{N},\sigma )\in \mathbb{%
R}^{2}:\mathbf{p}>\mathbf{N}>-\sigma )\right\} ,  \notag \\
\mathbf{C} &=&\left\{ (\mathbf{N},\sigma )\in \mathbb{R}^{2}:\mathbf{%
N>-\sigma >p}\right\} ,\qquad \mathbf{D}=\left\{ (\mathbf{N},\sigma )\in 
\mathbb{R}^{2}:-\sigma >\mathbf{N>p}\right\} ,  \label{regi} \\
\mathbf{E} &=&\left\{ (\mathbf{N},\sigma )\in \mathbb{R}^{2}:\mathbf{p>}%
-\sigma >\mathbf{N}\right\} ,\qquad \mathbf{F}=\left\{ (\mathbf{N},\sigma
)\in \mathbb{R}^{2}:-\sigma >\mathbf{p}>\mathbf{N}\right\} .  \notag
\end{eqnarray}%
Let $(\mathbf{N},\sigma )\in \mathbb{R}^{2}$ and $(\widehat{\mathbf{N}},%
\widehat{\sigma })\in \mathbb{R}^{2}$ such that $(\widehat{\mathbf{N}}-%
\mathbf{p})\mathbf{(p}+\sigma )=(\mathbf{N}-\mathbf{p})(\mathbf{p}+\widehat{%
\sigma }).$ Set 
\begin{equation}
\lambda =\frac{\widehat{\mathbf{N}}-\mathbf{p}}{\mathbf{N}-\mathbf{p}}=\frac{%
\mathbf{p}+\widehat{\sigma }}{\mathbf{p}+\sigma }=\frac{\widehat{\mathbf{N}}+%
\widehat{\sigma }}{\mathbf{N}+\sigma },  \label{lam}
\end{equation}%
and consider the change of unknown 
\begin{equation}
\widehat{w}(\widehat{r})=Cw(r),\qquad r=\widehat{r}^{\lambda },C=\lambda ^{%
\frac{p}{q+1-p}}.  \label{chu}
\end{equation}%
Then $w$ satisfies the equation (\ref{sca}) if and only if $\widehat{w}$
satisfies the analogous equation with $r,\mathbf{N},\sigma $ replaced by $%
\widehat{r},\widehat{\mathbf{N}},\widehat{\sigma },$ and 
\begin{equation}
\widehat{r}^{\widehat{\gamma }}\widehat{w}(\widehat{r})=Cr^{\gamma
}w(r),\qquad \widehat{r}^{\frac{\widehat{\mathbf{N}}-\mathbf{p}}{\mathbf{p}-1%
}}\widehat{w}(\widehat{r})=Cr^{\frac{\mathbf{N}-\mathbf{p}}{\mathbf{p}-1}%
}w(r),\qquad \widehat{r}^{-\frac{\widehat{\sigma }+1}{\mathbf{p}-1}}\widehat{%
w}^{\prime }(\widehat{r})=\lambda Cr^{-\frac{\sigma +1}{\mathbf{p}-1}%
}w^{\prime }(r).  \label{cal}
\end{equation}%
Moreover for any $(\mathbf{N},\sigma )\in \mathbf{F}$ (resp. $\mathbf{D}$,
resp. $\mathbf{E})$ there exists $(\widehat{\mathbf{N}},\widehat{\sigma }%
)\in $ $\mathbf{A}$ (resp. $\mathbf{B}$, resp. $\mathbf{C})$, such that (\ref%
{lam}) holds with $\lambda =-1.$ As a consequence all the results valid for
regions $\mathbf{A},\mathbf{B},\mathbf{C}$ apply respectively to $\mathbf{F},%
\mathbf{D},\mathbf{E}$ by changing $r$ into $\frac{1}{r}.$
\end{lemma}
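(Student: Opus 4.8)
The plan is to verify the change of variables directly, then establish the final claim about $\lambda=-1$ connecting the paired regions. The heart of the lemma is a substitution, so I would begin by computing how the operator $-\Delta_{\mathbf{p}}^{\mathbf{N}}$ transforms under $r=\widehat{r}^{\lambda}$ and $\widehat{w}(\widehat{r})=Cw(r)$. Writing the radial $\mathbf{p}$-Laplacian in the form $-r^{1-\mathbf{N}}(r^{\mathbf{N}-1}\left\vert w'\right\vert^{\mathbf{p}-2}w')'$ is the cleanest starting point, since this puts all the dimension dependence in the single weight $r^{\mathbf{N}-1}$. By the chain rule, $\frac{dw}{dr}=\frac{1}{\lambda}C^{-1}\widehat{r}^{1-\lambda}\frac{d\widehat{w}}{d\widehat{r}}$, and one substitutes this into the flux $r^{\mathbf{N}-1}\left\vert w'\right\vert^{\mathbf{p}-2}w'$, collecting powers of $\widehat{r}$. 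The exponents should reorganize into $\widehat{r}^{\widehat{\mathbf{N}}-1}$ times $\left\vert \widehat{w}'\right\vert^{\mathbf{p}-2}\widehat{w}'$ up to a constant, provided $\widehat{\mathbf{N}}$ is defined through the relation in the statement; this is precisely where the constraint $(\widehat{\mathbf{N}}-\mathbf{p})(\mathbf{p}+\sigma)=(\mathbf{N}-\mathbf{p})(\mathbf{p}+\widehat{\sigma})$ and the three-fold equality defining $\lambda$ in \eqref{lam} get used.

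Next I would match the right-hand side. The source becomes $\varepsilon r^{\sigma}w^{\mathbf{q}}=\varepsilon \widehat{r}^{\lambda\sigma}(C^{-1}\widehat{w})^{\mathbf{q}}$, and after multiplying through by the Jacobian factor $\widehat{r}^{\lambda-1}$ (arising from $dr=\lambda\widehat{r}^{\lambda-1}d\widehat{r}$ when one rewrites the whole equation in the $\widehat{r}$ variable), the target is to see the power of $\widehat{r}$ collapse to $\widehat{\sigma}$ and the constant prefactor collapse to $1$, which fixes the choice $C=\lambda^{p/(q+1-p)}$. The cleanest way to organize this is to observe that the quantities $s,z$ in \eqref{fid} are scale-covariant: under the substitution one checks $\widehat{s}(\widehat{t})=s(t)$ and $\widehat{z}(\widehat{t})=z(t)$ with $\widehat{t}=t/\lambda$, so that both systems \eqref{pspz} (with hatted and unhatted coefficients) describe the same orbit up to a linear time rescaling by $\lambda$. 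Indeed from the definition $s=-r w'/w$ one gets $\widehat{s}=-\widehat{r}\,\widehat{w}'/\widehat{w}=\lambda^{-1}(-r w'/w)\cdot\lambda=s$ after accounting for $\widehat{r}\,d/d\widehat{r}=\lambda^{-1}\,r\,d/dr$; a parallel computation handles $z$ using that $\mathbf{p}$ is held fixed. This $(s,z)$ viewpoint is the most economical route and also immediately yields the identities \eqref{cal}, which are just rewritings of \eqref{wwp} in the two variables.

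For the final assertion, I would specialize to $\lambda=-1$. Setting $\lambda=-1$ in \eqref{lam} forces $\widehat{\mathbf{N}}-\mathbf{p}=-(\mathbf{N}-\mathbf{p})$, $\mathbf{p}+\widehat{\sigma}=-(\mathbf{p}+\sigma)$, and $\widehat{\mathbf{N}}+\widehat{\sigma}=-(\mathbf{N}+\sigma)$, namely $\widehat{\mathbf{N}}=2\mathbf{p}-\mathbf{N}$ and $\widehat{\sigma}=-2\mathbf{p}-\sigma$. One then checks these three sign flips against the region definitions in \eqref{regi}: for $(\mathbf{N},\sigma)\in\mathbf{F}$, i.e. $-\sigma>\mathbf{p}>\mathbf{N}$, the flipped triple satisfies $\widehat{\mathbf{N}}-\mathbf{p}>0$, $\mathbf{p}+\widehat{\sigma}>0$, $\widehat{\mathbf{N}}+\widehat{\sigma}>0$, which is exactly the defining chain $\widehat{\mathbf{N}}>\mathbf{p}>-\widehat{\sigma}$ of region $\mathbf{A}$; the analogous bookkeeping sends $\mathbf{D}\to\mathbf{B}$ and $\mathbf{E}\to\mathbf{C}$. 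Since $\lambda=-1$ makes $r=\widehat{r}^{-1}$, transferring a result from the image region back to the original amounts to replacing $r$ by $1/r$, as claimed.

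The main obstacle I anticipate is purely bookkeeping: tracking the exponents of $\widehat{r}$ through the $\mathbf{p}$-Laplacian so that they consolidate into $\widehat{\mathbf{N}}-1$ and $\widehat{\sigma}$ simultaneously, and verifying that the single constant $C=\lambda^{p/(q+1-p)}$ normalizes the equation with no residual factor. There is genuine content in seeing that one free parameter $C$ suffices to absorb the prefactors on both the operator side and the source side at once — this works only because the two constraints on $(\widehat{\mathbf{N}},\widehat{\sigma})$ and the homogeneity degree $\mathbf{q}+1-\mathbf{p}$ are compatible. Routing everything through the invariance of $(s,z)$ largely sidesteps this by reducing the verification to the observation that the coefficients $\tfrac{\mathbf{p}-\mathbf{N}}{\mathbf{p}-1}$, $\tfrac{1}{\mathbf{p}-1}$, $\mathbf{N}+\sigma$, and $\mathbf{q}$ in \eqref{pspz} transform correctly under $t\mapsto\lambda t$, which is where I would concentrate the actual computation.
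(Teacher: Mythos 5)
Your overall route --- realizing the change of variables as a rescaling of $(s,z,t)$ for the quadratic system (\ref{pspz}) --- is exactly the paper's proof, but your central computation is wrong, and as written the argument does not close. You claim $s,z$ are invariant, $\widehat{s}(\widehat{t})=s(t)$, $\widehat{z}(\widehat{t})=z(t)$, based on the identity $\widehat{r}\,d/d\widehat{r}=\lambda^{-1}r\,d/dr$. That chain rule is inverted: from $r=\widehat{r}^{\lambda}$ one gets $\widehat{r}\,\frac{d}{d\widehat{r}}=\lambda\,r\frac{d}{dr}$, hence
\begin{equation*}
\widehat{s}=-\widehat{r}\,\frac{\widehat{w}'}{\widehat{w}}=-\lambda\,r\frac{w'}{w}=\lambda s,
\end{equation*}
and similarly $\widehat{z}=\lambda z$ (this second identity is where the normalization $C=\lambda^{\mathbf{p}/(\mathbf{q}+1-\mathbf{p})}$ of (\ref{chu}) is actually needed, since $C$ cancels in $\widehat{s}$ but not in $\widehat{z}$). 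The factor $\lambda$ is not cosmetic. If one had $(\widehat{s},\widehat{z})=(s,z)$ with $\widehat{t}=t/\lambda$, then $d\widehat{s}/d\widehat{t}=\lambda\,ds/dt=\lambda s\bigl(\frac{\mathbf{p}-\mathbf{N}}{\mathbf{p}-1}+s+\frac{z}{\mathbf{p}-1}\bigr)$, and the factor $\lambda$ multiplying the quadratic terms $s^{2}$ and $sz/(\mathbf{p}-1)$ cannot be absorbed by redefining $\mathbf{N},\sigma$, which only affect the linear part; the transformed system has the form (\ref{pspz}) only when $\lambda=1$. With the correct relation $(\widehat{s},\widehat{z})=\lambda(s,z)$ everything closes: $d\widehat{s}/d\widehat{t}=\lambda^{2}\,ds/dt$, the quadratic terms also scale by $\lambda^{2}$, and the linear coefficients transform exactly through (\ref{lam}), namely $\lambda(\mathbf{N}-\mathbf{p})=\widehat{\mathbf{N}}-\mathbf{p}$ and $\lambda(\mathbf{N}+\sigma)=\widehat{\mathbf{N}}+\widehat{\sigma}$. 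This is precisely the paper's proof, which sets $t=\lambda\widehat{t}$ and $(\widehat{s},\widehat{z})=\lambda(s,z)$.

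The remainder of your argument survives the correction. The identities (\ref{cal}) do follow from (\ref{wwp}) once the correct degree-one homogeneity of $(s,z)$ in $\lambda$ is used, and your bookkeeping for $\lambda=-1$ (giving $\widehat{\mathbf{N}}=2\mathbf{p}-\mathbf{N}$, $\widehat{\sigma}=-2\mathbf{p}-\sigma$ and the correspondences $\mathbf{F}\to\mathbf{A}$, $\mathbf{D}\to\mathbf{B}$, $\mathbf{E}\to\mathbf{C}$) is correct and matches the statement. So the gap is a single but essential computation: fix the chain rule, replace ``invariant'' by ``scaled by $\lambda$,'' and verify that the one free constant $C$ makes $\widehat{z}=\lambda z$; then your proof coincides with the paper's.
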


\begin{proof}
Setting $t=\lambda \widehat{t}$ and $(\widehat{s},\widehat{z})=\lambda (s,z),
$ we obtain a system analogous to (\ref{pspz}) where $\mathbf{N},\sigma $
are replaced by $\widehat{\mathbf{N}},\widehat{\sigma }.$ This corresponds
to the change of unknown (\ref{chu}). And the regions $\mathbf{A},\mathbf{B},%
\mathbf{C}$ are respectively exchanged to $\mathbf{F},\mathbf{E},\mathbf{D},$
after choosing suitable reals $\lambda <0$ such that $\mathbf{N>0}$
correponds to $\widehat{\mathbf{N}}>0.$ The relations (\ref{cal}) are
straightforward, implying the last conclusions.
\end{proof}

As a consequence we prove that some upperestimates, which are classical when 
$\mathbf{N}>\mathbf{p}>-\sigma ,$ and still valid for other ranges of the
parameters:

\begin{lemma}
\label{osse}Let $\mathbf{q}>\mathbf{p}-1>0,$ and $\mathbf{N},\sigma \in 
\mathbb{R}.$ There is a constant $C_{\mathbf{N,p,q}}>0$ such that any
positive solution of (\ref{sca}) solution with in $(0,r_{0})$ (resp in $%
(r_{0},\infty )$ satisfies 
\begin{equation*}
w(r)\leq C_{\mathbf{N,p,q}}r^{-\gamma }\quad \text{in }(0,\frac{r_{0}}{2}%
)\quad \text{(resp. in }(2r_{0},\infty )\text{).}
\end{equation*}
\end{lemma}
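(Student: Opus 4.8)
The plan is to exploit the scaling invariance of (\ref{sca}) to collapse the estimate, which is stated both near $0$ and near $\infty$, into a single purely local bound at one fixed scale, and then to prove that local bound separately in the absorption and source cases.

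First I would record the scaling. For $\ell>0$ the function $w_{\ell}(r):=\ell^{\gamma}w(\ell r)$, with $\gamma=\frac{\mathbf{p}+\sigma}{\mathbf{q}+1-\mathbf{p}}$, again solves (\ref{sca}); a direct computation shows that this exponent $\gamma$ is exactly the one making the equation invariant. Hence it suffices to prove the single-scale statement: there is $C_{0}=C_{0}(\mathbf{N},\mathbf{p},\mathbf{q},\sigma)$ such that every positive solution $v$ of (\ref{sca}) on the fixed annulus $(\tfrac12,2)$ satisfies $v(1)\le C_{0}$. Indeed, given $w$ on $(0,r_{0})$ and $r\in(0,r_{0}/2)$, the rescaled function $w_{r}(s)=r^{\gamma}w(rs)$ is positive on $(0,r_{0}/r)\supset(\tfrac12,2)$, so $r^{\gamma}w(r)=w_{r}(1)\le C_{0}$; and given $w$ on $(r_{0},\infty)$ and $r>2r_{0}$, the same $w_{r}$ is positive on $(r_{0}/r,\infty)\supset(\tfrac12,2)$, giving again $w_{r}(1)\le C_{0}$. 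Thus both the interior and the exterior estimate follow from the one local bound, with $C_{\mathbf{N,p,q}}=C_{0}$; in particular no splitting into the regions of Lemma \ref{reduc} is needed, although that inversion $r\mapsto 1/r$ could be used to reduce the number of cases in the step below.

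On $(\tfrac12,2)$ the weight satisfies $2^{-|\sigma|}\le r^{\sigma}\le 2^{|\sigma|}$, so (\ref{sca}) reads $\mp\Delta_{\mathbf{p}}^{\mathbf{N}}v=\pm\,h(r)v^{\mathbf{q}}$ with $h$ bounded between positive constants. In the absorption case $\varepsilon=-1$ we have $\Delta_{\mathbf{p}}^{\mathbf{N}}v=h(r)v^{\mathbf{q}}\ge c\,v^{\mathbf{q}}$ with $c>0$, and since $\mathbf{q}>\mathbf{p}-1$ the power nonlinearity satisfies the Keller--Osserman condition $\int^{\infty}(t^{\mathbf{q}+1})^{-1/\mathbf{p}}\,dt<\infty$. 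The classical Keller--Osserman bound then yields $v(1)\le C\,\mathrm{dist}(1,\{\tfrac12,2\})^{-\mathbf{p}/(\mathbf{q}+1-\mathbf{p})}\le C_{0}$, uniformly in the solution. This settles $\varepsilon=-1$ for every $(\mathbf{N},\sigma)$.

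The source case $\varepsilon=+1$ is the main obstacle, since then $v$ is only a positive $\mathbf{p}$-superharmonic supersolution, $-\Delta_{\mathbf{p}}^{\mathbf{N}}v\ge c\,v^{\mathbf{q}}$, and no comparison with a maximal solution is available. Here I would argue by contradiction: if the local bound failed there would be positive solutions $v_{n}$ on $(\tfrac12,2)$ with $v_{n}(1)\to\infty$, and a doubling and rescaling argument (in the spirit of Pol\'a\v{c}ik, Quittner and Souplet), combined with the interior $C^{1,\alpha}$ estimates for the $\mathbf{p}$-Laplacian, lets one rescale around suitable points of a compact subannulus near $r=1$ to produce a limit entire solution. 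Because the solutions are radial and the rescaling zooms in at $r=1>0$, the first-order term $\frac{\mathbf{N}-1}{r}|v'|^{\mathbf{p}-2}v'$ becomes lower order and drops out, so the limit depends on a single variable and solves the one-dimensional equation $-(|V'|^{\mathbf{p}-2}V')'=V^{\mathbf{q}}$ on $\mathbb{R}$ with $V>0$. Such a $V$ cannot exist: $|V'|^{\mathbf{p}-2}V'$ has derivative $-V^{\mathbf{q}}<0$, hence is strictly decreasing, which forces $V$ to vanish in finite distance and contradicts positivity. This contradiction gives $v(1)\le C_{0}$ and completes the proof. As an alternative staying entirely inside the radial ODE, one may instead close a bootstrap integral inequality for $r^{\mathbf{N}-1}|w'|^{\mathbf{p}-2}w'$ exactly as in the proof of Proposition \ref{uprad}, using the monotonicity forced by the equation; in either route the delicate point is to make the passage to the scale-invariant model (the one-dimensional limit problem, respectively the bootstrap exponent $\mathbf{q}/(\mathbf{p}-1)>1$) quantitative and uniform in the solution.
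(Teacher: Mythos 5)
Your proposal is correct, and it takes a genuinely different route from the paper's own proof. The paper handles Lemma \ref{osse} by citation plus a symmetry: for $\varepsilon=-1$ it invokes the Osserman property (\cite[Proposition 5.2]{Ve2}), for $\varepsilon=1$ the integral estimates of \cite[Theorem 3.1]{BiPo}, both stated for integer $\mathbf{N}>\mathbf{p}$, and it then covers $\mathbf{N}<\mathbf{p}$ by the inversion $r\mapsto 1/r$ of Lemma \ref{reduc} with $\lambda=-1$, which replaces $\mathbf{N}$ by $2\mathbf{p}-\mathbf{N}$ and exchanges the behaviours at $0$ and $\infty$. You instead exploit the exact scale invariance $w\mapsto \ell^{\gamma}w(\ell\,\cdot)$ (your computation of the exponent $\gamma=\frac{\mathbf{p}+\sigma}{\mathbf{q}+1-\mathbf{p}}$ is right) to collapse both the interior and the exterior bound into one universal estimate on the fixed annulus $(\frac{1}{2},2)$, where $r^{\sigma}$ is comparable to a constant, and you prove that estimate directly: Keller--Osserman for absorption (legitimate, since $\mathbf{q}>\mathbf{p}-1$ and on the annulus the operator is a one-dimensional $\mathbf{p}$-Laplacian with bounded drift, so the comparison argument applies), and for the source case a doubling/rescaling argument whose limit problem $-(|V'|^{\mathbf{p}-2}V')'=cV^{\mathbf{q}}$, $c>0$, on $\mathbb{R}$ admits no entire nonnegative solution with $V(0)=1$; your Liouville argument is sound, since $|V'|^{\mathbf{p}-2}V'$ is strictly decreasing: once it is negative, $V$ decreases at least linearly and becomes negative, while if it stays nonnegative then $V\geq V(0)>0$, and the equation drives $|V'|^{\mathbf{p}-2}V'$ to $-\infty$. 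What your route buys: it is self-contained at the radial ODE level, treats all real $\mathbf{N},\sigma$ at once with no case distinction --- in particular it covers $\mathbf{N}=\mathbf{p}$, which falls between the two cases of the paper's proof, and it does not lean on nonradial theorems stated for integer dimension, a point the paper passes over with the remark that the hypothesis $\mathbf{N}>\mathbf{p}$ ``does not seem necessary''. What the paper's route buys is brevity. The cost of yours is that the source case rests on the doubling-lemma machinery, which you only sketch; but the genuinely new ingredient there (the one-dimensional Liouville property) is stated and justified, and your alternative suggestion of closing the source case by the bootstrap of Proposition \ref{uprad} is exactly the technique the paper uses for the analogous estimate at the level of the system, so either completion is available.
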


\begin{proof}
In case $\varepsilon =-1,$ this comes the Osserman's property, which is
valid in the nonradial case when $\mathbf{N}$ is an integer, see a proof in 
\cite[Proposition 5.2]{Ve2}, and for any subsolution. In the case $%
\varepsilon =1,$ it also extends to integral estimates in the nonradial
case, see \cite[Theorem 3.1]{BiPo}, valid for any $\sigma \in \mathbb{R}.$
Both results suppose $\mathbf{N>1,}$ and are given and $\mathbf{N>p}$, even
if this condition does not seem necessary. Next suppose $\mathbf{N<p.}$ We
use the transformation (\ref{lam}) (\ref{chu}) with $\lambda =-1$ : it
defines $\widehat{\mathbf{N}}=2\mathbf{p-N}$ and $\widehat{\sigma }=-2%
\mathbf{p-}\sigma $, and $w(r)=\widehat{w}(\widehat{r}),$ $\widehat{r}=\frac{%
1}{r}.$ From (\ref{cal}), $\widehat{r}^{\widehat{\gamma }}\widehat{w}(%
\widehat{r})=Cr^{\gamma }w(r),$ and the behaviours near $0$ and $\infty $
are exchanged, hence the conclusion is still valid.
\end{proof}

From now on in this section, the proofs of the lemmas and theorems are given
in the Appendix. We first analyze the nature of the fixed points of the
system:

\begin{lemma}
\label{nat}The fixed points of system (\ref{pspz}) are 
\begin{equation}
\begin{array}{c}
M_{0}=(s_{0},z_{0})=(\gamma ,\mathbf{N}-\mathbf{p}-(\mathbf{p}-1)\gamma )=(%
\frac{\mathbf{p}+\sigma }{\mathbf{q}+1-\mathbf{p}},\frac{(\mathbf{N}-\mathbf{%
p})(\mathbf{q}-\mathbf{q}_{c})}{\mathbf{q}+1-\mathbf{p}}), \\ 
N_{0}=(0,\mathbf{N}+\sigma ),\qquad A_{0}=(\frac{\mathbf{N}-\mathbf{p}}{%
\mathbf{p}-1},0),\qquad \text{and \quad }(0,0).%
\end{array}
\label{fixe}
\end{equation}

(i) The point $M_{0}$ corresponds to the particular solutions defined at (%
\ref{weto}).The eigenvalues $\lambda _{1},\lambda _{2}$ of the linearized
system associated to system (\ref{pspz}) at $M_{0}$ are the roots of the
equation 
\begin{equation}
T(\lambda )=\lambda ^{2}-(s_{0}-z_{0})\lambda +\frac{\mathbf{q}-\mathbf{p}+1%
}{\mathbf{p}-1}s_{0}z_{0}=\lambda ^{2}-(\mathbf{p}\gamma +\mathbf{p}-\mathbf{%
N})\lambda +\frac{(\mathbf{p}+\sigma )(\mathbf{N}-\mathbf{p})(\mathbf{q}-%
\mathbf{q}_{c})}{(\mathbf{p}-1)(\mathbf{q}-\mathbf{p}+1)}=0.  \label{valp}
\end{equation}%
$M_{0}$ is a saddle point when $s_{0}z_{0}<0.$

(ii) The eigenvalues at $N_{0}$ are $l_{1}=\frac{\mathbf{p}+\sigma }{\mathbf{%
p}-1}$ and $l_{2}=-(\mathbf{N}+\sigma );$ and $\left\{ s=0\right\} $
contains nonadmissible trajectories linked to $l_{2}$. $N_{0}$ is a saddle
point when $(\mathbf{p}+\sigma )(\mathbf{N}+\sigma )>0.$

(iii) The eigenvalues at $A_{0}$ are $\mu _{1}=\frac{\mathbf{N}-\mathbf{p}}{%
\mathbf{p}-1}$ and $\mu _{2}=\frac{\mathbf{N}-\mathbf{p}}{\mathbf{p}-1}(%
\mathbf{q}_{c}-\mathbf{q})=\mathbf{N}+\sigma +\mathbf{q}\frac{\mathbf{p}-%
\mathbf{N}}{\mathbf{p}-1};$ and $\left\{ z=0\right\} $ contain nonadmissible
trajectories linked to $\mu _{1}.$ $A_{0}$ is a saddle point when $\mathbf{%
q>q}_{c}.$

(iv) The eigenvalues at $(0,0)$ are $\rho _{1}=\frac{\mathbf{p}-\mathbf{N}}{%
\mathbf{p}-1}$ and $\rho _{2}=\mathbf{N}+\sigma ,$ and $\left\{ s=0\right\} $
and $\left\{ z=0\right\} $ contain nonadmissible trajectories, linked to $%
\rho _{1}$ and $\rho _{2}.$ The point $(0,0)$ is a saddle point when $(%
\mathbf{N}-\mathbf{p})(\mathbf{N}+\sigma )>0.$
\end{lemma}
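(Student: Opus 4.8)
The plan is to treat system (\ref{pspz}) as a planar polynomial vector field and carry out the standard fixed-point and linearization analysis, exploiting the factored form of the two equations. I would write the right-hand sides as $f(s,z)=s\bigl(\tfrac{\mathbf{p}-\mathbf{N}}{\mathbf{p}-1}+s+\tfrac{z}{\mathbf{p}-1}\bigr)$ and $g(s,z)=z(\mathbf{N}+\sigma-\mathbf{q}s-z)$. A fixed point requires $f=g=0$, hence $s=0$ or the line $L_1:\ \tfrac{\mathbf{p}-\mathbf{N}}{\mathbf{p}-1}+s+\tfrac{z}{\mathbf{p}-1}=0$, together with $z=0$ or the line $L_2:\ \mathbf{N}+\sigma-\mathbf{q}s-z=0$. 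The four combinations give exactly $(0,0)$, the point $N_0=(0,\mathbf{N}+\sigma)$ (from $s=0$ and $L_2$), the point $A_0=(\tfrac{\mathbf{N}-\mathbf{p}}{\mathbf{p}-1},0)$ (from $L_1$ and $z=0$), and the interior point $M_0=L_1\cap L_2$. Solving the two linear equations for $M_0$ yields $s_0=\gamma$ and $z_0=\mathbf{N}-\mathbf{p}-(\mathbf{p}-1)\gamma$; the alternative expression $z_0=\tfrac{(\mathbf{N}-\mathbf{p})(\mathbf{q}-\mathbf{q}_c)}{\mathbf{q}+1-\mathbf{p}}$ is then a direct algebraic identity after substituting $\gamma=\tfrac{\mathbf{p}+\sigma}{\mathbf{q}+1-\mathbf{p}}$ and the definition of $\mathbf{q}_c$ from (\ref{qcqs}). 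This establishes (\ref{fixe}), and the identification of $M_0$ with the particular solutions (\ref{weto}) is immediate from the recovery formula (\ref{wwp}) evaluated at a constant $(s,z)$.

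For the linearization I would compute the Jacobian once, with $f_s=\tfrac{\mathbf{p}-\mathbf{N}}{\mathbf{p}-1}+2s+\tfrac{z}{\mathbf{p}-1}$, $f_z=\tfrac{s}{\mathbf{p}-1}$, $g_s=-\mathbf{q}z$, $g_z=\mathbf{N}+\sigma-\mathbf{q}s-2z$, and evaluate at each point. At $(0,0)$, $N_0$ and $A_0$ one coordinate vanishes, so the Jacobian is triangular and its eigenvalues are the diagonal entries, producing $(\rho_1,\rho_2)$, $(l_1,l_2)$ and $(\mu_1,\mu_2)$ exactly as stated; the two forms of $\mu_2$ at $A_0$ coincide after using $\tfrac{\mathbf{N}-\mathbf{p}}{\mathbf{p}-1}\mathbf{q}_c=\mathbf{N}+\sigma$, which is immediate from (\ref{qcqs}). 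At $M_0$ the key simplification is that, because $M_0$ lies on both $L_1$ and $L_2$, the diagonal entries collapse: $f_s(M_0)=s_0$ and $g_z(M_0)=-z_0$. Hence the trace equals $s_0-z_0$ and the determinant equals $-s_0z_0+\tfrac{\mathbf{q}}{\mathbf{p}-1}s_0z_0=\tfrac{\mathbf{q}-\mathbf{p}+1}{\mathbf{p}-1}s_0z_0$, giving the characteristic polynomial $T(\lambda)$ of (\ref{valp}); the equivalent coefficients $\mathbf{p}\gamma+\mathbf{p}-\mathbf{N}$ and $\tfrac{(\mathbf{p}+\sigma)(\mathbf{N}-\mathbf{p})(\mathbf{q}-\mathbf{q}_c)}{(\mathbf{p}-1)(\mathbf{q}-\mathbf{p}+1)}$ follow by inserting the closed forms of $s_0,z_0$.

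To read off the saddle conditions I would use that a hyperbolic planar fixed point is a saddle exactly when its two eigenvalues have opposite signs, i.e. when $\det J<0$, equivalently $\lambda_1\lambda_2<0$. Since $\mathbf{p}-1>0$ and $\mathbf{q}+1-\mathbf{p}>0$ by hypothesis, each product reduces to the stated condition: at $M_0$, $\lambda_1\lambda_2=\tfrac{\mathbf{q}-\mathbf{p}+1}{\mathbf{p}-1}s_0z_0$ has the sign of $s_0z_0$; at $N_0$, $l_1l_2=-\tfrac{(\mathbf{p}+\sigma)(\mathbf{N}+\sigma)}{\mathbf{p}-1}$; at $A_0$, $\mu_1\mu_2=\bigl(\tfrac{\mathbf{N}-\mathbf{p}}{\mathbf{p}-1}\bigr)^2(\mathbf{q}_c-\mathbf{q})$; and at $(0,0)$, $\rho_1\rho_2=\tfrac{\mathbf{p}-\mathbf{N}}{\mathbf{p}-1}(\mathbf{N}+\sigma)$. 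The assertions about nonadmissible trajectories follow from the factored form of the field, which makes the axes $\{s=0\}$ and $\{z=0\}$ invariant: the linearized dynamics restricted to each invariant axis identifies the eigenvalue tangent to it (for instance on $\{s=0\}$ one has $z_t=z(\mathbf{N}+\sigma-z)$, linking $l_2=-(\mathbf{N}+\sigma)$ and $\rho_2=\mathbf{N}+\sigma$ to that axis, and on $\{z=0\}$ one has $s_t=s(\tfrac{\mathbf{p}-\mathbf{N}}{\mathbf{p}-1}+s)$, linking $\mu_1$ and $\rho_1$). Via the recovery formula (\ref{wwp}), $s=0$ forces $w'=0$ and $z=0$ forces $w=0$, so an orbit contained in either axis corresponds to $w$ constant or $w\equiv0$, which are not genuine positive solutions with $w'\neq0$.

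I expect no serious conceptual obstacle: once the factored structure is used, the analysis is essentially mechanical. The only point demanding care is the bookkeeping that reconciles the two equivalent forms of $z_0$, of $\mu_2$, and of the coefficients of $T(\lambda)$, all of which rest on the definitions of $\gamma$ and $\mathbf{q}_c$ in (\ref{qcqs}); tracking the sign of $\mathbf{N}-\mathbf{p}$ (nonzero here, since the limit case $\mathbf{p}=\mathbf{N}$ is excluded in this paragraph) is what guarantees the relevant eigenvalue products are nonzero, so that the saddle dichotomy applies.
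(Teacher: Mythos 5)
Your proposal is correct and follows essentially the same route as the paper: both identify the fixed points from the factored vanishing conditions and then linearize at each one, your single Jacobian computation (with the diagonal collapse $f_s(M_0)=s_0$, $g_z(M_0)=-z_0$ coming from $M_0\in L_1\cap L_2$) being exactly the matrix of the paper's shifted-coordinate linearized systems, and the saddle criteria read off from the sign of the determinant in the same way. The only difference is that the paper additionally records the eigenvectors (used later in the Appendix), which the lemma itself does not require.
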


\begin{remark}
Recall that any trajectory in the phase plane distinct from the two axis
corresponds to an infinity with one parameter of solutions of equation (\ref%
{sca}). So any saddle-point leads at most to two families of such solutions,
and any source or sink leads to an infinity of solutions with two parameters.
\end{remark}

Next we make the link between the convergence to the fixed points and the
behaviour of the solutions $w$ near $0$ or $\infty .$

\begin{lemma}
\label{link}Let $w$ be any positive solution of (\ref{sca}) defined near $0$
(resp near $\infty $). Then as $r\longrightarrow 0,t\longrightarrow -\infty $
(resp $r\longrightarrow \infty ,t\longrightarrow \infty )$%
\begin{equation*}
\begin{array}{cccc}
\text{(i)} & \lim (s,z)=M_{0} & \Longrightarrow & w\sim w^{\ast },\qquad
w^{\prime }\sim w^{\prime \ast }, \\ 
\text{(ii)} & \lim (s,z)=N_{0} & \Longrightarrow & \lim w=c>0,\qquad \lim
r^{-\frac{\sigma +1}{\mathbf{p}-1}}w^{\prime }=\left\vert \frac{c}{\mathbf{N}%
+\sigma }\right\vert ^{\frac{q}{p-1}}\text{sign}(-\varepsilon (\mathbf{N}%
+\sigma )), \\ 
\text{(iii)} & \lim (s,z)=(0,0) & \Longrightarrow & \lim w=c>0,\qquad
\lim_{r\rightarrow 0}r^{\frac{\mathbf{N}-1}{\mathbf{p}-1}}w^{\prime }=d\neq
0, \\ 
\text{(iv)} & \lim (s,z)=A_{0} & \Longrightarrow & \lim r^{\frac{\mathbf{N}-%
\mathbf{p}}{\mathbf{p}-1}}w=k>0,\qquad \lim r^{\frac{\mathbf{N}-1}{\mathbf{p}%
-1}}w^{\prime }=k\frac{\mathbf{p}-\mathbf{N}}{\mathbf{p}-1}\text{sign}%
(-\varepsilon z).%
\end{array}%
\end{equation*}
\end{lemma}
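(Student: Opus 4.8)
The backbone of the argument is the single identity $\frac{d}{dt}\ln w = r\frac{w'}{w}=-s$, immediate from the definition $s=-r w'/w$ in (\ref{fid}), together with the recovery formulas (\ref{wwp}), which express $w$ and $w'$ algebraically in $r,s,z$. The plan is to treat each limit point separately. Whenever the limiting values of \emph{both} $s$ and $z$ are nonzero, the asymptotics of $w$ and $w'$ can simply be read off from (\ref{wwp}); the genuine work arises only when one coordinate tends to $0$, in which case the vanishing algebraic factor must be balanced against a power of $r$, and this requires the rate of approach to the fixed point. Case (i) is of the first, easy type: at $M_0=(s_0,z_0)$ one has $s_0=\gamma\neq0$ and $z_0=\mathbf{N}-\mathbf{p}-(\mathbf{p}-1)\gamma\neq0$, these nonvanishing conditions being exactly those under which $w^\ast$ exists. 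Dividing the first formula of (\ref{wwp}) by $r^{-\gamma}$ gives $r^{\gamma}w=(|s|^{\mathbf{p}-1}|z|)^{1/(\mathbf{q}+1-\mathbf{p})}\to(|s_0|^{\mathbf{p}-1}|z_0|)^{1/(\mathbf{q}+1-\mathbf{p})}$, and comparison with (\ref{weto}) shows this limit is $a^\ast$; hence $w\sim a^\ast r^{-\gamma}=w^\ast$, and the analogous computation for the second formula yields $w'\sim w^{\prime\ast}$.

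For case (ii), $\lim(s,z)=N_0=(0,\mathbf{N}+\sigma)$, I would first obtain $\lim w=c>0$ by integrating $\frac{d}{dt}\ln w=-s$: since the orbit converges to $N_0$, the coordinate $s$ decays to $0$ along the admissible direction at the exponential rate set by the eigenvalue $l_1=(\mathbf{p}+\sigma)/(\mathbf{p}-1)$ of Lemma \ref{nat}(ii) (the axis $\{s=0\}$ being nonadmissible), so $s$ is integrable at the relevant end, $\ln w$ has a finite limit, and $w\to c\in(0,\infty)$. The behaviour of $w'$ then follows with no integration: solving the definition $z=-\varepsilon r^{1+\sigma}w^{\mathbf{q}}|w'|^{-\mathbf{p}}w'$ for $w'$ gives $|w'|^{\mathbf{p}-1}=|z|\,r^{-(1+\sigma)}w^{-\mathbf{q}}$ together with $\text{sign}(w')=\text{sign}(-\varepsilon z)$, and inserting $z\to\mathbf{N}+\sigma$, $w\to c$ reproduces the stated limit of $r^{-(\sigma+1)/(\mathbf{p}-1)}w'$ with sign $\text{sign}(-\varepsilon(\mathbf{N}+\sigma))$.

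Cases (iii) and (iv) are those with $z\to0$, where (\ref{wwp}) cannot be substituted naively; here I would track the tilted quantity $r^{s_0}w$, with $s_0$ the first coordinate of the fixed point ($s_0=0$ at $(0,0)$, $s_0=(\mathbf{N}-\mathbf{p})/(\mathbf{p}-1)$ at $A_0$). From $\frac{d}{dt}\ln(r^{s_0}w)=s_0-s=-(s-s_0)$, integrability of $s-s_0$ yields $r^{s_0}w\to k\in(0,\infty)$, which is the asserted limit of $w$ in (iii) and of $r^{(\mathbf{N}-\mathbf{p})/(\mathbf{p}-1)}w$ in (iv). For the gradient I use $w'=-sw/r$, that is $r^{(\mathbf{N}-1)/(\mathbf{p}-1)}w'=-s\,(r^{s_0}w)\,r^{(\mathbf{N}-\mathbf{p})/(\mathbf{p}-1)-s_0}$. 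In (iv) the exponent is $0$ and $s\to s_0$, so the limit is $-s_0k=\frac{\mathbf{p}-\mathbf{N}}{\mathbf{p}-1}k$, as claimed. In (iii) the exponent equals $(\mathbf{N}-\mathbf{p})/(\mathbf{p}-1)$ and $s\to0$, so $d=-c\lim\bigl(s\,r^{(\mathbf{N}-\mathbf{p})/(\mathbf{p}-1)}\bigr)$, and $d\neq0$ is equivalent to $s$ decaying at the exact rate $r^{\rho_1}$, $\rho_1=(\mathbf{p}-\mathbf{N})/(\mathbf{p}-1)$ from Lemma \ref{nat}(iv), with nonzero leading coefficient.

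The main obstacle, and the only nonroutine ingredient, is precisely the control of these rates: one must show the approach to each fixed point is exponential, so that $s-s_0$ is integrable, and in (iii) that the leading coefficient of $s$ does not vanish. Both facts follow from hyperbolicity of the fixed points, which holds away from the excluded limit cases $\mathbf{p}=\mathbf{N}$, $\sigma=-\mathbf{p}$, $\sigma=-\mathbf{N}$, for which all eigenvalues in Lemma \ref{nat} are nonzero, combined with the fact, also recorded there, that the coordinate axes carry only nonadmissible trajectories: an admissible orbit reaching $A_0$ or $(0,0)$ must enter transversally to these axes along the eigendirection with nonzero $s$-component, which forces the coefficient $K\neq0$ needed in (iii). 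The one configuration deserving separate care is the critical value $\mathbf{q}=\mathbf{q}_c$, where $M_0$ merges with $A_0$ and $\mu_2=0$ destroys hyperbolicity; there the rate must be recovered from a center-manifold reduction or from a direct integration of the scalar equation for $s$, and I would handle it within the case-by-case phase-plane analysis of the Appendix.
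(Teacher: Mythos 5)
Your overall strategy is the paper's: reduce everything to rates of convergence of $(s,z)$ to the fixed point, then recover $w,w'$ algebraically; case (i) is handled identically. Within that, your recovery step is a genuine simplification. The paper substitutes the asymptotics of \emph{both} $s$ and $z$ into (\ref{wwp}), and therefore must first prove the exact exponential rate of every vanishing coordinate --- e.g.\ in (ii) it shows $\lim e^{-l_1t}s=C\neq0$ before it can cancel the factor $r^{-\gamma}$, and in (iv) it needs $\lim e^{-\mu_2t}z=D\neq 0$. Your device of integrating $\frac{d}{dt}\ln(r^{s_0}w)=-(s-s_0)$ needs only integrability of $s-s_0$, and your recovery of $w'$ from the definition of $z$ needs only the limits of $z$ and of $w$, so the rate of $z$ is never required. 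One algebraic correction in (ii): solving $z=-\varepsilon r^{1+\sigma}w^{\mathbf q}\left\vert w'\right\vert^{-\mathbf p}w'$ gives $\left\vert w'\right\vert^{\mathbf p-1}=r^{1+\sigma}w^{\mathbf q}/\left\vert z\right\vert$, not $\left\vert z\right\vert r^{-(1+\sigma)}w^{-\mathbf q}$; with your inverted formula you would be computing the limit of $r^{+\frac{\sigma+1}{\mathbf p-1}}w'$ rather than of $r^{-\frac{\sigma+1}{\mathbf p-1}}w'$. The correct formula does yield the stated limit, so this is only a slip.

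The substantive problem is the one point you yourself single out as nonroutine: that in (iii) the constant $\lim e^{-\rho_1t}s$ is nonzero. Your justification --- an admissible orbit ``must enter transversally to these axes along the eigendirection with nonzero $s$-component'' --- is dynamically inaccurate: when $\rho_1$ is the faster of the two exponents, admissible orbits approach $(0,0)$ \emph{tangent} to the $z$-axis, not transversally to it, and the conclusion nevertheless holds; conversely, extracting exact asymptotic constants from hyperbolicity or Hartman--Grobman linearization is delicate in general (resonances $\rho_2=m\rho_1$ can generate logarithmic corrections). The paper avoids all invariant-manifold theory with an elementary two-pass bootstrap exploiting the multiplicative structure of (\ref{pspz}) (both axes are invariant, so $s_t/s$ and $z_t/z$ are smooth along admissible orbits): convergence to the fixed point first gives $s_t/s=\rho_1+o(1)$ and $z_t/z=\rho_2+o(1)$, hence crude exponential bounds on $\left\vert s\right\vert,\left\vert z\right\vert$; feeding these back makes $\frac{d}{dt}\bigl(\ln\left\vert s\right\vert-\rho_1t\bigr)=s+\frac{z}{\mathbf p-1}$ integrable at the relevant end, so $e^{-\rho_1t}s$ converges, and its limit is $s(t_0)e^{-\rho_1t_0}$ times the exponential of a convergent integral --- nonzero automatically. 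The same two-pass argument supplies the integrability you invoke in (ii) and (iv). Your proof becomes complete, and essentially coincides with the paper's, once this bootstrap replaces the hyperbolicity appeal; your closing caveat about $\mathbf q=\mathbf q_c$ (where $\mu_2=0$) is consistent with the paper, which treats that degenerate case separately in Theorem \ref{qccrit}.
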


\begin{lemma}
\label{glo} Let $w$ be any local solution in $(0,r_{0})$ (resp. in $%
(r_{0},\infty ).$ Then the associated trajectory in the phase plane is
bounded as $t\longrightarrow -\infty $ (resp. $t\longrightarrow \infty ).$
It converges to one of the fixed points of the system, or has a limit cycle
around $M_{0}$ (when $\mathbf{q}=\mathbf{q}_{S}$).
\end{lemma}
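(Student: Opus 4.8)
The plan is to read off from the a priori estimate of Lemma~\ref{osse} a bound on one algebraic combination of $(s,z)$, upgrade it to boundedness of the whole trajectory by a finite-time blow-up comparison on the quadratic system (\ref{pspz}), and then invoke the Poincaré–Bendixson theorem in the plane, excluding closed orbits outside the critical exponent by a Dulac integrating factor.

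\textbf{Step 1 (invariant quadrant and a product bound).} Since $\{s=0\}$ and $\{z=0\}$ are invariant lines of (\ref{pspz}) (they carry the nonadmissible trajectories of Lemma~\ref{nat}) and $sz$ has the sign of $\varepsilon$ by Remark~\ref{deri}, the trajectory stays in a single open quadrant, where $w'$ has constant sign, so $(s,z)$ is defined on a whole half-line of $t$. From the recovery formula (\ref{wwp}) one gets $|s|^{\mathbf{p}-1}|z|=r^{\mathbf{p}+\sigma}w^{\mathbf{q}+1-\mathbf{p}}$, and since $\gamma(\mathbf{q}+1-\mathbf{p})=\mathbf{p}+\sigma$, the Osserman bound $w\le C r^{-\gamma}$ of Lemma~\ref{osse} yields $|s(t)|^{\mathbf{p}-1}|z(t)|\le C_{0}$ for $t$ near $-\infty$ (resp. $+\infty$).

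\textbf{Step 2 (boundedness).} The product bound confines $(s,z)$ to a region in which escape to infinity can only happen along one of the channels $|s|\to\infty,|z|\to0$ or $|s|\to0,|z|\to\infty$. In $Q_{1}$, say, on $\{s>M\}$ the product bound forces $z$ small, so $s_{t}=s(\tfrac{\mathbf{p}-\mathbf{N}}{\mathbf{p}-1}+s+\tfrac{z}{\mathbf{p}-1})\ge\tfrac12 s^{2}$ for $M$ large: thus $\{s>M\}$ is forward invariant with $s$ increasing, whence $\limsup_{t\to-\infty}s<\infty$. On $\{z>M'\}$ one has $z_{t}\le z(\mathbf{N}+\sigma-z)\le-\tfrac12 z^{2}$, a Riccati inequality forcing $z$ to blow up in finite backward time; since the trajectory is defined for all $t<\ln r_{0}$, this is impossible, so $z\le M'$ uniformly. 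The remaining quadrants are identical after passing to $|s|,|z|$ (the quadratic terms $\pm s^{2},\mp z^{2}$ always give the needed sign), and the case $t\to+\infty$ on $(r_{0},\infty)$ is symmetric, the roles of the two channels being exchanged. Hence the trajectory is bounded.

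\textbf{Step 3 (Poincaré–Bendixson and the Dulac factor).} Being bounded, the trajectory has a nonempty compact connected invariant limit set in the closed quadrant, which by Poincaré–Bendixson is a fixed point, a periodic orbit, or a graphic of fixed points joined by connecting orbits. To remove the last two away from the critical case I use the Dulac function $B=|s|^{a}|z|^{b}$, with $a,b$ chosen so that the two non-constant terms of $\operatorname{div}(B\mathbf{F})$ cancel, i.e. $a+2=(b+1)\mathbf{q}$ and $a+1=(\mathbf{p}-1)(b+2)$; this is solvable because $\mathbf{q}>\mathbf{p}-1$. A direct computation then gives $\operatorname{div}(B\mathbf{F})=K\,|s|^{a}|z|^{b}$ with $K=\dfrac{(\mathbf{N}-\mathbf{p})(\mathbf{q}_{S}-\mathbf{q})}{\mathbf{q}+1-\mathbf{p}}$, so $K=0$ exactly when $\mathbf{q}=\mathbf{q}_{S}$ (equivalently, when the trace $s_{0}-z_{0}$ in (\ref{valp}) vanishes). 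For $\mathbf{q}\neq\mathbf{q}_{S}$, $\operatorname{div}(B\mathbf{F})$ is of one sign and nonvanishing on the simply connected open quadrant, so the Bendixson–Dulac flux identity excludes periodic orbits and graphics there, and the trajectory must converge to one of $M_{0},N_{0},A_{0},(0,0)$. For $\mathbf{q}=\mathbf{q}_{S}$ the field $B\mathbf{F}$ is divergence free, hence Hamiltonian, $M_{0}$ is a center, and the trajectory lies on a closed orbit around $M_{0}$.

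\textbf{Main obstacle.} The delicate point is Step~2: upgrading the single product bound from Lemma~\ref{osse} to separate control of $s$ and $z$ in every quadrant. The finite-time blow-up/forward-invariance comparison does this, but the signs must be tracked case by case. A second, more structural difficulty is excluding \emph{boundary} heteroclinic cycles whose arcs run along the invariant axes, since there the Dulac weight $|s|^{a}|z|^{b}$ degenerates; these are ruled out by combining the flux identity with the eigenvalue configuration at $N_{0},A_{0},(0,0)$ recorded in Lemma~\ref{nat}.
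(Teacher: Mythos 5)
Your proposal is correct in substance and, beyond the shared first step, takes a genuinely different route from the paper. Your Step 1 is exactly the paper's opening move: the paper also combines Lemma \ref{osse} with (\ref{wwp}) to conclude that $\Phi =\left\vert s\right\vert ^{\mathbf{p}-1}\left\vert z\right\vert $ is bounded. From there the paper stays with this single scalar quantity: it computes $\Phi _{t}=\Phi (\mathbf{p}+\sigma -(\mathbf{q}+1-\mathbf{p})s)$ and rules out unboundedness of $s$ by a dichotomy (either $\left\vert s\right\vert $ is eventually monotone and tends to infinity, which the paper argues is incompatible with the bound on $\Phi $, or there is a sequence $t_{n}$ of extrema of $\left\vert s\right\vert $ with $\left\vert s(t_{n})\right\vert \rightarrow \infty $, where $s_{t}(t_{n})=0$ forces $s(t_{n})z(t_{n})<0$ and then $s_{tt}(t_{n})s(t_{n})>0$, impossible at such extrema); boundedness of $z$ then follows directly from the system. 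Your alternative -- splitting escape to infinity into the two channels permitted by the product bound and killing each by a Riccati comparison giving finite-time blow-up, incompatible with the trajectory being defined on a full half-line where $w>0$ and $w^{\prime }\neq 0$ -- is more elementary and equally valid; I checked that the mechanism closes in every quadrant (in $Q_{2},Q_{3}$ the $s$-channel requires the backward-in-time blow-up, while in $Q_{1},Q_{4}$ it is the forward one), which is exactly the sign bookkeeping you flag. On the limit-set step your argument is in fact more complete than the paper's: the paper merely asserts, from boundedness and the fact that $N_{0}$, $A_{0}$, $(0,0)$ lie on the invariant axes, that a cycle can only surround $M_{0}$ and only when $\mathbf{q}=\mathbf{q}_{S}$, without proving the last claim; your Dulac function proves it, and the computation is right: the exponent equations $a+2=(b+1)\mathbf{q}$ and $a+1=(\mathbf{p}-1)(b+2)$ are solvable because $\mathbf{q}>\mathbf{p}-1$, and the resulting constant is $K=(\mathbf{N}-\mathbf{p})(\mathbf{q}_{S}-\mathbf{q})/(\mathbf{q}+1-\mathbf{p})$, vanishing exactly at $\mathbf{q}=\mathbf{q}_{S}$ (the limit case $\mathbf{p}=\mathbf{N}$ being excluded in this subsection). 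This makes rigorous what the paper leaves to the Pohozaev-type energies mentioned after Theorem \ref{qccrit}.

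Two cautions. In the critical case your final sentence overstates: divergence-freeness of $B\mathbf{F}$ gives a center at $M_{0}$ and a family of closed orbits, but an individual bounded trajectory may still converge to a fixed point -- for instance the explicit solutions (\ref{ground}), which exist precisely when $\mathbf{q}=\mathbf{q}_{S}$, correspond to a trajectory joining $N_{0}$ to $A_{0}$ -- so the conclusion must remain the dichotomy of the lemma, not that every trajectory is closed. Second, the boundary-graphic degeneracy you identify is genuine, but the paper does not address it at all, so it is not a gap relative to the paper's own proof; your proposed resolution (the flux identity combined with the eigenvalue data of Lemma \ref{nat}, the essential point being that the axes are invariant and carry only nonadmissible trajectories) is the right way to complete it.
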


\subsection{Study of regions \textbf{A} and \textbf{F }}

We first consider region $\mathbf{A,}$ where the behaviour of the system (%
\ref{pspz}) has been described in \cite{BiGi}, and we deduce the following:

\begin{theorem}
\label{orange}Let $\mathbf{N}>\mathbf{p}>-\sigma $ (region $\mathbf{A}$).

There exist local ($C^{0}$-regular) solutions: for $\varepsilon =\pm 1$ and
any $w_{0}>0$ there exists a unique solution such that 
\begin{equation*}
\lim_{r\longrightarrow 0}w=w_{0}>0,\qquad \lim_{r\longrightarrow 0}r^{-\frac{%
\sigma +1}{\mathbf{p}-1}}w^{\prime }=-\varepsilon c(w_{0}),\quad c(w_{0})>0.
\end{equation*}

\noindent (1) Let $\mathbf{p}-1<\mathbf{q}<\mathbf{q}_{c}.$

$\bullet $ For $\varepsilon =\pm 1$ and any $k>0,$ there exists an infinity
of local solutions near 0 such that 
\begin{equation*}
\lim_{r\rightarrow 0}r^{^{\frac{\mathbf{N-p}}{\mathbf{p}-1}}}w=k>0.\qquad
\lim_{r\longrightarrow 0}r^{\frac{\mathbf{N}-1}{\mathbf{p}-1}}w^{\prime
}=-c(k)<0.
\end{equation*}

$\bullet $ For $\varepsilon =1,$ there is no positive solution in $%
(r_{0},\infty ),$ $r_{0}>0$.

$\bullet $ For $\varepsilon =-1,$ there exists a global particular solution $%
w^{\ast }=a^{\ast }r^{-\frac{\mathbf{p}+\sigma }{\mathbf{q}+1-\mathbf{p}}}.$
Moreover there exist solutions such that 
\begin{equation*}
\lim_{r\rightarrow 0}r^{^{\frac{\mathbf{N-p}}{\mathbf{p}-1}}}w=k>0,\qquad
\lim_{r\rightarrow \infty }r^{\gamma }w=a^{\ast }.
\end{equation*}

\noindent (2) Let $\mathbf{q}>\mathbf{q}_{c}.$

$\bullet $ For $\varepsilon =\pm 1$ and $k>0,$ there exists a local solution
near $\infty $, unique up to a scaling, such that 
\begin{equation*}
\lim_{r\rightarrow \infty }r^{^{\frac{\mathbf{N}-\mathbf{p}}{\mathbf{p}-1}%
}}w=k>0,\qquad \lim_{r\longrightarrow \infty }r^{\frac{\mathbf{N}-1}{\mathbf{%
p}-1}}w^{\prime }=-c(k)<0.
\end{equation*}

$\bullet $ For $\varepsilon =-1,$ all the local solutions near 0 are $C^{0}$%
-regular (the singularity is called removable), and not global.

$\bullet $ For $\varepsilon =1,$ there is a particular solution $w^{\ast
}=a^{\ast }r^{-\frac{\mathbf{p}+\sigma }{\mathbf{q}+1-\mathbf{p}}}.$ Moreover

(i) either $\mathbf{q}<\mathbf{q}_{S}$, there exists no $C^{0}$- ground
state and there exist solutions such that%
\begin{equation*}
\lim_{r\rightarrow 0}r^{\gamma }w=a^{\ast },\qquad \lim_{r\rightarrow \infty
}r^{^{\frac{\mathbf{p}-\mathbf{N}}{\mathbf{p}-1}}}w=k>0.
\end{equation*}

(ii) or $\mathbf{q}>\mathbf{q}_{S}$ and there exist $C^{0}$-ground states,
such that%
\begin{equation*}
\lim_{r\rightarrow 0}w=w_{0}>0,\qquad \lim_{r\rightarrow \infty }r^{\gamma
}w=a^{\ast }.
\end{equation*}

(iii) or $\mathbf{q}=\mathbf{q}_{S}$ and there is a family of explicit (well
known) $C^{0}$-ground states\label{gsw}: 
\begin{equation}
w=c(d+r^{\frac{\mathbf{p}+\sigma }{\mathbf{p}-1}})^{-\frac{\mathbf{N}-%
\mathbf{p}}{\mathbf{p}+\sigma }},\qquad d=c^{\mathbf{q}-\mathbf{p}+1}(%
\mathbf{N}+\sigma )^{-1}(\frac{\mathbf{N}-\mathbf{p}}{\mathbf{p}-1})^{1-%
\mathbf{p}}.  \label{ground}
\end{equation}
\end{theorem}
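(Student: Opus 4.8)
The plan is to read off every behaviour listed in the statement from a complete phase portrait of the quadratic system \eqref{pspz} in region $\mathbf A$. The three structural inputs are Lemma \ref{nat} (the position and type of the four fixed points $M_0,N_0,A_0,(0,0)$), Lemma \ref{link} (the dictionary translating convergence of a trajectory to a given fixed point as $t\to-\infty$ or $t\to+\infty$ into the asymptotic behaviour of $w$ at $0$ or $\infty$), and Lemma \ref{glo} (every local trajectory stays bounded and converges to a fixed point, the only exception being a limit cycle when $\mathbf q=\mathbf q_S$). In region $\mathbf A$ the sign data are $\mathbf N-\mathbf p>0$, $\mathbf p+\sigma>0$, $\mathbf N+\sigma>0$; by Remark \ref{deri} the admissible trajectories lie in $Q_1\cup Q_3$ when $\varepsilon=1$ and in $Q_2\cup Q_4$ when $\varepsilon=-1$, and near $0$ the source solutions are decreasing (so they sit in $Q_1$) while the absorption solutions regular at $0$ are increasing (so they sit in $Q_2$).

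First I would produce the $C^0$-regular local solutions, which exist for both signs of $\varepsilon$ and all exponents. They correspond to $N_0=(0,\mathbf N+\sigma)$, which by Lemma \ref{nat}(ii) is a saddle here, with positive eigenvalue $l_1=(\mathbf p+\sigma)/(\mathbf p-1)$ carrying the only admissible direction (the axis $\{s=0\}$, attached to $l_2=-(\mathbf N+\sigma)<0$, being forbidden). Its unstable manifold has two admissible branches, one entering $Q_1$ and one entering $Q_2$; each is a single trajectory converging to $N_0$ as $t\to-\infty$, yielding via Lemma \ref{link}(ii) a solution with $\lim_{r\to0}w=w_0>0$ and the stated first-order behaviour, the sign of $w'$ being fixed by $\varepsilon$. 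Arbitrariness of $w_0$ follows from the scale invariance of \eqref{sca} (the trajectory is scale-invariant, so $w_0$ is a free parameter).

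Next I would split according to the sign of $z_0$, i.e. the position of $\mathbf q$ relative to $\mathbf q_c$. If $\mathbf p-1<\mathbf q<\mathbf q_c$ then $z_0<0$, so $M_0\in Q_4$ and $s_0z_0<0$; hence by Lemma \ref{nat}(i) $M_0$ is a saddle and exists only for $\varepsilon=-1$, while $A_0$ is an unstable node (both eigenvalues positive since $\mathbf q<\mathbf q_c$). For $\varepsilon=-1$ the point $M_0$ itself is the global $w^\ast$, and the orbit running from $A_0$ (as $t\to-\infty$, i.e. near $0$) into the one-dimensional stable manifold of the saddle $M_0$ (as $t\to+\infty$) gives the solutions singular at $0$ like $r^{-(\mathbf N-\mathbf p)/(\mathbf p-1)}$ and behaving like $a^\ast r^{-\gamma}$ at infinity; the node $A_0$ emits a one-parameter family into both $Q_1$ and $Q_4$, giving the infinity of local solutions near $0$ for $\varepsilon=\pm1$ through Lemma \ref{link}(iv). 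For $\varepsilon=1$ the nonexistence of a solution on $(r_0,\infty)$ is the clean part: by Lemma \ref{glo} its forward trajectory would converge to a fixed point of $\overline{Q_1\cup Q_3}$, but $A_0$ is a source, the stable manifolds of the saddles $N_0$ and $(0,0)$ lie on the forbidden axes, and $M_0\notin Q_1\cup Q_3$, a contradiction.

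Finally, for $\mathbf q>\mathbf q_c$ one has $z_0>0$, so $M_0\in Q_1$ with $s_0z_0>0$ (no longer a saddle) while $A_0$ becomes a saddle; the stable manifold of $A_0$ gives the local solution near $\infty$ of Lemma \ref{link}(iv), and for $\varepsilon=-1$ the absence of $M_0$ from $Q_2\cup Q_4$ forces all local solutions near $0$ to be $C^0$-regular and non-global. The remaining dichotomy for $\varepsilon=1$ is governed by the trace of the linearization at $M_0$, which equals $s_0-z_0=\mathbf p\gamma+\mathbf p-\mathbf N$ and vanishes exactly when $\gamma=(\mathbf N-\mathbf p)/\mathbf p$, i.e. when $\mathbf q=\mathbf q_S$; thus $M_0$ is a source for $\mathbf q<\mathbf q_S$ (yielding the orbit $M_0\to A_0$ and, since no orbit from $N_0$ can reach a decaying state, no ground state) and a sink for $\mathbf q>\mathbf q_S$ (yielding the heteroclinic $N_0\to M_0$, i.e. the $C^0$-ground states). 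The critical case $\mathbf q=\mathbf q_S$ I would settle by exhibiting the explicit family \eqref{ground} and verifying it by direct substitution. I expect the genuine difficulty to be exactly the \emph{global} existence of these heteroclinic connections: the eigenvalue count produces the stable and unstable manifolds but not that they match up, so each orbit must be obtained by building invariant regions bounded by the nullclines and the quadrant axes (or by invoking the detailed region-$\mathbf A$ analysis of \cite{BiGi}); the most delicate point is distinguishing a genuine limit cycle from a center at $\mathbf q=\mathbf q_S$, which is precisely why the explicit solutions \eqref{ground} are indispensable.
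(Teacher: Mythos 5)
Your proposal follows essentially the same route as the paper: a phase-plane analysis of system (\ref{pspz}) in region $\mathbf{A}$ built on Lemma \ref{nat} (fixed-point types), Lemma \ref{link} (translation of convergence to fixed points into asymptotics of $w$), and Lemma \ref{glo} (boundedness and convergence of local trajectories), with the case split governed by the sign of $z_{0}$ (i.e. $\mathbf{q}$ versus $\mathbf{q}_{c}$) and by the trace at $M_{0}$ (i.e. $\mathbf{q}$ versus $\mathbf{q}_{S}$). The paper's own proof is in fact terser than yours---it cites \cite{BiGi} for the detailed portrait and treats the ground-state dichotomy for $\varepsilon =1$, $\mathbf{q}\gtrless \mathbf{q}_{S}$ as well known---and these are exactly the two points you flag as the genuine difficulties, so your fallback on invariant regions or on \cite{BiGi} there is consistent with what the paper actually does.
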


In the case $\mathbf{N}>\mathbf{p}>-\sigma ,$ $\mathbf{q}=\mathbf{q}_{c}$, a
precise local behaviour of logarithmic type was in great part studied for $%
\sigma =0,\varepsilon =1$ in \cite{Avi}, \cite[Theorem 4.1]{GuVe}, and for $%
\varepsilon =1$ in \cite{Ve1}; the existence of such solutions was not
clear. We give below a complete description:

\begin{theorem}
\label{qccrit} (1) Let $\mathbf{N}>\mathbf{p}>-\sigma $ and $\mathbf{q}=%
\mathbf{q}_{c}.$ Then

$\bullet $ For $\varepsilon =\pm 1$ and any $w_{0}>0$ there exist a unique
solution such that 
\begin{equation*}
\lim_{r\longrightarrow 0}w=w_{0}>0,\qquad \lim_{r\longrightarrow 0}r^{-\frac{%
\sigma +1}{\mathbf{p}-1}}w^{\prime }=-\varepsilon c(w_{0}),\quad c(w_{0})>0.
\end{equation*}

$\bullet $ For $\varepsilon =\pm 1$ there exist an infinity of local
solutions near 0 such that 
\begin{equation*}
\lim_{r\longrightarrow 0}(r\left\vert \ln r\right\vert ^{\frac{1}{\mathbf{p}%
+\sigma }})^{^{\frac{\mathbf{N-p}}{\mathbf{p}-1}}}w=c(\mathbf{N},\mathbf{p}%
)>0,\qquad \lim_{r\longrightarrow 0}r^{\frac{\mathbf{N}-1}{\mathbf{p}-1}%
}\left\vert \ln r\right\vert ^{\frac{\mathbf{N-p}}{(\mathbf{p}+\sigma )(%
\mathbf{p}-1)}}w^{\prime }=-\frac{\mathbf{N}-\mathbf{p}}{\mathbf{p}-1}c(%
\mathbf{N},\mathbf{p}),
\end{equation*}%
where $c(\mathbf{N},\mathbf{p})=((\frac{\mathbf{N}-\mathbf{p}}{\mathbf{p}-1}%
)^{\mathbf{p}}\frac{\mathbf{p}-1}{\mathbf{p+\sigma }})^{\frac{\mathbf{N}-%
\mathbf{p}}{(\mathbf{p}-1)(\mathbf{p+\sigma )}}}.$

$\bullet $ For $\varepsilon =1,$ there is no positive solution in $%
(r_{0},\infty ),$ $r_{0}>0$.

$\bullet $ For $\varepsilon =-1,$ there exist an infinity of solutions such
that 
\begin{equation*}
\lim_{r\longrightarrow \infty }(r\left\vert \ln r\right\vert ^{\frac{1}{%
\mathbf{p}+\sigma }})^{^{\frac{\mathbf{N-p}}{\mathbf{p}-1}}}w=c(\mathbf{N},%
\mathbf{p}),\qquad \lim_{r\longrightarrow \infty }r^{\frac{\mathbf{N}-1}{%
\mathbf{p}-1}}\left\vert \ln r\right\vert ^{\frac{\mathbf{N-p}}{(\mathbf{p}%
+\sigma )(\mathbf{p}-1)}}\left\vert w^{\prime }\right\vert =\frac{\mathbf{N}-%
\mathbf{p}}{\mathbf{p}-1}c(\mathbf{N},\mathbf{p}).
\end{equation*}
\end{theorem}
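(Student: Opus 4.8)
The plan is to read everything off from the phase-plane system (\ref{pspz}) together with the recovery formula (\ref{wwp}), exploiting the single degeneracy that occurs at $\mathbf{q}=\mathbf{q}_{c}$. The crucial structural fact is that, by (\ref{fixe}), the $z$-coordinate of $M_{0}$ is $z_{0}=\frac{(\mathbf{N}-\mathbf{p})(\mathbf{q}-\mathbf{q}_{c})}{\mathbf{q}+1-\mathbf{p}}$, so at $\mathbf{q}=\mathbf{q}_{c}$ the points $M_{0}$ and $A_{0}$ coalesce into the single fixed point $A_{0}=(\frac{\mathbf{N}-\mathbf{p}}{\mathbf{p}-1},0)$, whose eigenvalues (Lemma \ref{nat}(iii)) degenerate to $\mu_{1}=\frac{\mathbf{N}-\mathbf{p}}{\mathbf{p}-1}>0$ and $\mu_{2}=0$. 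The regular solutions (first bullet) are governed by $N_{0}=(0,\mathbf{N}+\sigma)$, whose eigenvalues $l_{1}=\frac{\mathbf{p}+\sigma}{\mathbf{p}-1}>0$, $l_{2}=-(\mathbf{N}+\sigma)<0$ do not feel the value of $\mathbf{q}_{c}$; hence existence and uniqueness of the $C^{0}$-regular solution with prescribed $w_{0}$ is literally the statement already established in Theorem \ref{orange}, coming from the one-dimensional unstable manifold of the saddle $N_{0}$ together with the scaling invariance. All the new content therefore lies in the non-hyperbolic point $A_{0}$.

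For the logarithmic solutions I would carry out a center-manifold reduction at $A_{0}$. Setting $\tilde{s}=s-\frac{\mathbf{N}-\mathbf{p}}{\mathbf{p}-1}$, $\tilde{z}=z$, the linearization has eigenvector $(1,0)$ for $\mu_{1}$ and $(-1,\mathbf{p}-1)$ for $\mu_{2}=0$, and the axis $\{z=0\}$ carries the (nonadmissible) unstable direction. On the one-dimensional center manifold $\tilde{s}=h(\tilde{z})=-\frac{1}{\mathbf{p}-1}\tilde{z}+O(\tilde{z}^{2})$ the reduced flow is $\tilde{z}_{t}=a\,\tilde{z}^{2}+O(\tilde{z}^{3})$ with $a=\frac{\mathbf{q}-\mathbf{p}+1}{\mathbf{p}-1}>0$, which at criticality equals $\frac{\mathbf{p}+\sigma}{\mathbf{N}-\mathbf{p}}$; observe that the quadratic correction of $h$ only touches the $O(\tilde{z}^{3})$ term and leaves this leading coefficient intact. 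Integrating gives $\tilde{z}\sim \frac{1}{a|t|}$. For $\varepsilon=1$ admissibility forces $\tilde{z}>0$ (quadrant $Q_{1}$), so $\tilde{z}_{t}>0$ and $\tilde{z}\to 0^{+}$ as $t\to-\infty$, i.e. near $r=0$; for $\varepsilon=-1$ admissibility forces $\tilde{z}<0$ (quadrant $Q_{4}$) and the same sign of $\tilde{z}_{t}$ gives $\tilde{z}\to 0^{-}$ as $t\to+\infty$, i.e. near $r=\infty$. This explains why the logarithmic family sits near $0$ in the source case and near $\infty$ in the absorption case.

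It then remains to translate these rates into asymptotics for $w$. Feeding $s\to\frac{\mathbf{N}-\mathbf{p}}{\mathbf{p}-1}$, $|z|\sim\frac{\mathbf{N}-\mathbf{p}}{(\mathbf{p}+\sigma)|t|}$ and $\gamma=\frac{\mathbf{N}-\mathbf{p}}{\mathbf{p}-1}$ (the value of $\gamma$ at $\mathbf{q}=\mathbf{q}_{c}$) into (\ref{wwp}) with $t=\ln r$ yields $w\sim c(\mathbf{N},\mathbf{p})\,r^{-\frac{\mathbf{N}-\mathbf{p}}{\mathbf{p}-1}}|\ln r|^{-\frac{\mathbf{N}-\mathbf{p}}{(\mathbf{p}-1)(\mathbf{p}+\sigma)}}$, which is exactly the stated limit once $r|\ln r|^{1/(\mathbf{p}+\sigma)}$ is grouped; the constant $c(\mathbf{N},\mathbf{p})=((\frac{\mathbf{N}-\mathbf{p}}{\mathbf{p}-1})^{\mathbf{p}}\frac{\mathbf{p}-1}{\mathbf{p}+\sigma})^{\frac{\mathbf{N}-\mathbf{p}}{(\mathbf{p}-1)(\mathbf{p}+\sigma)}}$ is just the regrouping of the prefactor $(|s|^{\mathbf{p}-1}|z|)^{1/(\mathbf{q}+1-\mathbf{p})}$, and the companion estimate for $w^{\prime}$ follows from the second relation in (\ref{wwp}) (here $\gamma+1=\frac{\mathbf{N}-1}{\mathbf{p}-1}$, and $\text{sign}(-\varepsilon z)$ produces the minus sign). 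Since every admissible trajectory entering a neighborhood of $A_{0}$ with $\tilde{z}$ of the correct sign converges to $A_{0}$ (the $\mu_{1}$-component decays exponentially and is dominated by the algebraic center component), one obtains an \emph{infinity} of such solutions, all sharing the same universal leading constant.

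Finally, the nonexistence of a positive solution in $(r_{0},\infty)$ for $\varepsilon=1$ (third bullet): by Lemma \ref{glo} such a solution would give a trajectory that is bounded and, since $\mathbf{q}_{c}\neq\mathbf{q}_{S}$ excludes the limit cycle, converges to a fixed point as $t\to+\infty$; but in $Q_{1}$ the center dynamics $\tilde{z}_{t}=a\tilde{z}^{2}>0$ \emph{repel} from $A_{0}$ in forward time, while convergence to $N_{0}$ or to $(0,0)$ as $t\to+\infty$ is incompatible with their eigenvalue and admissibility structure (as in the subcritical case of Theorem \ref{orange}), so no such trajectory exists. The main obstacle throughout is precisely the zero eigenvalue at $A_{0}$: hyperbolic stable/unstable manifold theory supplies no approach rate, so one must justify the center-manifold reduction and then extract the \emph{exact} logarithmic rate and constant from the polynomial flow $\tilde{z}_{t}=a\tilde{z}^{2}+O(\tilde{z}^{3})$, verifying that both the $O(\tilde{z}^{3})$ terms and the exponentially small $\mu_{1}$-direction are genuinely negligible at leading order. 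This is where the care of the argument concentrates.
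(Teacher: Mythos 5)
Your proposal follows essentially the same route as the paper: you identify the coalescence of $M_{0}$ and $A_{0}$ at $\mathbf{q}=\mathbf{q}_{c}$, perform the center-manifold reduction with reduced flow $\tilde{z}_{t}=a\tilde{z}^{2}+O(\tilde{z}^{3})$, $a=\frac{\mathbf{q}+1-\mathbf{p}}{\mathbf{p}-1}=\frac{\mathbf{p}+\sigma}{\mathbf{N}-\mathbf{p}}$, integrate to get $|\tilde z|\sim\frac{\mathbf{N}-\mathbf{p}}{(\mathbf{p}+\sigma)|t|}$, and recover the stated rates and the constant $c(\mathbf{N},\mathbf{p})$ from (\ref{wwp}) exactly as the paper does (you even correct the paper's typo: the $\mu_{1}$-eigenvector is $(1,0)$, the axis $\{z=0\}$, not $(0,1)$). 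Your argument for the third bullet (no solution in $(r_{0},\infty)$ for $\varepsilon=1$), which the paper leaves implicit, is sound: forward convergence to $A_{0}$ is excluded by the repelling center/unstable structure in $Q_{1}$, and forward convergence to $N_{0}$ or $(0,0)$ would force the trajectory onto a nonadmissible axis. Moreover, for $\varepsilon=-1$ your derivation of existence directly from the $\tilde z<0$ branch of the center manifold is more self-contained than the paper's, which instead invokes the minimization construction of \cite{Ve1} (for $\sigma=0$, then extended) to produce a trajectory converging to $M_{0}$ as $t\to\infty$.

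There is, however, one genuinely wrong step: the blanket claim that ``every admissible trajectory entering a neighborhood of $A_{0}$ with $\tilde{z}$ of the correct sign converges to $A_{0}$, the $\mu_{1}$-component decaying exponentially.'' This is correct only in the case $\varepsilon=1$, where convergence is as $t\to-\infty$: in backward time $\mu_{1}>0$ becomes contracting and the reduction principle applies (the paper makes this rigorous with its explicit negatively invariant region bounded by the regular trajectory $\mathcal{T}_{0}$ and $\{s\leq\frac{\mathbf{N}-\mathbf{p}}{\mathbf{p}-1}\}$; you should similarly guarantee that backward orbits stay in the neighborhood). For $\varepsilon=-1$ the convergence is as $t\to+\infty$, and there $\mu_{1}>0$ is expanding: any trajectory with a nonzero component transverse to the center manifold is expelled along the unstable direction, so the forward-stable set in $Q_{4}$ is exactly the (unique, on the side $\tilde z<0$) center-manifold branch, i.e.\ a single trajectory in the phase plane. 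The ``infinity'' of solutions in that case does not come from an open set of trajectories but from the translation invariance $t\mapsto t+c$ (scaling in $r$), one trajectory corresponding to a one-parameter family of solutions $w$ --- which is also why the paper obtains a two-parameter family for $\varepsilon=1$ but only a one-parameter family for $\varepsilon=-1$. Your conclusion survives because the center-manifold branch you constructed does converge forward, but the justification as written conflates the two time directions.
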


\begin{remark}
\textbf{\ }As mentioned in \cite[Remark 3.2]{BiGi}, some energy functions of
Pohozaev-type linked to the equation are well known. In fact they are valid
for any values of the parameters $\mathbf{p,q,N,}\sigma ,$ and for the two
equations $(\varepsilon =\pm 1)$:%
\begin{equation*}
F_{\theta }(r)=r^{\mathbf{N}}(\frac{(\mathbf{p}-1)\left\vert w^{\prime
}\right\vert ^{\mathbf{p}}}{\mathbf{p}}+\varepsilon r^{\sigma }\frac{w^{%
\mathbf{q}+1}}{\mathbf{q}+1}+\theta \frac{w\left\vert w^{\prime }\right\vert
^{\mathbf{p}-2}w^{\prime }}{r})=r^{\mathbf{N}-\mathbf{p}}w^{\mathbf{p}%
}\left\vert s\right\vert ^{\mathbf{p}-2}s(\frac{(\mathbf{p}-1)s}{\mathbf{p}}+%
\frac{z}{\mathbf{q}+1}-\theta ),
\end{equation*}%
either with $\theta =\frac{\mathbf{N}-\mathbf{p}}{\mathbf{p}}$ or $\theta =%
\frac{\mathbf{N}+\sigma }{\mathbf{q}+1},$ satisfying respectively 
\begin{equation*}
F_{\frac{\mathbf{N}-\mathbf{p}}{\mathbf{p}}}^{\prime }(r)=r^{\mathbf{N}%
-1+\sigma }(\frac{\mathbf{N}+\sigma }{\mathbf{q}+1}-\frac{\mathbf{N}-\mathbf{%
p}}{\mathbf{p}})w^{\mathbf{q}+1},\qquad F_{\frac{\mathbf{N}+\sigma }{\mathbf{%
q}+1}}^{\prime }(r)=r^{\mathbf{N}-1}(\frac{\mathbf{N}+\sigma }{\mathbf{q}+1}-%
\frac{\mathbf{N}-\mathbf{p}}{\mathbf{p}})\left\vert w^{\prime }\right\vert ^{%
\mathbf{p}},
\end{equation*}%
Other type of functions can be computed as the ones obtained for $\sigma =0$
in \cite[Proposition 2.2]{Bi}, coinciding with the functions above when $%
\mathbf{q}=\mathbf{q}_{S}.$ In particular $\mathbf{q}=\mathbf{q}_{S}$ is the
case of constant energy, leading to the existence of the ground states
mentioned above at (\ref{ground}) when $\varepsilon =1$, and to explicit
local solutions on $\left[ 0,r_{0}\right) $ and on $\left[ r_{0},\infty
\right) $ when $\varepsilon =-1.$
\end{remark}

From Lemma \ref{reduc}, we deduce the complete behaviour in region $\mathbf{F%
}$. It offers a new striking result in case $\mathbf{q}=\mathbf{q}_{S}$ of
existence of explicit $C^{0}$-ground states, increasing and bounded at $%
\infty :$

\begin{theorem}
\label{hypF} Let $-\sigma >\mathbf{p}>\mathbf{N}$ (region $\mathbf{F}$).
Then all the conclusions of Theorems \ref{orange} (for $\mathbf{q}\neq 
\mathbf{q}_{c}$) and \ref{qccrit} (for $\mathbf{q}=\mathbf{q}_{c}$) apply
after changing $r$ into $\frac{1}{r}.$ In particular for $\varepsilon =\pm 1$
and any $c>0$ there exist a unique local solution near $\infty $ such that 
\begin{equation*}
\lim_{r\longrightarrow \infty }w=C>0,\qquad \lim_{r\longrightarrow \infty
}r^{-\frac{\sigma +1}{\mathbf{p}-1}}w^{\prime }=\varepsilon d(C),\quad
d(C)>0.
\end{equation*}%
For $\varepsilon =1,\mathbf{q}=\mathbf{q}_{S}$ there are explicit solutions
given by 
\begin{equation*}
w=c(d+r^{\frac{\mathbf{p}+\sigma }{\mathbf{p}-1}})^{\frac{\mathbf{p-N}}{%
\mathbf{p}+\sigma }},\qquad d=c^{\mathbf{q}-\mathbf{p}+1}(\mathbf{N}+\sigma
)^{-1}(\frac{\mathbf{p-N}}{\mathbf{p}-1})^{1-\mathbf{p}}
\end{equation*}%
satisfying $\lim_{r\rightarrow 0}w=0,$ with $w\sim _{r\longrightarrow 0}cr^{%
\frac{\mathbf{p-N}}{\mathbf{p-}1}}$ and $\lim_{r\rightarrow \infty }w=cd^{%
\frac{\mathbf{p-N}}{\mathbf{p}+\sigma }}.$
\end{theorem}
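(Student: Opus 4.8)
The plan is to obtain Theorem \ref{hypF} as a direct consequence of the reduction in Lemma \ref{reduc}, applied with $\lambda = -1$. First I would make the transformation explicit: since region $\mathbf{F}$ is characterized by $-\sigma > \mathbf{p} > \mathbf{N}$, I solve the relations (\ref{lam}) with $\lambda = -1$ to obtain the target parameters $\widehat{\mathbf{N}} = 2\mathbf{p} - \mathbf{N}$ and $\widehat{\sigma} = -2\mathbf{p} - \sigma$. I then check that $(\widehat{\mathbf{N}}, \widehat{\sigma})$ lands in region $\mathbf{A}$: indeed $\mathbf{N} < \mathbf{p}$ forces $\widehat{\mathbf{N}} = 2\mathbf{p} - \mathbf{N} > \mathbf{p}$, while $-\widehat{\sigma} = 2\mathbf{p} + \sigma < \mathbf{p}$ is exactly the condition $\sigma < -\mathbf{p}$, which holds in $\mathbf{F}$; hence $\widehat{\mathbf{N}} > \mathbf{p} > -\widehat{\sigma}$, i.e. $(\widehat{\mathbf{N}}, \widehat{\sigma}) \in \mathbf{A}$. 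This is precisely the content of the last sentence of Lemma \ref{reduc}, so the reduction is licit.

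Next, because Theorems \ref{orange} and \ref{qccrit} are already established for region $\mathbf{A}$, I would transport each of their conclusions back to $\mathbf{F}$ through the change of unknown (\ref{chu}) with $r = \widehat{r}^{-1}$. The key point is that the substitution $\widehat{r} = 1/r$ exchanges the behaviour near $0$ and near $\infty$: a $C^{0}$-regular solution $\widehat{w}$ of the transformed equation with $\lim_{\widehat{r}\to 0}\widehat{w} = w_0 > 0$ becomes, via $w(r) = C^{-1}\widehat{w}(\widehat{r})$, a solution of the original equation with a finite positive limit at $\infty$. To pin down the gradient behaviour I would use the third identity of (\ref{cal}), namely $\widehat{r}^{-\frac{\widehat{\sigma}+1}{\mathbf{p}-1}}\widehat{w}'(\widehat{r}) = \lambda C\, r^{-\frac{\sigma+1}{\mathbf{p}-1}} w'(r)$; combined with $\lim_{\widehat{r}\to 0} \widehat{r}^{-\frac{\widehat{\sigma}+1}{\mathbf{p}-1}}\widehat{w}' = -\varepsilon c(w_0)$ from Theorem \ref{orange} and $\lambda = -1$, this yields the stated limit $\lim_{r\to\infty} r^{-\frac{\sigma+1}{\mathbf{p}-1}} w' = \varepsilon d(C)$ with $d(C) > 0$. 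The splitting $\mathbf{q} \neq \mathbf{q}_c$ versus $\mathbf{q} = \mathbf{q}_c$ is inherited verbatim, since $\mathbf{q}_c$ is preserved by the transformation.

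Finally, for the explicit solutions in the Sobolev case $\varepsilon = 1$, $\mathbf{q} = \mathbf{q}_S$, I would substitute the parameters $\widehat{\mathbf{N}}, \widehat{\sigma}$ into the ground-state formula (\ref{ground}) of Theorem \ref{orange}(iii). Using $\mathbf{p} + \widehat{\sigma} = -(\mathbf{p} + \sigma)$ and $\widehat{\mathbf{N}} - \mathbf{p} = \mathbf{p} - \mathbf{N}$, the exponent $-\frac{\widehat{\mathbf{N}} - \mathbf{p}}{\mathbf{p} + \widehat{\sigma}}$ becomes $\frac{\mathbf{p} - \mathbf{N}}{\mathbf{p} + \sigma}$, and $\widehat{r}^{\frac{\mathbf{p} + \widehat{\sigma}}{\mathbf{p}-1}} = r^{\frac{\mathbf{p}+\sigma}{\mathbf{p}-1}}$ after setting $\widehat{r} = 1/r$; this produces precisely the claimed formula for $w$. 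A short check that $\mathbf{q}_S$ is invariant under $(\mathbf{N}, \sigma) \mapsto (\widehat{\mathbf{N}}, \widehat{\sigma})$ guarantees that the Sobolev case for $\mathbf{F}$ is the image of the Sobolev case for $\mathbf{A}$. Reading off the two endpoint limits from the formula gives $w \sim_{r\to 0} c\, r^{\frac{\mathbf{p}-\mathbf{N}}{\mathbf{p}-1}}$ (the exponent is positive since $\mathbf{p} > \mathbf{N}$, so $w \to 0$) and $\lim_{r\to\infty} w = c\, d^{\frac{\mathbf{p}-\mathbf{N}}{\mathbf{p}+\sigma}}$. The step I expect to be the most delicate is bookkeeping rather than conceptual: tracking the multiplicative constant $C = \lambda^{\frac{\mathbf{p}}{\mathbf{q}+1-\mathbf{p}}}$ and the sign factor $\lambda = -1$ through the identities (\ref{cal}) so that the constants $c(w_0)$, $d(C)$ and the coefficient $d$ in the explicit solution come out correctly and positive. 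Everything else follows mechanically once the pair $(\widehat{\mathbf{N}}, \widehat{\sigma}) \in \mathbf{A}$ is identified and Lemma \ref{reduc} is invoked.
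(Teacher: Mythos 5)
Your proposal is correct and follows exactly the paper's own route: Theorem \ref{hypF} is obtained there precisely by invoking Lemma \ref{reduc} with $\lambda=-1$ (so $\widehat{\mathbf{N}}=2\mathbf{p}-\mathbf{N}$, $\widehat{\sigma}=-2\mathbf{p}-\sigma$ land in region $\mathbf{A}$) and transporting the conclusions of Theorems \ref{orange} and \ref{qccrit} through the identities (\ref{cal}), with no separate argument in the Appendix. Your additional checks (invariance of $\mathbf{q}_{c}$ and $\mathbf{q}_{S}$, the sign flip in the gradient limit, and the algebra giving the explicit Sobolev-case formula) are exactly the bookkeeping the paper leaves implicit.
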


\subsection{Study of Regions \textbf{B} and \textbf{D}}

The case of region $\mathbf{B}$ is particularly interesting; indeed we prove
the following:

\begin{theorem}
\label{rose}Let $\mathbf{p}>\mathbf{N}>-\sigma $ (region $\mathbf{B}$)

$\bullet $ For $\varepsilon =\pm 1$ there exists local $C^{0}$\textbf{%
-regular }solutions of\textbf{\ three }types:%
\begin{equation}
\lim_{r\rightarrow 0}w=w_{0}>0,\quad \lim_{r\longrightarrow 0}r^{-\frac{%
\sigma +1}{\mathbf{p}-1}}w^{\prime }=-\varepsilon c(w_{0}),\qquad c(w_{0})>0,
\label{no}
\end{equation}%
\noindent \noindent\ 
\begin{equation}
\lim_{r\rightarrow 0}w=0,\quad \lim_{r\rightarrow 0}r^{\frac{\mathbf{N}-%
\mathbf{p}}{\mathbf{p}-1}}w=k>0,\qquad \lim_{r\rightarrow 0}r^{\frac{\mathbf{%
N}-1}{\mathbf{p}-1}}w^{\prime }=c(k)>0  \label{ao}
\end{equation}%
\begin{equation}
\lim_{r\rightarrow 0}w=w_{0}>0,\qquad \lim_{r\rightarrow 0}r^{\frac{\mathbf{N%
}-1}{\mathbf{p}-1}}w^{\prime }=-\varepsilon c(w_{0}),\quad c(w_{0})>0.
\label{oo}
\end{equation}

For $\varepsilon =1$ they are not global, and there is no solution in $%
(r_{0},\infty )$. For $\varepsilon =-1$ there exists two types of global
solutions in $(0,\infty ):$

$\bullet $ $w^{\ast }(r)=a^{\ast }r^{-\frac{\mathbf{p}+\sigma }{\mathbf{q}+1-%
\mathbf{p}}},$

$\bullet $ solutions such that 
\begin{equation*}
\lim_{r\rightarrow 0}w=w_{0}>0,\;\lim_{r\rightarrow 0}r^{\frac{\mathbf{N}-1}{%
\mathbf{p}-1}}w^{\prime }=-c(w_{0})<0,\qquad w\sim _{r\longrightarrow \infty
}w^{\ast }.
\end{equation*}
\end{theorem}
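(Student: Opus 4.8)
The plan is to read off every assertion from the phase portrait of the quadratic system (\ref{pspz}) in region $\mathbf{B}$, using the dictionary between trajectory limits and the asymptotics of $w$ furnished by Lemma \ref{link}, together with the boundedness/convergence of orbits from Lemma \ref{glo} and the a priori bound of Lemma \ref{osse}. First I would record the signs valid throughout region $\mathbf{B}$: since $\mathbf{p}>\mathbf{N}>-\sigma$ and $\mathbf{q}>\mathbf{p}-1>0$ one has $\mathbf{N}-\mathbf{p}<0$, $\mathbf{N}+\sigma>0$, $\mathbf{p}+\sigma>0$, hence $\gamma>0$ (so the particular solution $w^\ast$ is $\infty$-singular) and $\mathbf{q}_c<0<\mathbf{q}$. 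Feeding these into Lemma \ref{nat} classifies the four fixed points: $(0,0)$ has eigenvalues $\rho_1=\frac{\mathbf{p}-\mathbf{N}}{\mathbf{p}-1}>0$ and $\rho_2=\mathbf{N}+\sigma>0$, so it is a source; $N_0$ and $A_0$ are saddles (the saddle conditions $(\mathbf{p}+\sigma)(\mathbf{N}+\sigma)>0$ and $\mathbf{q}>\mathbf{q}_c$ both hold); and $M_0=(\gamma,z_0)\in Q_4$ with $s_0z_0<0$, so $M_0$ is a saddle as well. In particular the limit-cycle alternative in Lemma \ref{glo} (which requires $M_0$ to be a center, at $\mathbf{q}=\mathbf{q}_S$) never occurs here, removing a potential complication.

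Next I would settle the local existence near $0$, i.e. the regime $t\to-\infty$, where a trajectory converging to a fixed point runs along its unstable manifold. The source $(0,0)$ has a two-dimensional unstable manifold, giving a two-parameter family of orbits, each producing by Lemma \ref{link}(iii) a $C^0$-regular solution of type (\ref{oo}). The saddle $N_0$ has a one-dimensional unstable manifold along the eigendirection of $l_1>0$; computing this eigenvector shows it points into $Q_1$ for one sign and into $Q_2$ for the other, so for each $\varepsilon=\pm1$ it yields, after the scaling fixing $w_0$, the type (\ref{no}) solution via Lemma \ref{link}(ii). Likewise the unstable direction $\mu_2>0$ at $A_0$ points into $Q_3$ (for $\varepsilon=1$) and into $Q_2$ (for $\varepsilon=-1$), giving type (\ref{ao}) through Lemma \ref{link}(iv); here $\frac{\mathbf{N}-\mathbf{p}}{\mathbf{p}-1}<0$ is precisely what forces $w=k\,r^{(\mathbf{p}-\mathbf{N})/(\mathbf{p}-1)}\to0$. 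This accounts for all three local types for both signs of $\varepsilon$.

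For the global statements I would separate the two signs. When $\varepsilon=1$ the admissible orbits lie in $Q_1\cup Q_3$, whose only fixed point is the source $(0,0)$, which cannot be approached as $t\to+\infty$; since by Lemma \ref{glo} any solution on $(r_0,\infty)$ would give a trajectory converging to a fixed point as $t\to+\infty$, no such solution exists, and the local solutions near $0$ must leave every compact set of the phase plane in finite $t$, hence are not global. When $\varepsilon=-1$ the orbits lie in $Q_2\cup Q_4$; the constant orbit at $M_0$ gives the explicit global solution $w^\ast$, and any other global solution on $(0,\infty)$ is a heteroclinic orbit joining two fixed points. The only candidate compatible with the signs is the connection from the source $(0,0)$ (entering $Q_4$) to the saddle $M_0$, whose near-$0$ behaviour is the $Q_4$ branch of type (\ref{oo}) (with $w'<0$) and whose near-$\infty$ behaviour is $w\sim w^\ast$ by Lemma \ref{link}(i).

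The main obstacle is establishing this heteroclinic connection and proving it exhausts the global solutions. I would construct a positively invariant trapping triangle in $Q_4$ bounded by arcs of the two nullclines $s_t=0$ and $z_t=0$ (whose non-axis branches are lines meeting at $M_0$), verify on each side that the field points inward, and conclude that the branch of the unstable manifold of $(0,0)$ entering $Q_4$ cannot escape; having no periodic orbit available around the saddle $M_0$ and no other fixed point in the closure, its $\omega$-limit must be $M_0$. This is where the Jacobian sign computations and a careful description of the nullclines do the real work. Combining with Lemma \ref{glo}, which excludes unbounded or oscillatory limits, then shows that $w^\ast$ and this single heteroclinic family are the only global solutions for $\varepsilon=-1$, completing the proof.
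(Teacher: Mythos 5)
Your treatment of the fixed points and of the local statements follows the paper's own route: the source $(0,0)$ produces the family (\ref{oo}), the saddles $N_{0}$ and $A_{0}$ produce (\ref{no}) and (\ref{ao}) through Lemma \ref{link}, and the nonexistence for $\varepsilon =1$ is argued exactly as in the paper (no fixed point of $\overline{Q_{1}}\cup \overline{Q_{3}}$ attracts an admissible orbit as $t\rightarrow +\infty $; note the paper also treats the resonant case $\rho _{1}=\rho _{2}$ at the source via Proposition \ref{expli}, which you skip). The genuine gap is in the key step for $\varepsilon =-1$: your construction of the connection from $(0,0)$ to $M_{0}$ runs the flow in the wrong direction, and the trapping-region check you rely on will fail. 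The only bounded region of $Q_{4}$ cut out by the two non-axis nullclines and the coordinate axes is the quadrilateral $\mathcal{H}$ with vertices $(0,0)$, $(\frac{\mathbf{N}+\sigma }{\mathbf{q}},0)$, $M_{0}$ and $(0,\mathbf{N}-\mathbf{p})$, in which $s_{t}>0$ and $z_{t}<0$. On its edge contained in $\left\{ s_{t}=0\right\} $ the field is vertical and points downward, i.e. out of $\mathcal{H}$, and on its edge contained in $\left\{ z_{t}=0\right\} $ it is horizontal and points to the right, again out of $\mathcal{H}$. So $\mathcal{H}$ is \emph{negatively} invariant, not positively invariant. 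The failure is structural, not a matter of choosing a cleverer region: $M_{0}$ is a saddle, so only its two stable-manifold branches can have $\omega $-limit $\left\{ M_{0}\right\} $, while the source emits a continuum of orbits into $Q_{4}$ (a source has no single distinguished ``branch''). If some compact positively invariant set in $\overline{Q_{4}}$ containing no fixed point except $M_{0}$ trapped those orbits, Poincar\'{e}--Bendixson would force each of them to converge to $M_{0}$, to a periodic orbit, or to a graphic; periodic orbits and homoclinic loops are excluded by index theory (they would have to enclose fixed points of total index $+1$, whereas $M_{0}$ has index $-1$), so uncountably many orbits would converge to a saddle, a contradiction.

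The repair is precisely the paper's argument, with time reversed relative to yours. The stable eigendirection at $M_{0}$ has slope $\rho _{1}$ satisfying $-\mathbf{q}<\rho _{1}<-(\mathbf{p}-1)$, strictly between the slopes of the two nullclines, so one branch $\mathcal{T}_{1}^{M}$ of the stable manifold of $M_{0}$ approaches $M_{0}$ through $\mathcal{H}$. By the negative invariance of $\mathcal{H}$ this branch stays in $\mathcal{H}$ for all backward time, hence is bounded and defined on all of $\mathbb{R}$; backward in time $s$ decreases and $z$ increases in $\mathcal{H}$, so $(s,z)$ converges as $t\rightarrow -\infty $ to a fixed point of $\overline{\mathcal{H}}$. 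It cannot converge backward to $M_{0}$ itself, because the unstable directions at $M_{0}$ have positive slope and leave this sector, so the limit is $(0,0)$. Applying Lemma \ref{link} at both ends of $\mathcal{T}_{1}^{M}$ gives exactly the stated global solutions for $\varepsilon =-1$, and your exhaustiveness argument (the only admissible connection is $(0,0)\rightarrow M_{0}$, and cycles are excluded because $M_{0}$ is a saddle) can then be kept as it stands.
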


Note that in the case $\mathbf{N=1,p}=2,\sigma =0,$ Theorem \ref{rose} can
be checked easily, since the equation $-w^{\prime \prime }=\varepsilon w^{q}$
admits a first integral: $w^{\prime 2}+\frac{2\varepsilon }{q+1}w^{q+1}=C.$

As a direct consequence we obtain the behaviour in region $\mathbf{D}.$ Here
also we have an very interesting behaviour: we find infinitely many bounded
solutions in an exterior domain which do not converge to $0$ at $\infty ,$
and global ones when $\varepsilon =-1:$

\begin{theorem}
\label{white}(region $\mathbf{D}$) Let $\mathbf{p}<\mathbf{N}<-\sigma .$ Then

$\bullet $ For $\varepsilon =\pm 1$ there exists local solutions near $%
\infty $\textbf{\ } of\textbf{\ three }types:%
\begin{equation*}
\lim_{r\rightarrow \infty }w=C>0,\quad \lim_{r\longrightarrow \infty }r^{-%
\frac{\sigma +1}{\mathbf{p}-1}}w^{\prime }=\varepsilon c(C),\qquad c(C)>0,
\end{equation*}%
\noindent \noindent\ 
\begin{equation*}
\lim_{r\rightarrow \infty }w=0,\quad \lim_{r\rightarrow \infty }r^{\frac{%
\mathbf{N-p}}{\mathbf{p}-1}}w=k>0,\qquad \lim_{r\rightarrow \infty }r^{\frac{%
\mathbf{N}-1}{\mathbf{p}-1}}w^{\prime }=-c(k)<0,
\end{equation*}%
\begin{equation*}
\lim_{r\rightarrow \infty }w=C>0,\qquad \lim_{r\rightarrow \infty }r^{\frac{%
\mathbf{N}-1}{\mathbf{p}-1}}w^{\prime }=D\neq 0.
\end{equation*}

For $\varepsilon =1$ they are not global, and there is no solution in $%
(0,r_{0})$. For $\varepsilon =-1$ there exists two types of global solutions
in $(0,\infty ):$

$\bullet $ $w^{\ast }(r)=a^{\ast }r^{-\frac{\mathbf{p}+\sigma }{\mathbf{q}+1-%
\mathbf{p}}},$ which is $C^{0}$-regular,

$\bullet $ solutions such that 
\begin{equation*}
w\sim _{r\longrightarrow 0}w^{\ast },\qquad \lim_{r\rightarrow \infty
}w=C>0,\;\lim_{r\rightarrow \infty }r^{\frac{\mathbf{N}-1}{\mathbf{p}-1}%
}w^{\prime }=c(C)>0.
\end{equation*}
\end{theorem}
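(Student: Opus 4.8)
The plan is to deduce Theorem \ref{white} entirely from Theorem \ref{rose} by invoking the duality of Lemma \ref{reduc} with $\lambda=-1$. First I would record the region correspondence. With $\lambda=-1$ the defining relations force $\widehat{\mathbf{N}}=2\mathbf{p}-\mathbf{N}$ and $\widehat{\sigma}=-2\mathbf{p}-\sigma$. For $(\mathbf{N},\sigma)\in\mathbf{D}$ we have $\mathbf{N}>\mathbf{p}$, hence $\widehat{\mathbf{N}}=2\mathbf{p}-\mathbf{N}<\mathbf{p}$, and we have $-\sigma>\mathbf{N}$, hence $-\widehat{\sigma}=2\mathbf{p}+\sigma<2\mathbf{p}-\mathbf{N}=\widehat{\mathbf{N}}$; together these give $\mathbf{p}>\widehat{\mathbf{N}}>-\widehat{\sigma}$, that is $(\widehat{\mathbf{N}},\widehat{\sigma})\in\mathbf{B}$. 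Thus any solution $w$ on an interval in region $\mathbf{D}$ corresponds, through the change of unknown (\ref{chu}), to a solution $\widehat{w}$ of the same family of equations in region $\mathbf{B}$, with $\widehat{r}=1/r$, so that Theorem \ref{rose} supplies the full dynamical content.

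Since the inversion $\widehat{r}=1/r$ exchanges the behaviours at $0$ and at $\infty$, the three local $C^{0}$-regular types near $0$ of Theorem \ref{rose} become three local types near $\infty$ in region $\mathbf{D}$; to read off the precise statements I would translate the three asymptotic identities (\ref{cal}). The first identity $\widehat{w}(\widehat{r})=Cw(r)$ turns $\lim_{\widehat{r}\to0}\widehat{w}=\widehat{w}_{0}$ into $\lim_{r\to\infty}w=C>0$ and $\lim_{\widehat{r}\to0}\widehat{w}=0$ into $\lim_{r\to\infty}w=0$. The second, $\widehat{r}^{(\widehat{\mathbf{N}}-\mathbf{p})/(\mathbf{p}-1)}\widehat{w}(\widehat{r})=Cr^{(\mathbf{N}-\mathbf{p})/(\mathbf{p}-1)}w(r)$, converts $\lim_{\widehat{r}\to0}\widehat{r}^{(\widehat{\mathbf{N}}-\mathbf{p})/(\mathbf{p}-1)}\widehat{w}=k$ into $\lim_{r\to\infty}r^{(\mathbf{N}-\mathbf{p})/(\mathbf{p}-1)}w=k/C>0$, with $w\to0$ because $\mathbf{p}<\mathbf{N}$ makes the exponent positive. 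For the gradients, combining the third identity with the elementary relation $\widehat{\mathbf{N}}+\widehat{\sigma}=-(\mathbf{N}+\sigma)$ yields $\widehat{r}^{-(\widehat{\sigma}+1)/(\mathbf{p}-1)}\widehat{w}'(\widehat{r})=-Cr^{-(\sigma+1)/(\mathbf{p}-1)}w'(r)$ and $\widehat{r}^{(\widehat{\mathbf{N}}-1)/(\mathbf{p}-1)}\widehat{w}'(\widehat{r})=-Cr^{(\mathbf{N}-1)/(\mathbf{p}-1)}w'(r)$; pushing the three derivative limits of Theorem \ref{rose} through these produces exactly the three prescribed limits of $w'$ near $\infty$, with the stated signs $\varepsilon c(C)$, $-c(k)<0$, and $D\neq0$.

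For the global solutions it suffices to transport the two $\varepsilon=-1$ families: the particular solution $\widehat{w}^{\ast}=\widehat{a}^{\ast}\widehat{r}^{-(\mathbf{p}+\widehat{\sigma})/(\mathbf{q}+1-\mathbf{p})}$ maps to $w^{\ast}=a^{\ast}r^{-(\mathbf{p}+\sigma)/(\mathbf{q}+1-\mathbf{p})}$, which is now $C^{0}$-regular because $\mathbf{p}+\sigma<0$ in $\mathbf{D}$, and the connecting orbit joining a $C^{0}$-regular solution at $\widehat{r}=0$ to $\widehat{w}^{\ast}$ at $\widehat{r}=\infty$ maps to the global solution with $w\sim_{r\to0}w^{\ast}$ and $\lim_{r\to\infty}w=C>0$; for $\varepsilon=1$ the nonexistence in $(\widehat{r}_{0},\infty)$ and the non-globality of the local solutions in region $\mathbf{B}$ become, respectively, nonexistence in $(0,r_{0})$ and non-globality in region $\mathbf{D}$. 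The only delicate point is the bookkeeping of the previous paragraph, namely verifying the region membership $(\widehat{\mathbf{N}},\widehat{\sigma})\in\mathbf{B}$ and correctly tracking the signs and exponents through (\ref{cal})---in particular the identity $\widehat{\mathbf{N}}+\widehat{\sigma}=-(\mathbf{N}+\sigma)$ responsible for the exponent $(\mathbf{N}-1)/(\mathbf{p}-1)$ in the derivative estimates. Once the transformation is in place there is no further analytic difficulty, the substantive work having already been carried out in Theorem \ref{rose}.
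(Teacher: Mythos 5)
Your proposal is correct and is exactly the paper's own route: the paper derives Theorem \ref{white} as a ``direct consequence'' of Theorem \ref{rose} via Lemma \ref{reduc} with $\lambda=-1$ (so $\widehat{\mathbf{N}}=2\mathbf{p}-\mathbf{N}$, $\widehat{\sigma}=-2\mathbf{p}-\sigma$, mapping region $\mathbf{D}$ onto region $\mathbf{B}$ and exchanging $0$ and $\infty$), and provides no separate proof in the Appendix. Your sign and exponent bookkeeping through (\ref{cal}) — including the identity $\widehat{\mathbf{N}}+\widehat{\sigma}=-(\mathbf{N}+\sigma)$ and the resulting flips giving $\varepsilon c(C)$, $-c(k)<0$, $D\neq 0$, and the $C^{0}$-regularity of $w^{\ast}$ from $\mathbf{p}+\sigma<0$ — checks out.
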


\subsection{Study of regions \textbf{C} and \textbf{E}}

In regions $\mathbf{C}$ and then in region $\mathbf{E}$ from Lemma \ref%
{reduc}, we obtain the following results:

\begin{theorem}
\label{yellow}Let $\mathbf{p}<-\sigma <\mathbf{N}$ (region $\mathbf{C}$)
Then there exists no $C^{0}$-regular solution. For $\varepsilon =\pm 1$
there exist local (nonglobal) solutions near $\infty $ of two types: 
\begin{equation}
\lim_{r\rightarrow \infty }w=C>0,\qquad \lim_{r\rightarrow \infty }r^{-\frac{%
\sigma +1}{\mathbf{p}-1}}w^{\prime }=-\varepsilon D,\quad D>0,  \label{no2}
\end{equation}%
\begin{equation}
\lim_{r\rightarrow \infty }w=0,\quad \lim_{r\rightarrow \infty }r^{\frac{%
\mathbf{N}-\mathbf{p}}{\mathbf{p}-1}}w=k>0,\qquad \lim_{r\rightarrow \infty
}r^{\frac{\mathbf{N}-1}{\mathbf{p}-1}}w^{\prime }=-c(k)<0.  \label{ao2}
\end{equation}

For $\varepsilon =-1,$ there exist two types of global solutions in $%
(0,\infty ):$

$\bullet $ $w^{\ast }(r)=a^{\ast }r^{-\frac{\mathbf{p}+\sigma }{\mathbf{q}+1-%
\mathbf{p}}},$which is a cusp-solution,

$\bullet $ solutions such that 
\begin{equation*}
w\sim _{r\longrightarrow 0}w^{\ast },\qquad \lim_{r\rightarrow \infty
}w=C>0,\quad \lim_{r\rightarrow \infty }r^{-\frac{\sigma +1}{\mathbf{p}-1}%
}w^{\prime }=D>0,
\end{equation*}%
and there exist also solutions on $(r_{0},\infty )$ such that $u(r_{0})=0$
and others such that $\lim_{r\longrightarrow r_{0}}w=\infty ,$ and $w\sim
_{r\longrightarrow \infty }w^{\ast }$, and nonglobal solutions on $(0,r_{0})$
such that $w\sim _{r\longrightarrow 0}w^{\ast }$ and lim$_{r\longrightarrow
r_{0}}=\infty $\textbf{.}

For $\varepsilon =1$ there is no local solution in $(0,r_{0})$.
\end{theorem}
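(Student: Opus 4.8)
The plan is to read off the complete phase portrait of the quadratic system (\ref{pspz}) under the sign constraints of region $\mathbf{C}$, and then to convert each admissible trajectory into a solution $w$ through the dictionary of Lemma \ref{link} together with the convergence alternative of Lemma \ref{glo}. First I would record the signs forced by $\mathbf{p}<-\sigma<\mathbf{N}$: one has $\mathbf{N}-\mathbf{p}>0$, $\mathbf{N}+\sigma>0$ and $\mathbf{p}+\sigma<0$, whence $\gamma=\frac{\mathbf{p}+\sigma}{\mathbf{q}+1-\mathbf{p}}<0$ and, crucially, $\mathbf{q}_c<\mathbf{p}-1<\mathbf{q}$ (the inequality $\mathbf{q}_c\le\mathbf{p}-1$ being equivalent to $\mathbf{p}+\sigma\le0$). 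Plugging these signs into Lemma \ref{nat} fixes the nature of the four points of (\ref{fixe}): $(0,0)$ is a saddle; $A_0=(\frac{\mathbf{N}-\mathbf{p}}{\mathbf{p}-1},0)$ sits on the positive $s$-axis and, because $\mathbf{q}>\mathbf{q}_c$, is a saddle whose unique interior (admissible) eigendirection is the stable one; $N_0=(0,\mathbf{N}+\sigma)$ sits on the positive $z$-axis and is a stable node, since $l_1=\frac{\mathbf{p}+\sigma}{\mathbf{p}-1}<0$ and $l_2=-(\mathbf{N}+\sigma)<0$; and $M_0\in Q_2$ with $s_0z_0<0$, hence a saddle. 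As $M_0$ is a saddle, the limit-cycle possibility of Lemma \ref{glo} is excluded throughout region $\mathbf{C}$, so every bounded semi-trajectory converges to one of these four points.

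By Remark \ref{deri} the admissible trajectories split by sign of $\varepsilon$: for $\varepsilon=1$ they lie in $Q_1\cup Q_3$, for $\varepsilon=-1$ in $Q_2\cup Q_4$. For $\varepsilon=1$ I would argue in $Q_1$ (the picture in $Q_3$ being symmetric). The only fixed points meeting $\overline{Q_1}$ are $(0,0)$, $A_0$ and the sink $N_0$, while $M_0\notin\overline{Q_1}$. A solution near $\infty$ corresponds to a trajectory bounded as $t\to+\infty$, which by Lemma \ref{glo} converges either to $N_0$---giving, through Lemma \ref{link}(ii), a solution of type (\ref{no2})---or along the admissible stable manifold of $A_0$---giving, through Lemma \ref{link}(iv), a solution of type (\ref{ao2}); exhibiting a trapping region bounded by the two nullclines $s+\frac{z}{\mathbf{p}-1}=\frac{\mathbf{N}-\mathbf{p}}{\mathbf{p}-1}$ and $\mathbf{q}s+z=\mathbf{N}+\sigma$ shows that both behaviours genuinely occur. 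Conversely, a solution on $(0,r_0)$ would force a trajectory converging to a fixed point as $t\to-\infty$, i.e. lying on an unstable manifold of $(0,0)$, $A_0$ or $N_0$; but $N_0$ has no unstable direction, and the only unstable eigendirections at $(0,0)$ and $A_0$ are the nonadmissible axes. Hence no interior trajectory admits an interior fixed point as $\alpha$-limit, which is exactly what forbids a local solution in $(0,r_0)$ when $\varepsilon=1$ and, run toward $N_0$ or $(0,0)$, also rules out any $C^0$-regular solution for either sign of $\varepsilon$.

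For $\varepsilon=-1$ the analysis is concentrated in $Q_2$, where $M_0$---carrying the particular solution $w^\ast=a^\ast r^{-\gamma}$ of (\ref{weto})---is a saddle and $N_0$ is the sink on the boundary $\{s=0\}$. The core of the proof is to follow the two one-dimensional separatrices of $M_0$. I expect one branch of the unstable manifold (whose $\alpha$-limit $M_0$ corresponds, by Lemma \ref{link}(i), to $w\sim_{r\to0}w^\ast$) to be captured by the sink $N_0$ as $t\to+\infty$, producing the global solution with $w\sim_{r\to0}w^\ast$ and $\lim_{r\to\infty}w=C>0$; the remaining separatrix branches either reach the boundary of $Q_2$ or blow up in finite time $t_0=\ln r_0$, and via the reconstruction formula (\ref{wwp}) these translate into the solutions with $w(r_0)=0$, those with $\lim_{r\to r_0}w=\infty$, and the nonglobal ones on $(0,r_0)$. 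Lemma \ref{osse} keeps the relevant semi-trajectories bounded where boundedness is needed, and the Pohozaev-type energies $F_\theta$ of the remark after Theorem \ref{qccrit} help exclude spurious homoclinic returns. The main obstacle is precisely this global tracking of the saddle separatrices of $M_0$: one has to build the correct nullcline-bounded trapping regions, establish the monotonicity of $s$ and $z$ inside them, confirm the absence of periodic orbits (here automatic, $M_0$ being a saddle), and finally turn the qualitative statements ``trajectory leaves $Q_2$'' and ``trajectory escapes to infinity at finite $t$'' into the sharp endpoint behaviours $w\to0$ and $w\to\infty$. Once region $\mathbf{C}$ is established, region $\mathbf{E}$ follows at once from Lemma \ref{reduc} with $\lambda=-1$, i.e. by the substitution $r\mapsto 1/r$.
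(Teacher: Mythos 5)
Your proposal is correct and follows essentially the same route as the paper's own proof: classification of the fixed points ($M_{0}$ a saddle in $Q_{2}$, $N_{0}$ a sink, $A_{0}$ and $(0,0)$ saddles whose unstable directions lie on the non-admissible axes), translation of fixed-point limits into asymptotics of $w$ via Lemmas \ref{link} and \ref{glo}, and capture of the unstable separatrix of $M_{0}$ by the sink $N_{0}$ inside a positively invariant nullcline-bounded region, which is exactly the paper's region $J$ (where $s_{t}>0$, $z_{t}<0$) argument producing the global $\varepsilon =-1$ solutions. The only cosmetic deviations are inessential: the Pohozaev energies are not needed, and the remark that $Q_{3}$ is ``symmetric'' to $Q_{1}$ is inaccurate but harmless, since $\overline{Q_{3}}$ contains no admissible attracting or repelling direction and so contributes nothing in either approach.
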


\bigskip

\begin{theorem}
\label{hypE}(region $\mathbf{E}$) Let $\mathbf{p}>-\sigma >\mathbf{N.}$ For $%
\varepsilon =\pm 1$ there exist local (nonglobal) solutions near $0$ of two
types: 
\begin{equation*}
\lim_{r\rightarrow 0}w=w_{0}>0,\qquad \lim_{r\rightarrow 0}r^{-\frac{\sigma
+1}{\mathbf{p}-1}}w^{\prime }=\varepsilon D,\quad D>0,
\end{equation*}%
\begin{equation*}
\lim_{r\rightarrow 0}w=0,\quad \lim_{r\rightarrow 0}r^{\frac{\mathbf{N}-%
\mathbf{p}}{\mathbf{p}-1}}w=k>0,\qquad \lim_{r\rightarrow 0}r^{\frac{\mathbf{%
N}-1}{\mathbf{p}-1}}w^{\prime }=d(k)>0.
\end{equation*}%
For $\varepsilon =-1,$ there exist two types of global solutions in $%
(0,\infty ):$

$\bullet $ $w^{\ast }(r)=a^{\ast }r^{-\frac{\mathbf{p}+\sigma }{\mathbf{q}+1-%
\mathbf{p}}},$

$\bullet $ solutions such that 
\begin{equation*}
\lim_{r\rightarrow 0}w=w_{0}>0,\;\lim_{r\rightarrow 0}r^{-\frac{\sigma +1}{%
\mathbf{p}-1}}w^{\prime }=d<0,\qquad w\sim _{r\longrightarrow \infty
}w^{\ast },
\end{equation*}%
and there exist also solutions on $(0,r_{0})$ such that $w(r_{0})=0$ , and
others such that $\lim_{r\longrightarrow r_{0}}w=\infty ,$ and $w\sim
_{r\longrightarrow 0}w^{\ast }$; and there exist nonglobal solutions on $%
(r_{0},\infty )$ such that $w\sim _{r\longrightarrow \infty }w^{\ast }$ and
lim$_{r\longrightarrow r_{0}}w=\infty $\textbf{. }

For $\varepsilon =1$ there is no local solution in $(r_{0},\infty )$\textbf{.%
}
\end{theorem}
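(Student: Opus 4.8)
The plan is to deduce Theorem \ref{hypE} from Theorem \ref{yellow} (region $\mathbf{C}$) by applying the reduction of Lemma \ref{reduc} with $\lambda = -1$, which exchanges region $\mathbf{E}$ with region $\mathbf{C}$ and interchanges the behaviours at $0$ and at $\infty$ through the substitution $r \mapsto 1/r$. Since Lemma \ref{reduc} already certifies that this substitution is an exact equivalence between the two equations and between their phase-plane trajectories, no new dynamical-systems analysis is required; the work is purely the transcription of each item of Theorem \ref{yellow} into the coordinates of region $\mathbf{E}$.

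First I would fix $(\mathbf{N},\sigma)\in\mathbf{E}$, so that $\mathbf{p}>-\sigma>\mathbf{N}$, and set, exactly as in the proof of Lemma \ref{osse}, $\widehat{\mathbf{N}}=2\mathbf{p}-\mathbf{N}$ and $\widehat{\sigma}=-2\mathbf{p}-\sigma$, which is the choice making (\ref{lam}) hold with $\lambda=-1$. A direct check shows $(\widehat{\mathbf{N}},\widehat{\sigma})\in\mathbf{C}$: the inequality $\widehat{\mathbf{N}}>-\widehat{\sigma}$ is equivalent to $-\sigma>\mathbf{N}$, while $-\widehat{\sigma}>\mathbf{p}$ is equivalent to $\mathbf{p}+\sigma>0$, and these two conditions are precisely the defining inequalities of $\mathbf{E}$. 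By Lemma \ref{reduc}, a function $w$ solves (\ref{sca}) with parameters $(\mathbf{N},\sigma)$ near $0$ if and only if $\widehat{w}$, given by (\ref{chu}) with $\widehat{r}=1/r$, solves the equation with parameters $(\widehat{\mathbf{N}},\widehat{\sigma})\in\mathbf{C}$ near $\infty$, with $\varepsilon$ unchanged.

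Next I would transcribe each conclusion of Theorem \ref{yellow} through the identities (\ref{cal}). The two families of local solutions near $\infty$ in region $\mathbf{C}$, namely (\ref{no2}) and (\ref{ao2}), become the two families of local solutions near $0$ in region $\mathbf{E}$. Using $\widehat{\mathbf{N}}-\mathbf{p}=-(\mathbf{N}-\mathbf{p})$ and $\widehat{\sigma}=-2\mathbf{p}-\sigma$, the exponents $\gamma$, $(\mathbf{N}-\mathbf{p})/(\mathbf{p}-1)$ and $(\mathbf{N}-1)/(\mathbf{p}-1)$ occurring in (\ref{cal}) are exactly the ones appearing in the limits, so each leading power is preserved; for instance the second relation of (\ref{cal}) gives $w(r)=C^{-1}\widehat{w}(1/r)$, from which $\lim_{r\to0}r^{(\mathbf{N}-\mathbf{p})/(\mathbf{p}-1)}w=k>0$ follows at once. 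The one point requiring care is the sign of the derivative limits: because $r\mapsto 1/r$ is orientation-reversing, equivalently because the third relation of (\ref{cal}) carries the factor $\lambda=-1$, every limit of the form $\lim r^{-(\sigma+1)/(\mathbf{p}-1)}w'$ or $\lim r^{(\mathbf{N}-1)/(\mathbf{p}-1)}w'$ changes sign. This is exactly why the value $-\varepsilon D$ in (\ref{no2}) turns into $\varepsilon D$, and why $-c(k)<0$ in (\ref{ao2}) becomes $d(k)>0$ in the statement.

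For $\varepsilon=-1$ the global solutions transfer in the same manner: the particular solution $w^{\ast}=a^{\ast}r^{-\gamma}$ is sent to the particular solution of the transformed equation; the connecting orbit joining $w^{\ast}$ at one end to a regular behaviour at the other keeps its structure with the roles of $0$ and $\infty$ exchanged and the derivative sign flipped, yielding the family with $\lim_{r\to0}w=w_0>0$, $\lim_{r\to0}r^{-(\sigma+1)/(\mathbf{p}-1)}w'=d<0$ and $w\sim w^{\ast}$ at $\infty$; and the solutions vanishing or blowing up at a finite $r_0$ are relabelled by interchanging the intervals $(r_0,\infty)$ and $(0,r_0)$. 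Finally the nonexistence of local solutions in $(0,r_0)$ for $\varepsilon=1$ in region $\mathbf{C}$ becomes nonexistence in $(r_0,\infty)$ in region $\mathbf{E}$. The only genuine obstacle is bookkeeping rather than analysis: one must apply the three identities of (\ref{cal}) consistently, track the $0\leftrightarrow\infty$ swap, and propagate the single sign change coming from $\lambda=-1$, after which every transcribed limit matches the corresponding line of the statement.
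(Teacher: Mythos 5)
Your proposal is correct and takes exactly the paper's route: the paper gives no separate proof of Theorem \ref{hypE} in the Appendix, but obtains it from Theorem \ref{yellow} (region $\mathbf{C}$) via Lemma \ref{reduc} with $\lambda=-1$, i.e. the substitution $r\mapsto 1/r$ exchanging regions $\mathbf{C}$ and $\mathbf{E}$. Your bookkeeping — checking $(\widehat{\mathbf{N}},\widehat{\sigma})=(2\mathbf{p}-\mathbf{N},-2\mathbf{p}-\sigma)\in\mathbf{C}$, preserving the leading exponents through (\ref{cal}), swapping $0$ and $\infty$, and propagating the single sign change from the factor $\lambda=-1$ in the derivative identity — is precisely the transcription the paper intends.
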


\subsection{Other case of explicit solutions\label{inte}}

We have recalled at (\ref{ground}) the well-known explicit grounds states
obtained for $\mathbf{q}=\mathbf{q}_{S}$ and $\varepsilon =1.$ Here we give
another case where we find global explicit solutions in $\mathbb{R}%
^{N}\backslash \left\{ 0\right\} .$ We remark that system (\ref{one}) admits
the solutions $(u_{1},u_{2})=(u,u)$ when $p=q,$ where $u$ is a solution of
the scalar equation (\ref{eq}), given explicitely by (\ref{eur}) and (\ref%
{logi}); and the corresponding solutions of system (\ref{SZ}) satisfy the
relation $S\equiv -Z,$ and $\mathbf{p}=\mathbf{N}=-\sigma .$ This suggests
that system (\ref{pspz}) with general $\mathbf{p},\mathbf{N},\sigma $ may
admit particular explicit solutions for some values of the parameters. We
show below that it is true, and this result appears to be new:

\begin{theorem}
\label{expli}Let $\mathbf{q}>\mathbf{p}-1>0.$ When $\sigma =-\mathbf{p}\frac{%
\mathbf{N}-1}{\mathbf{p}-1}$ there exist explicit radial solutions $w$ of
the Hardy-H\'{e}non equation (\ref{sca}) with $\varepsilon =-1$, of the form 
\begin{eqnarray}
w &=&(c\pm d_{\mathbf{p},\mathbf{q},\mathbf{N}}r^{\frac{\mathbf{p}-\mathbf{N}%
}{\mathbf{p}-1}})^{-\frac{\mathbf{p}}{\mathbf{q}-\mathbf{p}+1}},\text{
\qquad if }\mathbf{p}\neq \mathbf{N,}  \label{flo} \\
w &=&(c\pm d_{\mathbf{N}}\ln r)^{-\frac{\mathbf{N}}{\mathbf{q}-\mathbf{N}+1}%
},\qquad \text{if }\mathbf{p}=\mathbf{N,}  \label{flai}
\end{eqnarray}%
where $c>0$ and $d_{\mathbf{p},\mathbf{q},\mathbf{N}}=\frac{(\mathbf{p}-1)(%
\mathbf{q}-\mathbf{p}+1)}{\mathbf{p}(\mathbf{p}-\mathbf{N)}}(\frac{\mathbf{p}%
}{(\mathbf{p}-1)(\mathbf{q}+1)})^{\frac{1}{\mathbf{p}}}$, $d_{\mathbf{N}}=%
\frac{\mathbf{q}-\mathbf{N}+1}{\mathbf{N}}(\frac{\mathbf{N}}{(\mathbf{N}-1)(%
\mathbf{q}+1)})^{\frac{1}{\mathbf{N}}}$.
\end{theorem}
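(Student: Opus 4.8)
The plan is to verify the explicit formulas by direct substitution into the integrated form of the equation, guided by an invariant line of the phase-plane system (\ref{pspz}) which explains where the ansatz comes from. Writing (\ref{sca}) with $\varepsilon=-1$ in the integrated form
\[
(r^{\mathbf{N}-1}|w'|^{\mathbf{p}-2}w')'=r^{\mathbf{N}-1+\sigma}w^{\mathbf{q}},
\]
the chosen value $\sigma=-\mathbf{p}\frac{\mathbf{N}-1}{\mathbf{p}-1}$ is exactly the one for which the two constant coefficients of (\ref{pspz}) coincide, namely $\mathbf{N}+\sigma=\frac{\mathbf{p}-\mathbf{N}}{\mathbf{p}-1}$. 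First I would use this coincidence to locate an invariant line $z=\mu s$ of (\ref{pspz}): substituting and cancelling $s^{2}$ forces $\mu=-\frac{(\mathbf{q}+1)(\mathbf{p}-1)}{\mathbf{p}}<0$, and the sign $\mu<0$ is precisely the condition $sz<0$ characterising $\varepsilon=-1$ in Remark \ref{deri}. This line is the generalisation of the relation $S\equiv-Z$ observed in the scalar case $p=q$, and through the recovery formula (\ref{wwp}) it predicts the closed forms (\ref{flo})--(\ref{flai}).

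Then I would confirm (\ref{flo}) by plugging it straight in. Set $w=\phi^{m}$ with $\phi=c\pm d\,r^{\alpha}$, $\alpha=\frac{\mathbf{p}-\mathbf{N}}{\mathbf{p}-1}$ and $m=-\frac{\mathbf{p}}{\mathbf{q}-\mathbf{p}+1}<0$, so that $w'=m\phi^{m-1}\phi'$ with $\phi'=\pm d\alpha\,r^{\alpha-1}$. The key simplification is that $r^{\mathbf{N}-1}|w'|^{\mathbf{p}-2}w'=K\,\phi^{(m-1)(\mathbf{p}-1)}$ is a constant $K$ times a pure power of $\phi$, because the accompanying $r$-exponent $\mathbf{N}-1+(\alpha-1)(\mathbf{p}-1)$ vanishes identically; this is exactly what the special value of $\sigma$ (hence of $\alpha$) delivers. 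Differentiating and using $\phi'=\pm d\alpha\,r^{\alpha-1}$ restores the power $r^{\alpha-1}=r^{\mathbf{N}-1+\sigma}$ on the left, so it only remains to match the $\phi$-exponents and the constants. The exponent identity $(m-1)(\mathbf{p}-1)-1=m\mathbf{q}$ collapses, after clearing denominators, to $\mathbf{p}\mathbf{q}=\mathbf{p}\mathbf{q}$, and the constant equation $|m|^{\mathbf{p}-1}|m-1|(\mathbf{p}-1)\,(d|\alpha|)^{\mathbf{p}}=1$ (where $m<0$ and $m-1<0$ make the left side positive) determines $d$ and yields the stated $d_{\mathbf{p},\mathbf{q},\mathbf{N}}$, while a sign check fixes the admissible choice of $\pm$.

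For the borderline case $\mathbf{p}=\mathbf{N}$ the exponent $\alpha$ vanishes and $r^{\alpha}$ degenerates to a constant, so the correct replacement is its logarithmic limit. I would rerun the same computation with $\phi=c\pm d\ln r$, $\phi'=\pm d/r$ and $m=-\frac{\mathbf{N}}{\mathbf{q}-\mathbf{N}+1}$. Now $\sigma=-\mathbf{N}$ gives $\mathbf{N}-1+\sigma=-1$, the quantity $r^{\mathbf{N}-1}|w'|^{\mathbf{p}-2}w'$ again reduces to a constant times $\phi^{(m-1)(\mathbf{N}-1)}$ (the $r$-power being $r^{\mathbf{N}-1-(\mathbf{N}-1)}=r^{0}$), and differentiation produces the needed factor $r^{-1}$ out of $\phi'$; matching exponents and constants then fixes $d_{\mathbf{N}}$ as stated.

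The hard part will be purely the bookkeeping: tracking the absolute values and signs in $|w'|^{\mathbf{p}-2}w'$ for non-integer $\mathbf{p}$ so that $K$ and the equation for $d$ come out with the sign appropriate to $\varepsilon=-1$, and checking that $\phi$ remains positive on the relevant interval of $r$ so that $w=\phi^{m}$ is a genuine positive solution and the selected sign $\pm$ is admissible. Everything else reduces to the two algebraic identities above, each of which becomes a tautology once the special value of $\sigma$ is inserted.
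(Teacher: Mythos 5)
Your proposal is correct, and it rests on the same structural observation as the paper's proof, but it finishes the job differently. The paper also starts from the invariant line of the quadratic system (written there as $s=-\delta z$ with $\delta=\frac{\mathbf{p}}{(\mathbf{p}-1)(\mathbf{q}+1)}$, the reciprocal form of your $z=\mu s$ with $\mu=-\frac{(\mathbf{q}+1)(\mathbf{p}-1)}{\mathbf{p}}$, under the same coincidence $\mathbf{N}+\sigma=\frac{\mathbf{p}-\mathbf{N}}{\mathbf{p}-1}$), but then it stays inside the dynamical-systems framework: combining $s=-\delta z$ with the definition (\ref{fid}) of $z$ gives $\delta\varepsilon r^{\sigma}w^{\mathbf{q}+1}\left\vert w^{\prime }\right\vert ^{-\mathbf{p}}=-1$, which simultaneously forces $\varepsilon=-1$ and produces the separable first-order ODE $w^{-\frac{\mathbf{q}+1}{\mathbf{p}}}w^{\prime }=\pm \delta ^{\frac{1}{\mathbf{p}}}r^{-\frac{\mathbf{N}-1}{\mathbf{p}-1}}$, whose quadrature yields (\ref{flo}) and (\ref{flai}). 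You instead demote the invariant line to a heuristic and verify the closed forms by direct substitution into $(r^{\mathbf{N}-1}\left\vert w^{\prime }\right\vert ^{\mathbf{p}-2}w^{\prime })^{\prime }=r^{\mathbf{N}-1+\sigma }w^{\mathbf{q}}$; your two identities do check out (the $r$-exponent $\mathbf{N}-1+(\alpha -1)(\mathbf{p}-1)$ vanishes exactly for $\alpha=\frac{\mathbf{p}-\mathbf{N}}{\mathbf{p}-1}$, the exponent matching $(m-1)(\mathbf{p}-1)-1=m\mathbf{q}$ is equivalent to $m=-\frac{\mathbf{p}}{\mathbf{q}-\mathbf{p}+1}$, and the constant equation reproduces $d_{\mathbf{p},\mathbf{q},\mathbf{N}}$ and $d_{\mathbf{N}}$). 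What each route buys: the paper's derivation is constructive — it explains why these are the only solutions carried by the invariant line and obtains $\varepsilon=-1$ as an output rather than an input — while your substitution check is more elementary and self-contained, in particular sidestepping the (implicit in the paper) step of passing from a trajectory of (\ref{pspz}) lying on the line back to an actual solution $w$ via (\ref{wwp}).

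One small correction of wording: no sign check "fixes" the choice of $\pm$. Carrying out your bookkeeping, the constant equation comes out as $\left\vert m\right\vert ^{\mathbf{p}-1}\left\vert m-1\right\vert (\mathbf{p}-1)\,(d\left\vert \alpha \right\vert )^{\mathbf{p}}=1$ for either sign, so both choices are admissible — as the $\pm$ in the statement indicates — each giving a positive solution with $\varepsilon=-1$ on the maximal interval where $c\pm d_{\mathbf{p},\mathbf{q},\mathbf{N}}r^{\frac{\mathbf{p}-\mathbf{N}}{\mathbf{p}-1}}$ (or $c\pm d_{\mathbf{N}}\ln r$) stays positive; this is exactly the positivity check you flag at the end.
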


\subsection{Limit cases\label{limit}}

Here we give a complete study of all the critical cases $\sigma =-\mathbf{p}%
\neq -\mathbf{N,}$ $\sigma =-\mathbf{N}\neq -\mathbf{p,p}=\mathbf{N}\neq
-\sigma ,\mathbf{p}=\mathbf{N}=-\sigma .$

\subsubsection{Case $\protect\sigma =-\mathbf{p}\neq -\mathbf{N}$}

\begin{theorem}
\label{sigmap} (1) Assume $\mathbf{N>p=-}\sigma $

$\bullet $ For $\varepsilon =\pm 1,$ there exist local solutions near $%
\infty $ such that 
\begin{equation*}
\lim_{r\rightarrow \infty }r^{\frac{\mathbf{N}-\mathbf{p}}{\mathbf{p}-1}%
}w=k>0,\qquad \lim_{r\rightarrow \infty }r^{\frac{\mathbf{N}-1}{\mathbf{p}-1}%
}w^{\prime }=-c(k)<0.
\end{equation*}

$\bullet $ For $\varepsilon =1,$ there exists an infinity with 2 parameters
of local solutions near $\infty $ such that 
\begin{equation}
\lim_{r\longrightarrow \infty }(\ln r)^{\frac{\mathbf{p}-1}{\mathbf{q}-%
\mathbf{p}+1}}w(r)=\left( \frac{\mathbf{p}-1}{\mathbf{q}}\right) ^{\frac{%
\mathbf{p}-1}{\mathbf{q}-\mathbf{p}+1}}.  \label{port}
\end{equation}

$\bullet $ For $\varepsilon =-1,$ there exists at least a local solution
near $0$ such that 
\begin{equation}
\lim_{r\longrightarrow 0}\left\vert \ln r\right\vert ^{\frac{\mathbf{p}-1}{%
\mathbf{q}-\mathbf{p}+1}}w(r)=\left( \frac{\mathbf{p}-1}{\mathbf{q}}\right)
^{\frac{\mathbf{p}-1}{\mathbf{q}-\mathbf{p}+1}}.  \label{part}
\end{equation}

(2) Assume $\mathbf{p}=-\sigma >\mathbf{N.}$ Then the behaviour is deduced
from (1) by changing $r$ into $\frac{1}{r}.$\bigskip
\end{theorem}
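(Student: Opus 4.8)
The plan is to run the phase-plane analysis of the quadratic system (\ref{pspz}) in the degenerate regime $\sigma=-\mathbf{p}$. Plugging this value in gives $\gamma=\frac{\mathbf{p}+\sigma}{\mathbf{q}+1-\mathbf{p}}=0$ and $\mathbf{N}+\sigma=\mathbf{N}-\mathbf{p}$, so that in Lemma \ref{nat} the points $M_0$ and $N_0$ coalesce into the single fixed point $P=(0,\mathbf{N}-\mathbf{p})$, while $A_0=(\frac{\mathbf{N}-\mathbf{p}}{\mathbf{p}-1},0)$ and $(0,0)$ stay distinct. This coalescence is exactly what produces the logarithmic factors in (\ref{port}) and (\ref{part}); the power-type behaviour of the first bullet comes instead from the hyperbolic point $A_0$. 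Set $\beta=\frac{\mathbf{p}-1}{\mathbf{q}-\mathbf{p}+1}$, and recall that by Remark \ref{deri} the admissible orbits lie in $Q_1$ when $\varepsilon=1$ and in $Q_2\cup Q_4$ when $\varepsilon=-1$.

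First I would dispose of the first bullet, which holds for $\varepsilon=\pm1$. Since $\sigma=-\mathbf{p}$ forces $\mathbf{q}_c=\mathbf{p}-1<\mathbf{q}$, Lemma \ref{nat}(iii) makes $A_0$ a saddle whose unstable direction lies along the non-admissible axis $\{z=0\}$ and whose one-dimensional admissible stable manifold enters $Q_1$ for $\varepsilon=1$ and $Q_4$ for $\varepsilon=-1$. The orbit on this stable manifold tends to $A_0$ as $t\to+\infty$, and Lemma \ref{link}(iv) yields the power asymptotics $r^{\frac{\mathbf{N}-\mathbf{p}}{\mathbf{p}-1}}w\to k>0$ of the first bullet, the companion estimate $r^{\frac{\mathbf{N}-1}{\mathbf{p}-1}}w'\to-c(k)<0$ following by differentiation. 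Boundedness of the orbit as $t\to+\infty$ is guaranteed by Lemma \ref{glo}, so nothing escapes before reaching $A_0$.

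The core of the argument is the degenerate point $P$. Writing $v=z-(\mathbf{N}-\mathbf{p})$, the system reads $s_t=s^2+\frac{sv}{\mathbf{p}-1}$ and $v_t=-(\mathbf{N}-\mathbf{p})(\mathbf{q}s+v)-\mathbf{q}sv-v^2$, so the linearization at the origin has eigenvalues $0$ and $-(\mathbf{N}-\mathbf{p})$, with center direction $(1,-\mathbf{q})$ and stable direction $(0,1)$. I would build the one-dimensional center manifold $v=-\mathbf{q}s+O(s^2)$ and reduce the flow to it, obtaining $s_t=-\frac{1}{\beta}s^2+O(s^3)$, the coefficient being negative precisely because $\mathbf{q}>\mathbf{p}-1$; integration gives $s\sim\beta/t$. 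For $\varepsilon=1$ the admissible branch has $s>0$, hence $s\to0^+$ as $t\to+\infty$, and since the transverse eigenvalue is negative a whole two-dimensional neighbourhood of $P$ in $Q_1$ is attracted to $P$, producing the two-parameter family near $\infty$. For $\varepsilon=-1$ the admissible branch has $s<0$, where the reduced flow forces $s\to0^-$ only as $t\to-\infty$ (in reverse time $P$ is a saddle-node with a one-dimensional attracting center set), so one recovers at least the center-manifold solution near $0$. In either case I would feed $s\sim\beta/\ln r$ and $z\to\mathbf{N}-\mathbf{p}$ into the recovery formula (\ref{wwp}) with $\gamma=0$, namely $w=(|s|^{\mathbf{p}-1}|z|)^{\frac{1}{\mathbf{q}+1-\mathbf{p}}}$, to get $w\sim\mathrm{const}\,(\ln r)^{-\beta}$ near $\infty$ (resp. $|\ln r|^{-\beta}$ near $0$), the constant being fixed by the center-manifold limit, which is (\ref{port}) and (\ref{part}).

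Finally, part (2), the range $\mathbf{p}=-\sigma>\mathbf{N}$, follows from part (1) through the involution of Lemma \ref{reduc} with $\lambda=-1$: it maps $(\mathbf{N},-\mathbf{p})$ with $\mathbf{N}>\mathbf{p}$ to $(2\mathbf{p}-\mathbf{N},-\mathbf{p})$ with $2\mathbf{p}-\mathbf{N}<\mathbf{p}$, preserves $\widehat{\sigma}=-\mathbf{p}$, and exchanges the behaviours at $0$ and $\infty$ through $r\mapsto1/r$. I expect the genuine obstacle to be the non-hyperbolic point $P$: the vanishing eigenvalue kills the usual linearization, so one must justify the center-manifold reduction and the transfer back to honest solutions, prove that admissible orbits actually reach $P$ rather than leaving the relevant quadrant (combining Lemma \ref{glo} to exclude blow-up with the nullcline geometry to trap them), and control the first correction $v+\mathbf{q}s=O(s^2)$ needed to pin down the constant in (\ref{port}) and (\ref{part}).
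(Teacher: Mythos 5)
Your strategy is the same as the paper's: part (2) is reduced to part (1) by Lemma \ref{reduc}; the first bullet comes from the saddle $A_{0}$ plus Lemma \ref{link}(iv); and the logarithmic regimes come from the coalescence of $M_{0}$ and $N_{0}$ into the degenerate point $(0,\mathbf{N}-\mathbf{p})$ with eigenvalues $0$ and $-(\mathbf{N}-\mathbf{p})$ and center direction $(1,-\mathbf{q})$. Where you invoke the saddle-node sector structure (a parabolic sector attracted to $P$ on the side $s>0$, giving the two-parameter family for $\varepsilon =1$; a single center-manifold orbit reaching $P$ in backward time for $\varepsilon =-1$), the paper instead builds explicit trapping regions (the region $\mathcal{R}_{1}$ bounded by the axes and the stable orbit of $A_{0}$, and, for $\varepsilon =-1$, a connectedness argument with two open families of orbits crossing the nullclines). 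These are interchangeable, and your acknowledgement that the trapping still has to be justified is fair.

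The genuine problem is your last step, where you assert that $s\sim \beta /t$, $z\rightarrow \mathbf{N}-\mathbf{p}$ fed into (\ref{wwp}) "is (\ref{port}) and (\ref{part})". It is not. Your center-manifold reduction is correct: substituting $\overline{z}=-\mathbf{q}s+O(s^{2})$ into $s_{t}=s(s+\frac{\overline{z}}{\mathbf{p}-1})$ gives the coefficient $1-\frac{\mathbf{q}}{\mathbf{p}-1}=-\frac{1}{\beta }$, so $s\sim \frac{\mathbf{p}-1}{(\mathbf{q}-\mathbf{p}+1)t}$ (the paper writes instead $s_{t}\sim -\frac{\mathbf{q}}{\mathbf{p}-1}s^{2}$, dropping the $s^{2}$ term even though it is of the same order as $\frac{s\overline{z}}{\mathbf{p}-1}$). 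But then (\ref{wwp}) with $\gamma =0$ yields
\begin{equation*}
\lim_{r\rightarrow \infty }(\ln r)^{\beta }w=\beta ^{\beta }\left( \mathbf{N}-\mathbf{p}\right) ^{\frac{1}{\mathbf{q}-\mathbf{p}+1}}=\left( \frac{\mathbf{p}-1}{\mathbf{q}-\mathbf{p}+1}\right) ^{\frac{\mathbf{p}-1}{\mathbf{q}-\mathbf{p}+1}}\left( \mathbf{N}-\mathbf{p}\right) ^{\frac{1}{\mathbf{q}-\mathbf{p}+1}},
\end{equation*}
not $\left( \frac{\mathbf{p}-1}{\mathbf{q}}\right) ^{\beta }$: the factor $\left\vert z\right\vert ^{\frac{1}{\mathbf{q}-\mathbf{p}+1}}\rightarrow (\mathbf{N}-\mathbf{p})^{\frac{1}{\mathbf{q}-\mathbf{p}+1}}$ cannot be discarded, and $\beta \neq \frac{\mathbf{p}-1}{\mathbf{q}}$. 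So your argument, carried to the end, proves the theorem with a different constant than the one printed in (\ref{port})--(\ref{part}); silently identifying the two is a gap. In fact a direct check, substituting $w=A(\ln r)^{-\beta }$ into (\ref{sca}) with $\sigma =-\mathbf{p}$ and $\varepsilon =1$, forces $A^{\mathbf{q}-\mathbf{p}+1}=\beta ^{\mathbf{p}-1}(\mathbf{N}-\mathbf{p})$, which confirms your dynamics and shows that the announced constant (produced in the paper by the erroneous coefficient $-\frac{\mathbf{q}}{\mathbf{p}-1}$ combined with omitting the $\left\vert z\right\vert $ factor in (\ref{wwp})) is itself incorrect. You should either state and prove the corrected constant, or explicitly flag that your computation contradicts the stated one; as written, the final identification does not follow from what precedes it.
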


\subsubsection{Case $\protect\sigma =-\mathbf{N}\neq -\mathbf{p}$}

\begin{theorem}
\label{sigman} (1) Assume $-\sigma =\mathbf{N}>\mathbf{p.}$

$\bullet $ For $\varepsilon =\pm 1,$ there exist local solutions near $%
\infty $ such that 
\begin{equation}
\lim_{r\rightarrow \infty }r^{\frac{\mathbf{N}-\mathbf{p}}{\mathbf{p}-1}%
}w=k>0,\qquad \lim_{r\rightarrow \infty }r^{\frac{\mathbf{N}-1}{\mathbf{p}-1}%
}w^{\prime }=-c(k)<0.  \label{hac}
\end{equation}%
$\bullet $ For $\varepsilon =\pm 1,$ there exists an infinity with 2
parameters of local solutions near $\infty $ such that%
\begin{equation}
\lim_{r\longrightarrow \infty }w=C>0,\qquad \lim_{r\longrightarrow \infty
}r^{\frac{\mathbf{N}-1}{p-1}}(\ln r)^{-\frac{1}{\mathbf{p}-1}}w^{\prime
}=-\varepsilon c(C).  \label{(hec}
\end{equation}

$\bullet $ For $\varepsilon =-1,$ there exists $C^{0}$-regular solutions $%
w^{\ast }=a^{\ast }r^{\frac{\mathbf{N}-\mathbf{p}}{\mathbf{q}+1-\mathbf{p}}%
}. $ There exists local solutions near $0$ such that $w\sim
_{r\longrightarrow 0}w^{\ast },$ and local ones near $\infty $ such that $%
w\sim _{r\longrightarrow \infty }w^{\ast }.$ There exists an infinity of
solutions such that 
\begin{equation}
w\sim _{r\longrightarrow 0}w^{\ast },\qquad \lim_{r\longrightarrow \infty
}w=C>0,\quad \lim_{r\longrightarrow \infty }r^{\frac{\mathbf{N}-1}{p-1}}(\ln
r)^{-\frac{1}{\mathbf{p}-1}}w^{\prime }=(\frac{\mathbf{q}+1\mathbf{-p}}{%
\mathbf{N}-\mathbf{p}})^{\frac{\mathbf{q}}{\mathbf{q}+1\mathbf{-p}}}.
\label{hoc}
\end{equation}

(2) Assume $\mathbf{p}>\mathbf{N=}-\sigma .$ Then the behaviour is deduced
from (1) by changing $r$ into $\frac{1}{r}.$
\end{theorem}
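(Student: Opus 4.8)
The plan is to specialize the quadratic system (\ref{pspz}) to the degenerate case $\mathbf{N}+\sigma=0$ and to read off the behaviour of $w$ from the recovery formulas (\ref{wwp}) and Lemma \ref{link}, exactly as in the non-limit theorems. The characteristic feature of $\sigma=-\mathbf{N}$ is that the fixed point $N_0=(0,\mathbf{N}+\sigma)$ of (\ref{fixe}) collapses onto the origin, turning $(0,0)$ into a non-hyperbolic point: by Lemma \ref{nat}(iv) its eigenvalues become $\rho_1=\frac{\mathbf{p}-\mathbf{N}}{\mathbf{p}-1}<0$ (along $s$) and $\rho_2=\mathbf{N}+\sigma=0$ (along $z$), while the $z$-equation reduces to $z_t=z(-\mathbf{q}s-z)$. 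This merging is the origin of the logarithmic corrections, and the center-manifold analysis at $(0,0)$ is the core of the proof.

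First I would locate the remaining fixed points under $\mathbf{N}>\mathbf{p}$, $\mathbf{N}+\sigma=0$: the point $M_0=(\gamma,z_0)$ with $\gamma=\frac{\mathbf{p}-\mathbf{N}}{\mathbf{q}+1-\mathbf{p}}<0$ and $z_0=(\mathbf{N}-\mathbf{p})\frac{\mathbf{q}}{\mathbf{q}+1-\mathbf{p}}>0$, hence $M_0\in Q_2$; and $A_0=(\frac{\mathbf{N}-\mathbf{p}}{\mathbf{p}-1},0)$ on the positive $s$-axis. Since $sz$ has the sign of $\varepsilon$ (Remark \ref{deri}), the admissible trajectories lie in $Q_1$ for $\varepsilon=1$ and in $Q_2\cup Q_4$ for $\varepsilon=-1$; in particular $M_0$, lying in $Q_2$, is relevant only for $\varepsilon=-1$, which is why $w^{\ast}$ and the associated global solutions appear only there. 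By Lemma \ref{nat}(i), $s_0z_0=\gamma z_0<0$, so $M_0$ is a saddle. The behaviour at $A_0$ then follows directly from Lemma \ref{link}(iv), giving (\ref{hac}); the particular solution $w^{\ast}=a^{\ast}r^{(\mathbf{N}-\mathbf{p})/(\mathbf{q}+1-\mathbf{p})}$ is the rest point $M_0$, well defined by (\ref{weto}) precisely for $\varepsilon=-1$; and the stable and unstable manifolds of the saddle $M_0$, together with the boundedness of trajectories (Lemma \ref{glo}), produce the local solutions with $w\sim w^{\ast}$ near $0$ or $\infty$ and the global connections. A trapping argument in $Q_1$ shows that for $\varepsilon=1$ no orbit joins two fixed points on all of $(0,\infty)$, so those solutions are non-global and none exists on $(0,r_0)$.

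The main obstacle, as expected, is the degenerate origin. Here I would use center-manifold theory: the center manifold is tangent to the $z$-axis, and since $\{s=0\}$ is invariant the reduced flow satisfies $z_t=-z^2+o(z^2)$, so that $z\sim 1/t=1/\ln r$ as $t\to+\infty$. The essential computation is to track $\vert s\vert$ to subleading order: from $\frac{d}{dt}\ln\vert s\vert=\rho_1+s+\frac{z}{\mathbf{p}-1}$ and $z\sim 1/t$, the coupling term $\frac{z}{\mathbf{p}-1}\sim\frac{1}{(\mathbf{p}-1)t}$ integrates to a logarithmic factor, yielding $\vert s\vert\sim C\,r^{(\mathbf{p}-\mathbf{N})/(\mathbf{p}-1)}(\ln r)^{1/(\mathbf{p}-1)}$. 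Inserting these rates into (\ref{wwp}), the powers of $\ln r$ cancel in $w=r^{-\gamma}(\vert s\vert^{\mathbf{p}-1}\vert z\vert)^{1/(\mathbf{q}+1-\mathbf{p})}$, so that $\lim w=C>0$, whereas in $w^{\prime}=r^{-(\gamma+1)}(\vert z\vert\vert s\vert^{\mathbf{q}})^{1/(\mathbf{q}+1-\mathbf{p})}$ they combine to the exponent $\frac{1}{\mathbf{p}-1}$, giving the logarithmic behaviour (\ref{(hec}); the saddle connection from $M_0$ to the origin then yields (\ref{hoc}). The delicate part will be to turn this formal expansion into rigorous limits: one must show that the orbit actually enters the sector governed by the center manifold, and control the remainder terms well enough to certify both the power $(\ln r)^{1/(\mathbf{p}-1)}$ and the limiting constants $c(C)$.

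Finally, part (2), the case $\mathbf{p}>\mathbf{N}=-\sigma$, requires no new work: Lemma \ref{reduc} with $\lambda=-1$ sends $(\mathbf{N},\sigma)$ to $(2\mathbf{p}-\mathbf{N},-2\mathbf{p}-\sigma)$, which preserves the relation $\sigma=-\mathbf{N}$ while interchanging $\mathbf{N}>\mathbf{p}$ and $\mathbf{N}<\mathbf{p}$, and exchanges the behaviours near $0$ and $\infty$ through $r\mapsto 1/r$. Thus all conclusions of (1) transfer after this inversion.
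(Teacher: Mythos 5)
Your proposal follows the same overall strategy as the paper's proof: reduce part (2) to part (1) by Lemma \ref{reduc}, analyze the quadratic system (\ref{pspz}) with $\mathbf{N}+\sigma =0$, identify $M_{0}\in Q_{2}$ as a saddle (hence tied to $\varepsilon =-1$), obtain (\ref{hac}) from the trajectories converging to the saddle $A_{0}$ via Lemma \ref{link}(iv), and trace the logarithmic corrections to the collapse of $N_{0}$ onto the origin. Where you genuinely differ is the endgame. The paper never computes the subleading correction to $s$: it establishes only $z\sim 1/t$ and exponential smallness of $s$, then converts $z\sim 1/t$ into the relation $w^{\frac{\mathbf{q}}{1-\mathbf{p}}}w^{\prime }\sim -\varepsilon r^{\frac{1-\mathbf{N}}{\mathbf{p}-1}}(\ln r)^{\frac{1}{\mathbf{p}-1}}$, integrates it, and rules out the alternative branch ($w$ behaving like $w^{\ast }$ with a logarithmic factor) by the contradiction that this would force $s\rightarrow \mathbf{p}-\mathbf{N}\neq 0$. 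Your route instead refines $\ln \left\vert s\right\vert =\rho _{1}t+\frac{1}{\mathbf{p}-1}\ln t+C+o(1)$ and feeds it into (\ref{wwp}), where the log powers cancel in $w$ and combine to the exponent $\frac{1}{\mathbf{p}-1}$ in $w^{\prime }$. This computation is correct (the needed convergence of $\int s\,dt$ and of $\int (z-\frac{1}{\tau })d\tau $ follows from the exponential decay of $s$), and it buys you both limits simultaneously, without the paper's dichotomy-and-contradiction step.

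The one place where your proposal is substantively thinner than the paper is exactly the step you flag as delicate: proving that admissible orbits actually converge to the origin, i.e. the existence of the two-parameter family and of the connection from $M_{0}$ to $(0,0)$ behind (\ref{hoc}). Center-manifold theory only gives local information near $(0,0)$; it does not tell you that the unstable trajectory of $M_{0}$, or an open set of orbits in $Q_{1}$, ever reaches that neighborhood in the correct sector. The paper fills this with explicit trapping regions: in $Q_{2}$, the bounded region $\mathcal{H}$ where $s_{t}>0,z_{t}<0$ is positively invariant, contains one unstable-manifold trajectory of $M_{0}$, and has no fixed point other than $(0,0)$ in its closure, so every orbit entering $\mathcal{H}$ (in particular that trajectory, which yields (\ref{hoc})) converges to the origin; in $Q_{1}$, the region $\mathcal{K}$ under the stable-manifold trajectory $\mathcal{T}_{1}$ of $A_{0}$ is invariant with $z_{t}<0$, and since $\mathcal{T}_{1}$ is the only orbit of $Q_{1}$ converging to $A_{0}$, every orbit in $\mathcal{K}$ must converge to $(0,0)$. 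Supplying this (or an equivalent global argument) is what is needed to turn your outline into a complete proof.
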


\subsubsection{Case $\mathbf{p}=\mathbf{N}\neq -\protect\sigma $}

\begin{theorem}
\label{pegaln}(1) Assume $\mathbf{p}=\mathbf{N}>-\sigma $

$\bullet $ For $\varepsilon =\pm 1$ there exists local ($C^{0}$-regular)
solutions: for any $w_{0}>0$ there exists a unique solution such that 
\begin{equation*}
\lim_{r\longrightarrow 0}w=w_{0}>0,\qquad \lim_{r\longrightarrow 0}r^{-\frac{%
\sigma +1}{\mathbf{p}-1}}w^{\prime }=-\varepsilon c(w_{0}),\quad c(w_{0})>0.
\end{equation*}

$\bullet $ For $\varepsilon =1,$ there exist an infinity of local solutions
near $0$ such that 
\begin{equation*}
\lim_{r\longrightarrow 0}\left\vert \ln r\right\vert ^{-1}w=C>0,\qquad
\lim_{r\longrightarrow 0}rw^{\prime }=-C.
\end{equation*}

$\bullet $ For $\varepsilon =-1,$ there exists a particular solution $%
w^{\ast }=a^{\ast }r^{-\frac{\mathbf{p+\sigma }}{\mathbf{q}+1-\mathbf{p}}}.$
There exist local solutions near 0 such that $w\sim _{r\longrightarrow
0}w^{\ast },$ and local ones near $\infty $ such that $w\sim
_{r\longrightarrow \infty }w^{\ast }.$ There exists an infinity of solutions
such that 
\begin{equation*}
\lim_{r\longrightarrow 0}\left\vert \ln r\right\vert ^{-1}w=C,\qquad
\lim_{r\longrightarrow 0}rw^{\prime }=-C,\qquad w\sim _{r\longrightarrow
\infty }w^{\ast }.
\end{equation*}

(2) Assume $\mathbf{p}=\mathbf{N}<-\sigma .$ Then the behaviour is deduced
from (1) by changing $r$ into $\frac{1}{r}.$
\end{theorem}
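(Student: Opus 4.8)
The plan is to read Theorem~\ref{pegaln} off the phase portrait of the quadratic system (\ref{pspz}) specialized to the degenerate value $\mathbf{p}=\mathbf{N}$. Setting $\mathbf{p}=\mathbf{N}$ kills the coefficient $\frac{\mathbf{p}-\mathbf{N}}{\mathbf{p}-1}$ and reduces (\ref{pspz}) to
\[
s_t=s\Big(s+\frac{z}{\mathbf{p}-1}\Big),\qquad z_t=z(\mathbf{N}+\sigma-\mathbf{q}s-z);
\]
the essential new feature is that the fixed points $A_0$ and $(0,0)$ of (\ref{fixe}) collapse onto a single point, which by Lemma~\ref{nat}(iv) now carries eigenvalues $\rho_1=0$ and $\rho_2=\mathbf{N}+\sigma>0$ (recall $\mathbf{N}+\sigma=\mathbf{p}+\sigma>0$ in region (1)). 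The remaining fixed points are the hyperbolic ones $N_0=(0,\mathbf{N}+\sigma)$ and $M_0=(\gamma,-(\mathbf{p}-1)\gamma)$ with $\gamma=\frac{\mathbf{p}+\sigma}{\mathbf{q}+1-\mathbf{p}}>0$, so $M_0\in Q_4$; by (\ref{weto}) the particular solution $w^{\ast}=a^{\ast}r^{-\gamma}$ exists iff $-\varepsilon(\mathbf{p}-1)\gamma^2>0$, i.e. only for $\varepsilon=-1$, explaining why $M_0$ enters solely in the absorption case. Since $\mathbf{N}-\mathbf{p}=0$ sends $\mathbf{q}_S$ to $+\infty$, Lemma~\ref{glo} applies with no limit cycle, so every bounded trajectory converges to a fixed point.

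The hyperbolic points are handled by linearization and Lemma~\ref{link}. At the saddle $N_0$ the eigenvalues of Lemma~\ref{nat}(ii) are $l_1=\frac{\mathbf{p}+\sigma}{\mathbf{p}-1}>0$ and $l_2=-(\mathbf{N}+\sigma)<0$; its one-dimensional unstable manifold converges to $N_0$ as $t\to-\infty$ and has two branches, the one in $Q_1$ ($s>0$) serving $\varepsilon=1$ and the one in $Q_2$ ($s<0$) serving $\varepsilon=-1$. Through Lemma~\ref{link}(ii) this manifold yields, for each $w_0>0$, the unique $C^0$-regular solution with $\lim_{r\to0}w=w_0$ and $\lim_{r\to0}r^{-\frac{\sigma+1}{\mathbf{p}-1}}w'=-\varepsilon c(w_0)$. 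When $\varepsilon=-1$, the saddle $M_0$ together with Lemma~\ref{link}(i) supplies, from its unstable and stable manifolds, the local solutions with $w\sim_{r\to0}w^{\ast}$ and with $w\sim_{r\to\infty}w^{\ast}$ respectively.

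The heart of the proof, and the step I expect to be the main obstacle, is the behaviour at the degenerate origin. As $\rho_1=0$ the linearization is inconclusive, so I would carry out a center-manifold reduction: there is an invariant curve $z=h(s)$ tangent to the $s$-axis with $h(s)=o(s)$, and because the transverse eigenvalue $\rho_2>0$ is repelling in forward time, the whole local flow is attracted onto this curve as $t\to-\infty$. On it the reduced equation is $s_t=s^2\bigl(1+o(1)\bigr)$, whose $s>0$ solutions obey $s\sim 1/|t|=1/|\ln r|$ as $t\to-\infty$. Inserting this into $\frac{d}{dt}\ln w=-s$ and $rw'=-sw$ produces $w\sim C|\ln r|$ and $rw'\to-C$, namely the logarithmic family of the statement, living in $Q_1$ for $\varepsilon=1$ and in $Q_4$ for $\varepsilon=-1$. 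The delicate points are to control the $o(1)$ error well enough to fix the constant $C$ rigorously and to confirm that the backward-attracting nature of the center manifold produces a genuine infinity (a full two-parameter family, one parameter after the scaling) of such solutions rather than a single orbit.

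Finally I would glue the local pieces into the global statement. By Lemma~\ref{osse} and Lemma~\ref{glo} each global trajectory converges to a fixed point at both ends. For $\varepsilon=-1$ the logarithmic orbits leaving the origin land, in forward time, on the stable manifold of $M_0$, yielding the heteroclinic connection with $\lim_{r\to0}|\ln r|^{-1}w=C$ and $w\sim_{r\to\infty}w^{\ast}$; for $\varepsilon=1$ the quadrant $Q_1$ contains no admissible forward limit point, so these solutions remain merely local near $0$ and none is global. Part~(2), the case $\mathbf{p}=\mathbf{N}<-\sigma$, then needs no separate analysis: applying Lemma~\ref{reduc} with $\lambda=-1$ maps it onto region~(1) and exchanges the roles of $0$ and $\infty$ through $r\mapsto 1/r$.
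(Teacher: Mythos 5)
Your phase--plane skeleton coincides with the paper's own proof: same reduced system for $\mathbf{p}=\mathbf{N}$, same observation that $A_{0}$ and $(0,0)$ merge into a degenerate point with eigenvalues $0$ and $\mathbf{N}+\sigma>0$, the saddle $N_{0}$ producing the $C^{0}$-regular solutions through Lemma \ref{link}, the saddle $M_{0}\in Q_{4}$ (absorption only) producing $w\sim w^{\ast}$ near $0$ and near $\infty$, and Lemma \ref{reduc} with $\lambda=-1$ for part (2). A minor difference: the paper does not invoke abstract center-manifold attraction to get the ``infinity'' of log-type orbits, but exhibits explicit negatively invariant regions (in $Q_{1}$, the region bounded by the axes and the segment $N_{0}M_{0}$, and the region under the trajectory $\mathcal{T}_{1}$; in $Q_{4}$, the region $\mathcal{H}$ where $s_{t}>0$, $z_{t}<0$) and shows every trajectory meeting them converges to $(0,0)$ as $t\to-\infty$.

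The genuine gap is precisely the step you flag as ``delicate'' and then leave unresolved: the asymptotics at the degenerate point. From $s_{t}=s^{2}\bigl(1+o(1)\bigr)$ you get only $s\sim1/|t|$, and integrating $\frac{d}{dt}\ln w=-s$ then yields $\ln w=(1+o(1))\ln|t|$, i.e. $w=|\ln r|^{1+o(1)}$ --- \emph{not} $w\sim C|\ln r|$ and \emph{not} $rw'\to-C$. The error $s-1/|t|$ is $o(1/|t|)$ but need not be integrable (think of $1/(|t|\ln|t|)$, whose integral diverges like $\ln\ln|t|$), so no constant $C$ can be extracted this way; the two displayed limits of the theorem simply do not follow from what you have established. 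The paper closes exactly this hole by a separate argument absent from your proposal: starting from the rough power bounds that the center-manifold estimate does give, it rewrites the equation for $\mathbf{p}=\mathbf{N}$ in logarithmic time as $-(\mathbf{N}-1)\left\vert w_{t}\right\vert ^{\mathbf{N}-2}w_{tt}=\varepsilon e^{(\mathbf{N}+\sigma )t}w^{\mathbf{q}}$, integrates the exact derivative $\bigl(\tfrac{\mathbf{N}-1}{\mathbf{q}-\mathbf{N}+1}\left\vert w_{t}\right\vert ^{\mathbf{N}-1-\mathbf{q}}\bigr)_{t}\sim \varepsilon e^{(\mathbf{N}+\sigma )t}\left\vert t\right\vert ^{\mathbf{q}}$ (convergent at $-\infty$ since $\mathbf{N}+\sigma>0$), and concludes that either $w_{t}\to-c<0$ --- which gives exactly $w\sim C|\ln r|$ and $rw'\to-C$ --- or an explicit alternative asymptotic holds, which is impossible for $\varepsilon=1$ by sign and is excluded for $\varepsilon=-1$ by contradiction with the rough bounds. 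To complete your proof you must either reproduce such an argument or push the center-manifold expansion to second order ($s_{t}=s^{2}+as^{3}+\dots$, giving an integrable correction $O(\ln|t|/t^{2})$) and verify that orbits off the exact manifold differ from it by backward-exponentially small terms; as written, the proposal stops short of the theorem's stated limits.
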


\subsubsection{Case $\mathbf{p}=\mathbf{N}=-\protect\sigma $}

\begin{theorem}
\label{pnsigma}Assume $\mathbf{p}=\mathbf{N}=-\sigma $. For $\varepsilon
=-1, $ there exist local explicit solutions near $0$ or near $\infty $ of
the form (\ref{flai}): 
\begin{equation}
w=(C\pm d_{\mathbf{N}}\ln r)^{-\frac{\mathbf{N}}{\mathbf{q}-\mathbf{N}+1}%
},\quad C\in \mathbb{R}.  \label{frio}
\end{equation}%
For $\varepsilon =1$ there is no local solution near $0$ nor $\infty .$
\end{theorem}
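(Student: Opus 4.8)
The plan is to handle the two assertions by entirely different means: the existence for $\varepsilon=-1$ is a direct specialization of an earlier result, while the nonexistence for $\varepsilon=1$ is forced by a total degeneracy of the phase plane that is peculiar to this critical case. For the first assertion I would simply invoke Theorem \ref{expli}. Its hypothesis $\sigma=-\mathbf{p}\frac{\mathbf{N}-1}{\mathbf{p}-1}$ reduces to $\sigma=-\mathbf{N}$ exactly when $\mathbf{p}=\mathbf{N}$, so the present case $\mathbf{p}=\mathbf{N}=-\sigma$ is precisely the branch $\mathbf{p}=\mathbf{N}$ of that theorem, producing the solutions (\ref{flai}). It then remains only to record the domains: writing $A=C\pm d_{\mathbf{N}}\ln r$ and $m=\mathbf{N}/(\mathbf{q}-\mathbf{N}+1)>0$, so that $w=A^{-m}$, positivity holds exactly where $A>0$. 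For the sign $-$ this is $r<e^{C/d_{\mathbf{N}}}$, a solution near $0$ with $w\to0$; for the sign $+$ it is $r>e^{-C/d_{\mathbf{N}}}$, a solution near $\infty$ with $w\to0$. A one-line substitution into (\ref{sca}) confirms the stated value of $d_{\mathbf{N}}$.

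For the nonexistence when $\varepsilon=1$, the decisive structural fact is that here all four fixed points (\ref{fixe}) collapse onto the origin: since $\mathbf{p}+\sigma=0$ we get $\gamma=0$, and since $\mathbf{N}-\mathbf{p}=0$ and $\mathbf{N}+\sigma=0$ the points $M_0,N_0,A_0$ all equal $(0,0)$. Consequently the system (\ref{pspz}) loses its linear terms and becomes the homogeneous quadratic system $s_t=s(s+\frac{z}{\mathbf{N}-1})$, $z_t=-z(\mathbf{q}s+z)$, whose Jacobian at the origin vanishes identically. This complete degeneracy is the main obstacle, as the linearized matrix carries no information. I would get around it by exploiting the degree-$2$ homogeneity: in polar coordinates $s=\rho\cos\phi$, $z=\rho\sin\phi$ one obtains $\rho_t=\rho^2 R(\phi)$ and $\phi_t=\rho\,\Phi(\phi)$, where $\Phi(\phi)=-\cos\phi\sin\phi\,[(\mathbf{q}+1)\cos\phi+\frac{\mathbf{N}}{\mathbf{N}-1}\sin\phi]$ and $R(0)=1$, $R(\pi/2)=-1$.

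By Remark \ref{deri}, for $\varepsilon=1$ every admissible trajectory lies in $Q_1\cup Q_3$ (since $sz$ has the sign of $\varepsilon$), and by the time-reversing symmetry $(s,z,t)\mapsto(-s,-z,-t)$ of the homogeneous field it suffices to analyze $Q_1$, that is $\phi\in(0,\pi/2)$. There $\Phi<0$ has no interior zero, so $\phi$ is strictly monotone and the trajectory sweeps the whole interval $(0,\pi/2)$; I would then study $\rho$ as a function of $\phi$ via $\frac{d\ln\rho}{d\phi}=R(\phi)/\Phi(\phi)$. Near $\phi=0$ one has $\Phi\sim-(\mathbf{q}+1)\phi$ with $R\to1$, and near $\phi=\pi/2$ one has $\Phi\sim-\frac{\mathbf{N}}{\mathbf{N}-1}(\pi/2-\phi)$ with $R\to-1$; in both cases the integral $\int d\phi\,R/\Phi$ diverges to $+\infty$. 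Hence $\rho$ tends to $+\infty$ at each end of the angular range while remaining continuous and positive on $(0,\pi/2)$, so $\rho$ attains a strictly positive minimum and no trajectory in $Q_1\cup Q_3$ ever approaches the origin.

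To conclude, suppose a positive local solution with $\varepsilon=1$ existed near $0$ (resp. near $\infty$). By Lemma \ref{glo} its trajectory, which lies in $Q_1\cup Q_3$, would be bounded as $t\to-\infty$ (resp. $t\to+\infty$) and converge to a fixed point or to a limit cycle around $M_0$. The only fixed point is now the origin, and convergence to it would force $\rho\to0$, contradicting the positive lower bound just established. A limit cycle is likewise excluded: any closed curve encircling the origin must cross both coordinate axes, which are invariant, and moreover the Sobolev value $\mathbf{q}_S$ of (\ref{qcqs}) governing the cyclic regime is not even defined here since $\mathbf{N}=\mathbf{p}$. Both alternatives of Lemma \ref{glo} thus fail, so no such local solution exists, which is the second assertion.
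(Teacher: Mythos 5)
Your proposal is correct, and its existence half is essentially the paper's own argument: the paper also obtains (\ref{frio}) by invoking Theorem \ref{expli}, whose hypothesis $\sigma=-\mathbf{p}\frac{\mathbf{N}-1}{\mathbf{p}-1}$ reduces to $\sigma=-\mathbf{N}$ when $\mathbf{p}=\mathbf{N}$, the paper additionally identifying these solutions with the two invariant half-lines $s\equiv-\frac{\mathbf{N}}{(\mathbf{N}-1)(\mathbf{q}+1)}z$ in $Q_{2}$ and $Q_{4}$. Where you genuinely diverge is the nonexistence for $\varepsilon=1$. The paper's argument is a one-line monotonicity observation: in $Q_{1}\cup Q_{3}$ the field satisfies $s_{t}>0>z_{t}$ at every point, so along any trajectory $s$ is strictly increasing and $z$ strictly decreasing; hence $s$ cannot tend to $0$ as $t\to+\infty$ and $z$ cannot tend to $0$ as $t\to-\infty$, so the trajectory converges to the unique fixed point $(0,0)$ in neither time direction (and the same monotonicity forbids periodic orbits), after which Lemma \ref{glo} concludes. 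Your polar blow-up argument, $d\ln\rho/d\phi=R(\phi)/\Phi(\phi)$ with nonintegrable singularities of the right sign at $\phi=0$ and $\phi=\pi/2$, reaches the same lower bound $\inf\rho>0$ by a quantitative route: it costs more computation, but it buys the extra information that trajectories in $Q_{1}\cup Q_{3}$ blow up in both time directions, and you treat the limit-cycle alternative of Lemma \ref{glo} explicitly (invariance of the axes, $\mathbf{q}_{S}$ undefined), a point the paper passes over in silence. One small step to tighten: your claim that each trajectory ``sweeps the whole interval $(0,\pi/2)$'' needs a justification -- if $\phi$ converged to an interior angle, then, since $R/\Phi$ is bounded on compact angular subintervals, $\rho$ would converge too, so the trajectory would converge to a non-origin point, which would have to be an equilibrium, and there is none; alternatively, your conclusion $\inf\rho>0$ already follows on whatever angular subinterval is actually swept, because $\ln\rho$ has bounded variation over compact subintervals and tends to $+\infty$ at the ends, so the sweeping claim is not even needed.
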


\section{Description of the radial solutions of system (\protect\ref{SA}) 
\label{exi}}

\subsection{The case $p=q$\label{pandq}}

In the case $p=q,$ the system (\ref{one}) admits solutions of the form $%
u_{1}\equiv u_{2}\equiv u,$ where $u$ is any solution of the scalar equation
(\ref{eq}). However at Proposition \ref{locun} we have constructed local
solutions such that $u_{1}\neq u_{2}$, for example such that $u_{1}^{\prime
}(r_{0})=0,$ $u_{2}^{\prime }(r_{0})\neq 0$ at some point $r_{0}>0.$ A
natural question is the existence global solutions in $\mathbb{R}%
^{N}\backslash \left\{ 0\right\} .$ Here we answer this question:

\begin{proposition}
\label{caspq}Assume that $p=q>1.$ Then all the radial solutions of system (%
\ref{one}) satisfy the first integral in any interval of definition 
\begin{equation}
\left\vert u_{1}^{\prime }\right\vert ^{q}u_{1}^{\prime }-\left\vert
u_{2}^{\prime }\right\vert ^{q}u_{2}^{\prime }\equiv Cr^{(1-N)(q+1)},\qquad
C\in \mathbb{R},  \label{first}
\end{equation}%
and can be computed by quadratures. All the radial solutions $(u_{1},u_{2})$
in $\mathbb{R}^{N}\backslash \left\{ 0\right\} $ satisfy $u_{1}\equiv
u_{2}\equiv u,$ where $u$ is any solution of the scalar Hamilton-Jacobi
equation.
\end{proposition}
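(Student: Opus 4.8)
The plan is to pass to the equivalent first order system (\ref{Sw}) for $w_1=-r^{N-1}u_1'$ and $w_2=-r^{N-1}u_2'$, which for $p=q$ reads $w_1'=r^{(N-1)(1-p)}|w_2|^p$ and $w_2'=r^{(N-1)(1-p)}|w_1|^p$. First I would produce the first integral. With $g(s)=|s|^q s$, an increasing bijection of $\mathbb{R}$ satisfying $g'(s)=(q+1)|s|^q$, a direct differentiation gives
\begin{equation*}
\frac{d}{dr}\bigl(g(w_1)-g(w_2)\bigr)=(q+1)\bigl(|w_1|^q w_1'-|w_2|^q w_2'\bigr)=(q+1)r^{(N-1)(1-p)}\bigl(|w_1|^q|w_2|^p-|w_2|^q|w_1|^p\bigr),
\end{equation*}
which vanishes identically precisely because $p=q$. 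Hence $g(w_1)-g(w_2)$ is a constant $K$; replacing $w_i$ by $-r^{N-1}u_i'$ and setting $C=-K$ yields exactly (\ref{first}). This computation uses only (\ref{Sw}), so it is valid on the whole interval of existence, including at the isolated zeros of $u_1'$ and $u_2'$.

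Since $g$ is a bijection, the first integral gives $w_1=g^{-1}(g(w_2)+K)$, so the second equation becomes the separable scalar ODE $w_2'=r^{(N-1)(1-p)}\,|g^{-1}(g(w_2)+K)|^{p}$. Integrating it determines $w_2$ implicitly, then $w_1=g^{-1}(g(w_2)+K)$, and finally $u_1,u_2$ by one further integration of $u_i'=-r^{1-N}w_i$; this is the asserted computation by quadratures.

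The hard part is the global statement, and the decisive structural fact is that $w_1'\ge0$ and $w_2'\ge0$ everywhere, so both components are nondecreasing on the whole interval of existence, with no sign restriction. I would argue that $C\ne0$ is incompatible with a solution on all of $(0,\infty)$; exchanging the two indices reduces this to $K=-C>0$, and then the first integral forces $w_1>w_2$ throughout, with $w_1>w_2>0$ for $r$ large. The separated relation is $F(w_2(r))=G(r)+\text{const}$, where $F'(\tau)=|g^{-1}(g(\tau)+K)|^{-p}$ and $G(r)=\int^{r}\rho^{(N-1)(1-p)}\,d\rho$. As $\tau\to+\infty$ one has $g^{-1}(g(\tau)+K)\sim\tau$, so the integrand of $F$ behaves like $\tau^{-p}$ with $p>1$, giving $F(+\infty)<\infty$. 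The position of $(N-1)(p-1)$ relative to $1$ governs $G$: if $(N-1)(p-1)\le1$ then $G(+\infty)=+\infty$, which is impossible since $F$ is bounded, forcing $w_2$ to blow up at a finite $r_*$; if $(N-1)(p-1)>1$ then $G(0^+)=-\infty$, and tracing the solution backwards I would check that $F(w_2)$ cannot tend to $-\infty$, because at the only finite value $\ell_2$ where the integrand of $F$ could be non integrable, namely $g(\ell_2)+K=0$, it behaves like $|w_2-\ell_2|^{-p/(p+1)}$, which is integrable since $p/(p+1)<1$, while the limit $\ell_2=-\infty$ is excluded in the same way; hence the solution cannot be continued down to $0$. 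In either case no global solution survives unless $K=0$. Finally $C=0$ gives $w_1\equiv w_2$, that is $u_1'\equiv u_2'$, so $u_1$ and $u_2$ differ by a constant, and $-\Delta u_1=|\nabla u_2|^p=|\nabla u_1|^q$ shows that $u:=u_1$ solves the scalar Hamilton-Jacobi equation (\ref{eq}).
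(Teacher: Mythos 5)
Your proposal is correct and follows essentially the same route as the paper: derive the first integral from the reduced system (\ref{Sw}), separate variables to get $F(w_2)=G(r)+\mathrm{const}$, and rule out global solutions with $C\neq 0$ by playing the boundedness of $F$ (convergence of the integral at the zero of the denominator and at $\pm\infty$, using $q>1$) against the unboundedness of $G$ on $(0,\infty)$; your case split on $(N-1)(p-1)$ versus $1$ is just an explicit version of the paper's remark that the right-hand side is unbounded ``in any case.'' The only blemish is the unused (and not fully justified) side claim that $w_1>w_2>0$ for large $r$, which your actual argument never needs.
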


\begin{proof}
Here system (\ref{Sw}) reduces to 
\begin{equation*}
\left\{ 
\begin{array}{c}
w_{1}^{\prime }=r^{(N-1)(1-q)}\left\vert w_{2}\right\vert ^{q}, \\ 
w_{2}^{\prime }=r^{(N-1)(1-q)}\left\vert w_{1}\right\vert ^{q}.%
\end{array}%
\right.
\end{equation*}%
As a consequence, 
\begin{equation*}
\left\vert w_{1}\right\vert ^{q}w_{1}^{\prime }-\left\vert w_{2}\right\vert
^{q}w_{2}^{\prime }=r^{(N-1)(1-q)}\left\vert w_{1}\right\vert ^{q}\left\vert
w_{2}\right\vert ^{q}-r^{(N-1)(1-q)}\left\vert w_{1}\right\vert
^{q}\left\vert w_{2}\right\vert ^{q}=0.
\end{equation*}%
So we get the relation 
\begin{equation*}
\left\vert w_{1}\right\vert ^{q}w_{1}-\left\vert w_{2}\right\vert
^{q}w_{2}\equiv C,
\end{equation*}%
equivalent to (\ref{first}). Suppose that $C\neq 0.$ By symmetry we can
suppose that $C=c^{q+1}>0.$ Then we obtain 
\begin{equation*}
\frac{w_{2}^{\prime }}{\left\vert C+\left\vert w_{2}\right\vert
^{q}w_{2}\right\vert ^{\frac{q}{q+1}}}=r^{(N-1)(1-q)}.
\end{equation*}%
We claim that the solution cannot be defined on\textbf{\ }$\mathbb{R}%
^{N}\backslash \left\{ 0\right\} $\textbf{. }Indeed let 
\begin{equation*}
F(\theta )=\dint_{0}^{\theta }\frac{d\theta }{\left\vert c^{q+1}+\left\vert
\theta \right\vert ^{q}\theta \right\vert ^{\frac{q}{q+1}}}=\dint_{0}^{-c}%
\frac{d\theta }{\left\vert c^{q+1}+\left\vert \theta \right\vert ^{q}\theta
\right\vert ^{\frac{q}{q+1}}}+\dint_{-c}^{\theta }\frac{d\theta }{\left\vert
c^{q+1}+\left\vert \theta \right\vert ^{q}\theta \right\vert ^{\frac{q}{q+1}}%
}.
\end{equation*}%
This function is well defined, since the integrals are convergent at the
bound $-c$ since $\frac{q}{q+1}<1.$ And the integrals are convergent at the
bounds $\pm \infty ,$ since $q>1,$ thus $F$ is bounded. If the solution is
global, that means $r$ describes $(0,\infty ),$ then $F(w_{2})=\frac{%
r^{N-(N-1)q}}{N-(N-1)q}+D,$ $D\in \mathbb{R},$ for $q\neq \frac{N}{N-1},$ $%
F(w_{2})=\ln r+D$ if $q=\frac{N}{N-1},$ which is impossible in any case. So
the solutions are not global. Hence all the global solutions on $(0,\infty )$
satisfy $w_{1}\equiv w_{2},$ $u_{1}=u_{2}+c,$ $c\in \mathbb{R},$ where $%
u_{2} $ is solution of the scalar equation. The nonglobal solutions can be
computed on any interval where $w_{1},$ \textbf{\ }$w_{2}$ have a constant
sign, by the formulas $F(w_{2})=\frac{r^{N-(N-1)q}}{N-(N-1)q}+D,$ for $q\neq 
\frac{N}{N-1},$ $F(w_{2})=\ln r+D$ if $q=\frac{N}{N-1}.$
\end{proof}

\begin{remark}
\label{tric}When $p=q\neq \frac{N}{N-1},$ the existence of local solutions
near $0$ or $\infty $ will be described as a particular case of Theorem \ref%
{locsol}. When $q=\frac{N}{N-1},$ such solutions do not exist, because $F$
is bounded.
\end{remark}

\subsection{Constant sign solutions $(u_{1}^{\prime },u_{2}^{\prime })$ of
system (\protect\ref{SA})}

In this paragraph, we study the existence of radial solutions $(u_{1},u_{2})$
of system (\ref{one}) in terms of the derivatives $u_{1}^{\prime
},u_{2}^{\prime },$ by applying all the results of Section \ref{app} to
system (\ref{SA}). It appears that the situation is extremely rich when $%
p\neq q.$ Here we focus our study on solutions defined in $%
B_{r_{0}}\backslash \left\{ 0\right\} ,$ or $\mathbb{R}^{N}\backslash 
\overline{B_{r_{0}}},$ and above all on global solutions in $\mathbb{R}%
^{N}\backslash \left\{ 0\right\} .\bigskip $ $.$

We distinguish four regions of study, corresponding respectively to the
regions $\mathbf{A},\mathbf{B},\mathbf{C},\mathbf{D}$ defined at (\ref{regi}%
) for system (\ref{pspz}), where $\mathbf{p},\mathbf{N},\sigma $ are defined
at (\ref{mns}), (\ref{red}).

\FRAME{ftbpF}{4.8372in}{4.0201in}{0in}{}{}{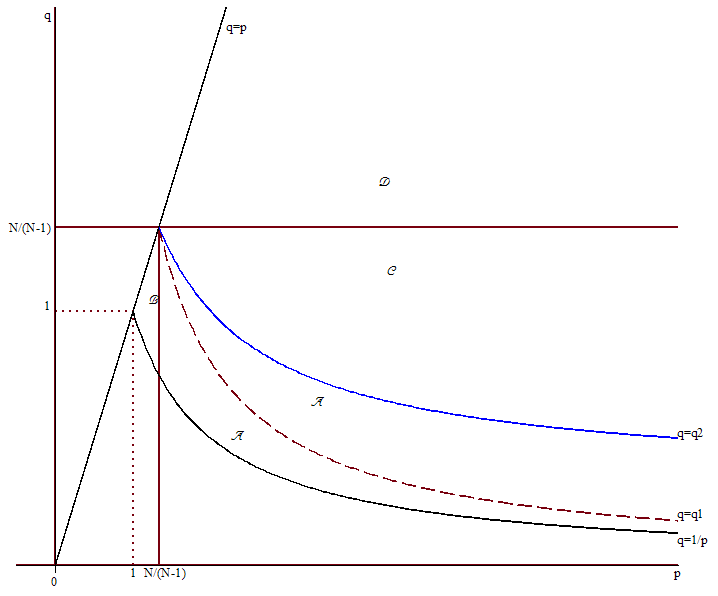}{\special%
{language "Scientific Word";type "GRAPHIC";maintain-aspect-ratio
TRUE;display "USEDEF";valid_file "F";width 4.8372in;height 4.0201in;depth
0in;original-width 4.5916in;original-height 3.811in;cropleft "0";croptop
"1";cropright "1";cropbottom "0";filename
'abcdlastfinal.png';file-properties "XNPEU";}}

\begin{definition}
\label{fir}Let $\mathcal{S=}$ $\left\{ (p,q)\in \mathbb{R}^{2}:pq>1,q\leq
p,q\neq q_{1},q_{2},p,q\neq \frac{N}{N-1}\right\} $ we consider the subsets
of $\mathcal{S}$ defined by 
\begin{eqnarray*}
\mathcal{A} &=&\left\{ p>\frac{N}{N-1},\;q<q_{2}\right\} ,\qquad \mathcal{B}%
=\left\{ p<\frac{N}{N-1}\right\} , \\
\mathcal{C} &=&\left\{ q_{2}<q<\frac{N}{N-1}\right\} ,\qquad \mathcal{D}%
=\left\{ q>\frac{N}{N-1}\right\} .
\end{eqnarray*}
\end{definition}

\begin{lemma}
\label{return}Let $(u_{1},u_{2})$ be any radial solution of system (\ref{one}%
) defined near $0$ (resp near $\infty $); let $w_{1},w_{2}$ be associated by
(\ref{ww}) and $w=\left\vert w_{1}\right\vert $ , solution of (\ref{eqm})
with (\ref{mns}). Then 
\begin{equation}
u_{1}^{\prime }\quad \text{has the sign of}-w^{\prime },\quad \text{and}%
\quad u_{2}^{\prime }\quad \text{has the sign of}\quad \varepsilon w^{\prime
}.  \label{sigi}
\end{equation}%
And as $r\longrightarrow 0,t\longrightarrow -\infty $ (resp $%
r\longrightarrow \infty ,t\longrightarrow \infty ),$%
\begin{equation*}
\begin{array}{cccc}
\text{(i)} & w\sim w^{\ast } & \Rightarrow & (u_{1}^{\prime },u_{2}^{\prime
})\sim (u_{1}^{\prime \ast },u_{2}^{\prime \ast }), \\ 
\text{(ii)} & \left\{ 
\begin{array}{c}
\lim w=c>0, \\ 
\lim r^{-\frac{\sigma +1}{\mathbf{p}-1}}w^{\prime }=d\neq 0,%
\end{array}%
\right. & \Rightarrow & \left\{ 
\begin{array}{c}
\lim r^{N-1}\left\vert u_{1}^{\prime }\right\vert =c_{1}>0, \\ 
\lim r^{(N-1)q-1}\left\vert u_{2}^{\prime }\right\vert =c_{2}>0,%
\end{array}%
\right. \\ 
\text{(iii)} & \left\{ 
\begin{array}{c}
\lim w=c>0, \\ 
\lim r^{\frac{\mathbf{N}-1}{\mathbf{p}-1}}w^{\prime }=d\neq 0,%
\end{array}%
\right. & \Rightarrow & \left\{ 
\begin{array}{c}
\lim r^{N-1}\left\vert u_{1}^{\prime }\right\vert =c_{1}>0, \\ 
\lim r^{N-1}\left\vert u_{2}^{\prime }\right\vert =c_{2}>0,%
\end{array}%
\right. \\ 
\text{(iv)} & \left\{ 
\begin{array}{c}
\lim r^{\frac{\mathbf{N}-\mathbf{p}}{\mathbf{p}-1}}w=k>0, \\ 
\lim r^{\frac{\mathbf{N}-1}{\mathbf{p}-1}}w^{\prime }=d(k)\neq 0,%
\end{array}%
\right. & \Rightarrow & \left\{ 
\begin{array}{c}
\lim r^{(N-1)p-1}\left\vert u_{1}^{\prime }\right\vert =k>0, \\ 
\lim r^{N-1}\left\vert u_{2}^{\prime }\right\vert =\left\vert
d(k)\right\vert >0.%
\end{array}%
\right.%
\end{array}%
\end{equation*}
\end{lemma}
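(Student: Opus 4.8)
The plan is to read Lemma \ref{return} as a dictionary between the limit behaviours of $(w,w')$ catalogued in Lemma \ref{link} and those of $(u_1',u_2')$, built on Proposition \ref{nom}. Recall from there that $w=|w_1|=r^{N-1}|u_1'|$ solves the Hardy-H\'{e}non equation (\ref{eqm}) with the parameters (\ref{mns}), and that $S\equiv s$, $Z\equiv z$. I work on a one-sided neighbourhood of $0$ (resp. of $\infty$); by Remark \ref{tre} each of $w_1,w_2$ changes sign at most once, so on such a neighbourhood $u_1'$ and $u_2'$ keep a constant sign and $w_1,w_2$ do not vanish.

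First I would settle the sign relations (\ref{sigi}). Since $w_1=-r^{N-1}u_1'$, we have $w_1=-\text{sign}(u_1')\,w$, and since $w_1'=r^{(N-1)(1-p)}|w_2|^p>0$ (because $w_2\neq0$), the function $w_1$ is increasing; hence $w'=\text{sign}(w_1)\,w_1'=-\text{sign}(u_1')\,w_1'$ with $w_1'>0$, so $u_1'$ has the sign of $-w'$, and in particular $|w'|=w_1'$. Combining this with $\varepsilon=-\text{sign}(u_1'u_2')$ gives $\text{sign}(\varepsilon w')=-\varepsilon\,\text{sign}(u_1')=\text{sign}(u_2')$, the second relation.

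The heart of the matter is two pointwise identities. From $w=r^{N-1}|u_1'|$ the behaviour of $|u_1'|$ reads off at once from that of $w$. For $u_2'$ I would use $|w'|=w_1'=r^{(N-1)(1-p)}|w_2|^p$, which yields $|w_2|=r^{(N-1)(p-1)/p}|w'|^{1/p}$ and hence
\[
|u_2'|=r^{1-N}|w_2|=r^{(1-N)/p}\,|w'|^{1/p}.
\]
Feeding the hypotheses of each case into these two identities and using the explicit values (\ref{mns}), namely $\mathbf{p}-1=\tfrac1p$, $\mathbf{N}-1=\tfrac{(N-1)(p-1)}{p}$ and $\sigma=\tfrac{(N-1)(1-pq)}{p}$, produces the four conclusions. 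For instance, in case (ii) one has $|w'|\sim|d|\,r^{(\sigma+1)/(\mathbf{p}-1)}$, so (since $(\mathbf{p}-1)p=1$) $|w'|^{1/p}\sim|d|^{1/p}r^{\sigma+1}$ and $|u_2'|\sim|d|^{1/p}r^{(1-N)/p+\sigma+1}=|d|^{1/p}r^{1-(N-1)q}$, giving $c_2=|d|^{1/p}>0$; cases (iii) and (iv) use $(\mathbf{N}-1)/(\mathbf{p}-1)=(N-1)(p-1)$ and $(\mathbf{N}-\mathbf{p})/(\mathbf{p}-1)=(N-1)p-N$, the latter being exactly (\ref{red}). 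Case (i) is cleanest through the phase plane: $w\sim w^{\ast}$ means $(s,z)\to M_0$, i.e. $(S,Z)\to M_0$, which by the recovery formulas (\ref{reco}) corresponds precisely to the particular solution $(u_1^{\prime\ast},u_2^{\prime\ast})$ of (\ref{SA}).

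I do not anticipate a genuine obstacle: the content is exponent arithmetic together with the tracking of positive constants. The only care needed is to stay on the constant-sign interval so that $w_1'>0$ and the absolute values behave as stated, and to record that the limiting constants in (iii)--(iv) are of the form $|d|^{1/p}$, a positive constant depending on the data, rather than $|d|$ itself.
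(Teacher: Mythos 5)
Your proposal is correct and follows essentially the same route as the paper: it rests on Proposition \ref{nom}, recovers $|w_{2}|$ from the first formula of (\ref{mac}) (your identity $|u_{2}'|=r^{(1-N)/p}|w'|^{1/p}$ is exactly $|w_{2}|=(r^{(N-1)(p-1)}w_{1}')^{1/p}$ combined with $u_{2}'=-r^{1-N}w_{2}$), computes the exponents from (\ref{mns}) and (\ref{red}), and reads the signs off (\ref{reco}) and (\ref{siga}) via $s\equiv S$, $z\equiv Z$ — precisely the steps the paper compresses into "we obtain the conclusions by computation." Your observation that the limits in cases (iii)--(iv) come out as $|d|^{1/p}$ rather than $|d|$ is a legitimate and careful refinement of the (loosely written) constants in the statement.
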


\begin{proof}
From Proposition \ref{nom}, the function $w=\left\vert w_{1}\right\vert
=r^{N-1}\left\vert u_{1}^{\prime }\right\vert $ satisfies the Hardy-H\'{e}%
non equation with $\mathbf{p},\mathbf{q},\mathbf{N},\sigma $ given by (\ref%
{mns}),(\ref{red}), and we deduce $\left\vert w_{2}\right\vert $ from any of
the formulas 
\begin{equation}
\left\vert w_{2}\right\vert =(r^{(N-1)(p-1)}w_{1}^{\prime })^{\frac{1}{p}},%
\text{ \ or\ }w_{2}^{\prime }=r^{(N-1)(1-q)}w^{q},  \label{mac}
\end{equation}%
and then we obtain the conclusions by computation, from the relations $%
u_{i}^{\prime }=-r^{1-N}w_{i},$ $i=1,2.$ The signs of the derivatives are
obtained from (\ref{reco}) and (\ref{siga}), since $s\equiv S$.
\end{proof}

\begin{remark}
\label{prat}(i) From the phase plane analysis in Section \ref{app}, all the
trajectories relative to singular \textbf{global} solutions of the Hardy-H%
\'{e}non equation (\ref{eqm}) are located in the quadrant containing the
point $M_{0}=(S_{0},Z_{0})$ corresponding to the particular solutions.
Morover from (\ref{dif}), $u_{1}^{\prime }$ has the sign of $S$ and $%
u_{2}^{\prime }$ has the sign of $-Z.$ So $u_{1}^{\prime }$ has the sign of $%
S_{0}$ and $u_{2}^{\prime }$ has the sign of $-Z_{0}.$ From (\ref{fixs}),
for $q\neq q_{1},q_{2},$ 
\begin{equation}
u_{1}^{\prime }<0\Longleftrightarrow q>q_{2},\qquad u_{2}^{\prime
}<0\Longleftrightarrow q>q_{1}.  \label{sig}
\end{equation}%
Then, with (\ref{sigi}) and (\ref{sig}) we have a complete knowledge of
system (\ref{SA}) as soon as we know the absolute value of the
derivatives.\medskip

(ii) Some results of Section \ref{app} involve the Serrin exponent $\mathbf{q%
}_{c}$ and the Sobolev exponent $\mathbf{q}_{S},$ defined at (\ref{qcqs}).
We easily check that for our system, 
\begin{equation*}
\mathbf{q}\leq \mathbf{q}_{c}\Longleftrightarrow q\leq q_{1}=q_{1}(p),\qquad 
\mathbf{q}\leq \mathbf{q}_{S}\Longleftrightarrow q\leq q^{\ast }=q^{\ast
}(p).
\end{equation*}
\end{remark}

As a consequence of Theorems \ref{orange}, \ref{rose}, \ref{white}, \ref%
{yellow} , we get the following result on\textit{\ }\textbf{global}
solutions of system (\ref{one}), where we use Remark \ref{prat}:

\begin{theorem}
\label{regA}(1) Let $(p,q)\in \mathcal{A},$ with $q<q_{1}$. Then

$\bullet $ there exists a particular solution of system (\ref{SA}) such that 
$u_{1}^{\ast \prime }=a_{1}r^{-\frac{p+1}{pq-1}}>0$ and $u_{2}^{\ast \prime
}=a_{2}r^{-\frac{q+1}{pq-1}}>0,$

$\bullet $ there exist solutions on $(0,\infty )$ such that $u_{1}^{\prime
},u_{2}^{\prime }>0$ and 
\begin{equation*}
\lim_{r\rightarrow 0}r^{(N-1)p-1}u_{1}^{\prime }=c_{1}>0,\qquad
\lim_{r\rightarrow 0}r^{N-1}u_{2}^{\prime }=c(c_{1})>0,\qquad
(u_{1},u_{2})\sim _{r\rightarrow \infty }(u_{1}^{\ast },u_{2}^{\ast }).
\end{equation*}

\noindent (2) Let $(p,q)\in \mathcal{A},$ with $q>q_{1}.$Then

$\bullet $ there exists a particular solution such that $u_{1}^{\ast \prime
}=a_{1}r^{-\frac{p+1}{pq-1}}>0$ and $u_{2}^{\ast \prime }=-a_{2}r^{-\frac{q+1%
}{pq-1}}<0,$

$\bullet $ if $q<q^{\ast },$ there exist solutions such that $u_{1}^{\prime
}>0>u_{2}^{\prime },$ and 
\begin{equation*}
(u_{1}^{\prime },u_{2}^{\prime })\sim _{r\rightarrow 0}(u_{1}^{\ast \prime
},u_{2}^{\ast \prime }),\qquad \lim_{r\rightarrow \infty
}r^{(N-1)p-1}u_{1}^{\prime }=c_{1}>0,\qquad \lim_{r\rightarrow \infty
}r^{N-1}u_{2}^{\prime }=-c(c_{1})<0,
\end{equation*}

$\bullet $ if $q>q^{\ast },$ there exist solutions such that 
\begin{equation*}
\lim_{r\rightarrow 0}r^{N-1}u_{1}^{\prime }=c_{1}>0,\qquad
\lim_{r\rightarrow 0}r^{(N-1)q-1}u_{2}^{\prime }=-c_{3}<0,\qquad
(u_{1}^{\prime },u_{2}^{\prime })\sim _{r\rightarrow \infty }(u_{1}^{\ast
\prime },u_{2}^{\ast \prime }),
\end{equation*}

$\bullet $ if $q=q^{\ast },$ there exist \textbf{explicit} solutions: 
\begin{equation}
u_{1}^{\prime }=cr^{1-N}(d+r^{\frac{p-q^{\ast }}{2}})^{\frac{2(N-(N-1)p)}{%
p-q^{\ast }}},\qquad u_{2}^{\prime }=-br^{(1-(N-1)q^{\ast })}(d+r^{\frac{%
p-q^{\ast }}{2}})^{^{\frac{2((N-1)q^{\ast }-N)}{p-q^{\ast }}}}  \label{exact}
\end{equation}%
for any $c>0$, with $d=d(c)=(\frac{c^{pq^{\ast }-1}}{(N-1)p-N})^{\frac{1}{p}}%
\frac{1}{N-(N-1)q^{\ast }}.$ $b=b(c)=(c((N-1)p-N))^{\frac{1}{p}}$.\medskip

\noindent (3) Let $(p,q)\in \mathcal{B}.$ Then

$\bullet $ there exists a particular solution such that $u_{1}^{\ast \prime
}=a_{1}r^{-\frac{p+1}{pq-1}}>0$ and $u_{2}^{\ast \prime }=a_{2}r^{-\frac{q+1%
}{pq-1}}>0.$

$\bullet $ there exists solutions such that 
\begin{equation*}
\lim_{r\rightarrow 0}r^{N-1}u_{1}^{\prime }=c_{1},\qquad \lim_{r\rightarrow
0}r^{N-1}u_{2}^{\prime }=c_{2},\qquad (u_{1}^{\prime },u_{2}^{\prime })\sim
_{r\rightarrow \infty }(u_{1}^{\ast \prime },u_{2}^{\ast \prime }).
\end{equation*}%
\noindent (4) Let $(p,q)\in \mathcal{C\cup D}.$ Then there exists a
particular solution such that $u_{1}^{\ast \prime }=-a_{1}r^{-\frac{p+1}{pq-1%
}}<0$ and $u_{2}^{\ast \prime }=-a_{2}r^{-\frac{q+1}{pq-1}}<0.$

$\bullet $ If $(p,q)\in \mathcal{C},$ there also exist solutions such that 
\begin{equation*}
(u_{1}^{\prime },u_{2}^{\prime })\sim _{r\rightarrow 0}(u_{1}^{\ast \prime
},u_{2}^{\ast \prime }),\qquad \lim_{r\rightarrow \infty
}r^{N-1}u_{1}^{\prime }=-c_{1}<0,\qquad \lim_{r\rightarrow \infty
}r^{(N-1)q-1}u_{2}^{\prime }=-c(c_{1})<0.
\end{equation*}

$\bullet $ If $(p,q)\in \mathcal{D},$ there also exist solutions such that 
\begin{equation*}
(u_{1}^{\prime },u_{2}^{\prime })\sim _{r\rightarrow 0}(u_{1}^{\ast \prime
},u_{2}^{\ast \prime }),\qquad \lim_{r\rightarrow \infty
}r^{N-1}u_{1}^{\prime }=-c_{1}<0,\qquad \lim_{r\rightarrow \infty
}r^{N-1}u_{2}^{\prime }=-c_{2}<0.
\end{equation*}

\noindent And all the global solutions are described.
\end{theorem}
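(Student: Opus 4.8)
The plan is to prove Theorem \ref{regA} by translating, region by region, the classification of positive global solutions of the Hardy-H\'enon equation (\ref{eqm}) obtained in Theorems \ref{orange}, \ref{rose}, \ref{white}, \ref{yellow} into statements about the pair $(u_1',u_2')$, using the dictionary set up in Proposition \ref{nom} and Lemma \ref{return}. First I would fix a radial solution $(u_1,u_2)$ of (\ref{one}), introduce $w_1,w_2$ by (\ref{ww}) and $w=\left\vert w_1\right\vert =r^{N-1}\left\vert u_1'\right\vert $, which by Proposition \ref{nom} solves (\ref{eqm}) with the parameters (\ref{mns}) and $\varepsilon =-\text{sign}(u_1'u_2')$. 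The crucial preliminary step is to match the four regions: using the equivalences (\ref{reg}) together with Remark \ref{comp}, one checks that $\mathcal{A},\mathcal{B},\mathcal{C},\mathcal{D}$ of Definition \ref{fir} correspond exactly to the regions $\mathbf{A},\mathbf{B},\mathbf{C},\mathbf{D}$ of (\ref{regi}) for the associated equation, and that the Serrin and Sobolev thresholds transform as $\mathbf{q}_c\leftrightarrow q_1$ and $\mathbf{q}_S\leftrightarrow q^{\ast }$ by Remark \ref{prat}(ii).

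In each region I would then determine the sign of $\varepsilon $ and of the derivatives. By Remark \ref{prat}(i), for any singular global solution the trajectory in the $(S,Z)$ phase plane lies in the quadrant of $M_0$, so that (\ref{sig}) fixes the signs: $u_1'<0\Longleftrightarrow q>q_2$ and $u_2'<0\Longleftrightarrow q>q_1$. Combined with the inequalities $q_1\le q_2$ of Remark \ref{rela} and the definition of each region, this forces both derivatives positive in $\mathcal{A}\cap \{q<q_1\}$ and in $\mathcal{B}$ (so $\varepsilon =-1$), opposite signs in $\mathcal{A}\cap \{q>q_1\}$ (so $\varepsilon =1$), and both derivatives negative in $\mathcal{C}\cup \mathcal{D}$ (so $\varepsilon =-1$); in particular each region singles out the unique sign of $\varepsilon $ for which (\ref{eqm}) admits global solutions, in accordance with the non-existence statements of the cited theorems. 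With $\varepsilon $ and the region fixed, I would read off the admissible behaviours of $w$ near $0$ and $\infty $ (the particular solution $w^{\ast }$, and the heteroclinic orbits joining $M_0,N_0,A_0,(0,0)$), and convert each of them into the stated limits for $u_1',u_2'$ through the four implications of Lemma \ref{return}, recovering the signs from (\ref{sigi}).

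The only genuinely computational point is part (2) in the critical case $q=q^{\ast }$. Here I would take the explicit ground state (\ref{ground}) of Theorem \ref{orange}, substitute the values (\ref{mns}), and simplify the exponents; the key identity is that at $q=q^{\ast }$ relation (\ref{comstar}) gives $\frac{\mathbf{p}+\sigma }{\mathbf{p}-1}=N+p-(N-1)pq^{\ast }=\frac{p-q^{\ast }}{2}$ and $\frac{\mathbf{N}-\mathbf{p}}{\mathbf{p}+\sigma }=\frac{2((N-1)p-N)}{p-q^{\ast }}$, which turn (\ref{ground}) into the formula for $u_1'=\pm r^{1-N}w$ announced in (\ref{exact}); the companion formula for $u_2'$ then follows from $\left\vert w_2\right\vert =(r^{(N-1)(p-1)}w_1')^{1/p}$ in (\ref{mac}) and one integration. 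Finally, completeness of the list (``all the global solutions are described'') is inherited from the phase-plane analysis: by Lemma \ref{glo} every trajectory is bounded and converges to a fixed point (or to the cycle at $\mathbf{q}=\mathbf{q}_S$), so the global solutions of (\ref{one}) are in bijection with the global orbits of (\ref{pspz}) already enumerated in the four theorems. I expect the main effort to lie not in any single estimate but in the careful bookkeeping of signs and exponents across the four regions, and in verifying that the translation of the explicit ground state reproduces (\ref{exact}) exactly.
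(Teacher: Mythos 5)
Your proposal is correct and follows essentially the same route as the paper: the paper states Theorem \ref{regA} precisely as a consequence of Theorems \ref{orange}, \ref{rose}, \ref{white}, \ref{yellow} combined with the dictionary of Proposition \ref{nom} and Lemma \ref{return}, the region correspondence from (\ref{reg}) and Remark \ref{prat}(ii), and the sign bookkeeping of Remark \ref{prat}(i), which is exactly your plan, including the substitution of (\ref{mns}) into the ground state (\ref{ground}) to obtain (\ref{exact}) at $q=q^{\ast}$. The only quibble is that the formula for $u_{2}^{\prime}$ comes from the algebraic relation $\left\vert w_{2}\right\vert =(r^{(N-1)(p-1)}w_{1}^{\prime})^{1/p}$ after differentiating $w_{1}=r^{N-1}u_{1}^{\prime}$, rather than from an integration, but this does not affect the argument.
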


And we also get the existence of \textbf{local} but not global solutions
near $0$ or $\infty $ with a behaviour of linear type, by using Lemma \ref%
{return}:

\begin{theorem}
\label{locsol}

(1) Local existence of solutions of system (\ref{SA}) near $0$:

$\bullet $ in regions $\mathcal{A}$ or $\mathcal{B},$ there exist solutions
such that 
\begin{equation*}
\lim_{r\longrightarrow 0}r^{N-1}u_{1}^{\prime }=\pm c(c_{2}),\qquad
\lim_{r\longrightarrow 0}r^{(N-1)q-1}u_{2}^{\prime }=-c_{2}<0,
\end{equation*}

$\bullet $ in region $\mathcal{A}$ with $q<q_{1},$ there exist solutions
such that 
\begin{equation*}
\lim_{r\rightarrow 0}r^{(N-1)p-1}u_{1}^{\prime }=c(c_{2})>0,\qquad
\lim_{r\rightarrow 0}r^{N-1}u_{2}^{\prime }=\pm c_{2},
\end{equation*}

$\bullet $ in region $\mathcal{B},$ there exist solutions such that 
\begin{equation*}
\lim_{r\rightarrow 0}r^{(N-1)p-1}u_{1}^{\prime }=-c(c_{2})<0,\qquad
\lim_{r\rightarrow 0}r^{N-1}u_{2}^{\prime }=\pm c_{2},
\end{equation*}%
and solutions such that for any $c_{1},c_{2}\in \mathbb{R}\backslash \left\{
0\right\} ,$ 
\begin{equation*}
\lim_{r\longrightarrow 0}r^{N-1}u_{1}^{\prime }=c_{1},\qquad
\lim_{r\rightarrow 0}r^{N-1}u_{2}^{\prime }=c_{2},
\end{equation*}

\noindent and no such solution near $0$ in regions $\mathcal{C},\mathcal{D}$%
.\medskip

\noindent (2) Local existence of solutions near $\infty $:

$\bullet $ in regions $\mathcal{C}$ and $\mathcal{D},$ for any $%
c_{1},c_{2}\in \mathbb{R}\backslash \left\{ 0\right\} ,$ there exist
solutions such that 
\begin{equation*}
\lim_{r\longrightarrow \infty }r^{N-1}u_{1}^{\prime }=c_{1},\qquad
\lim_{r\rightarrow \infty }r^{N-1}u_{2}^{\prime }=c_{2},
\end{equation*}

$\bullet $ in region $\mathcal{A}$ with $q>q_{1},$ and in regions $\mathcal{C%
}$ and $\mathcal{D},$ there exist solutions such that%
\begin{equation*}
\lim_{r\rightarrow \infty }r^{(N-1)p-1}u_{1}^{\prime }=c_{1}>0,\qquad
\lim_{r\rightarrow \infty }r^{N-1}u_{2}^{\prime }=\pm c(c_{1}),
\end{equation*}

$\bullet $ in region $\mathcal{C},$ there exist solutions such that%
\begin{equation*}
\lim_{r\longrightarrow \infty }r^{N-1}u_{1}^{\prime }=\pm c(c_{2}),\qquad
\lim_{r\longrightarrow \infty }r^{(N-1)q-1}u_{2}^{\prime }=-c_{2}<0,
\end{equation*}

$\bullet $ in region $\mathcal{D},$ there exist solutions such that%
\begin{equation*}
\lim_{r\longrightarrow \infty }r^{N-1}u_{1}^{\prime }=\pm c(c_{2}),\qquad
\lim_{r\longrightarrow \infty }r^{(N-1)q-1}u_{2}^{\prime }=c_{2}>0.
\end{equation*}
\end{theorem}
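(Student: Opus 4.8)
The plan is to read Theorem \ref{locsol} as a pure translation result: each assertion is obtained by transporting a local-existence statement for the scalar Hardy-H\'enon equation (\ref{eqm}) --- already established in Theorems \ref{orange}, \ref{rose}, \ref{white}, \ref{yellow} --- back to the derivatives $(u_1',u_2')$ of the system. The dictionary is Proposition \ref{nom}, which identifies $w=|w_1|=r^{N-1}|u_1'|$ as a positive solution of (\ref{eqm}) with the parameters (\ref{mns}), together with the region correspondence of Remark \ref{comp}: by the equivalences (\ref{reg}), the sets $\mathcal{A},\mathcal{B},\mathcal{C},\mathcal{D}$ of Definition \ref{fir} are exactly the regions $\mathbf{A},\mathbf{B},\mathbf{C},\mathbf{D}$ of (\ref{regi}) under the map (\ref{mns}). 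So first I would fix a region, list the fixed points that the trajectory $(s,z)=(S,Z)$ can reach at the relevant end ($t\to-\infty$ for solutions near $0$, $t\to+\infty$ for solutions near $\infty$), and invoke the matching local-existence theorem of Section \ref{app}.

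Next I would convert each behaviour of $w$ into a behaviour of $(u_1',u_2')$ through the four cases of Lemma \ref{return}: the $N_0$-behaviour gives case (ii) (so $u_2'$ carries the weight $r^{(N-1)q-1}$), the $(0,0)$-behaviour gives case (iii) (both weights $r^{N-1}$), and the $A_0$-behaviour gives case (iv) (so $u_1'$ carries $r^{(N-1)p-1}$); this already pins down the exponents and which components appear. The signs follow from (\ref{sigi}): $u_1'$ has the sign of $-w'$ and $u_2'$ the sign of $\varepsilon w'$. For the $N_0$- and $A_0$-behaviours the sign of $w'$ is fixed by the limit (e.g. $r^{-\frac{\sigma+1}{\mathbf{p}-1}}w'\to-\varepsilon c$ at $N_0$, while at $A_0$ the limit of $r^{\frac{\mathbf{N}-1}{\mathbf{p}-1}}w'$ is $-c(k)<0$ when $\mathbf{N}>\mathbf{p}$ and $+c(k)>0$ when $\mathbf{N}<\mathbf{p}$), so exactly one component of $(u_1',u_2')$ acquires a definite sign while the other flips with $\varepsilon$, which accounts for every ``$\pm$'' in the statement and for the sign flip of $u_1'$ between $\mathcal{A}$ and $\mathcal{B}$. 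For the $(0,0)$-behaviour the origin is a source (as $r\to0$ in region $\mathbf{B}$) or a sink (as $r\to\infty$ in region $\mathbf{D}$) by the eigenvalue computation of Lemma \ref{nat}(iv), hence attained by a two-parameter family; letting the trajectory range over both admissible quadrants and over $\varepsilon=\pm1$ realizes all four sign patterns of $(S,Z)$, and since $\operatorname{sign}u_1'=\operatorname{sign}S$ and $\operatorname{sign}u_2'=-\operatorname{sign}Z$ from (\ref{dif}), every $(c_1,c_2)\in(\mathbb{R}\setminus\{0\})^2$ is produced.

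Finally, the claims ``no such solution near $0$ in regions $\mathcal{C},\mathcal{D}$'' and the placement of the linear-type behaviours near $\infty$ require no extra work: they follow from the stability types recorded in Lemma \ref{nat} together with the $r\leftrightarrow1/r$ duality of Lemma \ref{reduc} (regions $\mathbf{C},\mathbf{D}$ being the images of $\mathbf{E},\mathbf{B}$). A trajectory can converge to a given fixed point only at the end dictated by its stable/unstable directions, so a behaviour sitting at $r\to0$ in $\mathbf{A},\mathbf{B}$ necessarily sits at $r\to\infty$ in $\mathbf{C},\mathbf{D}$. I expect the genuinely delicate step to be precisely this sign-and-quadrant bookkeeping: for each listed behaviour one must check that the fixed point is reached from inside the admissible quadrant, and not merely along the nonadmissible axes $\{s=0\}$ or $\{z=0\}$ flagged in Lemma \ref{nat}, and that a claimed two-parameter family corresponds to a source or sink rather than a saddle. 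Since all the underlying phase-plane analysis is already carried out in Section \ref{app} and its Appendix, no new dynamical input is needed, and the remainder is the careful computation packaged in Lemma \ref{return}.
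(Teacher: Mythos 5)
Your proposal follows exactly the paper's own route: the paper's entire justification of Theorem \ref{locsol} is the remark that it follows from the Hardy--H\'enon theorems of Section \ref{app} ``by using Lemma \ref{return}'', and your region dictionary, your use of cases (ii)--(iv) of Lemma \ref{return} to fix the exponents, and your sign bookkeeping via (\ref{sigi}), (\ref{dif}) and Remark \ref{prat} constitute precisely that argument, carried out correctly.

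One substantive point deserves attention. Your proposal places the $(0,0)$-type behaviour (both components weighted by $r^{N-1}$) near $\infty$ only in region $\mathcal{D}$, where $(0,0)$ is a sink; but the first bullet of part (2) of the statement claims it in regions $\mathcal{C}$ \emph{and} $\mathcal{D}$. Your version is the correct one, and the statement as printed cannot be proved for $\mathcal{C}$: there the origin is a saddle whose invariant manifolds are the non-admissible axes (Lemma \ref{nat}(iv) and the proof of Theorem \ref{yellow}), so no admissible trajectory reaches $(0,0)$. Equivalently, and more elementarily, by (\ref{Sw}) one has $w_{2}^{\prime }=r^{(N-1)(1-q)}\left\vert w_{1}\right\vert ^{q}\geq 0$, so if $r^{N-1}u_{1}^{\prime }\rightarrow c_{1}\neq 0$ then $w_{2}^{\prime }\sim \left\vert c_{1}\right\vert ^{q}r^{(N-1)(1-q)}$, which is not integrable at infinity when $q<\frac{N}{N-1}$ (true throughout $\mathcal{C}$); hence $w_{2}=-r^{N-1}u_{2}^{\prime }$ tends to $+\infty $ and cannot have a finite nonzero limit. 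So your method proves the theorem with that bullet restricted to $\mathcal{D}$; the inclusion of $\mathcal{C}$ is a slip in the paper, inconsistent with its own Theorem \ref{yellow}, which lists only the $N_{0}$- and $A_{0}$-behaviours near $\infty $ in region $\mathbf{C}$. Apart from this, the only deviation is organizational and immaterial: you invoke the $r\leftrightarrow 1/r$ duality to handle $\mathbf{C}$ as the image of $\mathbf{E}$, whereas the paper treats $\mathbf{C}$ directly and derives $\mathbf{E}$ from it; since the $\lambda =-1$ transformation of Lemma \ref{reduc} is an involution, the two readings coincide.
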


Finally we consider the limit cases, where for simplicity we only mention
the solutions presenting a logarithmic behaviour:

\begin{theorem}
\label{limcas} (1) For $q=q_{1}<p$ , there exist local solutions of system (%
\ref{one}) near $0$ or $\infty $, with respectively 
\begin{equation*}
\lim_{r\longrightarrow 0}r^{(N-1)p-1}\left\vert \ln r\right\vert ^{\frac{p}{%
pq-1}}u_{1}^{\prime }=C_{1}>0,\qquad \lim_{r\longrightarrow
0}r^{N-1}\left\vert \ln r\right\vert ^{\frac{1}{pq-1}}u_{2}^{\prime }=\pm
C_{2},
\end{equation*}%
\begin{equation*}
\lim_{r\longrightarrow \infty }r^{(N-1)p-1}\left\vert \ln r\right\vert ^{%
\frac{p}{pq-1}}u_{1}^{\prime }=C_{1}>0,\qquad \lim_{r\longrightarrow \infty
}r^{N-1}\left\vert \ln r\right\vert ^{\frac{1}{pq-1}}u_{2}^{\prime }=C_{2}>0,
\end{equation*}

(2) For $q=q_{2}<p$ $\mathbf{,}$ there exist local solutions near $0$ or $%
\infty $, with respectively%
\begin{equation}
\lim_{r\longrightarrow 0}r^{N-1}\left\vert \ln r\right\vert ^{\frac{1}{pq-1}%
}u_{1}^{\prime }(r)=C_{1}>0,\quad \lim_{r\longrightarrow
0}r^{(N-1)q-1}\left\vert \ln r\right\vert ^{\frac{q}{pq-1}}u_{2}^{\prime
}=C_{2}>0,  \label{kil}
\end{equation}%
\begin{equation}
\lim_{r\longrightarrow \infty }r^{N-1}(\ln r)^{\frac{1}{pq-1}}u_{1}^{\prime
}(r)=C_{1}>0,\quad \lim_{r\longrightarrow \infty }r^{(N-1)q-1}(\ln r)^{\frac{%
q}{pq-1}}u_{2}^{\prime }=-C_{2}<0,  \label{kol}
\end{equation}

(3) For $p=\frac{N}{N-1}>q$ , there exist local solutions near $0,$ such
that 
\begin{equation*}
\lim_{r\longrightarrow 0}r^{N-1}\left\vert \ln r\right\vert
^{-1}u_{1}^{\prime }=C_{1}>0,\qquad \lim_{r\longrightarrow
0}r^{N-1}u_{2}^{\prime }=\pm C_{2},
\end{equation*}%
and global solutions such that 
\begin{equation*}
\lim_{r\longrightarrow 0}r^{N-1}\left\vert \ln r\right\vert
^{-1}u_{1}^{\prime }=C_{1}>0,\qquad \lim_{r\longrightarrow
0}r^{N-1}u_{2}^{\prime }=C_{2}>0,\qquad (u_{1}^{\prime },u_{2}^{\prime
})\sim _{r\longrightarrow \infty }(a_{1}r^{-\frac{p+1}{pq-1}},a_{2}r^{-\frac{%
q+1}{pq-1}}),
\end{equation*}

(4) For $q=\frac{N}{N-1}<p$ , there exist local solutions near $\infty $
such that 
\begin{equation*}
\lim_{r\longrightarrow \infty }r^{N-1}u_{1}^{\prime }=\pm C_{1},\qquad
\lim_{r\longrightarrow \infty }r^{N-1}(\ln r)^{-p}u_{2}^{\prime }=-C_{2}<0,
\end{equation*}%
and global ones such that, 
\begin{equation*}
(u_{1}^{\prime },u_{2}^{\prime })\sim _{r\longrightarrow 0}(-a_{1}r^{-\frac{%
p+1}{pq-1}},-a_{2}r^{-\frac{q+1}{pq-1}}),\quad \lim_{r\longrightarrow \infty
}r^{N-1}u_{1}^{\prime }=-C_{1}<0,\quad \lim_{r\longrightarrow \infty
}r^{N-1}(\ln r)^{-p}u_{2}^{\prime }=-C_{2}<0.
\end{equation*}%
(5) For $q=p=\frac{N}{N-1}$ ( where $\mathbf{p}=\mathbf{N}=-\sigma )$ we
find again the solutions given at (\ref{logi}) and no other local solution.
\end{theorem}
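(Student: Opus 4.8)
The plan is to deduce every assertion from the already-established behaviour of the Hardy-H\'enon equation, exactly as was done for the noncritical ranges in Theorems \ref{regA} and \ref{locsol}. The tool is Proposition \ref{nom}: for any radial solution $(u_1,u_2)$ the function $w=|w_1|=r^{N-1}|u_1'|$ solves (\ref{eqm}) with the parameters (\ref{mns}) and $\varepsilon=-\mathrm{sign}(u_1'u_2')$, and Lemma \ref{return} then converts the asymptotics of $w,w'$ into those of $u_1',u_2'$. The decisive remark is that each limit value of $(p,q)$ in the statement corresponds, through (\ref{mns}) and the equivalences of Remark \ref{comp}, to exactly one of the limit or critical configurations of $(\mathbf{N},\mathbf{p},\mathbf{q},\sigma)$ treated in Section \ref{app}. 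Indeed, writing $\mathbf{p}+\sigma=\frac{p+N-(N-1)pq}{p}$, $\mathbf{N}+\sigma=N-(N-1)q$ and $\mathbf{N}-\mathbf{p}=\frac{(N-1)p-N}{p}$, and using Remark \ref{prat}(ii), one reads off $q=q_1\Leftrightarrow\mathbf{q}=\mathbf{q}_c$, $q=q_2\Leftrightarrow\sigma=-\mathbf{p}$, $p=\frac{N}{N-1}\Leftrightarrow\mathbf{p}=\mathbf{N}$, $q=\frac{N}{N-1}\Leftrightarrow\sigma=-\mathbf{N}$ and $q=p=\frac{N}{N-1}\Leftrightarrow\mathbf{p}=\mathbf{N}=-\sigma$.

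Next I would fix the region in order to select the correct subcase. Since $q_1,q_2\le\frac{N}{N-1}$ precisely when $p\ge\frac{N}{N-1}$ (Remark \ref{rela}), the hypotheses $q_i<p$ in (1)--(2) force $p>\frac{N}{N-1}$, hence $\mathbf{N}>\mathbf{p}$; moreover $q_1<q_2$ gives $\mathbf{p}>-\sigma$ in (1), whereas $q=q_2$ gives $\mathbf{p}=-\sigma$ in (2). Thus (1) lies in region $\mathbf{A}$ and is governed by Theorem \ref{qccrit}, while (2) is the limit case $\mathbf{N}>\mathbf{p}=-\sigma$ of Theorem \ref{sigmap}(1). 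In (3) the assumption $q<p=\frac{N}{N-1}$ gives $q<\frac{N}{N-1}$, so $\mathbf{N}+\sigma>0$, i.e. $\mathbf{p}=\mathbf{N}>-\sigma$, which is Theorem \ref{pegaln}(1); in (4) the assumption $\frac{N}{N-1}=q<p$ gives $p>\frac{N}{N-1}$, i.e. $\mathbf{p}<\mathbf{N}=-\sigma$, which is Theorem \ref{sigman}(1); and (5) is the fully critical configuration $\mathbf{p}=\mathbf{N}=-\sigma$ of Theorem \ref{pnsigma}. In each case the two branches $\varepsilon=\pm1$ of the quoted theorem correspond, via $\varepsilon=-\mathrm{sign}(u_1'u_2')$, to solutions with $u_1',u_2'$ of equal ($\varepsilon=-1$) or opposite ($\varepsilon=1$) sign, the signs themselves being fixed by (\ref{sigi}) and (\ref{sig}); the passage between the near-$0$ and near-$\infty$ statements is supplied by the $r\mapsto1/r$ symmetry of Lemma \ref{reduc}. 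For (5) I would moreover invoke Proposition \ref{caspq} to identify the surviving $\varepsilon=-1$ solutions with the scalar logarithmic solutions (\ref{logi}) and to conclude that there is no other local solution.

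Given $w$ and $w'$, one recovers $|u_1'|=r^{1-N}w$ directly and $u_2'$ from $|w_2|=(r^{(N-1)(p-1)}w_1')^{1/p}$, or equivalently by integrating $w_2'=r^{(N-1)(1-q)}w^{q}$ as in the proof of Lemma \ref{return}. The only real work --- and the main obstacle --- is the bookkeeping of the logarithmic powers: one must check that the exponents $\frac{\mathbf{N}-\mathbf{p}}{(\mathbf{p}-1)(\mathbf{p}+\sigma)}$, $\frac{\mathbf{p}-1}{\mathbf{q}-\mathbf{p}+1}$ and $\frac{1}{\mathbf{p}-1}$ occurring respectively in Theorems \ref{qccrit}, \ref{sigmap}, \ref{sigman} (those of Theorem \ref{pegaln} being already powers of $\ln$) collapse, after substituting the critical value of $q$ and the parameters (\ref{mns}), to the announced powers $\tfrac{p}{pq-1}$, $\tfrac{1}{pq-1}$, $\tfrac{q}{pq-1}$ for $u_1',u_2'$. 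A representative verification is case (1): here $\frac{\mathbf{N}-\mathbf{p}}{\mathbf{p}-1}=(N-1)p-N$ and $\frac{1}{\mathbf{p}+\sigma}=\frac{p}{p-q}$, and the identity $((N-1)p-N)(pq-1)=p-q$ --- which holds exactly when $(N-1)pq=N+q$, i.e. on $q=q_1$ --- turns the $w$-asymptotics of Theorem \ref{qccrit} into $\lim_{r\to0}r^{(N-1)p-1}|\ln r|^{p/(pq-1)}|u_1'|=C_1>0$; the $u_2'$-power is then obtained by the parallel integration of $w_2'=r^{(N-1)(1-q)}w^{q}$, and the remaining cases (2)--(5) follow by the same computation together with Lemma \ref{reduc}.
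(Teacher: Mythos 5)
Your proposal is correct and follows essentially the same route as the paper: the paper's proof likewise deduces (1) from Theorem \ref{qccrit} ($\mathbf{q}=\mathbf{q}_{c}$), (2) from Theorem \ref{sigmap} ($\mathbf{N}>\mathbf{p}=-\sigma$), (3) from Theorem \ref{pegaln} ($\mathbf{p}=\mathbf{N}>-\sigma$), (4) from Theorem \ref{sigman} ($-\sigma=\mathbf{N}>\mathbf{p}$), and (5) from Theorem \ref{pnsigma}, with the nonexistence of other solutions in (5) coming from Remark \ref{tric}, which is exactly your appeal to the first integral of Proposition \ref{caspq}. Your translation of the logarithmic exponents via (\ref{mns}), Lemma \ref{return}, and the identity $((N-1)p-N)(pq-1)=p-q$ on $\left\{q=q_{1}\right\}$ supplies the bookkeeping the paper leaves implicit.
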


\begin{proof}
We deduce (1) from Theorem \ref{qccrit}, since $\mathbf{q}=\mathbf{q}_{c},$
then (2) from Theorem \ref{sigmap}, since $\mathbf{N>p}=-\sigma $, in turn
(3) from Theorem \ref{pegaln} since $\mathbf{p}=\mathbf{N}>-\sigma $, and
(4) from Theorem \ref{sigman} where $-\sigma =\mathbf{N>p}$, and finally (5) %
\ref{pnsigma} where $\mathbf{p}=\mathbf{N}=-\sigma .$ There exists no other
solution near $0$ or $\infty ,$ see Remark \ref{tric}.
\end{proof}

\subsection{Local radial existence results for system (\protect\ref{one})}

Next we deduce local existence results for system (\ref{one}), obtained from
Theorem \ref{locsol} by integration. We first give a result of existence of
local solutions near $0$, with a behaviour of linear type. For simplicity,
due to the great number of possibilities, we consider only the positive
solutions of the system. One can formulate analogous results for systems (%
\ref{abs}) and (\ref{mixt}).

\begin{theorem}
\label{Dirac} Existence of solutions of system (\ref{sou}) in $%
B_{r_{0}}\backslash \left\{ 0\right\} $, $r_{0}>0:$

$\bullet $ in regions $\mathcal{A}$ or $\mathcal{B},$ there exist solutions
such that 
\begin{equation*}
\lim_{r\longrightarrow 0}r^{N-2}u_{1}=c_{1}>0,\qquad \lim_{r\longrightarrow
0}r^{N-2}u_{2}=0,\quad \left\{ 
\begin{array}{c}
\lim_{r\rightarrow 0}r^{(N-1)q-2}u_{2}=c(c_{1}),\text{ if }q>\frac{2}{N-1},
\\ 
\lim_{r\rightarrow 0}u_{2}=c_{2},\text{ \quad \quad if }q<\frac{2}{N-1}, \\ 
\lim_{r\rightarrow 0}(\left\vert \ln r\right\vert ^{-1}u_{2})=c_{2},\text{
if }q=\frac{2}{N-1};%
\end{array}%
\right.
\end{equation*}%
$\bullet $ in region $\mathcal{B},$ there exist solutions such that 
\begin{equation*}
\lim_{r\longrightarrow 0}r^{N-2}u_{1}=0,\quad \lim_{r\longrightarrow
0}r^{(N-1)p-2}u_{1}=c(c_{2}),\qquad \lim_{r\longrightarrow
0}r^{N-2}u_{2}=c_{2}>0,
\end{equation*}%
and solutions such that 
\begin{equation*}
\lim_{r\longrightarrow 0}r^{N-2}u_{1}=c_{1}>0,\qquad \lim_{r\longrightarrow
0}r^{N-2}u_{2}=c_{2}>0.
\end{equation*}%
In any case the solutions satisfy the equations in the sense of
distributions in $B_{r_{0}},$ where $\delta _{0}$ is the Dirac mass at $0$: 
\begin{equation}
\left\{ 
\begin{array}{c}
{-\Delta u}_{1}=\left\vert u_{2}^{\prime }\right\vert ^{p}+C_{1}\delta _{0},
\\ 
{-\Delta u}_{2}=\left\vert u_{1}^{\prime }\right\vert ^{q}+C_{2}\delta _{0},%
\end{array}%
\right.  \label{distri}
\end{equation}%
with respectively $(C_{1},C_{2})=(c_{N}c_{1},0)$, $%
(C_{1},C_{2})=(0,c_{N}c_{2}),$ $(C_{1},C_{2})=(c_{N}c_{1},c_{N}c_{2}).%
\medskip \ $

$\bullet $ There is no radial supersolutions of system (\ref{distri}) such
that $C_{1}>0$ or $C_{2}>0$ for $p\geq q\geq q_{2},$ and no supersolutions
such that $C_{2}>0$ for $p\geq \frac{N}{N-1}.$
\end{theorem}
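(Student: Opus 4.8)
The plan is to obtain every solution in the statement by integrating the corresponding derivative-level solution produced in Theorem \ref{locsol}, choosing in each case the sign that makes $u_1',u_2'\le 0$, so that $u_1,u_2$ are decreasing and we are genuinely in the source system \eqref{sou} (recall from Remark \ref{souab} that nonconstant positive solutions must satisfy $u_1',u_2'<0$). Thus in the first bullet I take the branch $\lim_{r\to0}r^{N-1}u_1'=-c(c_2)<0$, $\lim_{r\to0}r^{(N-1)q-1}u_2'=-c_2<0$ of Theorem \ref{locsol}; in the second bullet the branch $\lim_{r\to0}r^{(N-1)p-1}u_1'=-c(c_2)<0$, $\lim_{r\to0}r^{N-1}u_2'=-c_2<0$; and for the last family the branch with $\lim_{r\to0}r^{N-1}u_1'=c_1<0$, $\lim_{r\to0}r^{N-1}u_2'=c_2<0$.

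First I would read off the behaviour of each $u_i$ from that of $u_i'$ by a single integration. Whenever $r^{N-1}u_i'\to-\kappa<0$ we get $u_i'\sim-\kappa r^{1-N}$, hence $u_i\sim\frac{\kappa}{N-2}r^{2-N}$ and $\lim_{r\to0}r^{N-2}u_i=\frac{\kappa}{N-2}>0$; this yields the $r^{2-N}$ asymptotics for $u_1$ (first bullet) and for $u_2$ (second and third families). For $u_2$ in the first bullet, $u_2'\sim-c_2r^{1-(N-1)q}$ and the integral exponent is $2-(N-1)q$: integrating gives $\lim_{r\to0}u_2=c_2$ when $(N-1)q<2$, $\lim_{r\to0}r^{(N-1)q-2}u_2=c(c_1)>0$ when $(N-1)q>2$, and the logarithmic law $\lim_{r\to0}|\ln r|^{-1}u_2=c_2$ when $(N-1)q=2$, which are exactly the three cases $q\lessgtr\frac{2}{N-1}$. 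The $u_1$ asymptotics in the second bullet follow identically from $u_1'\sim-c(c_2)r^{1-(N-1)p}$, using $p<\frac{N}{N-1}$ (region $\mathcal{B}$) to guarantee $\lim_{r\to0}r^{N-2}u_1=0$.

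Next I would establish the distributional form \eqref{distri}. Away from $0$ the pair solves \eqref{SA}, i.e. $-\Delta u_1=|u_2'|^p$ and $-\Delta u_2=|u_1'|^q$ classically, and these right-hand sides lie in $L^1_{loc}(B_{r_0})$ because in every case the relevant exponents satisfy $q<q_2$ and $q<\frac{N}{N-1}$ (and $p<\frac{N}{N-1}$ in region $\mathcal{B}$): indeed $\int_0 r^{N-1}|u_2'|^p\,dr$ converges iff $q<q_2$ and $\int_0 r^{N-1}|u_1'|^q\,dr$ converges iff $q<\frac{N}{N-1}$. The Dirac coefficient is then the flux defect at the origin: testing against $\phi\in C_c^\infty(B_{r_0})$ and applying Green's identity on $B_{r_0}\setminus B_\varepsilon$, the boundary integral on $\partial B_\varepsilon$ tends to $\bigl(-|\mathbb{S}^{N-1}|\lim_{r\to0}r^{N-1}u_i'\bigr)\phi(0)$, so $C_i=-|\mathbb{S}^{N-1}|\lim_{r\to0}r^{N-1}u_i'$, a dimensional constant $c_N$ times $\lim_{r\to0}r^{N-2}u_i$. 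Evaluating this limit from the previous step gives $C_1=c_Nc_1>0,\ C_2=0$ in the first bullet (the flux of $u_2'$ vanishes since $N-(N-1)q>0$), $C_1=0,\ C_2=c_Nc_2>0$ in the second, and $C_1,C_2>0$ in the last family, which is the asserted list of triples $(C_1,C_2)$.

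Finally, for the nonexistence assertions I would argue by contradiction, combining the a priori bound of Proposition \ref{uprad} with the sign feedback between the two equations. Suppose a positive radial supersolution has $C_2>0$; then $r^{N-1}u_2'\to-\kappa_2<0$, so $|u_2'|\ge c\,r^{1-N}$ near $0$ and $s^{N-1}|u_2'|^p\ge c\,s^{-(N-1)(p-1)}$. When $p\ge\frac{N}{N-1}$ one has $(N-1)(p-1)\ge1$, whence $\int_0 s^{N-1}|u_2'|^p\,ds=+\infty$; feeding this into the first equation forces $r^{N-1}u_1'\to+\infty$, so $u_1'\ge c\,r^{1-N}$ is nonintegrable and, being positive, gives $u_1\to-\infty$, contradicting $u_1\ge0$. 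This settles the $p\ge\frac{N}{N-1}$ statement. For $C_1>0$ with $q>q_2$, the bound $|u_1'|\le Cr^{-(p+1)/(pq-1)}$ gives $r^{N-1}u_1'\to0$ since $\frac{p+1}{pq-1}<N-1$, forcing $C_1=0$, a contradiction. The borderline $q=q_2$ (which forces $p>\frac{N}{N-1}$) is the delicate point: there $C_1>0$ again means $u_1'\sim-\kappa_1r^{1-N}$, and integrating the second equation yields $u_2'\sim r^{1-(N-1)q}$, so $|u_2'|^p\sim r^{-N}\notin L^1_{loc}$, which is incompatible with a finite flux for $u_1'$ (the remaining sub-case, where the induced flux of $u_2'$ is itself negative, is absorbed by the preceding $C_2>0$ argument since $p>\frac{N}{N-1}$). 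I expect this reconciliation of the critical exponent with the logarithmic behaviour of Theorem \ref{limcas} to be the main obstacle; everything else is an integration together with a standard flux computation.
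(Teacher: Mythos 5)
Your existence and distributional part follows essentially the paper's own route: the paper also produces these solutions by integrating the decreasing branches ($u_1',u_2'<0$) of Theorem \ref{locsol}, and identifies the Dirac masses via the Brezis--Lions lemma, which is exactly what your Green's identity flux computation on $B_{r_0}\setminus B_\varepsilon$ amounts to; your asymptotics and the triples $(C_1,C_2)$ agree with the statement. One small imprecision there: the $L^1_{loc}$ conditions you quote ($q<q_2$ and $q<\frac{N}{N-1}$) are the ones relevant to the first family; for the second family the correct conditions are $p<\frac{N}{N-1}$ and $q<q_1$, both automatic in region $\mathcal{B}$ but not literally what you wrote.

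For the nonexistence part your route genuinely differs from the paper's, and is in one respect slicker. The paper runs a single bootstrap chain: $C_1>0$ gives $|u_1'|\geq cr^{1-N}$; local integrability of $|u_1'|^{q}$ (from superharmonicity and positivity of $u_2$) forces $q<\frac{N}{N-1}$; integrating the second inequality from the origin gives $|u_2'|\geq cr^{1-(N-1)q}$; and integrability of $|u_2'|^{p}$ forces $q<q_2$, a contradiction; symmetrically $C_2>0$ forces $q<q_1$ and $p<\frac{N}{N-1}$. You instead dispose of the case $C_1>0$, $q>q_2$ in one stroke via the a priori bound of Proposition \ref{uprad}, which gives $r^{N-1}u_1'\to 0$ (since $\frac{p+1}{pq-1}<N-1$ exactly when $q>q_2$) and so kills the atom; this shortcut is not in the paper, and it confines the chain argument to the single borderline $q=q_2$. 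Two repairable slips remain. First, $q=q_2$ does not force $p>\frac{N}{N-1}$, only $p\geq\frac{N}{N-1}$: the admissible corner $p=q=q_2=\frac{N}{N-1}$ escapes your borderline argument (there $\int_0 s^{(N-1)(1-q)}\,ds$ diverges), and must be handled separately, e.g.\ by the $p=q$ symmetry which turns it into your $C_2>0$ case, or by noting that $|u_1'|^{q}\gtrsim r^{-N}$ is already non-integrable, forcing $r^{N-1}u_2'\to+\infty$ and hence $u_2\to-\infty$. Second, the claim that $C_2>0$ is impossible when $q\geq q_2$ is never addressed explicitly; it does follow from your $p\geq\frac{N}{N-1}$ argument, but only after the observation that $q\geq q_2$ together with $q\leq p$ forces $p\geq\frac{N}{N-1}$ (because $q_2>\frac{N}{N-1}$ exactly when $p<\frac{N}{N-1}$), a line you should add.
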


\begin{proof}
The existence of the solutions is a direct consequence of Theorem \ref%
{locsol}. and the behaviour follows from Brezis-Lions Lemma applied to $%
u_{i},$ $i=1,2.$ Next consider any radial supersolution $(u_{1},u_{2})$ and
assume $q\geq q_{2}.$ If $C_{1}>0,$ then ${u}_{1}\geq cr^{2-N}$ and $%
\left\vert u_{1}^{\prime }\right\vert \geq cr^{1-N}$ near $0,$ and $%
\left\vert u_{1}^{\prime }\right\vert ^{q}\in L_{loc}^{1}(B_{r_{0}}),$ thus $%
q<\frac{N}{N-1},$ and ${-(r}^{N-1}u_{2}^{\prime })^{\prime }\geq
cr^{(N-1)(1-q)};$ then by integration we deduce that $\lim_{r\longrightarrow
0}{r}^{N-1}\left\vert u_{2}^{\prime }\right\vert =l\geq 0,$ and $\left\vert
u_{2}^{\prime }\right\vert \geq \frac{r^{1-(N-1)q}}{N-(N-1)q}$ if $l=0.$ In
any case, $\left\vert u_{2}^{\prime }\right\vert \geq cr^{1-(N-1)q}.$ Then ${%
-(r}^{N-1}u_{1}^{\prime })^{\prime }\geq cr^{(1-(N-1)q)p},$ thus $%
r^{(1-(N-1)q)p}\in L_{loc}^{1}(B_{r_{0}}),$ so that $N+p-(N-1)pq>0,$ which
is $q<q_{2}.$ Similarly, if $C_{2}>0,$ then $q<q_{1};$ moreover $\left\vert
u_{2}^{\prime }\right\vert \geq cr^{1-N}$ near $0$, thus $\left\vert 
\overline{{u}_{2}}^{\prime }\right\vert ^{p}\geq Cr^{(1-N)p},$ and $%
\left\vert u_{1}^{\prime }\right\vert ^{p}\in L_{loc}^{1}(B_{r_{0}}),$ thus $%
N-(N-1)p>0.$\medskip 
\end{proof}

In the same way we obtain existence of solutions in an exterior domain with
a linear type:

\begin{theorem}
\label{exterior}Existence of solutions of system (\ref{sou}) in $\mathbb{R}%
^{N}\backslash \overline{B_{r_{0}}}$:

$\bullet $ in region $\mathcal{C},$ there exist solutions such that 
\begin{equation*}
\lim_{r\longrightarrow \infty }r^{N-2}u_{1}=c_{1}>0,\qquad \left\{ 
\begin{array}{c}
\lim_{r\rightarrow \infty }r^{(N-1)q-2}u_{2}=\pm c(c_{1}),\text{ if }q<\frac{%
2}{N-1}, \\ 
\lim_{r\rightarrow \infty }u_{2}=c_{2}>0,\quad \quad \text{ if }q>\frac{2}{%
N-1}, \\ 
\lim_{r\rightarrow \infty }(\left\vert \ln r\right\vert ^{-1}u_{2})=c(c_{1}),%
\text{ if }q=\frac{2}{N-1},%
\end{array}%
\right.
\end{equation*}

$\bullet $ in regions $\mathcal{C}$ and $\mathcal{D},$ there exist solutions
such that 
\begin{equation*}
\lim_{r\longrightarrow \infty }r^{N-2}u_{1}=c_{1}>0,\qquad
\lim_{r\longrightarrow \infty }r^{N-2}u_{2}=c_{2}>0,
\end{equation*}

$\bullet $ in regions $\mathcal{A}$ with $q>q_{1}$, and $\mathcal{D},$ there
exist solutions such that 
\begin{equation*}
\lim_{r\longrightarrow \infty }u_{1}=c_{1}>0,\qquad \lim_{r\longrightarrow
\infty }r^{N-2}u_{2}=c_{2}>0.
\end{equation*}
\end{theorem}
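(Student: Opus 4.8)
Theorem \ref{exterior} asks me to establish existence of solutions of the positive system (\ref{sou}) in an exterior domain $\mathbb{R}^N \setminus \overline{B_{r_0}}$ with prescribed linear-type decay at infinity, in each of three regimes of the parameters. Let me think about how to prove this.

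The key is that Theorem \ref{locsol} already gives me existence of local radial solutions near $\infty$ in terms of the derivatives $u_1', u_2'$, with precise power-law behavior. So the strategy is: take the appropriate local-existence-near-$\infty$ statements from Theorem \ref{locsol}, integrate the derivatives from $\infty$ back down to $r_0$, and read off the resulting behavior of $u_1, u_2$ themselves. The nontrivial point is going from asymptotics of $u_i'$ to asymptotics of $u_i$, which requires checking integrability/convergence of the relevant integrals — this is where the critical exponent $q = 2/(N-1)$ appears.

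Let me trace the three bullets against Theorem \ref{locsol}(2). For the first bullet (region $\mathcal{C}$): Theorem \ref{locsol}(2) gives solutions near $\infty$ with $\lim r^{N-1}u_1' = \pm c(c_2)$ and $\lim r^{(N-1)q-1}u_2' = -c_2 < 0$. Since $\lim r^{N-1}u_1' = c_1 \neq 0$ means $u_1' \sim c_1 r^{1-N}$, integration gives $u_1 \sim \frac{c_1}{N-2} r^{2-N}$, i.e. $\lim r^{N-2}u_1 = c_1' > 0$ (choosing the sign so $u_1 > 0$). For $u_2' \sim -c_2 r^{1-(N-1)q}$, integrating the exponent $1-(N-1)q$: the behavior of $\int^r s^{1-(N-1)q}\,ds$ splits exactly at $2-(N-1)q = 0$, i.e. $q = 2/(N-1)$. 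If $q < 2/(N-1)$ the integral diverges and $u_2 \sim c\, r^{2-(N-1)q}$, giving the first case $\lim r^{(N-1)q-2}u_2 = \pm c(c_1)$; if $q > 2/(N-1)$ it converges, so $u_2$ has a finite limit $c_2 > 0$; if $q = 2/(N-1)$ the logarithmic borderline yields $\lim |\ln r|^{-1}u_2 = c(c_1)$. This matches the stated trichotomy. For the second bullet (regions $\mathcal{C},\mathcal{D}$): take the solutions of Theorem \ref{locsol}(2) with both $\lim r^{N-1}u_i' = c_i \neq 0$; integrating $u_i' \sim c_i r^{1-N}$ gives $\lim r^{N-2}u_i = c_i' > 0$ in both slots, after adjusting signs so both components are positive. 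For the third bullet (regions $\mathcal{A}$ with $q>q_1$, and $\mathcal{D}$): take solutions with $\lim r^{(N-1)p-1}u_1' = c_1 > 0$ and $\lim r^{N-1}u_2' = \pm c(c_1)$; here $u_1' \sim c_1 r^{1-(N-1)p}$ with exponent $1-(N-1)p < 1-N < -1$ (since $p>1$), so this integral converges and $u_1$ tends to a finite positive limit $c_1 > 0$, while $u_2' \sim c r^{1-N}$ integrates to $\lim r^{N-2}u_2 = c_2 > 0$, as claimed.

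So the plan is: (i) for each bullet, cite the matching local-near-$\infty$ existence statement from Theorem \ref{locsol}; (ii) integrate $u_i'$ from $\infty$, using that $u_i' = -r^{1-N}w_i$ and the sign conventions in Remark \ref{prat} to guarantee positivity of the components on $\mathbb{R}^N \setminus \overline{B_{r_0}}$; (iii) apply the elementary integration of power functions, tracking the threshold at $2-N$ (always convergent since $N\geq 3$) versus $2-(N-1)q$ (convergent iff $q > 2/(N-1)$) to produce each case. The additive constant of integration is chosen to arrange the finite-limit cases, which is legitimate since adding constants preserves solutions of (\ref{one}).

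The main obstacle I anticipate is not analytic depth but bookkeeping: I must verify that the sign of each limit can genuinely be arranged so that \emph{both} $u_1$ and $u_2$ are positive throughout $\mathbb{R}^N \setminus \overline{B_{r_0}}$, which is exactly what system (\ref{sou}) requires. This is where Remark \ref{prat}, giving the signs of $u_1', u_2'$ via (\ref{sig}) and (\ref{sigi}), is essential — in region $\mathcal{C}$ one has $u_1' < 0$ (since $q>q_2$) but $u_2'$ can be negative or positive depending on whether $q \gtrless q_1$, so monotonicity plus the prescribed limits at $\infty$ must be combined carefully to ensure each integrated component stays positive up to the boundary $r_0$. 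Once the signs are pinned down, the integration is routine, so I would state the result as a straightforward corollary of Theorem \ref{locsol} and present only the integration thresholds and the sign verification in detail.
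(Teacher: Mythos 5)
Your proposal is correct and takes essentially the same route as the paper: Theorem \ref{exterior} is obtained there exactly as Theorem \ref{Dirac} (``in the same way''), i.e.\ by taking the local solutions near $\infty $ from Theorem \ref{locsol} and integrating the derivatives, with the thresholds $2-N$ (always convergent) and $2-(N-1)q$ (splitting at $q=\frac{2}{N-1}$) producing the stated trichotomy. The sign/positivity subtlety you flag in region $\mathcal{C}$ — where Theorem \ref{locsol} forces $u_{2}^{\prime }<0$, so for $q\leq \frac{2}{N-1}$ the integrated $u_{2}$ tends to $-\infty $ and genuine positivity cannot be arranged — is inherited from the paper's own statement, which records it through the $\pm c(c_{1})$.
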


\subsection{Global radial existence and behaviour}

Next we study all the \textit{global} constant sign solutions $(u_{1},u_{2})$
of system (\ref{one}), of any sign. Since they are obtained by integration
of the derivates, we are lead to divide some of the regions $\mathcal{A},%
\mathcal{B},\mathcal{C},\mathcal{D}$ into subregions, according to the
position of $q$ with respect to $q_{3},q_{4}.$\medskip

\begin{definition}
\label{ijk}We set $\mathcal{A}=\mathcal{A}_{1}\cup \mathcal{A}_{2}\cup 
\mathcal{A}_{3},\mathcal{C=C}_{1}\cup \mathcal{C}_{2}\cup \mathcal{C}_{3},%
\mathcal{D}=\mathcal{D}_{1}\cup \mathcal{D}_{2}\cup \mathcal{D}_{3},$ where%
\begin{equation*}
\begin{array}{c}
\mathcal{A}_{1}=\left\{ p>\frac{N}{N-1},q<q_{1}\right\} ,\quad \mathcal{A}%
_{2}=\left\{ q_{1}<q<\min (q_{2},q_{3})\right\} ,\quad \mathcal{A}%
_{3}=\left\{ q_{3}<q<q_{2}\right\} , \\ 
\mathcal{C}_{1}=\left\{ q_{2}<q<\min (q_{3},\frac{N}{N-1})\right\} ,\;%
\mathcal{C}_{2}=\left\{ \max (q_{2},q_{3})<q<\min (q_{4},\frac{N}{N-1}%
)\right\} ,\;\mathcal{C}_{3}=\left\{ q_{4}<q<\frac{N}{N-1}\right\} , \\ 
\mathcal{D}_{1}=\left\{ \frac{N}{N-1}<q<q_{3}\right\} ,\quad \mathcal{D}%
_{2}=\left\{ \max (q_{3},\frac{N}{N-1})<q<q_{4}\right\} ,\quad \mathcal{D}%
_{3}=\left\{ \max (q_{4},\frac{N}{N-1})<q\right\} .%
\end{array}%
\end{equation*}%
\FRAME{ftbpF}{6.2167in}{4.5418in}{0in}{}{}{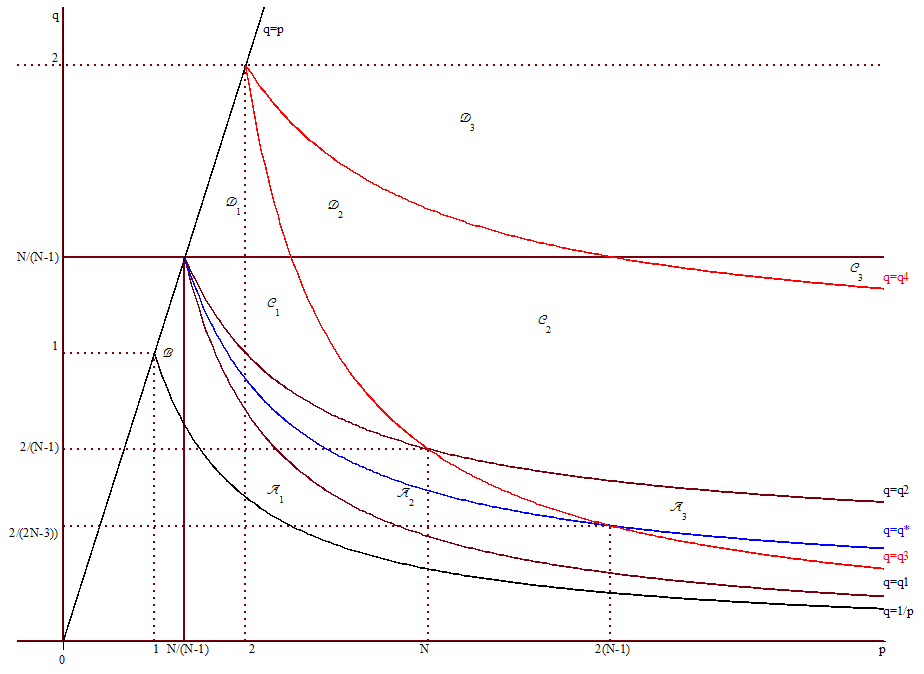}{\special%
{language "Scientific Word";type "GRAPHIC";maintain-aspect-ratio
TRUE;display "USEDEF";valid_file "F";width 6.2167in;height 4.5418in;depth
0in;original-width 5.909in;original-height 4.3096in;cropleft "0";croptop
"1";cropright "1";cropbottom "0";filename
'completefinal.png';file-properties "XNPEU";}}
\end{definition}

We first study the problem with source terms (\ref{sou}) in case $q<q_{4}:$

\begin{theorem}
\label{thsou}For $(p,q)\in \mathcal{C}_{1}\cup \mathcal{D}_{1},$ up to
positive constants, system (\ref{sou}) admits two types of global
nonconstant solutions:

$\bullet $ $(u_{1}^{\ast },u_{2}^{\ast })$, with both components $\infty $%
-singular,

$\bullet $ if $(p,q)\in \mathcal{C}_{1},$ solutions such that 
\begin{equation*}
(u_{1},u_{2})\sim _{r\rightarrow 0}(u_{1}^{\ast },u_{2}^{\ast }),\qquad
\lim_{r\rightarrow \infty }r^{N-2}u_{1}=c_{1}>0,\qquad \lim_{r\rightarrow
\infty }r^{(N-1)q-2}u_{2}=c(c_{1})>0,
\end{equation*}

$\bullet $ if $(p,q)\in \mathcal{D}_{1},$ solutions such that 
\begin{equation*}
(u_{1},u_{2})\sim _{r\rightarrow 0}(u_{1}^{\ast },u_{2}^{\ast }),\qquad
\lim_{r\rightarrow \infty }r^{N-2}u_{1}=c_{1}>0,\qquad \lim_{r\rightarrow
\infty }r^{N-2}u_{2}=c_{2}>0.
\end{equation*}

\noindent There is no global nonconstant solution in the other regions such
that $q<q_{4}.$
\end{theorem}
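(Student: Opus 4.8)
The plan is to read off both the existence and the rigidity from the reduction of the radial system to the Hardy-H\'enon equation carried out in Section \ref{rad}, combined with the phase-plane classification of Section \ref{app}. First I would observe that any nonconstant positive solution of (\ref{sou}) has $u_1'<0$ and $u_2'<0$ on the whole interval of definition (Remark \ref{souab}); by Remark \ref{prat} this sign pattern is precisely the one forced by $q>q_2$, so the only admissible parameters are those of $\mathcal{C}\cup\mathcal{D}$ (in $\mathcal{A}$ and $\mathcal{B}$ one has $q<q_2$, which already forbids $u_1'<0$). Since then $u_1'u_2'>0$, Proposition \ref{nom} shows that $w=r^{N-1}|u_1'|$ solves the \emph{absorption} equation (\ref{eqm}), i.e. $\varepsilon=-1$, with $(\mathbf{N},\mathbf{p},\sigma)$ given by (\ref{mns}); by (\ref{reg}) these parameters lie in region $\mathbf{C}$ when $(p,q)\in\mathcal{C}$ and in region $\mathbf{D}$ when $(p,q)\in\mathcal{D}$. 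Hence classifying the global positive solutions of (\ref{sou}) is equivalent to classifying the global positive solutions of (\ref{eqm}) with $\varepsilon=-1$, which is done in Theorem \ref{yellow} (region $\mathbf{C}$) and Theorem \ref{white} (region $\mathbf{D}$).

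For the existence part, both theorems produce exactly two global solutions on $(0,\infty)$ for $\varepsilon=-1$: the particular solution $w^{\ast}$ (the fixed point $M_0$) and the solution joining $w\sim w^{\ast}$ at $0$ to a regular behaviour at infinity (the connection $M_0\to N_0$ in region $\mathbf{C}$ and $M_0\to(0,0)$ in region $\mathbf{D}$). I would translate both back by Lemma \ref{return}. The fixed point $M_0$ yields the particular solution $(u_1^{\ast},u_2^{\ast})$; in $\mathcal{C}_1\cup\mathcal{D}_1$ one has $q<q_3$ and, because $q\le p$, also $q<q_4$ (for $p<2$ this follows from $q\le p<2<q_4$, and for $p\ge2$ from $q<q_3\le q_4$), so by Remark \ref{2.3} both components are $\infty$-singular and, up to additive constants, $(u_1^{\ast},u_2^{\ast})$ is a genuine positive solution. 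The $M_0$-connection gives the second type: Lemma \ref{return}(i) gives $(u_1,u_2)\sim(u_1^{\ast},u_2^{\ast})$ as $r\to0$, so both components are again $\infty$-singular there.

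The limits at infinity then follow by integrating the derivative asymptotics supplied by Lemma \ref{return}. In $\mathcal{C}_1$ the endpoint $N_0$ gives $\lim r^{N-1}|u_1'|=c_1$ and $\lim r^{(N-1)q-1}|u_2'|=c_2$; since nonemptiness of $\mathcal{C}_1$ forces $p<N$, hence $q>q_2>2/(N-1)$, both integrals converge and produce $\lim r^{N-2}u_1=c_1>0$ and $\lim r^{(N-1)q-2}u_2=c(c_1)>0$. In $\mathcal{D}_1$ the endpoint $(0,0)$ gives $\lim r^{N-1}|u_1'|=c_1$ and $\lim r^{N-1}|u_2'|=c_2$, and integrating both (using $N>2$) yields $\lim r^{N-2}u_1=c_1>0$ and $\lim r^{N-2}u_2=c_2>0$. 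The signs are those read off from Remark \ref{prat}, so after adding suitable constants the pairs are positive and decreasing.

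The main obstacle is the non-existence statement for the remaining range $q<q_4$, that is $\mathcal{C}_2\cup\mathcal{D}_2$ where $q_3<q<q_4$. Here the only candidate global solutions of (\ref{eqm}) are again $w^{\ast}$ and the $M_0$-connection, so one must show that neither yields a positive pair $(u_1,u_2)$ on all of $(0,\infty)$. For $w^{\ast}$ this is immediate: since $q>q_3$, the component $u_2^{\ast}=\varepsilon_2A_2^{\ast}r^{(p-1)(q-q_3)/(pq-1)}+c_2$ has $\varepsilon_2<0$ and a positive exponent, hence decreases to $-\infty$, and no additive constant restores positivity. The delicate part is to discard the $M_0$-connections as well; this I would do by feeding the precise endpoint behaviour of Theorems \ref{yellow} and \ref{white} through Lemma \ref{return} and examining the integrated asymptotics of $u_2$, showing that in this parameter range they are incompatible with a globally positive decreasing pair. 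Carrying out this incompatibility check carefully — reconciling the $\infty$-singular behaviour of $u_1$ at the origin with the constraints on $u_2$ and the positivity requirement — is the crux of the proof.
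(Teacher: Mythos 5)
Your overall route is the one the paper itself takes: force the sign pattern $u_{1}^{\prime}<0$, $u_{2}^{\prime}<0$ from positivity (Remark \ref{souab}), pass to the absorption Hardy-H\'enon equation via Proposition \ref{nom}, invoke the classification of global solutions in regions $\mathbf{C}$ and $\mathbf{D}$ (Theorems \ref{yellow} and \ref{white}, repackaged in the paper as Theorem \ref{regA}(4)), and translate back through Lemma \ref{return} and integration. Your existence argument in $\mathcal{C}_{1}\cup\mathcal{D}_{1}$, your exclusion of $\mathcal{A}\cup\mathcal{B}$ (the paper does this by inspecting the list of global solutions in Theorems \ref{regA} and \ref{limcas} rather than by your direct sign argument, but the mechanism is the same), and your exclusion of the particular solution in $\mathcal{C}_{2}\cup\mathcal{D}_{2}$ (where $\varepsilon_{2}=-1$ and $u_{2}^{\ast}$ decreases to $-\infty$) are all correct and match the paper.

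The genuine gap is exactly the step you label the crux and then do not execute: discarding, in $\mathcal{C}_{2}\cup\mathcal{D}_{2}$, the global solutions coming from the $M_{0}$-connections. Worse, the plan you sketch for it — extracting an incompatibility with positivity from the integrated asymptotics of $u_{2}$ — cannot succeed as stated. Along these connections the trajectory stays in $Q_{2}$, so $u_{1}^{\prime}<0$ and $u_{2}^{\prime}<0$ on all of $(0,\infty)$; near the origin $|u_{2}^{\prime}|\sim a_{2}r^{-\frac{q+1}{pq-1}}$ is integrable precisely because $q>q_{3}$, while at infinity $|u_{2}^{\prime}|\sim c\,r^{1-N}$ (endpoint $(0,0)$, region $\mathcal{D}_{2}$) or $|u_{2}^{\prime}|\sim c\,r^{1-(N-1)q}$ (endpoint $N_{0}$, region $\mathcal{C}_{2}$) is integrable whenever $(N-1)q>2$, which holds throughout $\mathcal{D}_{2}$ and in $\mathcal{C}_{2}$ whenever $p\leq N$. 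In those ranges $u_{2}$ therefore comes out bounded and monotone, $u_{1}$ decreases from $+\infty$ to a finite limit, and adding constants makes both components positive: the asymptotics yield no contradiction, so the "incompatibility check" you defer is not merely delicate, it is unavailable by that route. Note also that the paper's own disposal of this step is a different and very terse argument made at the origin (solutions with $w\sim w^{\ast}$ are claimed to inherit the non-positivity of $u_{2}^{\ast}$); since the connection inherits from $u_{2}^{\ast}$ only a finite cusp value at $0$, and not its unboundedness from below, that argument does not apply to the connections either, so this step deserves a genuinely new justification rather than a deferred one.
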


\begin{proof}
Region $\mathcal{C}_{1}\cup \mathcal{D}_{1}$ is contained in $\mathcal{C\cup
D},$ so we can apply Theorem \ref{regA},(4). In any case $\lim
{}r^{N-1}u_{1}^{\prime }=-C_{1}<0,$ so $u_{1}^{\prime }$ is integrable at $%
\infty $ since $N>2,$ and the same holds for $u_{2}^{\prime }$ for $q>\frac{N%
}{N-1}.$ If $q<\frac{N}{N-1},\lim r^{(N-1)q-1}u_{2}^{\prime }=-C_{2}<0$ also
implies that $u_{2}^{\prime }$ is integrable; Then the functions 
\begin{equation*}
u_{1}(r)=-\int_{r}^{\infty }u_{1}^{\prime }(\tau )d\tau ,\qquad
u_{2}(r)=-\int_{r}^{\infty }u_{1}^{\prime }(\tau )d\tau ,
\end{equation*}%
satisfy the conclusions. Note that $q>\frac{2}{N-1}$ in $\mathcal{C}_{1}\cup 
\mathcal{D}_{1}.$\medskip

Next we note that there is no global solution of (\ref{sou}) for $q<q_{2}$ :
indeed if such solution exists then $u_{1}^{\prime }$ and $u_{2}^{\prime }$
are negative; but all the global solutions are given at Theorems \ref{regA}
and \ref{limcas}, and they do not fulfil these conditions.Then we are lead
to consider the regions $\mathcal{C}_{2},\mathcal{D}_{2}$. From Theorems \ref%
{white} and \ref{yellow}, there is no global solution $w$ on $(0,\infty )$
for $\varepsilon =1,$ and the global solutions relative to $\varepsilon =-1$
do not bring solutions of system (\ref{sou})$.$ Indeed near $0$, the
function $w$ behaves as $w^{\ast },$ and the corresponding solutions $%
(u_{1}^{\ast },u_{2}^{\ast })$ are such that $u_{2}^{\ast }$ is not
positive.\medskip
\end{proof}

Next we consider the problem with absorption, for which the situation is
very rich. Using Theorem \ref{regA} and similar arguments of integrability,
we obtain the following:

\begin{theorem}
\label{thabs}Consider the system (\ref{abs}) For $(p,q)\in \mathcal{B}\cup 
\mathcal{A}_{1}\cup \mathcal{A}_{3}$ it admits a particular solution $(%
\widetilde{u_{1}}^{\ast },\widetilde{u_{2}}^{\ast }).$

(1) If $(p,q)\in \mathcal{A}_{1},$ $\widetilde{u_{1}}^{\ast },\widetilde{%
u_{2}}^{\ast }$ are $\infty $-singular, and there exist solutions such that 
\begin{equation*}
\lim_{r\rightarrow 0}r^{(N-1)p-2}\widetilde{u_{1}}=c(c_{2})>0,\qquad
\lim_{r\rightarrow 0}r^{N-2}\widetilde{u_{2}}^{\ast }=c_{2}>0,\qquad
(u_{1},u_{2})\sim _{r\rightarrow \infty }(u_{1}^{\ast },u_{2}^{\ast }).
\end{equation*}

(2) If $(p,q)\in \mathcal{B},$ $\widetilde{u_{1}}^{\ast },\widetilde{u_{2}}%
^{\ast }$ are still $\infty $-singular, and there exist solutions such that 
\begin{equation*}
\lim_{r\rightarrow 0}r^{N-2}\widetilde{u_{1}}=c_{1}>0,\qquad
\lim_{r\rightarrow 0}r^{N-2}\widetilde{u_{2}}=c_{2}>0,\qquad
(u_{1},u_{2})\sim _{r\rightarrow \infty }(u_{1}^{\ast },u_{2}^{\ast }).
\end{equation*}

(3) If $(p,q)\in \mathcal{A}_{3},$ $\widetilde{u_{1}}^{\ast }$ is $\infty $%
-singular, and $\widetilde{u_{2}}^{\ast }$ is a cusp-solution; moreover

$\bullet $ if $q<q^{\ast }$ there exist solutions such that 
\begin{equation*}
(\widetilde{u_{1}},\widetilde{u_{2}})\sim _{r\rightarrow 0}(\widetilde{u_{1}}%
,\widetilde{u_{2}}^{\ast }),\qquad \lim_{r\rightarrow \infty }r^{(N-1)p-2}%
\widetilde{u_{1}}=c(c_{2})>0,\qquad \lim_{r\rightarrow \infty }\widetilde{%
u_{2}}=c_{2}>0,\lim_{r\rightarrow \infty }r^{N-1}\widetilde{u_{2}}^{\prime
}=k>0,
\end{equation*}%
so the function $\widetilde{u_{2}}$ varies from $0$ to $C_{2}.$

$\bullet $ if $q>q^{\ast }$ there exist solutions such that 
\begin{equation*}
\lim_{r\rightarrow 0}r^{N-2}\widetilde{u_{1}}=c_{1}>0,\qquad
\lim_{r\rightarrow 0}r^{(N-1)q-2}\widetilde{u_{2}}=c(c_{1})>0,\qquad (%
\widetilde{u_{1}},\widetilde{u_{2}})\sim _{r\rightarrow \infty }(\widetilde{%
u_{1}},\widetilde{u_{2}}^{\ast }),
\end{equation*}%
and there are explicit solutions if $q=q^{\ast }.$\medskip

\noindent There is no global nonconstant solution in the other regions such
that $q<q_{4}.$
\end{theorem}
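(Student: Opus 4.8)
The plan is to argue exactly as for Theorem \ref{thsou}, transported through the substitution $\widetilde{u_1}=-u_1,\widetilde{u_2}=-u_2$, under which (\ref{abs}) becomes (\ref{one}) with $u_1,u_2\leq 0$. Because system (\ref{SA}) depends only on the derivatives and admits the additive symmetry $(u_1,u_2)\mapsto(u_1+C_1,u_2+C_2)$, the sign constraint $\widetilde{u_1},\widetilde{u_2}\geq 0$ is equivalent, after fixing the constants, to $u_1$ and $u_2$ being bounded above. So I would start from the complete description of the global solutions of (\ref{SA}) in terms of $(u_1',u_2')$ furnished by Theorem \ref{regA} in regions $\mathcal{A},\mathcal{B}$, and retain only the branches that integrate to functions bounded above.

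First I would pin down where the admissible particular solution lives. By Proposition \ref{parti}, $\widetilde{u_i^*}=-u_i^*$ is a power of $r$ of sign $-\varepsilon_i$, so $\widetilde{u_1^*},\widetilde{u_2^*}\geq0$ forces $\varepsilon_1=\varepsilon_2=-1$. Using $\varepsilon_1=\mathrm{sign}(q_2-q)(q-q_4)$, $\varepsilon_2=\mathrm{sign}(q_1-q)(q-q_3)$, together with the orderings $q_1\leq\min(q_2,q_3)\leq\max(q_2,q_3)\leq q_4$ and $q_3<q_2\Leftrightarrow p>N$ from Remark \ref{rela}, one finds (for $q<q_4$) that $\varepsilon_1=-1$ iff $q<q_2$ and $\varepsilon_2=-1$ iff $q<q_1$ or $q>q_3$; intersecting, this is exactly $\mathcal{B}\cup\mathcal{A}_1\cup\mathcal{A}_3$, the announced domain of existence of $(\widetilde{u_1^*},\widetilde{u_2^*})$.

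Then I would construct the non-particular solutions region by region, reading the asymptotics of $(u_1',u_2')$ from Theorem \ref{regA} and integrating as in Theorem \ref{thsou}. In $\mathcal{A}_1$ and $\mathcal{B}$ one has $u_1',u_2'>0$ (Remark \ref{prat}); the relevant branches of Theorem \ref{regA}(1),(3) have derivative asymptotics at $0$ that are not integrable, so $u_i\to-\infty$ and $\widetilde{u_i}\to+\infty$ there, while $(u_1,u_2)\sim(u_1^*,u_2^*)$ at $\infty$ — giving the two $\infty$-singular components of (1) and (2). The delicate case is $\mathcal{A}_3$, where $u_1'>0>u_2'$, so $\widetilde{u_1^*}$ is $\infty$-singular and $\widetilde{u_2^*}$ is a cusp (its exponent $(p-1)(q-q_3)/(pq-1)\in(0,1)$ since $q>q_3$). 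Here I would split along $q\lessgtr q^*$ following Theorem \ref{regA}(2), using the explicit formulas (\ref{exact}) at $q=q^*$. The key computation is that $\mathcal{A}_3$ forces $p>N$, whence $(N-1)q<(N-1)q_2=1+N/p<2$; thus $u_2'$ is integrable at $0$ and $u_2$ has a finite limit, and choosing the constant so that $u_2(0^+)=0$ yields $\widetilde{u_2}\geq0$ with the stated behaviour (the component varying from $0$ to $C_2$ when $q<q^*$, resp.\ $r^{(N-1)q-2}\widetilde{u_2}\to c(c_1)$ when $q>q^*$).

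For the non-existence assertion I would use that Theorems \ref{regA} and \ref{limcas} exhaust the global solutions of (\ref{SA}): in any region with $q<q_4$ outside $\mathcal{B}\cup\mathcal{A}_1\cup\mathcal{A}_3$, either $\varepsilon_2=+1$ (so $\widetilde{u_2^*}<0$) or every available derivative branch sends some $u_i$ to $+\infty$, so that no additive constant can make $\widetilde{u_i}\geq0$. I expect the positivity bookkeeping to be the main difficulty: for each sub-region one must read off from the derivative asymptotics whether $u_i$ is integrable at the endpoint in play and whether it stays bounded above, and it is this that forces the fine partition by $q^*,q_3,q_4$ and $2/(N-1)$; once these thresholds are settled the integration is routine.
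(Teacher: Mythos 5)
Your proposal is correct and follows essentially the same route as the paper, which gives no written proof for this theorem beyond the remark that it follows from Theorem \ref{regA} ``and similar arguments of integrability'' (i.e.\ the scheme used for Theorem \ref{thsou}): classify the global solutions via Theorem \ref{regA}, determine the admissible sign pattern $\varepsilon_{1}=\varepsilon_{2}=-1$ of the particular solution (yielding $\mathcal{B}\cup\mathcal{A}_{1}\cup\mathcal{A}_{3}$), integrate the derivative asymptotics checking integrability at $0$ and $\infty$, and rule out the remaining regions with $q<q_{4}$ by the exhaustiveness of the classification. Your sign bookkeeping and the key integrability computation $(N-1)q<(N-1)q_{2}=1+N/p<2$ in $\mathcal{A}_{3}$ (where $p>N$) are exactly the details the paper leaves implicit.
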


Now we study the system (\ref{mixt}), also of a great richness, in
particular involving the Sobolev exponent $q^{\ast }$

\begin{theorem}
\label{thmix}Consider the system (\ref{mixt}). For $(p,q)\in \mathcal{A}%
_{2}\cup \mathcal{C}_{2}\cup \mathcal{D}_{2}$ it admits a particular
solution $(\widehat{u_{1}}^{\ast },\widehat{u_{2}}^{\ast }).$

(1) Assume $(p,q)\in \mathcal{A}_{2}.$ Then $\widehat{u_{1}}^{\ast },%
\widehat{u_{2}}^{\ast }$ are $\infty $-singular; moreover

$\bullet $ if $q<q^{\ast },$ there exist solutions such that 
\begin{equation*}
(\widehat{u_{1}},\widehat{u_{2}})\sim _{r\rightarrow 0}(\widehat{u_{2}}%
^{\ast },\widehat{u_{2}}^{\ast }),\qquad \lim_{r\rightarrow \infty
}r^{(N-1)p-2}\widehat{u_{1}}=c(c_{2})>0,\qquad \lim_{r\rightarrow \infty
}r^{N-2}\widehat{u_{2}}=c_{2}>0;
\end{equation*}

$\bullet $ if $q>q^{\ast },$there exist solutions such that%
\begin{eqnarray*}
\lim_{r\rightarrow 0}r^{N-2}\widehat{u_{1}} &=&c_{1}>0,\qquad \left\{ 
\begin{array}{c}
\lim_{r\rightarrow 0}\widehat{u_{2}}=c_{2}>0,\;\lim_{r\rightarrow
0}r^{(N-1)q-1}\widehat{u_{2}}^{\prime }=-c(c_{1})<0,\text{ if }q<\frac{2}{N-1%
}, \\ 
\lim_{r\rightarrow 0}r^{(N-1)q-2}\widehat{u_{2}}=c(c_{1})>0,\quad \quad 
\text{ if }q>\frac{2}{N-1},%
\end{array}%
\right. \\
(\widehat{u_{1}},\widehat{u_{2}}) &\sim &_{r\rightarrow \infty }(\widehat{%
u_{2}}^{\ast },\widehat{u_{2}}^{\ast }),
\end{eqnarray*}

$\bullet $ if $q=q^{\ast },$ there exist explicit solutions, given by
integration of (\ref{exact}).\medskip

(2) Assume $(p,q)\in \mathcal{C}_{2},$ $q\neq \frac{2}{N-1}.$ Then $\widehat{%
u_{1}}^{\ast }$ is $\infty $-singular and $\widehat{u_{2}}^{\ast }$ is a
cusp-solution.\ There exist solutions such that%
\begin{eqnarray*}
(\widehat{u_{1}},\widehat{u_{2}}) &\sim &_{r\rightarrow 0}(\widehat{u_{2}}%
^{\ast },\widehat{u_{2}}^{\ast }), \\
\lim_{r\rightarrow \infty }r^{N-2}\widehat{u_{1}} &=&c_{1}>0,\qquad \left\{ 
\begin{array}{c}
\lim_{r\rightarrow \infty }r^{(N-1)q-2}\widehat{u_{2}}=c(c_{1})>0,\quad
\quad \text{ if }q<\frac{2}{N-1}, \\ 
\lim_{r\rightarrow \infty }\widehat{u_{2}}=c_{2}>0,\lim_{r\rightarrow \infty
}r^{(N-1)q-1}\widehat{u_{2}}^{\prime }=-c(c_{1})<0,\text{ if }q>\frac{2}{N-1}%
.%
\end{array}%
\right.
\end{eqnarray*}

(3) Assume $(p,q)\in \mathcal{D}_{2}$. Then again $\widehat{u_{1}}^{\ast }$
is $\infty $-singular and $\widehat{u_{2}}^{\ast }$ is a cusp-solution.There
exist solutions such that 
\begin{equation*}
(\widehat{u_{1}},\widehat{u_{2}})\sim _{r\rightarrow 0}(\widehat{u_{2}}%
^{\ast },\widehat{u_{2}}^{\ast }),\qquad \lim_{r\rightarrow \infty }r^{N-2}%
\widehat{u_{1}}=c_{1}>0,\qquad \lim_{r\rightarrow \infty }\widehat{u_{2}}%
=c(c_{2})>0,\quad \lim_{r\rightarrow \infty }r^{N-1}\widehat{u_{2}}^{\prime
}=-c_{2}<0.
\end{equation*}%
\noindent There is no global nonconstant solution in the other regions such
that $q<q_{4}$.
\end{theorem}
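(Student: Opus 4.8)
The plan is to proceed exactly as in the proofs of Theorems \ref{thsou} and \ref{thabs}: translate the mixed system (\ref{mixt}) into system (\ref{one}) through $\widehat{u_1}=u_1$, $\widehat{u_2}=-u_2$, read off the behaviour of the derivatives $(u_1',u_2')$ from the already-complete phase-plane classification of Theorem \ref{regA} and Lemma \ref{return}, and then recover $(\widehat{u_1},\widehat{u_2})$ by integration. First I would identify the admissible regions: the sign pattern of $(u_1',u_2')$ is fixed by the position of the node $M_0$ (Remark \ref{prat}, relations (\ref{sig})), while the $\infty$-singular/cusp alternative for each component is governed by the position of $q$ relative to $q_3,q_4$ (Remark \ref{2.3}). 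Imposing $\widehat{u_1},\widehat{u_2}\geq 0$ together with $q<q_4$ then singles out precisely $\mathcal{A}_2$, $\mathcal{C}_2$ and $\mathcal{D}_2$: in $\mathcal{A}_2$ both $\widehat{u_1}^*$ and $\widehat{u_2}^*$ are $\infty$-singular, whereas in $\mathcal{C}_2$ and $\mathcal{D}_2$ the component $\widehat{u_1}^*$ stays $\infty$-singular while $\widehat{u_2}^*$ becomes a cusp-solution; in all cases the particular solution $(\widehat{u_1}^*,\widehat{u_2}^*)$ is the one furnished by Proposition \ref{parti}.

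In each region I would apply Theorem \ref{regA} to obtain, near $0$ and near $\infty$, the two model behaviours of $(u_1',u_2')$: the one asymptotic to $(u_1^{*\prime},u_2^{*\prime})$ (fixed point $M_0$) and the ``linear'' one coming from $N_0$, $A_0$ or $(0,0)$ through the dictionary of Lemma \ref{return}. Integrating these power laws is the core computation. Whether a given component converges to a positive constant, decays like a fundamental solution, or is itself singular is decided by the integrability of its derivative at the relevant endpoint: for $\widehat{u_1}$ this is controlled by the exponents $N-2$ and $(N-1)p-2$ appearing in the statement, while for the subharmonic component $\widehat{u_2}$, whose derivative behaves like $r^{1-(N-1)q}$ near the boundary, integrability switches at $q=\tfrac{2}{N-1}$. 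This produces the three alternatives $\lim r^{(N-1)q-2}\widehat{u_2}=c>0$, $\lim\widehat{u_2}=c_2>0$, and the logarithmic borderline, exactly as in Theorem \ref{Dirac}; positivity of the integrated functions follows from the fixed signs of $u_1',u_2'$ together with the Brezis--Lions argument already used in Theorem \ref{Dirac}.

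For the borderline Sobolev exponent $q=q^{*}$ (equivalently $\mathbf{q}=\mathbf{q}_S$, see Remark \ref{prat}), which meets the admissible set only inside $\mathcal{A}_2$ since $q^{*}\leq q_2$, I would not argue abstractly but integrate the explicit formulas (\ref{exact}) of Theorem \ref{regA}(2), obtaining the announced closed-form family of mixed solutions. Finally, to show there is no other global nonconstant solution with $q<q_4$, I would run the elimination step of Theorem \ref{thsou}: in the complementary sub-regions of $\mathcal{A},\mathcal{C},\mathcal{D}$ the only global profiles produced by Theorems \ref{orange}, \ref{white} and \ref{yellow} either fail to be global for $\varepsilon=1$, or, for $\varepsilon=-1$, force $\widehat{u_2}$ to change sign and hence cannot be mixed.

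The hard part will be the sign/positivity bookkeeping across the many sub-cases rather than any single estimate: for each region and each endpoint one must verify that the integrated components land on the correct side of zero and that the correct component is $\infty$-singular versus cusp. The most delicate points are the behaviour of the subharmonic $\widehat{u_2}$ at the threshold $q=\tfrac{2}{N-1}$, where the logarithm enters, and the check in $\mathcal{C}_2,\mathcal{D}_2$ that $\widehat{u_2}^{*}$ is a genuine cusp-solution ($\lim\widehat{u_2}^{*}$ finite, $\lim|\widehat{u_2}^{*\prime}|=\infty$) compatible with $\Delta\widehat{u_2}\geq 0$.
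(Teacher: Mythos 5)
Your proposal follows essentially the same route as the paper: Theorem \ref{thmix} is proved there exactly by combining the classification of Theorem \ref{regA} (read through Lemma \ref{return} and the sign relations (\ref{sig}) of Remark \ref{prat}, with the $\infty$-singular/cusp dichotomy of Remark \ref{2.3}) with the integrability arguments modeled on the proof of Theorem \ref{thsou}, including the switch at $q=\frac{2}{N-1}$ for the component $\widehat{u_2}$, the integration of the explicit formulas (\ref{exact}) when $q=q^{\ast}$, and the Theorem \ref{thsou}-style elimination of the remaining regions with $q<q_{4}$. One harmless imprecision: $q^{\ast}$ lies in $\mathcal{A}_{2}$ only when $p\leq 2(N-1)$ (equivalently $q^{\ast}\leq q_{3}$, by Remark \ref{rela}), while for $p>2(N-1)$ it falls in $\mathcal{A}_{3}$ and the explicit solutions are then attributed to system (\ref{abs}) in Theorem \ref{thabs}(3) — but since the bullet $q=q^{\ast}$ in part (1) is conditional on $(p,q)\in\mathcal{A}_{2}$, this does not affect the validity of your argument.
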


The region $\left\{ q>q_{4}\right\} ,$ containing $\mathcal{C}_{3}\cup 
\mathcal{D}_{3}$, plays a particular role, as in the scalar case for $q>2,$
because we can construct solutions such that $u_{1}$ and $u_{2}$ are \textbf{%
bounded} on $(0,\infty ),$ then for any of the three systems (\ref{sou}), (%
\ref{abs}) and (\ref{mixt}) by adding suitable constants.\bigskip

\begin{theorem}
\label{thall}Let $(p,q)\in \mathcal{C}_{3}\cup \mathcal{D}_{3}=\left\{
q>q_{4}\right\} .$ (i)Then system (\ref{abs}) still admits a particular
solution $(\widetilde{u_{1}}^{\ast },\widetilde{u_{2}}^{\ast }),$ where both
components are cusp-solutions. Moreover there exist \textbf{bounded} global
solutions on $(0,\infty ):$

$\bullet $ If $(p,q)\in \mathcal{C}_{3},$ there exist solutions such that 
\begin{eqnarray*}
(\widetilde{u_{1}},\widetilde{u_{2}}) &\sim &_{r\rightarrow 0}(\widetilde{%
u_{1}}^{\ast },\widetilde{u_{2}}^{\ast }), \\
\lim_{r\rightarrow \infty }\widetilde{u_{1}} &=&c_{1}>0,\quad
\lim_{r\rightarrow \infty }\widetilde{u_{2}}=c_{2}>0,\quad
\lim_{r\rightarrow \infty }r^{N-1}\widetilde{u_{1}}^{\prime }=k_{1}>0,\quad
\lim_{r\rightarrow \infty }r^{(N-1)q-2}\widetilde{u_{2}}^{\prime
}=c(k_{1})>0.
\end{eqnarray*}

$\bullet $ If $(p,q)\in \mathcal{D}_{3},$ there exist solutions such that 
\begin{equation*}
(\widetilde{u_{1}},\widetilde{u_{2}})\sim _{r\rightarrow 0}(\widetilde{u_{1}}%
,\widetilde{u_{2}}^{\ast }),\qquad \lim_{r\rightarrow \infty }\widetilde{%
u_{i}}=c_{i}>0,\quad \lim_{r\rightarrow \infty }r^{N-1}\widetilde{u_{1}}%
^{\prime }=k_{1}>0,\quad \lim_{r\rightarrow \infty }r^{N-1}\widetilde{u_{2}}%
^{\prime }=k_{2}>0.
\end{equation*}%
(ii) System (\ref{sou}) admits global bounded solutions in $\mathcal{C}_{3}$
or $\mathcal{D}_{3}$, of the form $(C_{1}-\widetilde{u_{1}},C_{2}-\widetilde{%
u_{2}})$ for $C_{1}>c_{1},C_{2}>c_{2}.\medskip $

\noindent (iii) System (\ref{mixt}) admits global bounded solutions of the
same form for $C_{1}>c_{1},C_{2}<0.\medskip $
\end{theorem}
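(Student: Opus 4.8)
The plan is to derive everything from the description of the derivatives $u_1',u_2'$ furnished by Theorem \ref{regA}(4), and then to integrate. First I would record a structural fact about the region. The constraint $q>q_4=\frac{p+2}{p}$ together with the standing assumption $q\leq p$ forces $\frac{p+2}{p}<p$, i.e. $(p-2)(p+1)>0$, hence $p>2$. Since $q_3=\frac{2}{p-1}<q_4$ precisely when $p>2$, we get $q>q_4>q_3$ throughout $\mathcal{C}_3\cup\mathcal{D}_3$. By Remark \ref{2.3} both $u_1^\ast$ and $u_2^\ast$ are then cusp-solutions, so the particular solution $(\widetilde{u_1}^\ast,\widetilde{u_2}^\ast)=(-u_1^\ast,-u_2^\ast)$ of (\ref{abs}) has both components of cusp type, which is the opening assertion of (i). Moreover $\mathcal{C}_3\subset\mathcal{C}$ and $\mathcal{D}_3\subset\mathcal{D}$, so Theorem \ref{regA}(4) applies and describes all global solutions of (\ref{SA}) through $u_1',u_2'$; by Remark \ref{prat} both derivatives are negative in these regions.

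The key analytic point is an integrability dichotomy at the two ends. Near $0$, Theorem \ref{regA}(4) gives $u_1'\sim -a_1 r^{-\frac{p+1}{pq-1}}$ and $u_2'\sim -a_2 r^{-\frac{q+1}{pq-1}}$; these are integrable at $0$ exactly when $\frac{p+1}{pq-1}<1$ and $\frac{q+1}{pq-1}<1$, i.e. when $p(q-1)>2$ and $q(p-1)>2$, that is $q>q_4$ and $q>q_3$, both holding by the previous step. Thus $q>q_4$ is precisely the threshold making the origin singularity integrable, mirroring $q>2$ in the scalar case. Near $\infty$, the decay rates of Theorem \ref{regA}(4) (namely $u_1'\sim -c_1 r^{1-N}$ in both regions, $u_2'\sim -c(c_1)r^{1-(N-1)q}$ in $\mathcal{C}_3$ and $u_2'\sim -c_2 r^{1-N}$ in $\mathcal{D}_3$) are integrable at $\infty$ because $N>2$ and $q>\frac{2}{N-1}$. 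Hence $u_1',u_2'\in L^1(0,\infty)$.

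Integration then completes (i). Since the $u_i'$ are negative and integrable at both ends, each $u_i$ is decreasing with finite limits at $0$ and $\infty$, hence bounded; fixing the free additive constants one obtains positive bounded solutions $\widetilde{u_i}=-u_i$ of (\ref{abs}) with $\widetilde{u_i}\sim_{r\to 0}\widetilde{u_i}^\ast$ and finite positive limits $c_i$ at $\infty$, the exact limiting values following by integrating the asymptotic expansions of $u_i'$ term by term. For (ii) and (iii) I would only rearrange signs: since $(u_1,u_2)$ solves (\ref{one}) and that system involves the gradients alone, $(C_1-\widetilde{u_1},C_2-\widetilde{u_2})$ solves (\ref{one}) for every choice of $C_1,C_2$. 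Taking $C_1>c_1$ and $C_2>c_2$ makes both components nonnegative, yielding a bounded solution of (\ref{sou}); taking $C_1>c_1$ and $C_2<0$ makes $\widehat{u_1}=C_1-\widetilde{u_1}\geq 0$ and $\widehat{u_2}=\widetilde{u_2}-C_2\geq 0$, yielding a bounded solution of (\ref{mixt}).

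The main obstacle is not a single hard estimate but the combined bookkeeping: one must check simultaneously that $q>q_4$ gives integrability at the origin (the genuinely new feature of this region) and that $N>2$ gives it at infinity, and then track the three distinct sign conventions relating (\ref{sou}), (\ref{abs}) and (\ref{mixt}) so that each bounded profile lands in the correct positivity cone. The limits at $\infty$ also require care, since the relevant power of $r$ differs between $\mathcal{C}_3$ and $\mathcal{D}_3$.
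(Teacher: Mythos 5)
Your proposal is correct and follows essentially the same route as the paper: Theorem \ref{regA}(4) supplies the global derivative profiles in $\mathcal{C}_3\cup\mathcal{D}_3$, the condition $q>q_4$ (hence $p>2$ and $q>q_3$) makes $u_1',u_2'$ integrable at the origin while $N\geq 3$ handles infinity, and integration plus addition of constants yields the bounded solutions of (\ref{abs}), (\ref{sou}) and (\ref{mixt}). The paper leaves this argument implicit (it invokes ``Theorem \ref{regA} and similar arguments of integrability'' as in the proof of Theorem \ref{thsou}), and your write-up makes exactly that bookkeeping explicit.
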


\begin{proof}[Proof of Theorem \protect\ref{posi}]
The proofs of (1) (2) are direct consequences of Theorem \ref{Dirac},\ref%
{thsou} and \ref{thall} respectively, from the definition \ref{ijk} of the
regions.
\end{proof}

\begin{remark}
The limit cases $p=\frac{N}{N-1}$ and $q=q_{1},q_{2},\frac{N}{N-1}$ can be
deduced from Remark \ref{limcas} by integration. We leave the computations
to the reader. In the limit cases $q=q_{3}\neq q_{2},q_{4}$, the particular
solutions have a logarithmic form: If $p>q=q_{3}\neq q_{2}$, and $q$ we
obtain solutions such that $u_{1}^{\ast \prime }=-a_{1}r^{1-p}$ and $%
u_{2}^{\ast \prime }=-a_{2}r^{-1},$ so we get non constant sign solutions of
the form 
\begin{equation}
u_{1}^{\ast }=\pm a_{1}r^{2-p}+c_{1},\qquad u_{2}^{\ast }=-a_{2}\ln r+c_{2}.
\end{equation}%
If $q=q_{4}<p$ (hence $p>2$) we get $u_{1}^{\ast \prime }=-a_{1}r^{-1}$ and $%
u_{2}^{\ast \prime }=-a_{2}r^{-\frac{2}{p}},$ then we find nonconstant sign
solutions of the form 
\begin{equation*}
u_{1}^{\ast }=a_{1}\ln r+c_{2},\qquad u_{2}^{\ast }=-\frac{p}{p-2}a_{2}r^{%
\frac{p-2}{p}}+c_{1}.
\end{equation*}%
And for $p=q=2,$ we recall the solutions $u_{1}^{\ast }=u_{2}^{\ast
}=(2-N)\ln r+c$. in these limit cases, there is no global nonconstant
solution.\medskip

Finally when $q=q_{2}=q_{3},$ equivalently $(p,q)=(N,\frac{2}{N-1}),$ the
behaviours of type (\ref{kil}) (\ref{kol}) near $0$ or $\infty $ resumes to $%
\lim r\left\vert \ln r\right\vert ^{\frac{2}{N+1}}u_{2}^{\prime }=-c_{2}<0,$
which gives by integration solutions such that 
\begin{equation*}
\lim \ln (\left\vert \ln r\right\vert ^{\frac{2}{N+1}})u_{2}=-C<0.
\end{equation*}
\end{remark}

\section{Nonradial case: upper estimates and local behaviour \label{nonrad}}

\subsection{Nonexistence results, and upperestimates of mean values}

Here we give upperestimates for the supersolutions of system (\ref{one}),
implying in particular nonexistence results of entire solutions of the
system. the nonexistence results obtained by integral methods were initiated
by \cite{MiPo} for the positive supersolutions of the Lane-Emden system (\ref%
{LE}) where $a=b=0$, and then extended in various directions, to more
general operators and second members in \cite{Fi}, involving quasilinear
operators and gradient terms, of type 
\begin{equation}
\left\{ 
\begin{array}{c}
{-\Delta }_{P}^{N}{u}_{1}\geq u_{1}^{a}{u}_{2}^{b}\left\vert \nabla
u_{2}\right\vert ^{p}, \\ 
{-\Delta }_{Q}^{N}{u}_{2}\geq {u}_{1}^{c}u_{2}^{d}\left\vert \nabla
u_{1}\right\vert ^{q},%
\end{array}%
\right.  \label{fili}
\end{equation}%
where $P,Q>1,$ the solutions are positive, and $b,c>0$. In \cite{BiPo} we
also obtained integral estimates of the positive solutions, for example on
problems of type (\ref{fili}) with $p=q=0.$ In the situation of system (\ref%
{one}) we adapt the methods of \cite{BiPo}, and obtain integral
upperestimates of the gradient. A noticeable fact is that the solutions are
not supposed to be positive, and not even of constant sign.

\begin{definition}
We say that a couple $(u_{1},u_{2})$ of $C^{2}$ function in a domain $\Omega
\subset \mathbb{R}^{N}$ is a supersolution (resp. a subsolution) of system (%
\ref{one}) if 
\begin{equation}
\left\{ 
\begin{array}{c}
{-\Delta u}_{1}\geq \left\vert \nabla u_{2}\right\vert ^{p}, \\ 
{-\Delta u}_{2}\geq \left\vert \nabla u_{1}\right\vert ^{q},%
\end{array}%
\quad \text{in }\Omega \right. ,\qquad \text{resp.}\left\{ 
\begin{array}{c}
{-\Delta u}_{1}\leq \left\vert \nabla u_{2}\right\vert ^{p}, \\ 
{-\Delta u}_{2}\leq \left\vert \nabla u_{1}\right\vert ^{q},%
\end{array}%
\quad \text{in }\Omega .\right.  \label{ineg}
\end{equation}
\end{definition}

\begin{theorem}
\label{esti}Let $pq>1,$ $p\geq q\geq 1,$and $(u_{1},u_{2})$ be any
supersolution of system (\ref{one}) (with no condition of sign) in a domain $%
\Omega \subset \mathbb{R}^{N}$.

(i) If $\Omega =B_{r_{0}}\backslash \left\{ 0\right\} $, resp $\Omega =%
\mathbb{R}^{N}\backslash \overline{B_{r_{0}}}$, then there exist $%
C=C(N,p,q)>0$ such that for any $R<\frac{r_{0}}{2}$ (resp. $R>2r_{0}$) 
\begin{equation}
\dint\limits_{\frac{R}{2}\leq \left\vert x\right\vert \leq \frac{3R}{2}%
}\left\vert \nabla u_{2}\right\vert ^{p}dx\leq CR^{N-\frac{p(q+1)}{pq-1}%
},\qquad \dint\limits_{\frac{R}{2}\leq \left\vert x\right\vert \leq \frac{3R%
}{2}}\left\vert \nabla u_{1}\right\vert ^{q}dx\leq CR^{N-\frac{q(p+1)}{pq-1}%
}.  \label{mil}
\end{equation}%
(ii) If $\Omega =\mathbb{R}^{N}$ and 
\begin{equation}
(N-1)pq<\max (N+p,N+q)=N+p.  \label{scr}
\end{equation}%
that means $q<q_{2},$\textbf{\ }then all the \textbf{solutions} of system (%
\ref{one}) are constant.
\end{theorem}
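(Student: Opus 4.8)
The plan is to adapt the nonlinear capacity (test-function) method of \cite{BiPo} to the coupled gradient system. I fix a nonnegative cutoff $\xi$ with compact support in $\Omega $ and an exponent $\lambda $ to be chosen large, multiply the two supersolution inequalities in (\ref{ineg}) by $\xi ^{\lambda }$, and integrate by parts; since $\xi $ has compact support there are no boundary terms. Writing
\begin{equation*}
I_{1}=\dint |\nabla u_{1}|^{q}\xi ^{\lambda },\qquad I_{2}=\dint |\nabla u_{2}|^{p}\xi ^{\lambda },
\end{equation*}
the first inequality gives $I_{2}\leq \lambda \dint \xi ^{\lambda -1}|\nabla u_{1}||\nabla \xi |$ and the second gives $I_{1}\leq \lambda \dint \xi ^{\lambda -1}|\nabla u_{2}||\nabla \xi |$. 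Both $I_{1},I_{2}$ are finite because $u_{i}\in C^{2}$ and $\xi $ is compactly supported.

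Next I estimate the two right-hand sides by H\"{o}lder. Splitting $\xi ^{\lambda -1}=\xi ^{\lambda /q}\,\xi ^{\lambda -1-\lambda /q}$ and applying H\"{o}lder with exponents $q,q^{\prime }$ gives $\dint \xi ^{\lambda -1}|\nabla u_{1}||\nabla \xi |\leq I_{1}^{1/q}J_{q}^{1/q^{\prime }}$, where $J_{q}=\dint |\nabla \xi |^{q^{\prime }}\xi ^{\lambda -q^{\prime }}$; the exponents are arranged precisely so that the surviving power of $\xi $ is $\lambda -q^{\prime }$, which is nonnegative once $\lambda \geq \max (p^{\prime },q^{\prime })=q^{\prime }$ (here $p\geq q$ forces $p^{\prime }\leq q^{\prime }$). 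Symmetrically, $\dint \xi ^{\lambda -1}|\nabla u_{2}||\nabla \xi |\leq I_{2}^{1/p}J_{p}^{1/p^{\prime }}$ with $J_{p}=\dint |\nabla \xi |^{p^{\prime }}\xi ^{\lambda -p^{\prime }}$. Combining with the previous step yields the coupled system $I_{2}\leq \lambda I_{1}^{1/q}J_{q}^{1/q^{\prime }}$ and $I_{1}\leq \lambda I_{2}^{1/p}J_{p}^{1/p^{\prime }}$. Substituting the second bound into the first and using $I_{2}<\infty $ to solve, I obtain $I_{2}\leq (\lambda ^{1+1/q}J_{p}^{1/(p^{\prime }q)}J_{q}^{1/q^{\prime }})^{pq/(pq-1)}$, and symmetrically for $I_{1}$.

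I then specialize $\xi (x)=\xi _{0}(x/R)$ with $\xi _{0}$ a fixed cutoff equal to $1$ on $\{1/2\leq |x|\leq 3/2\}$ and supported in a slightly larger annulus, so that $|\nabla \xi |\leq CR^{-1}$ on a set of measure at most $CR^{N}$; hence $J_{p}\approx R^{N-p^{\prime }}$ and $J_{q}\approx R^{N-q^{\prime }}$. A direct exponent count, which reduces to $\tfrac{Npq-N-p-pq}{pq-1}$, shows that the bound on $I_{2}$ is exactly $CR^{N-p(q+1)/(pq-1)}$ and that on $I_{1}$ is $CR^{N-q(p+1)/(pq-1)}$, which is precisely (\ref{mil}); for $\Omega =B_{r_{0}}\backslash \{0\}$ (resp.\ the exterior domain) one only needs $R$ in the admissible range keeping the support of $\xi $ inside $\Omega $ and away from the origin, which gives $R<r_{0}/2$ (resp.\ $R>2r_{0}$). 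This proves assertion (i).

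Finally, for (ii) I observe that the threshold (\ref{scr}) is equivalent to $N-p(q+1)/(pq-1)<0$, since $N(pq-1)<p(q+1)\Longleftrightarrow (N-1)pq<N+p$. Taking $\Omega =\mathbb{R}^{N}$ and $\xi _{0}$ equal to $1$ on $B_{1}$, the same computation gives $\dint_{B_{R}}|\nabla u_{2}|^{p}\leq CR^{N-p(q+1)/(pq-1)}\to 0$ as $R\to \infty $, hence $\nabla u_{2}\equiv 0$ and $u_{2}$ is constant. This is the only place where \emph{equalities} are needed: since $(u_{1},u_{2})$ solves (\ref{one}), the relation $-\Delta u_{2}=|\nabla u_{1}|^{q}$ together with $u_{2}$ constant forces $|\nabla u_{1}|^{q}\equiv 0$, so $u_{1}$ is constant as well. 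The main obstacle is the bookkeeping in the H\"{o}lder splitting and the choice of $\lambda $, which must keep all powers of $\xi $ admissible while arranging $J_{p},J_{q}$ to carry \emph{exactly} the sharp scaling $R^{N-p^{\prime }}$, $R^{N-q^{\prime }}$; it is this sharpness that produces the exponent $N-p(q+1)/(pq-1)$ and hence the optimality of the threshold in (ii). The limiting case $q=1$, where $q^{\prime }=\infty $, falls outside the H\"{o}lder step and must instead be treated by estimating $\dint \xi ^{\lambda -1}|\nabla u_{1}||\nabla \xi |$ directly on the transition annulus.
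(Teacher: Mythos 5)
Your proposal is correct and follows essentially the same route as the paper's own proof: the nonlinear capacity (test-function) method of \cite{BiPo}, testing each supersolution inequality against a power of a cutoff, applying H\"{o}lder, and closing the resulting coupled pair of inequalities to produce exactly the scalings $R^{N-\frac{p(q+1)}{pq-1}}$ and $R^{N-\frac{q(p+1)}{pq-1}}$, then letting $R\to \infty $ for part (ii). The remaining differences are cosmetic: you work with a single annular cutoff where the paper derives the estimate on balls $B(x_{0},R/2)$ and covers the annulus, you use one exponent $\lambda $ with the splitting $\xi ^{\lambda -1}=\xi ^{\lambda /q}\xi ^{\lambda -1-\lambda /q}$ in place of the paper's four exponents $\lambda ,\alpha ,\mu ,\beta $ chosen so that $\alpha p=\mu $ and $\beta q=\lambda $, and the case $q=1$ (where $q^{\prime }=\infty $) that you only flag is precisely the separate computation the paper carries out with $\mu =\frac{2p}{p-1}$, using $\left\vert \nabla \zeta \right\vert \leq C/R$ directly in place of the H\"{o}lder step.
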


\begin{proof}
(i) Let $R>0$ and $x_{0}\in \Omega $ such that $\overline{B(x_{0},2R)}%
\subset \Omega .$ $\ $Let $\zeta \in C_{0}^{\infty }(\mathbb{R}^{N}),$ with
values in $\left[ 0,1\right] ,$ such that $\zeta =1$ on $B_{\frac{R}{2}},$ $%
\zeta =0$ on $B_{R}^{C}$ and $\left\vert \nabla \zeta \right\vert \leq \frac{%
2}{R}.$ We take as test function $\varphi (x)=\zeta ^{\lambda }(x-x_{0})$, $%
\lambda >0,$ in the first inequality and get by integration, for any \textbf{%
\ }$p>1$ and $\alpha >0,$ 
\begin{align*}
\int_{B(x_{0},R)}\left\vert \nabla u_{1}\right\vert ^{q}\zeta ^{\lambda }dx&
\leq \lambda \left\vert \int_{B(x_{0},R)}<\nabla u_{2},\nabla \zeta >\zeta
^{\lambda -1}dx\right\vert =\lambda \left\vert \int_{B(x_{0},R)}\left\vert
\nabla u_{2}\right\vert \zeta ^{\alpha }\left\vert \nabla \zeta \right\vert
\zeta ^{\lambda -1-\alpha }dx\right\vert \\
& \leq \lambda (\int_{B(x_{0},R)}\left\vert \nabla u_{2}\right\vert
^{p}\zeta ^{\alpha p}dx)^{\frac{1}{p}}(\int_{B(x_{0},R)}\left\vert \nabla
\zeta \right\vert ^{p^{\prime }}\zeta ^{(\lambda -1-\alpha )p^{\prime }}dx)^{%
\frac{1}{p^{\prime }}};
\end{align*}%
then for $\lambda \geq 1+\alpha ,$%
\begin{equation*}
\int_{B(x_{0},R)}\left\vert \nabla u_{1}\right\vert ^{q}\zeta ^{\lambda
}dx\leq \lambda (\int_{B(x_{0},R)}\left\vert \nabla u_{2}\right\vert
^{p}\zeta ^{\alpha p}dx)^{\frac{1}{p}}(\int_{B(x_{0},R)}\left\vert \nabla
\zeta \right\vert ^{p^{\prime }}dx)^{\frac{1}{p^{\prime }}}.
\end{equation*}%
Similarly for given $\beta >0,\mu >0$ such that $\mu \geq 1+\beta ,$ 
\begin{equation*}
\int_{B(x_{0},R)}\left\vert \nabla u_{2}\right\vert ^{p}\zeta ^{\mu }dx\leq
\mu (\int_{B(x_{0},R)}\left\vert \nabla u_{1}\right\vert ^{q}\xi ^{\beta
q}dx)^{\frac{1}{p}}(\int_{B(x_{0},R)}\left\vert \nabla \zeta \right\vert
^{q^{\prime }}dx)^{\frac{1}{q^{\prime }}}.
\end{equation*}%
Taking $\lambda =\frac{q(p+1)}{pq-1},$ $\alpha =\frac{q+1}{pq-1}$ $\mu =%
\frac{p(q+1)}{pq-1}$ and $\beta =\frac{p+1}{pq-1},$ we find 
\begin{equation*}
\int_{B(x_{0},R)}\left\vert \nabla u_{1}\right\vert ^{q}\zeta ^{\frac{q(p+1)%
}{pq-1}}dx\leq \lambda (\int_{B(x_{0},R)}\left\vert \nabla u_{2}\right\vert
^{p}\zeta ^{\frac{p(q+1)}{pq-1}}dx)^{\frac{1}{p}}(\int_{B(x_{0},R)}\left%
\vert \nabla \zeta \right\vert ^{p^{\prime }}dx)^{\frac{1}{p^{\prime }}},
\end{equation*}%
and by symmetry, when $q>1,$ 
\begin{equation*}
\int_{B(x_{0},R)}\left\vert \nabla u_{2}\right\vert ^{p}\xi ^{\frac{p(q+1)}{%
pq-1}}dx\leq \mu (\int_{B(x_{0},R)}\left\vert \nabla u_{1}\right\vert
^{q}\xi ^{\frac{q(p+1)}{pq-1}}dx)^{\frac{1}{q}}(\int_{B(x_{0},R)}\left\vert
\nabla \zeta \right\vert ^{q^{\prime }}dx)^{\frac{1}{q^{\prime }}}.
\end{equation*}%
As a consequence, 
\begin{equation*}
\int_{B(x_{0},R)}\left\vert \nabla u_{2}\right\vert ^{p}\xi ^{\frac{p(q+1)}{%
pq-1}}dx\leq \mu \lambda ^{\frac{1}{q}}(\int_{B(x_{0},R)}\left\vert \nabla
u_{2}\right\vert ^{p}\xi ^{\frac{p(q+1)}{pq-1}}dx)^{\frac{1}{pq}%
}(\int_{B(x_{0},R)}\left\vert \nabla \zeta \right\vert ^{q^{\prime }}dx)^{%
\frac{1}{q^{\prime }}}(\int_{B(x_{0},R)}\left\vert \nabla \zeta \right\vert
^{p^{\prime }}dx)^{\frac{1}{qp^{\prime }}},
\end{equation*}%
\begin{eqnarray*}
(\int_{B(x_{0},\frac{R}{2})}\left\vert \nabla u_{2}\right\vert ^{p}dx)^{%
\frac{pq-1}{pq}} &\leq &(\int_{B(x_{0},R)}\left\vert \nabla u_{2}\right\vert
^{p}\xi ^{\frac{p(q+1)}{pq-1}}dx)^{\frac{pq-1}{pq}} \\
&\leq &\mu \lambda ^{\frac{1}{q}}(\int_{B(x_{0},R)}\left\vert \nabla \zeta
\right\vert ^{q^{\prime }}dx)^{\frac{1}{q^{\prime }}}(\int_{B(x_{0},R)}\left%
\vert \nabla \zeta \right\vert ^{p^{\prime }}dx)^{\frac{1}{qp^{\prime }}%
}\leq CR^{\frac{N-q^{\prime }}{q^{\prime }}+\frac{N-p^{\prime }}{qp^{\prime }%
}}=CR^{N\frac{pq-1}{pq}-\frac{q+1}{q}},
\end{eqnarray*}%
for some constant $C=C(N,p,q)>0.$ When $q=1,$ we get directly with $\mu =%
\frac{2p}{p-1}$ 
\begin{eqnarray*}
\int_{B(x_{0},R)}\left\vert \nabla u_{2}\right\vert ^{p}\zeta ^{\frac{2p}{p-1%
}}dx &\leq &\frac{2p}{p-1}\left\vert \int_{B(x_{0},R)}<\nabla u_{1},\nabla
\zeta >\zeta ^{\frac{p+1}{p-1}}dx\right\vert =\frac{2p}{p-1}\left\vert
\int_{B(x_{0},R)}\left\vert \nabla u_{1}\right\vert \zeta ^{\frac{p+1}{p-1}%
}\left\vert \nabla \zeta \right\vert dx\right\vert \\
&\leq &\frac{C}{R}\int_{B(x_{0},R)}\left\vert \nabla u_{1}\right\vert \zeta
^{\frac{p+1}{p-1}},
\end{eqnarray*}%
\begin{equation*}
\int_{B(x_{0},R)}\left\vert \nabla u_{1}\right\vert \zeta ^{\frac{p+1}{p-1}%
}dx\leq \lambda (\int_{B(x_{0},R)}\left\vert \nabla u_{2}\right\vert
^{p}\zeta ^{\frac{2p}{p-1}}dx)^{\frac{1}{p}}(\int_{B(x_{0},R)}\left\vert
\nabla \zeta \right\vert ^{p^{\prime }}dx)^{\frac{1}{p^{\prime }}},
\end{equation*}%
then we obtain 
\begin{equation*}
(\int_{B(x_{0},\frac{R}{2})}\left\vert \nabla u_{2}\right\vert ^{p}dx)^{%
\frac{1}{p^{\prime }}}\leq CR^{\frac{N}{p^{\prime }}-2}.
\end{equation*}%
In any case the estimates (\ref{mil}) follow by considering a finite
recovering of $\left\{ \frac{R}{2}\leq \left\vert x\right\vert \leq \frac{3R%
}{2}\right\} $ by balls $B(x_{0},R).\medskip $

(ii) If $\Omega =\mathbb{R}^{N}$ and $N(pq-1)<p(q+1),$ equivalently (\ref%
{scr}) holds, we consider any ball $B(0,R)$ and make $R\rightarrow \infty ;$
we deduce that $\nabla u_{2}=0,$ hence $u_{2}=C_{2};$ then ${-\Delta u}%
_{2}=0\geq \left\vert \nabla u_{1}\right\vert ^{q},$ thus $%
u_{1}=C_{1}.\medskip $
\end{proof}

\begin{remark}
In case $p=q,$ and $u_{1}\geq 0,u_{2}\geq 0,$ the nonexistence can be
obtained by reducing to the scalar case: 
\begin{equation*}
-\Delta (u_{1}+u_{2})=\left\vert \nabla u_{2}\right\vert ^{q}+\left\vert
\nabla u_{1}\right\vert ^{q}\geq c_{q}^{1}\left( \left\vert \nabla
u_{2}\right\vert +\left\vert \nabla u_{1}\right\vert \right) ^{q}\geq
c_{q}^{2}\left\vert \nabla (u_{1}+u_{2})\right\vert ^{q}.
\end{equation*}%
If $1<q<\frac{N}{N-1},$ then the only nonnegative solutions in whole $%
\mathbb{R}^{N}$ satisfy $u_{1}+u_{2}=C,$ for example from \cite[Proposition
2.1]{BiGaVe2}. Then $-\Delta (u_{1}+u_{2})=0\geq \left\vert \nabla
u_{2}\right\vert ^{q}+\left\vert \nabla u_{1}\right\vert ^{q},$ hence $u_{1}$
and $u_{2}$ are constant. At Theorem \ref{esti}we have shown that the
positivity is not required. And moreover we obtain integral upperestimates
of the gradients when $q\geq \frac{N}{N-1}$.
\end{remark}

In the sequel, the mean value of any function $u$ in $\left\{ R\leq
\left\vert x\right\vert \leq R^{\prime }\right\} $ on the sphere $S^{N-1}$
is denoted by $\overline{u}$.

\begin{lemma}
\label{mean}Let $p\geq q\geq 1$ and $(u_{1},u_{2})$ be a supersolution of
system (\ref{one}) in $\left\{ R\leq \left\vert x\right\vert \leq R^{\prime
}\right\} .$ Then $(\overline{u}_{1},\overline{u}_{2})$ is also a
supersolution. In particular if $(\widetilde{u_{1}},\widetilde{u_{2}})$ is a
positive subsolution of system (\ref{sou}), then $(\overline{\widetilde{u_{1}%
}},\overline{\widetilde{u_{2}}})$ is also a subsolution.
\end{lemma}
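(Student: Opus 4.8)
The plan is to exploit two elementary properties of spherical averaging and then close the argument with the convexity of $t\mapsto t^{p}$ and $t\mapsto t^{q}$. Throughout, $\overline{u}(r)$ denotes the average of $u$ over the sphere $\{|x|=r\}$, and since $(u_{1},u_{2})$ is $C^{2}$ every differentiation under the integral sign is justified. First I would record that averaging commutes with the Laplacian: writing $\Delta u=\partial_{rr}u+\frac{N-1}{r}\partial_{r}u+\frac{1}{r^{2}}\Delta_{\omega}u$ in polar coordinates and integrating over the unit sphere, the Laplace--Beltrami term $\Delta_{\omega}u$ integrates to $0$, so that $\overline{\Delta u}=\overline{u}''+\frac{N-1}{r}\overline{u}'=\Delta\,\overline{u}$, the right-hand side being the (radial) Laplacian of $\overline{u}$.

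Second, I would bound the gradient of the average. Because $\overline{u}$ is radial, $|\nabla\overline{u}|=|\overline{u}'(r)|=|\,\overline{\partial_{r}u}\,|\le\overline{|\partial_{r}u|}\le\overline{|\nabla u|}$, where the last inequality uses $|\partial_{r}u|=|\nabla u\cdot\omega|\le|\nabla u|$. Feeding this into Jensen's inequality for the convex map $t\mapsto t^{p}$ (this is the only place $p\ge1$, and symmetrically $q\ge1$, enters) gives the chain $|\nabla\overline{u}|^{p}\le(\overline{|\nabla u|})^{p}\le\overline{|\nabla u|^{p}}$, and likewise with $p$ replaced by $q$.

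With these two facts the first assertion is immediate. Averaging $-\Delta u_{1}\ge|\nabla u_{2}|^{p}$ and using the commutation yields $-\Delta\overline{u}_{1}=\overline{-\Delta u_{1}}\ge\overline{|\nabla u_{2}|^{p}}\ge|\nabla\overline{u}_{2}|^{p}$, and symmetrically $-\Delta\overline{u}_{2}\ge|\nabla\overline{u}_{1}|^{q}$, so $(\overline{u}_{1},\overline{u}_{2})$ is again a supersolution of (\ref{one}). The one point to watch — and essentially the whole content of the lemma — is that Jensen must be used in the direction favourable to convex powers; this is exactly why the sign convention $-\Delta u_{i}\ge(\cdots)$ is what lets the argument go through, and why the hypotheses $p\ge q\ge1$ are needed.

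For the ``in particular'' the same computation applies once the inequalities are arranged so that $\Delta$ sits on the Jensen-favourable side. For a positive subsolution of the absorption system (\ref{abs}) this is automatic: the defining inequalities read $\Delta\widetilde{u}_{1}\ge|\nabla\widetilde{u}_{2}|^{p}\ge0$ and $\Delta\widetilde{u}_{2}\ge|\nabla\widetilde{u}_{1}|^{q}\ge0$, so both components are subharmonic. Averaging and reusing the commutation and Jensen steps gives $\Delta\overline{\widetilde{u}_{1}}=\overline{\Delta\widetilde{u}_{1}}\ge\overline{|\nabla\widetilde{u}_{2}|^{p}}\ge|\nabla\overline{\widetilde{u}_{2}}|^{p}$, and symmetrically for $\widetilde{u}_{2}$, so the averaged pair is again a subsolution. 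This sign bookkeeping — making sure convexity is used in the direction that increases, rather than decreases, the relevant side of the inequality — is the only genuinely delicate step; everything else is routine differentiation under the integral together with the identity $\overline{\Delta u}=\Delta\overline{u}$.
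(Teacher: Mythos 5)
Your proof is correct and follows essentially the same route as the paper's: the pointwise bound $\left\vert \partial _{r}u\right\vert \leq \left\vert \nabla u\right\vert $ combined with Jensen's inequality for $t\mapsto t^{p}$, $t\mapsto t^{q}$ (this is where $p,q\geq 1$ enters), together with the commutation $\overline{\Delta u}=\Delta \overline{u}$, which the paper uses implicitly and you make explicit. One remark: in the ``in particular'' clause you (rightly) treat a positive subsolution of the \emph{absorption} system (\ref{abs}) rather than of (\ref{sou}) as literally written; this is the correct reading, since for a subsolution of (\ref{sou}) Jensen runs the wrong way ($-\Delta \overline{u}_{1}\leq \overline{\left\vert \nabla u_{2}\right\vert ^{p}}$ cannot be compared downward to $\left\vert \overline{u}_{2}^{\prime }\right\vert ^{p}$), and it is the (\ref{abs}) version that the paper actually invokes later, e.g.\ in Theorem \ref{Osserman}.
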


\begin{proof}
For any function $u\in C^{1}(B(0,R)),$ and any $p\geq 1,$ from the Jensen
inequality, 
\begin{equation}
\overline{\left\vert \nabla u\right\vert ^{p}}\geq \left\vert \overline{u}%
^{\prime }\right\vert ^{p}.  \label{jen}
\end{equation}%
Indeed 
\begin{eqnarray*}
\overline{\left\vert \nabla u\right\vert ^{p}} &=&\frac{1}{\left\vert
S^{N-1}\right\vert }\int_{S^{N-1}}(u_{2,r}^{2}+r^{-2}\left\vert \nabla
^{\prime }u_{2}\right\vert ^{2})^{\frac{p}{2}}d\sigma \geq \frac{1}{%
\left\vert S^{N-1}\right\vert }\int_{S^{N-1}}\left\vert u_{r}\right\vert
^{p}d\sigma \\
&\geq &(\frac{1}{\left\vert S^{N-1}\right\vert }\int_{S^{N-1}}\left\vert
u_{r}\right\vert d\sigma )^{p}\geq \left\vert \frac{1}{\left\vert
S^{N-1}\right\vert }\int_{S^{N-1}}u_{r}d\sigma \right\vert ^{p}=\left\vert 
\overline{u}^{\prime }\right\vert ^{p}.
\end{eqnarray*}%
Then, since $q\geq 1,$%
\begin{equation*}
-\Delta \overline{u_{1}}\geq \overline{\left\vert \nabla u_{2}\right\vert
^{p}}\geq \left\vert \overline{u_{2}}^{\prime }\right\vert ^{p},\qquad
-\Delta \overline{u_{2}}\geq \overline{\left\vert \nabla u_{1}\right\vert
^{q}}\geq \left\vert \overline{u_{1}}^{\prime }\right\vert ^{q}.
\end{equation*}
\end{proof}

\subsection{Local behaviour near 0 or $\infty $}

As a consequence of Theorem \ref{esti} and Proposition \ref{uprad}, we get

\begin{proposition}
\label{esto}Let $pq>1,$ $p\geq q\geq 1,$ and $({u}_{1},{u}_{2})$ be any
supersolution of system (\ref{sou}) in $B_{r_{0}}\backslash \left\{
0\right\} .$ Then there exist $C=C(N,p,q)>0$, and $\rho \in (0,r_{0})$
depending on ${u}_{1},{u}_{2}$, such that for $r\in \left( 0,\rho \right) ,$%
s 
\begin{equation*}
\left\vert \overline{u_{1}}(r)\right\vert \leq \left\{ 
\begin{array}{c}
Cr^{-\frac{2-p(q-1)}{pq-1}}\text{ if }q<q_{4}, \\ 
C\left\vert \ln r\right\vert \quad \text{if }q=q_{4}, \\ 
C\text{ \quad if }q>q_{4},%
\end{array}%
\right. \qquad \left\vert \overline{u_{2}}(r)\right\vert \leq \left\{ 
\begin{array}{c}
Cr^{-\frac{2-q(p-1)}{pq-1}}\text{if }q<q_{3}, \\ 
C\left\vert \ln r\right\vert \quad \text{if }q=q_{3}, \\ 
C\text{ if }q>q_{3}.%
\end{array}%
\right.
\end{equation*}%
(ii) $({u}_{1},{u}_{2})$ be any supersolution of system (\ref{sou}) in $%
\mathbb{R}^{N}\backslash \overline{B_{r_{0}}}.$ Then there exists $\eta
>r_{0}$ such that for $r>\eta ,$ 
\begin{equation*}
\left\vert \overline{u_{1}}(r)\right\vert \leq \left\{ 
\begin{array}{c}
C\left\vert x\right\vert ^{\frac{p(q-1)-2}{pq-1}}\text{if }q>q_{4}, \\ 
C\ln r\quad \text{if }q=q_{4}, \\ 
C\text{ if }q<q_{4},%
\end{array}%
\right. \qquad \left\vert \overline{u_{2}}(r)\right\vert \leq \left\{ 
\begin{array}{c}
Cr^{\frac{q(p-1)-2}{pq-1}}\text{if }q>q_{3}, \\ 
C\ln r\quad \text{if }q=q_{3}, \\ 
C\text{ if }q<q_{3}.%
\end{array}%
\right.
\end{equation*}
\end{proposition}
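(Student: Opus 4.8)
The plan is to reduce the nonradial supersolution to a radial one by spherical averaging, then to apply the pointwise derivative estimate of Proposition \ref{uprad} to the averaged functions, and finally to integrate in $r$, splitting into three regimes according to the sign of a single exponent. First I would invoke Lemma \ref{mean}: the spherical means $(\overline{u_1},\overline{u_2})$ again form a radial supersolution of system (\ref{one}), since $-\Delta\overline{u_1}\geq|\overline{u_2}'|^p$ and $-\Delta\overline{u_2}\geq|\overline{u_1}'|^q$ follow from the Jensen inequality (\ref{jen}); this is precisely where the hypothesis $q\geq 1$ is needed. Having a genuine radial supersolution, I may then apply Proposition \ref{uprad} to $(\overline{u_1},\overline{u_2})$, obtaining, for $r$ small enough when $\Omega=B_{r_0}\backslash\{0\}$ (resp. large enough in the exterior case), the pointwise bounds
\begin{equation*}
|\overline{u_1}'(r)|\leq Cr^{-\frac{p+1}{pq-1}},\qquad |\overline{u_2}'(r)|\leq Cr^{-\frac{q+1}{pq-1}},\qquad C=C(N,p,q).
\end{equation*}
The companion nonradial integral estimates of Theorem \ref{esti} carry the same scaling exponents and could serve as an alternative source of this control after averaging.

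The second step is integration. Set $a_1=\frac{p+1}{pq-1}$, so that $1-a_1=\frac{p(q-1)-2}{pq-1}$ changes sign exactly at $q=q_4$ (where $p(q-1)=2$). For the interior problem, integrating the bound on $|\overline{u_1}'|$ from $r$ to a fixed $\rho\in(0,r_0)$ gives $|\overline{u_1}(r)|\leq|\overline{u_1}(\rho)|+C\int_r^\rho s^{-a_1}\,ds$, and the three cases of the proposition correspond to the elementary trichotomy of $\int s^{-a_1}\,ds$: for $a_1>1$, i.e. $q<q_4$, the integral grows like $r^{1-a_1}=r^{-\frac{2-p(q-1)}{pq-1}}$; for $a_1=1$, i.e. $q=q_4$, it grows like $|\ln r|$; and for $a_1<1$, i.e. $q>q_4$, the function $s^{-a_1}$ is integrable near $0$, so $\overline{u_1}$ has a finite limit at $0$ and stays bounded. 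The identical computation with $a_2=\frac{q+1}{pq-1}$ and threshold $q_3$ (where $q(p-1)=2$) yields the stated bounds for $\overline{u_2}$.

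For the exterior problem in part (ii) I would instead integrate from a fixed $\eta>r_0$ up to $r$; convergence and divergence of the tail integral are then interchanged, so the inequalities in $q$ flip, and the same trichotomy produces the bounded, logarithmic, and power behaviours as $r\to\infty$. I expect the only genuinely delicate point to be the reduction itself: one must verify that $(\overline{u_1},\overline{u_2})$ is a bona fide radial supersolution on the whole punctured interval and that the constant-sign, non-vanishing analysis of $w_1,w_2$ underlying Proposition \ref{uprad} survives the averaging (which Lemma \ref{mean} and Remark \ref{tre} together secure). Everything after that is the routine estimate of $\int s^{-a}\,ds$ according to $a\gtrless 1$. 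I would also note that in the cases $q>q_4$ (resp. $q>q_3$) the resulting constant absorbs the boundary value $\overline{u_1}(\rho)$ (resp. $\overline{u_2}(\rho)$), hence in those regimes the bounding constant depends on the solution through that value.
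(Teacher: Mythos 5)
Your proposal is correct and follows essentially the same route as the paper: the proposition is stated there as a direct consequence of Proposition \ref{uprad} applied (via Lemma \ref{mean} and Jensen's inequality, which is where $q\geq 1$ enters) to the spherical means, followed by exactly your integration trichotomy in the exponents $\frac{p+1}{pq-1}$ and $\frac{q+1}{pq-1}$ with thresholds $q_4$ and $q_3$ — the same scheme the paper writes out explicitly in the proof of the analogous Theorem \ref{Osserman}. Your closing observation, that in the bounded regimes ($q>q_4$, resp. $q>q_3$) the constant necessarily absorbs a boundary value of the solution (while in the unbounded regimes this dependence can be pushed into the choice of $\rho$), is a correct reading of how the constants depend on the data.
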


\subsubsection{Case of system (\protect\ref{abs})}

In this case, we can improve the preceeding results. We give Osserman's type
estimates for the local solutions near $0$ or $\infty $

\begin{theorem}
\label{Osserman}Let $pq>1,$ $p\geq q\geq 1.$(i) Let $(\widetilde{u_{1}},%
\widetilde{u_{2}})$ be any positive subsolution of system (\ref{abs}) in $%
B_{r_{0}}\backslash \left\{ 0\right\} .$ Then there exists and $C=C(N,p,q)>0$%
, and $\rho \in (0,r_{0})$ depending on $\widetilde{u_{1}},\widetilde{u_{2}} 
$, such that for $0<\left\vert x\right\vert <\rho ,$ 
\begin{equation*}
\widetilde{u_{1}}(x)\leq \left\{ 
\begin{array}{c}
C\left\vert x\right\vert ^{-\frac{2-p(q-1)}{pq-1}}\text{ if }q<q_{4}, \\ 
C\left\vert \ln \left\vert x\right\vert \right\vert \quad \text{if }q=q_{4},
\\ 
C\text{ if }q>q_{4},%
\end{array}%
\right. \qquad \widetilde{u_{2}}(x)\leq \left\{ 
\begin{array}{c}
C\left\vert x\right\vert ^{-\frac{2-q(p-1)}{pq-1}}\text{if }q<q_{3}, \\ 
C\left\vert \ln \left\vert x\right\vert \right\vert \quad \text{if }q=q_{3},
\\ 
C\text{ if }q>q_{3}.%
\end{array}%
\right.
\end{equation*}%
(ii) Let $(\widetilde{u_{1}},\widetilde{u_{2}})$ be any positive subsolution
of system (\ref{abs}) in $\mathbb{R}^{N}\backslash \overline{B_{r_{0}}}.$
Then there exists $\eta >r_{0}$ such that for $\left\vert x\right\vert >\eta
,$ 
\begin{equation*}
\widetilde{u_{1}}(x)\leq \left\{ 
\begin{array}{c}
C\left\vert x\right\vert ^{\frac{p(q-1)-2}{pq-1}}\text{if }q>q_{4}, \\ 
C\left\vert \ln \left\vert x\right\vert \right\vert \quad \text{if }q=q_{4},
\\ 
C\text{ if }q<q_{4},%
\end{array}%
\right. \qquad \widetilde{u_{2}}(x)\leq \left\{ 
\begin{array}{c}
C\left\vert x\right\vert ^{\frac{q(p-1)-2}{pq-1}}\text{if }q>q_{3}, \\ 
C\left\vert \ln \left\vert x\right\vert \right\vert \quad \text{if }q=q_{3},
\\ 
C\text{ if }q<q_{3}.%
\end{array}%
\right.
\end{equation*}
\end{theorem}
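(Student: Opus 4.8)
The plan is to deduce the whole statement from the radial gradient estimate of Proposition \ref{uprad} together with the subharmonicity of the two components. First I would record the two structural facts behind the theorem. A positive subsolution $(\widetilde u_1,\widetilde u_2)$ of (\ref{abs}) satisfies $\Delta \widetilde u_1\ge |\nabla \widetilde u_2|^p\ge 0$ and $\Delta \widetilde u_2\ge |\nabla \widetilde u_1|^q\ge 0$, so each $\widetilde u_i$ is subharmonic; and setting $u_i=-\widetilde u_i$ shows that $(u_1,u_2)$ is a supersolution of (\ref{one}) with $u_i\le 0$, since $-\Delta u_1=\Delta \widetilde u_1\ge |\nabla \widetilde u_2|^p=|\nabla u_2|^p$ and likewise for the second equation. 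These two observations are exactly what let me import the radial machinery without losing the (absence of) sign hypothesis.

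Next I would pass to spherical means. By Lemma \ref{mean} applied to the supersolution $(-\widetilde u_1,-\widetilde u_2)$ of (\ref{one}), the pair $(-\overline{\widetilde u_1},-\overline{\widetilde u_2})$ is a radial supersolution of (\ref{one}) on $(0,r_0)$ (resp. on $(r_0,\infty)$). Proposition \ref{uprad} then yields, for $r$ small (resp. large),
\[
|\overline{\widetilde u_1}'(r)|\le C\,r^{-\frac{p+1}{pq-1}},\qquad
|\overline{\widetilde u_2}'(r)|\le C\,r^{-\frac{q+1}{pq-1}}.
\]
Integrating these in $r$ gives the mean-value bounds. A primitive of $r^{-(p+1)/(pq-1)}$ behaves like $r^{(p(q-1)-2)/(pq-1)}$, whose exponent is negative, zero or positive exactly according to $q<q_4$, $q=q_4$, $q>q_4$ (recall $q_4=(p+2)/p$), giving for $\overline{\widetilde u_1}$ the three alternatives $r^{-(2-p(q-1))/(pq-1)}$, $|\ln r|$, bounded; symmetrically a primitive of $r^{-(q+1)/(pq-1)}$ governs $\overline{\widetilde u_2}$ through $q_3=2/(p-1)$. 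In the interior case one integrates from $r$ up to a fixed radius and the divergent contribution sits at the lower limit, while in the exterior case one integrates up to $r$ and it sits at the upper limit, which is why the roles of $q<q_i$ and $q>q_i$ are interchanged between (i) and (ii).

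Finally I would upgrade these averaged bounds to the pointwise ones, the only genuinely new step. Since each $\widetilde u_i\ge 0$ is subharmonic, the sub-mean value property gives, for $x$ with $|x|=r$ and the admissible ball $B(x,r/2)$,
\[
\widetilde u_i(x)\le \frac{1}{|B(x,r/2)|}\int_{B(x,r/2)}\widetilde u_i\,dy
\le \frac{c_N}{r^N}\int_{r/2}^{3r/2}\rho^{N-1}\,\overline{\widetilde u_i}(\rho)\,d\rho
\le C\sup_{r/2\le\rho\le 3r/2}\overline{\widetilde u_i}(\rho),
\]
where I used $B(x,r/2)\subset\{r/2\le |y|\le 3r/2\}$ and $\widetilde u_i\ge 0$ to replace the $x$-centered ball by an origin-centered shell, on which only the already controlled spherical means $\overline{\widetilde u_i}$ appear. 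As the mean-value bounds just obtained are monotone power (or logarithmic) functions of $\rho$, their supremum over $[r/2,3r/2]$ is comparable to their value at $\rho=r$, and the claimed estimates for $\widetilde u_1$ and $\widetilde u_2$ follow; the exterior case is identical, with $B(x,r/2)$ still interior to $\mathbb{R}^N\setminus\overline{B_{r_0}}$ once $r>2r_0$.

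The main obstacle is precisely this last conversion from spherical-mean control to pointwise control: it is what forces us to exploit that the absorption structure of (\ref{abs}) makes each component subharmonic (so the sub-mean value inequality is available), and to verify that the ball average over $B(x,r/2)$ is genuinely dominated, with a constant depending only on $N$, by the spherical means over the comparable shell $\{r/2\le|y|\le 3r/2\}$. Everything else is bookkeeping of exponents around the critical values $q_3$ and $q_4$ together with the already established Lemma \ref{mean} and Proposition \ref{uprad}.
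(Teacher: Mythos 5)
Your proposal is correct and follows essentially the same route as the paper: pass to spherical means (the pair $(-\overline{\widetilde{u_1}},-\overline{\widetilde{u_2}})$ is a radial supersolution of (\ref{one}) by Lemma \ref{mean}), apply Proposition \ref{uprad}, integrate with the case distinction at $q_3,q_4$ (with the interior/exterior role swap), and upgrade to pointwise bounds via subharmonicity. The only cosmetic difference is that you prove the pointwise upgrade directly from the sub-mean value inequality over the shell $\{r/2\le|y|\le 3r/2\}$, whereas the paper quotes the equivalent estimate $\widetilde{u_i}(x)\le C(N)\max\bigl(\overline{\widetilde{u_i}}(|x|/2),\overline{\widetilde{u_i}}(2|x|)\bigr)$ from \cite[Lemma 2.1]{BiGr}.
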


\begin{proof}
(i) Since the functions are subharmonic, if $u_{i}$ is nonconstant near $0,$
then $\overline{\widetilde{u_{i}}}$ is strictly monotone for small $r,$ and
either $r^{N-2}\overline{\widetilde{u_{i}}}$ has a positive limit as $%
r\longrightarrow 0,$ or $\overline{\widetilde{u_{i}}}$ is bounded. And for $%
\left\vert x\right\vert <\rho $ small enough, depending on the functions,
from Proposition \ref{uprad} applied to the functions $\overline{\widetilde{%
u_{i}}}$ 
\begin{equation*}
\left\vert \overline{\widetilde{u_{1}}}^{\prime }(r)\right\vert \leq
C(N,p,q)r^{-\frac{p+1}{pq-1}},\qquad \left\vert \overline{\widetilde{u_{2}}}%
^{\prime }(r)\right\vert \leq Cr^{-\frac{q+1}{pq-1}}..
\end{equation*}%
Then by integration, for $\rho $ small enough, there holds $\overline{%
\widetilde{u_{1}}}(r)\leq Cr^{-\frac{2-p(q-1)}{pq-1}}$ if $q<q_{4}$ and $%
\overline{\widetilde{u_{1}}}(r)\leq C$ if $q>q_{4},$ similarly for $%
\overline{\widetilde{u_{2}}}$ by exchanging $q_{3}$ and $q_{4}.$ Since the
functions are subharmonic, 
\begin{equation*}
u_{i}(x)\leq C(N)\max (\overline{\widetilde{u_{i}}}(\frac{\left\vert
x\right\vert }{2}),\overline{\widetilde{u_{i}}}(2\left\vert x\right\vert ),
\end{equation*}%
see \cite[Lemma 2.1]{BiGr}, hence the conclusions hold. Moreover if $q>q_{4},
$ then $\left\vert \overline{\widetilde{u_{1}}}^{\prime }\right\vert $ is
integrable, hence $\overline{\widetilde{u_{1}}}$ has a finite limit $l$ as $%
r\longrightarrow 0$.

(ii) The proof is similar.
\end{proof}

\subsubsection{Case of system (\protect\ref{sou})}

We first consider the exterior problem. We observe that in the scalar case,
when $1<q<\frac{N}{N-1},$ there is no positive radial solution of the
equation such that $\lim_{r\rightarrow \infty }u=0.$ But there exist
solutions such that $u$ is increasing to some $l>0$ as $r\rightarrow \infty
, $ so there exist solutions of system (\ref{sou}) for $p=q.$ In the general
case we prove the following when $q$ is subcritical, namely $q<q_{2}$:

\begin{theorem}
\label{nonext}Let $pq>1,$ $p\geq q\geq 1.$ If $q<q_{2},$ there is no
positive supersolution $(u_{1},u_{2})$ of system (\ref{sou}) such that $%
\lim_{r\rightarrow \infty }\overline{u_{1}}=0$ in an exterior set $\mathbb{R}%
^{N}\backslash \overline{B_{r_{0}}}$.
\end{theorem}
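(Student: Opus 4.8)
The plan is to reduce to a one–dimensional problem and then run a bootstrap on the decay exponent of the radial derivative $v_{1}^{\prime }$, the threshold being exactly $q=q_{2}$.

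First I would pass to spherical means. By Lemma \ref{mean} the pair $(v_{1},v_{2}):=(\overline{u_{1}},\overline{u_{2}})$ is again a positive radial supersolution, so writing $W_{i}=-r^{N-1}v_{i}^{\prime }$ the inequalities become $W_{1}^{\prime }\geq r^{(N-1)(1-p)}|W_{2}|^{p}$ and $W_{2}^{\prime }\geq r^{(N-1)(1-q)}|W_{1}|^{q}$, that is $-(r^{N-1}v_{1}^{\prime })^{\prime }\geq r^{N-1}|v_{2}^{\prime }|^{p}$ and symmetrically. Since $v_{1}>0$, $v_{1}\to 0$, and $r^{N-1}v_{1}^{\prime }$ is nonincreasing (its derivative being $\geq 0$), $v_{1}$ cannot be eventually increasing (it would then stay negative), so $v_{1}^{\prime }\leq 0$ and $W_{1}$ is nondecreasing and nonnegative for large $r$. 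As $v_{1}>0$ with limit $0$ it is not constant on any tail, so $W_{1}$ is not identically zero there, and being nondecreasing it obeys $W_{1}(r)\geq c_{1}>0$, i.e. $|v_{1}^{\prime }(r)|\geq c_{1}r^{1-N}$. I would also record that $q<q_{2}$ with $p\geq q$ forces $q<\frac{N}{N-1}$ (if $p\geq \frac{N}{N-1}$ then $q_{2}\leq \frac{N}{N-1}$, while if $p<\frac{N}{N-1}$ then $q\leq p<\frac{N}{N-1}$), which keeps the integrations below supercritical.

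The core is the bootstrap. Suppose $|v_{1}^{\prime }(r)|\geq A\,r^{-\mu }$ for large $r$. Feeding this into the second inequality and integrating gives, as long as $N-q\mu >0$, a bound $W_{2}(r)\geq c\,r^{N-q\mu }>0$ for large $r$ (the boundary term is absorbed by the growing power, which also yields the sign $v_{2}^{\prime }<0$), hence $|v_{2}^{\prime }(r)|=r^{1-N}W_{2}\geq c\,r^{1-q\mu }$. Inserting this into the first inequality and integrating, as long as $N+p-pq\mu >0$, yields $W_{1}(r)\geq c^{\prime }r^{\,N+p-pq\mu }$ and therefore $|v_{1}^{\prime }(r)|\geq c^{\prime }r^{\,1-p(q\mu -1)}$, i.e. the improved exponent
\[
\mu ^{\prime }=pq\,\mu -(p+1).
\]
Starting from $\mu _{0}=N-1$ and iterating, the map $\mu \mapsto pq\,\mu -(p+1)$ has fixed point $\mu _{\ast }=\frac{p+1}{pq-1}$ (the exponent of the particular solution), and $\mu _{k}-\mu _{\ast }=(pq)^{k}(\mu _{0}-\mu _{\ast })$. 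Since $\mu _{0}=N-1<\mu _{\ast }$ is equivalent to $(N-1)(pq-1)<p+1$, i.e. to $(N-1)pq<N+p$, i.e. to $q<q_{2}$, the sequence $\mu _{k}$ strictly decreases to $-\infty $.

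Finally the admissibility conditions propagate. At $\mu _{0}=N-1$ one has $N-q\mu _{0}=N-(N-1)q>0$ and $N+p-pq\mu _{0}=N+p-(N-1)pq>0$ precisely because $q<q_{2}$, and since $\mu _{k}$ decreases these two quantities only increase, so every step is legitimate and the signs $W_{1},W_{2}>0$ persist. Once $\mu _{k}<0$ (after finitely many steps) the bound reads $|v_{1}^{\prime }(r)|\geq c\,r^{|\mu _{k}|}$ with $v_{1}^{\prime }<0$, and integrating forces $v_{1}(r)\to -\infty $, contradicting $v_{1}>0$. The main obstacle is exactly this: verifying that the exponents $N-q\mu _{k}$ and $N+p-pq\mu _{k}$ remain positive and that $W_{2}$ stays positive along the whole iteration, so that each integration genuinely produces the stated power growth with controllable constants. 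This is the quantitative content of the subcritical hypothesis $q<q_{2}$, and it is handled by the same bootstrap mechanism already used in Proposition \ref{uprad} and in \cite{BiGr}, \cite{BiGaYa}, \cite{Bi3}.
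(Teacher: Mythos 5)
Your proof is correct, but it takes a genuinely different route from the paper's. Both arguments begin the same way: by Lemma \ref{mean} the spherical means $(\overline{u_1},\overline{u_2})$ form a radial supersolution, and superharmonicity together with $\overline{u_1}>0$, $\lim_{r\to\infty}\overline{u_1}=0$ forces $W_1=-r^{N-1}\overline{u_1}^{\prime}\geq c_1>0$ for large $r$ (the paper states this as $\overline{u_1}\geq \frac{K}{N-2}r^{2-N}$). From there the paper does not iterate at all: it invokes the universal upper estimate of Proposition \ref{uprad} applied to the means, $|\overline{u_1}^{\prime}|\leq Cr^{-\frac{p+1}{pq-1}}$, integrates it from $r$ to $\infty$ using $\lim\overline{u_1}=0$ to obtain $\overline{u_1}\leq cr^{\frac{p(q-1)-2}{pq-1}}$, and then compares the two power exponents, $2-N\leq\frac{p(q-1)-2}{pq-1}$, which is equivalent to $q\geq q_2$, a contradiction. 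You instead run a divergent bootstrap on \emph{lower} bounds of $|\overline{u_1}^{\prime}|$, starting from $\mu_0=N-1$ and iterating $\mu\mapsto pq\,\mu-(p+1)$; note that the fixed point $\mu_*=\frac{p+1}{pq-1}$ of your iteration is exactly the exponent appearing in Proposition \ref{uprad}, and your divergence criterion $\mu_0<\mu_*$ is the same inequality of exponents the paper reaches, both being equivalent to $q<q_2$. Your bookkeeping is sound: the initial admissibility $N-(N-1)q>0$ and $N+p-(N-1)pq>0$ follows from $q<q_2$ and $p\geq q$ (which give $q<\frac{N}{N-1}$), the conditions $N-q\mu_k>0$ and $N+p-pq\mu_k>0$ only improve as $\mu_k$ decreases, only finitely many steps are needed, and the absorption of the integration constants by the growing powers is legitimate. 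What each approach buys: the paper's proof is shorter given that Proposition \ref{uprad} is already established (and is needed elsewhere in the paper), whereas yours is self-contained for this theorem --- it never uses the universal upper estimate, only the elementary superharmonicity observation and direct integration of the system --- and it makes transparent why $q_2$ is the exact threshold, namely the value at which the starting exponent $N-1$ crosses the fixed point of the amplification map.
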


\begin{proof}
We have the upper estimates of the derivatives 
\begin{equation}
\left\vert \overline{u_{1}}^{\prime }(r)\right\vert \leq Cr^{-\frac{p+1}{pq-1%
}},\qquad \left\vert \overline{u_{2}}^{\prime }(r)\right\vert \leq Cr^{-%
\frac{q+1}{pq-1}},  \label{esa}
\end{equation}%
and 
\begin{equation*}
\left\{ 
\begin{array}{c}
{-\Delta }\overline{u}_{1}\geqq \left\vert \overline{u_{2}}^{\prime
}\right\vert ^{p}, \\ 
{-\Delta }\overline{u}_{2}\geqq \left\vert \overline{u_{1}}^{\prime
}\right\vert ^{q}.%
\end{array}%
\right.
\end{equation*}%
The function $\overline{u_{1}}$ is superharmonic, and 
\begin{equation*}
\overline{u_{1}}(r)\leq \overline{u_{1}}(r_{0})+\tint\limits_{r_{0}}^{r}%
\left\vert \overline{u_{1}}^{\prime }(r)\right\vert dr\leq \overline{u_{1}}%
(r_{0})+Cr^{-\frac{(2-p(q-1))}{pq-1}},
\end{equation*}%
so as soon as $q<q_{4}$, $\overline{u_{1}}$ is bounded. And more precidely $%
\lim_{r\longrightarrow \infty }\overline{u_{1}}=l_{1}$ (and $%
\lim_{r\longrightarrow \infty }\overline{u_{2}}=l_{2}$). Here we suppose
that $l_{1}=0.$ From the superharmonicity, $r^{N-1}\overline{u_{1}}^{\prime
} $ is decreasing so either $\lim_{r\longrightarrow \infty }r^{N-1}\overline{%
u_{1}}^{\prime }=l\in \left( 0,\infty \right) $ or $\lim_{r\longrightarrow
\infty }r^{N-1}\overline{u_{1}}^{\prime }=-\infty ;$ and $l\in \left(
0,\infty \right) $ is impossible because then $\overline{u_{1}}^{\prime }>0$
for large $r$ so $\overline{u_{1}}$ cannot tend to $0.$ Thus $\overline{u_{1}%
}^{\prime }<0$. Then $r^{N-1}\overline{u_{1}}^{\prime }<-K$ for large $r,$
so $\overline{u_{1}}-\frac{K}{N-2}r^{2-N}$ is decreasing to $0$, hence $%
\overline{u_{1}}\geq \frac{K}{N-2}r^{2-N}.$ But for large $r,$ there holds $%
\overline{u_{1}}^{\prime }(r)\geq -Cr^{-\frac{p+1}{pq-1}}=(\frac{C(pq-1)}{%
p(q_{4}-q)}r^{\frac{p(q-q_{4})}{pq-1}})^{\prime },$ then $\overline{u_{1}}-%
\frac{C(pq-1)}{p(q_{4}-q)}r^{\frac{p(q-q_{4})}{pq-1}},$ is increasing to $0,$
thus $\overline{u_{1}}\leq cr^{\frac{p(q-q_{4})}{pq-1}}=cr^{\frac{p(q-1)-2}{%
pq-1}}.$ Therefore $r^{2-N}\leq r^{\frac{p(q-1)-2}{pq-1}},$ which implies $%
q\geq q_{2},$ leading to a contradiction.
\end{proof}

\begin{remark}
In the case $q<q_{2},$ from Theorem (\ref{exterior}) there exist solutions
such that $\lim_{r\longrightarrow \infty }u_{1}=c_{1}>0.$ In regions $%
\mathcal{C}$ and $\mathcal{D}$, where $q>q_{2}$, we have proved the
existence of solutions in $\mathbb{R}^{N}\backslash B_{r_{0}}$ at the same
theorem. For $q>q_{4}$ from Theorem \ref{thall} there exist solutions on $%
(0,\infty )$. So our result is optimal.
\end{remark}

Finally we consider the local behaviour of the \textit{solutions} of system (%
\ref{sou}) near a singularity, and moreover the existence of a system with
measure data. We give a nonradial existence result of solutions of a
Dirichlet problem when $q<q_{2},$ extending the radial ones of Theorem \ref%
{Dirac} $:$

\begin{theorem}
\label{orig}Let $pq>1,$ $p\geq q\geq 1.\ $Let $\Omega $ be a $C^{2}$ bounded
domain containing $0.\medskip $

(i) Let $({u}_{1},{u}_{2})\in W_{0}^{1,q}(\Omega \backslash \left\{
0\right\} )\times W_{0}^{1,p}(\Omega \backslash \left\{ 0\right\} )$, such
that ${\Delta u}_{1},{\Delta u}_{2}\in L_{loc}^{1}(\Omega \backslash \left\{
0\right\} )$ be a solution of system (\ref{sou}) in $\Omega \backslash
\left\{ 0\right\} ,$ with ${u}_{1}={u}_{2}=0$ on $\partial \Omega $. Then $({%
u}_{1},{u}_{2})\in W_{0}^{1,q}(\Omega )\times W_{0}^{1,p}(\Omega )$ and
there exist $C_{1}\geq 0,C_{2}\geq 0,$ such that 
\begin{equation}
\left\{ 
\begin{array}{c}
{-\Delta u}_{1}=\left\vert \nabla u_{2}\right\vert ^{p}+C_{1}\delta _{0}, \\ 
{-\Delta u}_{2}=\left\vert \nabla u_{1}\right\vert ^{q}+C_{2}\delta _{0},%
\end{array}%
\right. \qquad \text{in }\mathcal{D}^{\prime }(\Omega ).  \label{dpri}
\end{equation}%
Moreover 
\begin{equation}
\text{if }C_{1}>0\text{ then }q<q_{2},\text{ and if }C_{2}>0,\text{ then }p<%
\frac{N}{N-1}.  \label{cond}
\end{equation}%
(ii) Reciprocally, let $C_{1},C_{2}$ satisfying (\ref{cond}) with $C_{1}>0$
or $C_{2}>0,$ and $C_{1},C_{2}$ small enough.Then there exists a solution $({%
u}_{1},{u}_{2})$ of system (\ref{dpri}), such that $({u}_{1},{u}_{2})\in
W_{0}^{1,q}(\Omega )\times W_{0}^{1,p}(\Omega ).$ More generally, for any
bounded Radon measures $\mu ,\nu $ in $\Omega ,$ under the same conditions
on $C_{1},C_{2},$ there exists a solution of the system 
\begin{equation*}
\left\{ 
\begin{array}{c}
{-\Delta u}_{1}=\left\vert \nabla u_{2}\right\vert ^{p}+C_{1}\mu , \\ 
{-\Delta u}_{2}=\left\vert \nabla u_{1}\right\vert ^{q}+C_{2}\nu ,%
\end{array}%
\right. \qquad \text{in }\mathcal{D}^{\prime }(\Omega ).
\end{equation*}
Moreover ${u}_{1},{u}_{2}\in W_{0}^{1,s}(\Omega )$ for any $s\in \left[ 1,%
\frac{N}{N-1}\right) ,$ and if $C_{2}=0,$ then ${u}_{2}\in
W_{0}^{1,s_{2}}(\Omega )$ for $s_{2}<\frac{N}{(N-1)q-1}.$
\end{theorem}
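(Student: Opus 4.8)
The plan for (i) is to exploit that both right-hand sides are nonnegative, so $u_{1},u_{2}$ are superharmonic in $\Omega \setminus \{0\}$, and that the membership hypotheses already force $\left\vert \nabla u_{2}\right\vert ^{p},\left\vert \nabla u_{1}\right\vert ^{q}\in L^{1}(\Omega )$ (since $\nabla u_{2}\in L^{p}$ and $\nabla u_{1}\in L^{q}$ on all of $\Omega $, the puncture being a null set). Because $u_{i}\in W_{0}^{1,\cdot }(\Omega \setminus \{0\})\hookrightarrow L^{1}(\Omega )$, I would apply the Brezis--Lions lemma (exactly as invoked in the proof of Theorem \ref{Dirac}) separately to each equation: a nonnegative distribution with an isolated singularity whose Laplacian is $L^{1}$ away from that point can carry only a nonnegative Dirac mass as singular part. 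This gives $-\Delta u_{i}=(\text{right-hand side})+C_{i}\delta _{0}$ in $\mathcal{D}^{\prime }(\Omega )$ with $C_{i}\geq 0$. The upgrade $u_{i}\in W_{0}^{1,\cdot }(\Omega )$ (no puncture) follows because a single point has zero $W^{1,s}$-capacity, so the distributional gradient on $\Omega $ coincides with the $L^{\cdot }$ function and supports no mass at the origin.

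The delicate point of (i) is deriving the constraints (\ref{cond}). I would pass to the spherical means via Lemma \ref{mean} and the Jensen inequality (\ref{jen}), so that $(\overline{u_{1}},\overline{u_{2}})$ is a radial supersolution and the bootstrap of the radial Theorem \ref{Dirac} applies verbatim. If $C_{1}>0$, then $-\Delta u_{1}\geq C_{1}\delta _{0}$, comparison with the fundamental solution gives $\overline{u_{1}}(r)\geq cr^{2-N}$, and monotonicity of $r^{N-1}\overline{u_{1}}^{\prime }$ yields $\left\vert \overline{u_{1}}^{\prime }\right\vert \gtrsim r^{1-N}$; then $-\Delta \overline{u_{2}}\geq \left\vert \overline{u_{1}}^{\prime }\right\vert ^{q}\gtrsim r^{(1-N)q}$ forces first $q<\frac{N}{N-1}$ for local integrability of $\left\vert \nabla u_{1}\right\vert ^{q}$, and by integration $\left\vert \overline{u_{2}}^{\prime }\right\vert \gtrsim r^{1-(N-1)q}$; feeding this back, $-\Delta \overline{u_{1}}\geq \left\vert \overline{u_{2}}^{\prime }\right\vert ^{p}\gtrsim r^{(1-(N-1)q)p}$, so integrability of $\left\vert \nabla u_{2}\right\vert ^{p}$ forces $N+p-(N-1)pq>0$, i.e. $q<q_{2}$. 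The symmetric but shorter chain for $C_{2}>0$ gives $\left\vert \overline{u_{2}}^{\prime }\right\vert \gtrsim r^{1-N}$, whence $-\Delta \overline{u_{1}}\gtrsim r^{(1-N)p}$ and integrability of $\left\vert \nabla u_{2}\right\vert ^{p}$ directly yields $N-(N-1)p>0$, that is $p<\frac{N}{N-1}$.

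For the existence part (ii) I would set up a Schauder fixed-point argument. Let $\mathcal{G}$ denote the solution operator of $-\Delta u=\lambda $, $u=0$ on $\partial \Omega $, for a datum $\lambda $ that is an $L^{1}$ function plus a bounded Radon measure; by the Stampacchia and Boccardo--Gallou\"{e}t theory $\mathcal{G}$ is well defined with values in $W_{0}^{1,s}(\Omega )$ for every $s<\frac{N}{N-1}$, with gradient estimates in the Marcinkiewicz space $M^{N/(N-1)}(\Omega )$. I would then define $T(v_{1},v_{2})=(u_{1},u_{2})$ by $u_{1}=\mathcal{G}(\left\vert \nabla v_{2}\right\vert ^{p}+C_{1}\mu )$ and $u_{2}=\mathcal{G}(\left\vert \nabla v_{1}\right\vert ^{q}+C_{2}\nu )$, and show that on a suitable small ball of the product space $T$ is a continuous, compact self-map: smallness of $C_{1},C_{2}$ provides invariance of the ball, and the strict inequality $s<\frac{N}{N-1}$ combined with the compact Sobolev embedding provides compactness. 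Schauder's theorem then produces a fixed point, which is the desired solution, and the extension to arbitrary bounded measures $\mu ,\nu $ uses the same scheme.

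The main obstacle is the asymmetry of the available regularity: a measure datum confines its component to $W^{1,s}$ with $s<\frac{N}{N-1}$, while the target spaces $W^{1,q}$ and $W^{1,p}$ (with possibly $p\geq \frac{N}{N-1}$) demand more. The constraints (\ref{cond}) are exactly what reconcile this, since they guarantee that each gradient nonlinearity lands in $L^{1}$; and when a component carries no measure (the case $C_{2}=0$), a Calder\'{o}n--Zygmund bootstrap raises its integrability, so that $\left\vert \nabla u_{1}\right\vert ^{q}\in L^{m}$ with $m$ just below $\frac{N}{(N-1)q}$ gives $\nabla u_{2}\in L^{s_{2}}$ for every $s_{2}<\frac{N}{(N-1)q-1}$, matching the stated refined regularity. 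Choosing the functional framework so that $T$ is simultaneously a self-map and compact, balancing these competing integrability demands against the constraints, is the technical heart of the argument.
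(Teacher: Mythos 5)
Your part (i) is essentially the paper's own proof: the paper also obtains (\ref{dpri}) with $C_{1},C_{2}\geq 0$ from the Brezis--Lions lemma (the right-hand sides being in $L^{1}$ by the membership hypotheses), and it derives (\ref{cond}) exactly as you do, by passing to spherical means via Lemma \ref{mean} and running the radial integrability bootstrap written out in the proof of Theorem \ref{Dirac} (the chain $\overline{u_{1}}\geq cr^{2-N}\Rightarrow \left\vert \overline{u_{2}}^{\prime }\right\vert \geq cr^{1-(N-1)q}\Rightarrow r^{(1-(N-1)q)p}\in L_{loc}^{1}\Rightarrow q<q_{2}$, and its shorter analogue for $C_{2}>0$). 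For part (ii), however, you take a genuinely different route. The paper does not build its own fixed point: it invokes the existence theorem of Attar--Bentifour--Laamri for the system with nonnegative data $f\in L^{m}$, $g\in L^{k}$ under a smallness condition, applies it to smooth approximations $\mu _{n},\nu _{n}$ of the measures (taking $m=k=1$ when $C_{2}>0$, and $m=1$ with $\frac{Np}{N+p}<k<\frac{N}{(N-1)q}$ when $C_{2}=0$ --- an interval which is nonempty precisely when $q<q_{2}$, the same arithmetic you isolate in the form $p<\frac{N}{(N-1)q-1}$), and then passes to the limit. Your plan runs Schauder directly on the measure-data problem, which would avoid the approximation and limit step entirely; your exponent bookkeeping, including the refined regularity $s_{2}<\frac{N}{(N-1)q-1}$ when $C_{2}=0$, matches the paper's.

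The one step of your scheme that would fail as justified is the compactness of $T$: it cannot come from ``the compact Sobolev embedding,'' because the embedding $W_{0}^{1,s^{\prime }}(\Omega )\subset W_{0}^{1,s}(\Omega )$, $s<s^{\prime }$, is never compact (take $v_{n}=n^{-1}\sin (nx_{1})\varphi $ with $\varphi \in C_{c}^{\infty }(\Omega )$: the $v_{n}$ are bounded in every $W^{1,s^{\prime }}$ and their gradients converge weakly but not strongly in any $L^{s}$). What your argument actually needs --- and what the paper's own limit passage also implicitly relies on when it asserts strong $L^{1}$ convergence of $\left\vert \nabla u_{2,n}\right\vert ^{p}$ and $\left\vert \nabla u_{1,n}\right\vert ^{q}$ --- is compactness of the Green operator $\mathcal{G}$ from bounded sets of measures into $W_{0}^{1,s}(\Omega )$ for $s<\frac{N}{N-1}$: when the data converge weak-* with bounded total variation, the gradients of the solutions converge almost everywhere (via the Green-potential representation, or the Boccardo--Gallou\"{e}t stability theory), and the uniform Marcinkiewicz bound in $M^{N/(N-1)}(\Omega )$ makes $\left\vert \nabla u_{n}\right\vert ^{s}$ equi-integrable, so Vitali's theorem gives strong $L^{s}$ convergence. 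With that lemma substituted for the embedding claim, your self-map and smallness bookkeeping (superlinearity $p,q>1$ giving invariance of a small ball, and the Calder\'{o}n--Zygmund gain closing the loop when $C_{2}=0$) does go through, and your proof becomes a legitimate, self-contained alternative to the paper's citation-plus-approximation argument.
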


\begin{proof}
(i) The functions ${u}_{1},{u}_{2}$ are superharmonic and nonnegative. Then
there exist $C_{1}\geq 0,C_{2}\geq 0,$ such that (\ref{dpri}) holds, and $%
\left\vert \nabla u_{2}\right\vert ^{p}+\left\vert \nabla u_{1}\right\vert
^{q}\in L_{loc}^{1}(\Omega ),$ and moreover ${u}_{1},{u}_{2}\in
W_{0}^{1,s}(\Omega )$ for any $s\in \left[ 1,\frac{N}{N-1}\right) ,$ see 
\cite{BrLi}. $\ $From Lemma \ref{mean} $(\overline{{u}_{1}},$ $\overline{{u}%
_{2}})$ is a supersolution, then the conditions on $C_{1},C_{2}$ follow from
Theorem \ref{Dirac}.\medskip

(ii) We recall a result of \cite[Theorem 3.1]{AtBeLa}: for any nonnegative $%
f\in L^{m}(\Omega )$ and $g\in L^{k}(\Omega ),$ $m,k\in \left( 1,N\right) $
such that $qk<\frac{mN}{N-m}$ and $pm<\frac{kN}{N-k},$ there exists $\Lambda
=\Lambda (p,q,m,k)>0$ such that the problem 
\begin{equation*}
\left\{ 
\begin{array}{c}
{-\Delta u}_{1}=\left\vert \nabla u_{2}\right\vert ^{p}+C_{1}f, \\ 
{-\Delta u}_{2}=\left\vert \nabla u_{1}\right\vert ^{q}+C_{2}g,%
\end{array}%
\right. \qquad \text{in }\Omega ,
\end{equation*}%
admits a solution such that $({u}_{1},{u}_{2})\in W_{0}^{1,q}(\Omega )\times
W_{0}^{1,p}(\Omega )$ under the condition 
\begin{equation*}
C_{1}^{q}\left\Vert f\right\Vert _{L^{m}(\Omega )}^{q}+C_{2}\left\Vert
g\right\Vert _{L^{k}(\Omega )}\leq \Lambda .
\end{equation*}%
And $({u}_{1},{u}_{2})\in W_{0}^{1,s_{1}}(\Omega )\times
W_{0}^{1,s_{2}}(\Omega )$ for any $s_{1}\in \left[ 1,\frac{mN}{N-m}\right) $
and $s_{2}\in \left[ 1,\frac{kN}{N-k}\right) ,$ with the estimate 
\begin{equation*}
\left\Vert {u}_{1}\right\Vert _{W_{0}^{1,s_{1}}(\Omega )}+\left\Vert {u}%
_{2}\right\Vert _{W_{0}^{1,s_{2}}(\Omega )}\leq C(\Lambda ,s_{1},s_{2},m,k).
\end{equation*}%
Let $\mu ,\nu $ be two positive Radon measures in $\Omega .$ Let $(\mu
_{n}), $ $(\nu _{n})$ be sequences in $C^{\infty ,+}\left( \overline{\Omega }%
\right) $ converging respectively to $\mu ,\nu $ in the sense of measures,
and $\mu _{n}(\Omega )\leq 2\mu (\Omega )$, $\nu _{n}(\Omega )\leq 2\nu
(\Omega ).$ Then there exists a solution of the approximate problem 
\begin{equation*}
\left\{ 
\begin{array}{c}
{-\Delta u}_{1,n}=\left\vert \nabla u_{2,n}\right\vert ^{p}+C_{1}\mu _{n},
\\ 
{-\Delta u}_{2,n}=\left\vert \nabla u_{1,n}\right\vert ^{q}+C_{2}\nu _{n},%
\end{array}%
\right. \qquad \text{in }\Omega ,
\end{equation*}%
First suppose $C_{2}>0,$ then $1\leq q\leq p<\frac{N}{N-1}.$ We take $m=k=1,$
and $f=$ $\mu _{n},$ $g=\nu _{n}$. For $2^{q}C_{1}^{q}\mu ^{q}(\Omega
)+2C_{2}\nu (\Omega )<\Lambda $ we get the existence of sequences $({u}%
_{1,n}),({u}_{2,n})$ bounded in $W_{0}^{1,s}(\Omega )$ for any $s\in \left[
1,\frac{N}{N-1}\right) .$ Next suppose $C_{2}=0<C_{1}.$ We take $f=\mu _{n}$
and $g=0$. We need to find some $m$ and $k$ such that $qk<\frac{mN}{N-m}$
and $pm<\frac{kN}{N-k}.$ We fix $m=1$, so we require that that $\frac{Np}{N+p%
}<k<\frac{N}{(N-1)q}$ (which implies $q<\frac{N}{N-1}).$ This is possible
because $q<q_{2}=\frac{N+p}{(N-1)p}\leq \frac{N}{N-1}$. Then $({u}_{1,n})$
is bounded in $W_{0}^{1,s}(\Omega )$ for any $s\in \left[ 1,\frac{N}{N-1}%
\right) $ ${u}_{2,n}$ bounded in $W_{0}^{1,s_{2}}(\Omega )$ for any $%
s_{2}\in \left[ 1,\frac{N}{(N-1)q-1}\right) ,$ hence in particular for $%
s_{2}\in \left[ 1,\frac{N}{N-1}\right) .$ Therefore we get the existence for 
$2^{q}C_{1}^{q}\mu ^{q}(\Omega )\leq \Lambda .$ Next we can pass to the
limit in $\mathcal{D}^{\prime }(\Omega )$. We fix $s_{1}$ with $q<s_{1}<%
\frac{N}{N-1}$ and $s_{2}$ with $p<s_{2}<\frac{N}{N-1}$ if $C_{2}>0$ and $%
p<s_{2}<\frac{N}{(N-1)q-1}$ if $C_{2}=0;$ up to subsequences, $({u}_{1,n})$
converges weakly and $a.e.$ in $W_{0}^{1,s_{1}}(\Omega )$ and $({u}_{2,n})$
converges weakly and $a.e.$ in $W_{0}^{1,s_{2}}(\Omega )$ to some $%
u_{1},u_{2}$; in both cases, $(\left\vert \nabla u_{2,n}\right\vert ^{p})$
converges strongly in $L^{1}(\Omega ),$ then ${u}$ satisfies the equation in 
$\mathcal{D}^{\prime }(\Omega )$ 
\begin{equation*}
{-\Delta u}_{1}=\left\vert \nabla u_{2}\right\vert ^{p}+C_{1}\mu ,
\end{equation*}%
and $(\left\vert \nabla u_{1,n}\right\vert ^{q})$ converges strongly in $%
L^{1}(\Omega ),$ so we also pass to the limit and get in any case%
\begin{equation*}
{-\Delta u}_{2}=\left\vert \nabla u_{1}\right\vert ^{q}+C_{2}\nu .
\end{equation*}
\end{proof}

\section{Extensions\label{exten}}

(1) First note also that the study of Hardy-H\'{e}non type equations by
using of system (\ref{pspz}) can be adapted to other ranges of the
parameters, for example to a "sublinear" case, that is $\mathbf{q<p-1},$
corresponding to the case where $pq<1$ for the system, or also to case $%
\mathbf{q<0,}$ corresponding to $q<0.$ This allows to extend some results of 
\cite{GiQu} given for the Hardy-H\'{e}non equation $-\Delta w=\left\vert
x\right\vert ^{\sigma }u^{q}$ with the Laplacian to the $m$-Laplacian, in
the radial case, in particular their study in dimension 1.\medskip

(2) Moreover consider an equation of Hardy-H\'{e}non-type in dimension $\nu
\in \mathbb{N},\nu \geq 1,$ with a weight $\left\vert x\right\vert ^{a}$ ($%
a\in \mathbb{R}$) inside the operator: 
\begin{equation*}
-\func{div}(\left\vert x\right\vert ^{a}\left\vert \nabla w\right\vert ^{%
\mathbf{p}-2}\nabla w)=\varepsilon \left\vert x\right\vert ^{b}w^{\mathbf{q}%
},
\end{equation*}%
In the radial case, it reduces to 
\begin{equation*}
-\Delta _{\mathbf{p}}^{\mathbf{N}}w=-\frac{d}{dr}(\left\vert \frac{dw}{dr}%
\right\vert ^{\mathbf{p}-2}\frac{dw}{dr})-\frac{\mathbf{\nu +}a-1}{r}%
\left\vert \frac{dw}{dr}\right\vert ^{\mathbf{p}-2}\frac{dw}{dr}=\varepsilon
r^{b-a}w^{\mathbf{q}},
\end{equation*}%
so it directly falls in the scope of our study, with $N=\nu +a$ and $\sigma
=b-a.\ $It allows for example to find again rapidly some recents results of 
\cite{Vi}, given for the Laplacian with some conditions on $a,b,$ and extend
them to the $p$-Laplacian, in all the ranges of the parameters.\medskip

(3) All our radial study of system (\ref{one}) can be easily directly
extended to a system of $k$-Laplacians, 
\begin{equation}
\left\{ 
\begin{array}{c}
{-\Delta }_{k}{u}_{1}=\left\vert \nabla u_{2}\right\vert ^{p}, \\ 
{-\Delta }_{k}{u}_{2}=\left\vert \nabla u_{1}\right\vert ^{q},%
\end{array}%
\right.  \label{sysk}
\end{equation}%
where $k>1.$ Indeed the radial system reduces to%
\begin{equation*}
\left\{ 
\begin{array}{c}
-(\left\vert {u}_{1}^{\prime }\right\vert ^{k_{1}-2}u_{1}^{\prime })^{\prime
}-\frac{N-1}{r}\left\vert {u}_{1}^{\prime }\right\vert
^{k_{1}-2}u_{1}^{\prime }=\left\vert u_{2}^{\prime }\right\vert ^{p}, \\ 
-(\left\vert {u}_{2}^{\prime }\right\vert ^{k_{2}-2}u_{2}^{\prime })^{\prime
}-\frac{N-1}{r}\left\vert {u}_{2}^{\prime }\right\vert
^{k_{1}-2}u_{2}^{\prime }=\left\vert u_{1}^{\prime }\right\vert ^{q},%
\end{array}%
\right.
\end{equation*}%
\begin{equation*}
\left\{ 
\begin{array}{c}
w_{1}^{\prime }=r^{(N-1)(1-\frac{p}{k-1}})\left\vert w_{2}\right\vert ^{%
\frac{p}{k-1}}, \\ 
w_{2}^{\prime }=r^{(N-1)(1-\frac{q}{k-1}})\left\vert w_{1}\right\vert ^{%
\frac{q}{k-1}},%
\end{array}%
\right.
\end{equation*}%
where 
\begin{equation*}
-r^{N-1}\left\vert {u}_{1}^{\prime }\right\vert ^{k-2}u_{1}^{\prime
}=w_{1},\qquad -r^{N-1}\left\vert {u}_{2}^{\prime }\right\vert
^{k-2}u_{2}^{\prime }=w_{2}.
\end{equation*}%
So we are reduced to study functions $w_{1},w_{2},$ \textbf{with }$p,q$%
\textbf{\ replaced by }$\frac{p}{k-1}$\textbf{\ and }$\frac{q}{k-1};$ and
the results apply for $pq>(k-1)^{2}.$ We obtain analogous results as in the
case $k=2,$ with new parameters $q_{i}$ defined by%
\begin{equation*}
q_{1}=\frac{N}{(N-1)p-k+1},\quad q_{2}=\frac{N+(k-1)p}{(N-1)p},\quad q_{3}=%
\frac{2(k-1)^{2}}{p-k+1},\quad q_{4,}=\frac{(k-1)p+2(k-1)^{2}}{p}.
\end{equation*}%
Moreover one can make an easy adaptation of the results of the nonradial
section, relative to the supersolutions of system (\ref{one}),where the
estimates of the mean values are replaced by integral estimates, as in \cite%
{BiPo} in case of source, aborption or mixed terms. The computations are let
it to the reader. \bigskip

(4) More generally our study allows to treat systems of the type 
\begin{equation*}
\left\{ 
\begin{array}{c}
{-\func{div}(}\left\vert x\right\vert ^{a_{1}}\left\vert \nabla {u}%
_{1}\right\vert ^{k_{1}-2}\nabla {u}_{1})=\left\vert x\right\vert
^{b_{1}}\left\vert \nabla u_{2}\right\vert ^{p}, \\ 
{-\func{div}(}\left\vert x\right\vert ^{a_{2}}\left\vert \nabla {u}%
_{1}\right\vert ^{k_{2}-2}\nabla {u}_{2})=\left\vert x\right\vert
^{b_{2}}\left\vert \nabla u_{1}\right\vert ^{q},%
\end{array}%
\right.
\end{equation*}%
which in the radial case reduce to 
\begin{equation*}
\left\{ 
\begin{array}{c}
-(\left\vert {u}_{1}^{\prime }\right\vert ^{k_{1}-2}u_{1}^{\prime })^{\prime
}-\frac{N+a_{1}-1}{r}\left\vert {u}_{1}^{\prime }\right\vert
^{k_{1}-2}u_{1}^{\prime }=r^{b_{1}-a_{1}}\left\vert u_{2}^{\prime
}\right\vert ^{p}, \\ 
-(\left\vert {u}_{2}^{\prime }\right\vert ^{k_{2}-2}u_{2}^{\prime })^{\prime
}-\frac{N+a_{2}-1}{r}\left\vert {u}_{2}^{\prime }\right\vert
^{k_{1}-2}u_{2}^{\prime }=r^{b_{2}-a_{2}}\left\vert u_{1}^{\prime
}\right\vert ^{q},%
\end{array}%
\right.
\end{equation*}
and then to a Hardy-H\'{e}non equation (\ref{hql}) with new parameters $%
\mathbf{p,q,N}$ defined by 
\begin{equation*}
\frac{p}{k_{2}-1}=\frac{1}{\mathbf{p}-1},\;\;\frac{q}{k_{1}-1}=\mathbf{q}%
,\;\;\frac{\mathbf{p}-\mathbf{N}}{\mathbf{p}-1}=N+b_{1}-(N+a_{2}-1)\frac{p}{%
k_{2}-1},\;\;\mathbf{N}+\sigma =N+b_{2}-(N+a_{1}-1)\mathbf{q};
\end{equation*}
and $\mathbf{q>p}-1$ is equivalent to $pq>(k_{1}-1)(k_{2}-1).\medskip $

(3) Some of our nonradial results can be extended to more general systems,
like (\ref{sysk}), following the methods of \cite{BiPo}, for example Theorem %
\ref{esti}. It would be interesting to extend Theorem \ref{orig} to such
type systems.

\section{Appendix\label{append}}

In this section we give the proofs of the main results of Section \ref{app}

\begin{proof}[Proof of Lemma \protect\ref{nat}]
(i) Setting $s=s_{0}+\overline{s},$ $z=z_{0}+\overline{z},$ the linearized
problem at $M_{0}$ is 
\begin{equation*}
\left\{ 
\begin{array}{c}
\overline{s}_{t}=s_{0}(\overline{s}+\frac{\overline{z}}{\mathbf{p}-1)}), \\ 
\overline{z}_{t}=z_{0}(-\mathbf{q}\overline{s}-\overline{z}).%
\end{array}%
\right.
\end{equation*}%
It admits the eigenvalues $\lambda _{1},\lambda _{2},$ roots of the equation
(\ref{valp}). When the roots are real, that means $s_{0}z_{0}<0$ then $%
\lambda _{1}<0<\lambda _{2},$ and $\lambda _{1}<s_{0}<\lambda _{2}$ since $%
T(s_{0})<0,$ and the corresponding eigenvectors $\overrightarrow{v_{1}}=(%
\frac{s_{0}}{\mathbf{p}-1},\lambda _{1}-s_{0})$ and $\overrightarrow{v_{2}}=(%
\frac{s_{0}}{\mathbf{p}-1},\lambda _{2}-s_{0})$ have the slopes $\rho _{i}=(%
\mathbf{p}-1)(\frac{\lambda _{i}}{s_{0}}-1)=\frac{\mathbf{q}z_{0}}{%
z_{0}+\lambda _{i}},$ $i=1,2.$

(ii) Setting $z=\mathbf{N}+\sigma +\overline{z},$ the linearized problem at $%
N_{0}$ is 
\begin{equation*}
\left\{ 
\begin{array}{c}
s_{t}=\frac{\mathbf{p}+\sigma }{\mathbf{p}-1}s, \\ 
\overline{z}_{t}=(\mathbf{N}+\sigma )(-\mathbf{q}s-\overline{z}).%
\end{array}%
\right.
\end{equation*}%
It admits the eigenvalues $l_{1}=\frac{\mathbf{p}+\sigma }{\mathbf{p}-1}$
and $l_{2}=-(\mathbf{N}+\sigma ).$ When $l_{1}\neq l_{2},$ the corresponding
eigenvectors are $\overrightarrow{v_{1}}=(l_{1}-l_{2},\mathbf{q}l_{2})$ and $%
\overrightarrow{v_{2}}=(0,1).$

(iii) Setting $s=\frac{\mathbf{N}-\mathbf{p}}{\mathbf{p}-1}+\overline{s},$
the linearized system at $A_{0}$ is 
\begin{equation*}
\left\{ 
\begin{array}{c}
\overline{s}_{t}=\frac{\mathbf{N}-\mathbf{p}}{\mathbf{p}-1}(\overline{s}+%
\frac{z}{\mathbf{p}-1}), \\ 
z_{t}=z(\mathbf{N}+\sigma -\mathbf{q}\frac{\mathbf{N}-\mathbf{p}}{\mathbf{p}%
-1}).%
\end{array}%
\right.
\end{equation*}%
It admits the eigenvalues $\mu _{1}=\frac{\mathbf{N}-\mathbf{p}}{\mathbf{p}-1%
}$ and $\mu _{2}=\mathbf{N}+\sigma -\mathbf{q}\frac{\mathbf{N}-\mathbf{p}}{%
\mathbf{p}-1},$ And $z=0$ contain particular trajectories linked to $\mu
_{1} $. When $\mu _{1}\neq \mu _{2}\neq 0,$ corresponding eigenvectors are $%
\overrightarrow{v_{1}}=(1,0)$ and $\overrightarrow{v_{2}}=(\frac{\mu _{1}}{%
\mathbf{p}-1},\mu _{2}-\mu _{1}).$

(iv) The linearized system at point $(0,0),$ 
\begin{equation*}
\left\{ 
\begin{array}{c}
s_{t}=\frac{\mathbf{p}-\mathbf{N}}{\mathbf{p}-1}z, \\ 
z_{t}=(\mathbf{N}+\sigma )z.%
\end{array}%
\right.
\end{equation*}%
gives the eigenvalues $\rho _{1}=\frac{\mathbf{p}-\mathbf{N}}{\mathbf{p}-1}%
,\rho _{2}=\mathbf{N}+\sigma ,$ which are distinct for $\sigma \neq -\frac{%
(N-1)p}{p-1}$, and corresponding eigenvectors $\overrightarrow{v_{1}}=(1,0)$
and $\overrightarrow{v_{2}}=(0,1).$
\end{proof}

$\medskip $

\begin{proof}[Proof of Lemma \protect\ref{link}]
(i) Direct consequence of (\ref{wwp}).

(ii) Suppose $\lim (s,z)=N_{0}$. From (\ref{pspz}) we find $s_{t}=s(l_{1}+s+%
\overline{z})=s(l_{1}+o(1)),$ then $s=o(e^{-\frac{\left\vert
l_{1}t\right\vert }{2}}),$ and $\overline{z}_{t}=z-(\mathbf{N}+\sigma
)=(-l_{2}+\overline{z})(-\mathbf{q}s-\overline{z})=\overline{z}(l_{2}-%
\mathbf{q}s-\overline{z})+l_{2}\mathbf{q}s=(l_{2}+o(1))\overline{z}%
+o(e^{-2\left\vert l_{1}t\right\vert }).$ Then $z=o(e^{-k\left\vert
t\right\vert })$ for some $k>0.$ In turn $s_{t}=s(l_{1}+o(e^{-k\left\vert
t\right\vert })$ for some $k>0,$ thus $\lim e^{-l_{1}t}s=C\neq 0.$ The
conclusion follows from (\ref{wwp}) with $\lim e^{-l_{1}t}s=C\neq 0$ and $%
\lim z=N+\sigma .\medskip $

(iii) Suppose $\lim (s,z)=(0,0)$. Then $s_{t}=s(\rho _{1}+s+\overline{z}%
)=s(\rho _{1}+o(1)),$ and $z_{t}=z(\rho _{2}+o(1)).$ Then $%
s,z=o(e^{-k\left\vert t\right\vert })$ for some $k>0.$ Therefore we obtain $%
s_{t}=s(\rho _{1}+o(e^{kt})),z_{t}=z(\rho _{2}+o(e^{kt})).$ By integration
we deduce $\lim e^{-\rho _{1}t}s=C\neq 0$ and $\lim e^{-\rho _{2}t}z=D\neq 0$%
, and we still apply(\ref{wwp}).\medskip

(iv) Suppose $\lim (s,z)=A_{0}.$ Then $z_{t}=z(\mu _{2}-\mathbf{q}\overline{s%
}-z)$ with $\overline{s}=s-\frac{\mathbf{N}-\mathbf{p}}{\mathbf{p}-1}=o(1),$
so that $z_{t}=z(\mu _{2}+o(1)),$ then $z=o(e^{-\frac{\left\vert \mathbf{\mu 
}_{2}t\right\vert }{2}}).$ It follows that $\overline{s}_{t}=(\mu _{1}+%
\overline{s})(\overline{s}+\frac{z}{p-1})=\overline{s}(\mu _{1}+\frac{z}{p-1}%
+\overline{s})+\frac{\mathbf{N}-\mathbf{p}}{(\mathbf{p}-1)}z=\mu _{1}%
\overline{s}+o((e^{-\frac{\left\vert \mathbf{\mu }_{2}t\right\vert }{2}}).$
Then $s=o(e^{-k\left\vert t\right\vert })$ for some $k>0;$ in turn $%
z_{t}=z(\mu _{2}+o(e^{-k\left\vert t\right\vert });$ thus $\lim e^{-\mu
_{2}t}z=D\neq 0,$ and we apply (\ref{wwp}) where $\lim s=\mu _{1},\lim
e^{-\mu _{2}t}z=D.$
\end{proof}

$\medskip $

\begin{proof}[Proof of Lemma \protect\ref{glo}]
Let for example $w$ be any solution in $(0,r_{0}).$ From Lemma (\ref{osse}),
and (\ref{wwp}), $\Phi =\left\vert s\right\vert ^{\mathbf{p}-1}\left\vert
z\right\vert $ is bounded. And 
\begin{equation*}
\Phi _{t}=\Phi (\mathbf{p}+\sigma -(\mathbf{q}-\mathbf{p}+1)s).
\end{equation*}%
Suppose that $s$ is unbounded near $-\infty $. Either $s$ is monotone near $%
-\infty ,$ then $\lim_{t\longrightarrow -\infty }\left\vert s\right\vert
=\infty ,$ hence $\frac{\Phi _{t}}{\Phi }\geq -1$ near $-\infty ,$ $\ln
\left\vert \Phi \right\vert \geq \frac{\left\vert t\right\vert }{2},$ which
is impossible. Or there exists a sequence $t_{n}\longrightarrow -\infty $
such that $\left\vert s(t_{n})\right\vert \longrightarrow \infty $ of points
of maximum of $\left\vert s\right\vert .$ At these points, and $\frac{%
\mathbf{p}-\mathbf{N}}{\mathbf{p}-1}+s(t_{n})+\frac{z(t_{n})}{\mathbf{p}-1}%
=0 $, hence $s(t_{n})z(t_{n})<0,$ and 
\begin{equation*}
s_{tt}(t_{n})=\frac{s(t_{n})z_{t}(t_{n})}{\mathbf{p}-1}=\frac{%
s(t_{n})z(t_{n})}{\mathbf{p}-1}(\mathbf{p}+\sigma -(\mathbf{q}-\mathbf{p}%
+1)s(t_{n})),
\end{equation*}%
thus $s_{tt}(t_{n})s(t_{n})>0,$ which is contradictory. Then $s$ is bounded.
Suppose that $z$ is unbounded. Either $z$ is monotone near $-\infty ,$ then $%
\lim_{t\longrightarrow -\infty }\left\vert z\right\vert =\infty ,$ thus $%
\lim_{t\longrightarrow -\infty }\left\vert s_{t}\right\vert =\infty ,$ which
is impossible. Or there exists a sequence $t_{n}\longrightarrow -\infty $
such that $\left\vert z(t_{n})\right\vert \longrightarrow \infty $ of points
of maximum of $\left\vert z\right\vert .$ At these points there holds $%
\mathbf{N}+\sigma -\mathbf{q}s(t_{n})-z(t_{n})=0,$ which is impossible since 
$s$ is bounded. Then $z$ is bounded. It converges to one of the fixed points
of the system, or has a limit cycle around them. Since $N_{0},(0,0)$ or $%
A_{0}$ have real eigenvalues, such cycle can happen only at $M_{0},$ when $%
\mathbf{q}=\mathbf{q}_{S}.$
\end{proof}

\medskip

\begin{proof}[Proof of Theorem \protect\ref{orange}]
Here we consider region $\mathbf{A,}$ where $\mathbf{N}>\mathbf{p}>-\sigma .$
In this case we refer to \cite{BiGi} for the description of the system. Here
the behaviour depend on position of $\mathbf{q}$ with respect to $\mathbf{q}%
_{c},$ and also of $\mathbf{q}_{S}$ when $\varepsilon =1,$ see
figures1,2,3,4. The point $M_{0}$ is in $Q_{4}$ for $\mathbf{q}<\mathbf{q}%
_{c},$ corresponding to solutions of the equation with absorption ($%
\varepsilon =-1$), and $M_{0}\in Q_{1}$ for $\mathbf{q}>\mathbf{q}_{c}.$ The
point $N_{0}$ is a saddle point, corresponding to $C^{0}$-regular solutions
near $0$. The point $A_{0}$ is a source for $\mathbf{q}<\mathbf{q}_{c}$ and
a saddle point for $\mathbf{q}>\mathbf{q}_{c}.$ The point $(0,0)$ is a
saddle point, and the associated trajectories are not admissible. The
existence of ground states of the equation with source ($\varepsilon =1$) if
and only if $q\geq \mathbf{q}_{S}$ is well known. The phase plane in the
critical Sobolev case $q=\mathbf{q}_{S}$ is remarkable: there exist
particular particular trajectories , located on a straight line 
\begin{equation*}
(\mathbf{p}-1)s+\frac{\mathbf{p}z}{\mathbf{q}+1}-\mathbf{N}+\mathbf{p}=0.
\end{equation*}%
These trajectories correspond to the ground states given at (\ref{ground})
when $\varepsilon =1,$ and to explicit local solutions near $0$ or $\infty $
when $\varepsilon =-1.$
\end{proof}

\begin{equation*}
\begin{array}{ccc}
\FRAME{itbpF}{2.7068in}{2.7068in}{0in}{}{}{regionaqlessq1.png}{\special%
{language "Scientific Word";type "GRAPHIC";maintain-aspect-ratio
TRUE;display "USEDEF";valid_file "F";width 2.7068in;height 2.7068in;depth
0in;original-width 2.5584in;original-height 2.5584in;cropleft "0";croptop
"1";cropright "1";cropbottom "0";filename
'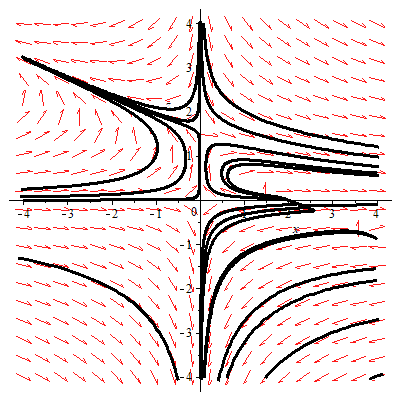';file-properties "XNPEU";}} &  & \FRAME{itbpF}{2.7068in}{%
2.7068in}{0in}{}{}{regionaqbetq1andqstar.png}{\special{language "Scientific
Word";type "GRAPHIC";maintain-aspect-ratio TRUE;display "USEDEF";valid_file
"F";width 2.7068in;height 2.7068in;depth 0in;original-width
2.5584in;original-height 2.5584in;cropleft "0";croptop "1";cropright
"1";cropbottom "0";filename '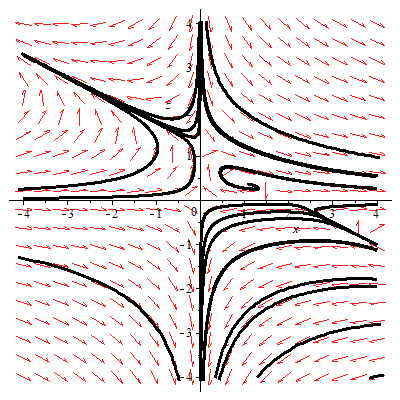';file-properties
"XNPEU";}} \\ 
\text{Figure 1,Theorem \ref{orange}: } &  & \text{Figure 2,Theorem \ref%
{orange}: } \\ 
\mathbf{N}=2.2>\mathbf{p}=1.4>-\sigma =0.6,\text{ }\mathbf{q}=0.7\mathbf{<q}%
_{c}=0.75 &  & \mathbf{q}_{c}<\mathbf{q}=0.85\mathbf{<q}_{S}=85/90%
\end{array}%
\end{equation*}

\begin{equation*}
\begin{array}{ccc}
\FRAME{itbpF}{2.7068in}{2.7068in}{0in}{}{}{regionaqgreaterqstar.png}{\special%
{language "Scientific Word";type "GRAPHIC";maintain-aspect-ratio
TRUE;display "USEDEF";valid_file "F";width 2.7068in;height 2.7068in;depth
0in;original-width 2.5584in;original-height 2.5584in;cropleft "0";croptop
"1";cropright "1";cropbottom "0";filename
'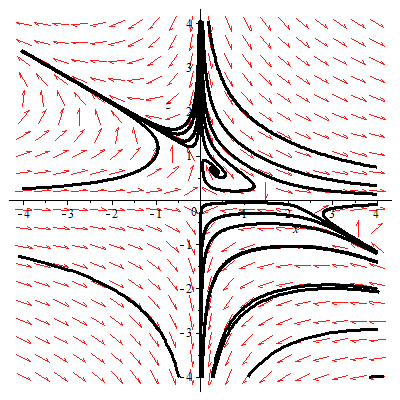';file-properties "XNPEU";}} &  & \FRAME{itbpF}{%
2.7068in}{2.7068in}{0in}{}{}{regionaqegalqstar.png}{\special{language
"Scientific Word";type "GRAPHIC";maintain-aspect-ratio TRUE;display
"USEDEF";valid_file "F";width 2.7068in;height 2.7068in;depth
0in;original-width 2.5584in;original-height 2.5584in;cropleft "0";croptop
"1";cropright "1";cropbottom "0";filename
'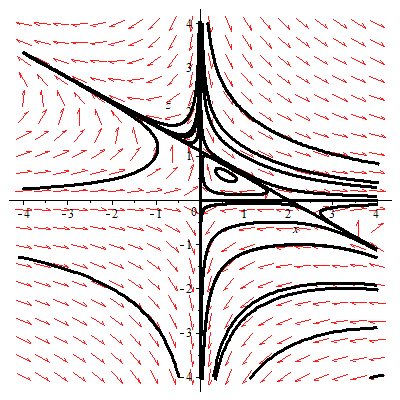';file-properties "XNPEU";}} \\ 
\text{Figure 3,Theorem \ref{orange}: } &  & \text{Figure 4,Theorem \ref%
{orange}: } \\ 
\mathbf{q}=1>\mathbf{q}_{S}=\frac{85}{90} &  & \mathbf{q=q}_{S}=\frac{85}{90}%
\end{array}%
\end{equation*}

\begin{proof}[Proof of Theorem \protect\ref{qccrit}]
$\bullet $ The point $N_{0}$ is still as saddle point as in Theorem \ref%
{orange}, corresponding to $C^{0}$-regular solutions near $0$. \medskip

$\bullet $ Here the point $M_{0}=(\frac{\mathbf{N}-\mathbf{p}}{\mathbf{p}-1}%
,0)\ $coincides with $A_{0},$ see figure 5. The eigenvalues are $\mu _{1}=%
\frac{\mathbf{N}-\mathbf{p}}{\mathbf{p}-1}$ and $\mu _{2}=0,$ with
eigenvectors are $\overrightarrow{v_{1}}=(0,1)$ and $\overrightarrow{v_{2}}%
=(1,-(\mathbf{p}-1))$. Corresponding to the eigenvalue $0,$ there exists a
central manifold of dimension 1, tangent to $\overrightarrow{v_{2}},$
invariant by the flow. It contains trajectories converging to $M_{0}$ as $%
t\longrightarrow \infty $ or $t\longrightarrow -\infty $. The eigenspace
associated to $\mu _{1}>0$ has dimension $1,$ so it is precisely the axis $%
z=0,$ and the trajectories are not admissible.\medskip

First assume $\varepsilon =1.$ In fact there exists an infinity of
trajectories tangent to $\overrightarrow{v_{2}},$ that means an infinity of
central manifolds. The idea is the following: we know that there exists a
trajectory $\mathcal{T}_{0}$ starting from $N_{0}$ and corresponding to $%
C^{0}$-regular solutions; they are not global, $w$ vanishes, which means
that $s\longrightarrow \infty $ and $z\longrightarrow 0$ in finite time. The
region $\mathcal{R}$ in $Q_{1}$ delimitated by $\mathcal{T}_{0}$ is
invariant. The region $\mathcal{R\cap }\left\{ s\leq \frac{\mathbf{N}-%
\mathbf{p}}{\mathbf{p}-1}\right\} $ is negatively invariant, since the field
at $(\frac{\mathbf{N}-\mathbf{p}}{\mathbf{p}-1},z)$ satisfies $s_{t}=\frac{sz%
}{p-1}>0.$ Then any trajectory passing by a point of this region stays in
it, and cannot converge to $(0,0),$ so it converges to $M_{0}$ as $%
t\longrightarrow -\infty $.

Setting $s=\frac{\mathbf{N}-\mathbf{p}}{\mathbf{p}-1}+\overline{s},$ we
obtain the system 
\begin{equation*}
\left\{ 
\begin{array}{c}
\overline{s}_{t}=(\overline{s}+\frac{\mathbf{N}-\mathbf{p}}{\mathbf{p}-1})(%
\overline{s}+\frac{z}{\mathbf{p}-1}), \\ 
z_{t}=z(-\mathbf{q}\overline{s}-z).%
\end{array}%
\right. 
\end{equation*}%
Since the trajectory is tangent to $\overrightarrow{v_{2}},$ there holds $%
z\sim -(\mathbf{p}-1)\overline{s},$ then $z_{t}\sim \frac{\mathbf{q}+1-%
\mathbf{p}}{\mathbf{p-1}}z^{2}$ and by integration $z\sim -\frac{\mathbf{p}-1%
}{(\mathbf{q}+1-\mathbf{p})t}=-\frac{\mathbf{N}-\mathbf{p}}{(\mathbf{p}%
+\sigma )t}.$ Since $s\sim \frac{\mathbf{N}-\mathbf{p}}{\mathbf{p}-1},$ we
deduce the exact behaviour of $w$ from (\ref{wwp}).\medskip 

Then assume $\varepsilon =-1.$ There exist trajectories converging to $M_{0}$
as $t\longrightarrow \infty .$ Indeed up to a scaling there exist solutions $%
w$ with a logarithmic behaviour as $r\longrightarrow \infty $, from \cite%
{Ve1} for $\sigma =0,$ obtained by minimisation. and the construction
extends to $\sigma \neq 0$. The exact behaviour follows as before.
\end{proof}

\begin{equation*}
\begin{array}{c}
\FRAME{itbpF}{2.7068in}{2.7068in}{0in}{}{}{regionaqegalq1.png}{\special%
{language "Scientific Word";type "GRAPHIC";maintain-aspect-ratio
TRUE;display "USEDEF";valid_file "F";width 2.7068in;height 2.7068in;depth
0in;original-width 2.5584in;original-height 2.5584in;cropleft "0";croptop
"1";cropright "1";cropbottom "0";filename
'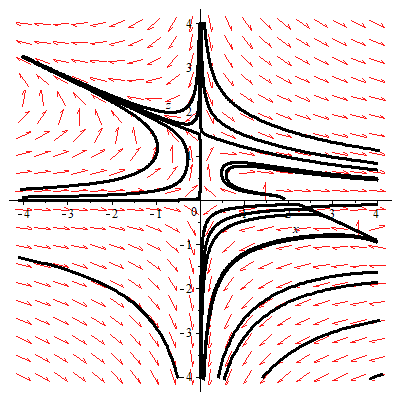';file-properties "XNPEU";}} \\ 
\text{Figure 5,Theorem \ref{qccrit}: } \\ 
\mathbf{N}=2.2>\mathbf{p}=1.4>\sigma =-0.6,\mathbf{q}=\mathbf{q}_{c}=0.75%
\end{array}%
\end{equation*}

\begin{proof}[Proof of Theorem \protect\ref{rose}]
Here we consider region $\mathbf{B,}$ where $\mathbf{p}>\mathbf{N}>\sigma ,$
see figure 6.\medskip

$\bullet $ The point $M_{0}$ is in $Q_{4},$ since $s_{0}=\gamma =\frac{%
\mathbf{p}+\sigma }{\mathbf{q}+1-\mathbf{p}}>0,$ and $z_{0}=\mathbf{N}-%
\mathbf{p}-(\mathbf{p}-1)\gamma <\mathbf{N}-\mathbf{p}<0$, and $M_{0}$ is a
saddle point from Lemma \ref{nat}. There exist four trajectories $\mathcal{T}%
_{1}^{M},\mathcal{T}_{2}^{M},\mathcal{T}_{3}^{M},\mathcal{T}_{4}^{M},$ such
that $\mathcal{T}_{1}^{M},\mathcal{T}_{2}^{M}$ (resp. $\mathcal{T}_{3}^{M},%
\mathcal{T}_{4}^{M})$ converge to $M_{0}$ as $t\rightarrow \infty $ (resp.$%
-\infty $). associated to $\lambda _{1}<0$ (resp. $\lambda _{2}>0$) with a
negative slope $\rho _{1}=-(\mathbf{p}-1)(1+\frac{\left\vert \lambda
_{1}\right\vert }{s_{0}})<-(\mathbf{p}-1)$ (resp. a positive slope). Then $%
\mathcal{T}_{1}^{M}$ lies in the bounded region $\mathcal{H}$ of $Q_{4}$
where $s_{t}<0,z_{t}<0$ for large $t.$ Since this region is negatively
invariant, the trajectory $\mathcal{T}_{1}^{M}$ stays in $\mathcal{H}$, so
it is bounded, hence defined on $(\infty ,\infty ),$ and converges to $(0,0)$
as $t\longrightarrow -\infty ,$ since it is the only possible fixed point in 
$\overline{\mathcal{H}}$.\medskip 

$\bullet $ The point $N_{0}$ is located at the boundary of $Q_{1}$ and $%
Q_{2}.$ The eigenvalues satisfy $l_{2}<0<l_{1}$, then $N_{0}$ is a saddle
point. There are two trajectories $\mathcal{T}_{1}^{\prime },\mathcal{T}%
_{2}^{\prime },$ starting from $N_{0},$ the first one in $Q_{1}$ and one in $%
Q_{2},$ and the corresponding solutions satisfy (\ref{no}) the solutions are
not global, from Lemma \ref{glo}, since in $\overline{Q_{1}}$ and $\overline{%
Q_{2}}$ there is not fixed point attracting at $\infty .$.\medskip 

$\bullet $ The point $A_{0}$ is located at the boundary of $Q_{2}$ and $%
Q_{3}.$ The eigenvalues satify $\mu _{1}<0<\mu _{2},$ then it is a saddle
point. Two trajectories corresponding to $\mu _{1}$ are ending at $A_{0}$ as 
$t\rightarrow \infty $ are located on the line $z=0$ not admissible. Two
trajectories correponding to $\mu _{2}$ are starting from $A_{0}$ at $%
-\infty ,$ a trajectory $\mathcal{T}_{2}^{A}$ in $Q_{2}$ and a trajectory $%
\mathcal{T}_{3}^{A}$ in $Q_{3},$ with a negative slope $-(\mathbf{p}-1)\frac{%
\mu _{2}+\left\vert \mu _{1}\right\vert }{\left\vert \mu _{1}\right\vert }.$
The corresponding solutions satisfy (\ref{ao}). They are not global, from
Lemma \ref{glo}, since in $\overline{Q_{2}}$ and $\overline{Q_{3}}$ there is
no attracting point at $\infty .$\medskip

$\bullet $ The point $(0,0)$ is a source, since $\rho _{1}>0$ and $\rho
_{2}>0$. So if $\rho _{1}\neq \rho _{2},$ that means $\frac{\mathbf{p}-%
\mathbf{N}}{\mathbf{p}-1}\neq \mathbf{N}+\sigma ,$ there exists an infinity
of trajectories starting from $(0,0)$. From Lemma \ref{link} the
corresponding solutions satisfy (\ref{oo}). If $\rho _{1}=\rho _{2}$, we
prove that there exist particular solutions at Proposition \ref{expli}%
.\medskip

$\bullet $ In $\overline{Q_{1}}$ and $\overline{Q_{3}}$ there is no fixed
point attracting at $\infty ,$ so for $\varepsilon =1$ there is no solution
in $(r_{0},\infty ).$
\end{proof}

\begin{equation*}
\begin{array}{ccc}
\FRAME{itbpF}{2.7068in}{2.7068in}{0in}{}{}{regionb.png}{\special{language
"Scientific Word";type "GRAPHIC";maintain-aspect-ratio TRUE;display
"USEDEF";valid_file "F";width 2.7068in;height 2.7068in;depth
0in;original-width 2.5584in;original-height 2.5584in;cropleft "0";croptop
"1";cropright "1";cropbottom "0";filename '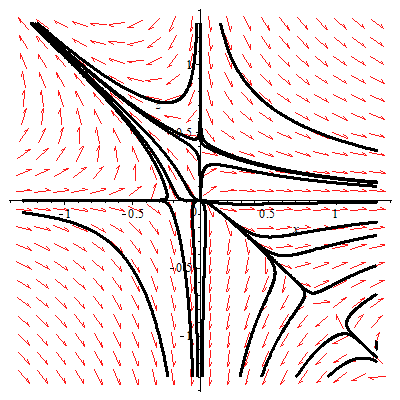';file-properties
"XNPEU";}} &  & \FRAME{itbpF}{2.7068in}{2.7068in}{0in}{}{}{regionc.png}{%
\special{language "Scientific Word";type "GRAPHIC";maintain-aspect-ratio
TRUE;display "USEDEF";valid_file "F";width 2.7068in;height 2.7068in;depth
0in;original-width 2.5584in;original-height 2.5584in;cropleft "0";croptop
"1";cropright "1";cropbottom "0";filename '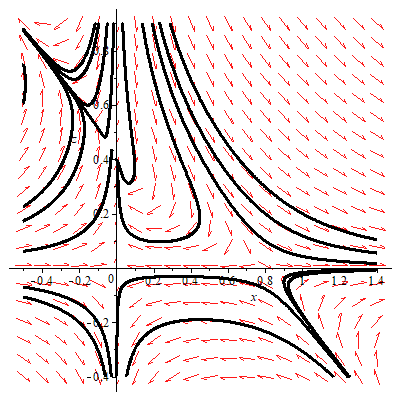';file-properties
"XNPEU";}} \\ 
\text{Figure 6: Theorem \ref{rose} } &  & \text{Figure 7: Theorem \ref%
{yellow} } \\ 
\mathbf{p}=\frac{12}{7}>\mathbf{N}=\frac{11}{7}>\sigma =-\frac{1.82}{7},%
\mathbf{q}=1.3 &  & \mathbf{p}=\frac{29}{19}<\mathbf{-}\sigma =-\frac{29.4}{%
19}<\mathbf{N}=\frac{37}{19},\mathbf{q}=1.3%
\end{array}%
\end{equation*}

\begin{proof}[Proof of Theorem \protect\ref{yellow}]
Here we consider region $\mathbf{C,}$ where $\mathbf{p}<\mathbf{-}\sigma <%
\mathbf{N},$ see figure 7.\medskip

$\bullet $ The point $M_{0}$ is in $Q_{2}$. Indeed $s_{0}=\gamma <0;$ and $%
z_{0}=\mathbf{N}-\mathbf{p}+(\mathbf{p}-1)\left\vert \gamma \right\vert >0.$
It is a saddle point, and the eigenvalues satisfy $\lambda
_{1}<s_{0}<\lambda _{2}.$ So there exist four trajectories $\mathcal{T}%
_{1}^{M},\mathcal{T}_{1}^{\prime M},\mathcal{T}_{2}^{M},\mathcal{T}%
_{2}^{\prime M},$ such that $\mathcal{T}_{1}^{M},\mathcal{T}_{1}^{\prime M},$
(resp. $\mathcal{T}_{2}^{M},\mathcal{T}_{2}^{\prime M})$ converge to $M_{0}$
as $t\rightarrow \infty $ (resp.$-\infty $). associated to $\lambda _{1}<0$
(resp. $\lambda _{2}>0$) with the slopes $\rho _{i}=(\mathbf{p}-1)(\frac{%
\lambda _{i}}{s_{0}}-1)$ $i=1,2$; hence $\rho _{1}>0>\rho _{2},$ and we call 
$\mathcal{T}_{1}^{M},\mathcal{T}_{2}^{M},$ the trajectories such that $%
s(t)>s_{0}$ for any $t\in \mathbb{R}.$In particular $\mathcal{T}_{2}^{M}$
starts in the region $J$ of $Q_{2}$ where $s_{t}>0,z_{t}<0.\ $We check
easily that the region $J$ is positively invariant. Then the trajectory $%
\mathcal{T}_{2}^{M}$ stays in $J,$ so it is bounded, and since $s$ and $z$
are monotone, we get that $\mathcal{T}_{2}^{M}$ converges at $\infty $ to a
fixed point in $\overline{Q_{2}},$ that is to $N_{0}$ if $\mathbf{N}+\sigma
>0,$ and to $(0,0)$ if $\mathbf{N}+\sigma <0.$

$\bullet $ The point $(0,0)$ admits the eigenvalues $\rho _{1}=\frac{\mathbf{%
p}-\mathbf{N}}{\mathbf{p}-1}<0$ and $\rho _{2}=\mathbf{N}+\sigma >0,$ it is
a saddle point, the trajectories issued from this point are the two axes,
they are not admissible.\medskip 

$\bullet $ The point $N_{0}$ is located at the boundary of $Q_{1}$ and $%
Q_{2}.$ The eigenvalues satisfy $l_{1}<0$ and $l_{2}<0,$ so $N_{0}$ is a
sink. In any case, all the corresponding solutions satisfy (\ref{no2}), from
Lemma \ref{link}. In any case, we know that the trajectory $\mathcal{T}%
_{2}^{M}$ converges to $N_{0}.$\medskip 

$\bullet $ The point $A_{0}=(\frac{\mathbf{N}-\mathbf{p}}{\mathbf{p}-1},0).$
is located at  the boundary of $Q_{1}$ and $Q_{4}.$ It has the eigenvalues $%
\mu _{1}=\frac{\mathbf{N}-\mathbf{p}}{\mathbf{p}-1}>0$ and 
\begin{equation*}
\mu _{2}=\mathbf{N}+\sigma -\mathbf{q}\frac{\mathbf{N}-\mathbf{p}}{\mathbf{p}%
-1}<\mathbf{N}+\sigma -(\mathbf{N}-\mathbf{p})=\mathbf{p}+\sigma <0,
\end{equation*}%
so it is a saddle point. As we have done before, we check that there exist
two trajectories converging to $A_{0}$ as $t\rightarrow \infty ,$ one in $%
Q_{1}$ and one in $Q_{4}$. This show the existence of local solutions of the
equations with $\varepsilon =\pm 1,$ such that $\lim_{r\rightarrow \infty
}r^{\frac{\mathbf{N-p}}{\mathbf{p}-1}}w=k>0,$ satisfying (\ref{ao2}).
Clearly those solutions are not global, since there is no fixed point in $%
\overline{Q_{1}}$ and $\overline{Q_{4}}$ attracting at $-\infty ,$ from
Lemma \ref{glo} So the corresponding functions $w$ are only solutions of the
exterior problem.
\end{proof}

\medskip

\begin{proof}[Proof of Theorem \protect\ref{expli}]
Setting $s=\Phi z,$ we get 
\begin{equation*}
\Phi _{t}=\Phi (\frac{\mathbf{p}-\mathbf{N}}{\mathbf{p}-1}-(\mathbf{N}%
+\sigma )+(\frac{\mathbf{p}}{\mathbf{p}-1}+(\mathbf{q}+1)\Phi )z)
\end{equation*}%
Then system admits particular solutions such that $s\equiv -\delta z$ for
some $\delta >0$ if and only if $\frac{\mathbf{p}-\mathbf{N}}{\mathbf{p}-1}=%
\mathbf{N}+\sigma ,$ that means $\sigma =-\mathbf{p}\frac{\mathbf{N}-1}{%
\mathbf{p}-1},$ and $\delta =\frac{\mathbf{p}}{(\mathbf{p}-1)(\mathbf{q}+1)}$%
. They are given explicitely from the equation%
\begin{equation*}
s_{t}=s(\frac{\mathbf{p}-\mathbf{N}}{\mathbf{p}-1}-\frac{\mathbf{q}-\mathbf{p%
}+1}{\mathbf{p}-1}s),
\end{equation*}%
and from (\ref{fid}), $\delta \varepsilon r^{\sigma }w^{q+1}\left\vert
w^{\prime }\right\vert ^{-p}=-1,$ hence $\varepsilon =-1$ and we get $w^{-%
\frac{\mathbf{q}+1}{\mathbf{p}}}w^{\prime }=\pm \delta ^{\frac{1}{p}}r^{-%
\frac{\mathbf{N}-1}{\mathbf{p}-1}},$ hence (\ref{flo}) by integration, with $%
d_{\mathbf{p},\mathbf{q},\mathbf{N}}=\frac{(\mathbf{p}-1)(\mathbf{q}-\mathbf{%
p}+1)}{\mathbf{p}(\mathbf{p}-\mathbf{N)}}(\frac{\mathbf{p}}{(\mathbf{p}-1)(%
\mathbf{q}+1)})^{\frac{1}{\mathbf{p}}}$. These solutions satisfy $%
\lim_{r\longrightarrow 0}w=C$ when $C>0$ and when they are global, $w\sim
_{r\longrightarrow \infty }w^{\ast }.\medskip $

If $\mathbf{p}=\mathbf{N,}$ we get $\sigma =-\mathbf{p=-N}$, and $w^{-\frac{%
\mathbf{q}+1}{\mathbf{N}}}w^{\prime }=\pm \delta ^{\frac{1}{\mathbf{N}}%
}r^{-1},$ then $w^{1-\frac{\mathbf{q}+1}{\mathbf{N}}}=C\pm \delta ^{\frac{1}{%
\mathbf{N}}}(\frac{\mathbf{q}+1}{\mathbf{N}}-1)\ln r,$ where $\delta =\frac{%
\mathbf{N}}{(\mathbf{N}-1)(\mathbf{q}+1)},$ that is $d_{\mathbf{N}}=\frac{%
\mathbf{q}+1-\mathbf{N}}{\mathbf{N}}(\frac{\mathbf{N}}{(\mathbf{N}-1)(%
\mathbf{q}+1)})^{\frac{1}{\mathbf{N}}}.$
\end{proof}

\medskip

\begin{proof}[Proof of Theorem \protect\ref{sigmap}]
Using Lemma \ref{reduc}, we reduce to the case (1)\textbf{\ }$\mathbf{N}>%
\mathbf{p}=-\sigma ,$ see figure 8.\medskip

$\bullet $ Here the point $M_{0}=(0,\mathbf{N}-\mathbf{p})$ coincides with $%
N_{0}$.The eigenvalues are $l_{1}=0$ and $l_{2}=\mathbf{p}-\mathbf{N}<0.$
The corresponding eigenvectors are $\overrightarrow{v_{1}}=(1,-\mathbf{q})$
and $\overrightarrow{v_{2}}=(0,1).$ Corresponding to the eigenvalue $0,$
there exists a central manifold of dimension $1$, tangent to $%
\overrightarrow{v_{1}},$ invariant by the flow. It contains trajectories
converging to $M_{0}$ as $t\longrightarrow \infty $ or $t\longrightarrow
-\infty $. The eigenspace associated to $l_{2}>0$ has dimension $1,$ so it
is precisely the axis $s=0,$ and the trajectories are not admissible. The
system becomes  
\begin{equation*}
\left\{ 
\begin{array}{c}
s_{t}=s(\frac{\mathbf{p}-\mathbf{N}}{\mathbf{p}-1}+s+\frac{z}{\mathbf{p}-1}),
\\ 
z_{t}=z(\mathbf{N}-\mathbf{p}-\mathbf{q}s-z).%
\end{array}%
\right. 
\end{equation*}

$\bullet $ The point $A_{0}=(\frac{\mathbf{N}-\mathbf{p}}{\mathbf{p}-1},0).$
is located at the boundary of $Q_{1}$ and $Q_{4}.$ The eigenvalues are $\mu
_{1}=\frac{\mathbf{N}-\mathbf{p}}{\mathbf{p}-1}>0$ and $\mu _{2}=(\mathbf{q-p%
}+1)\frac{\mathbf{p}-\mathbf{N}}{\mathbf{p}-1}<0,$ with eigenvectors. $%
\overrightarrow{\nu _{1}}=(1,0)$ and $\overrightarrow{\nu _{2}}=(1,-(\mathbf{%
p}-1)(\mathbf{q}-\mathbf{p}+2)).$ Two trajectories associated to $A_{0}$ are
located on the line $\left\{ z=0\right\} $ , so they are not admissible.
There exist two other trajectories, $\mathcal{T}_{1}$ and $\mathcal{T}_{4},$
respectively in $Q_{1}$ and $Q_{4},$ converging to $A_{0}$ at $\infty .$ The
corresponding solutions $w$ satisfy $\lim_{r\longrightarrow \infty }r^{\frac{%
\mathbf{N}-\mathbf{p}}{\mathbf{p}-1}}w=k>0,$ from Lemma \ref{link}, for $%
\varepsilon =\pm 1$.\medskip 

$\bullet $ First consider the trajectories located in $Q_{1},$ corresponding
the case $\varepsilon =1.$ The trajectory $\mathcal{T}_{1}$ has a slope less
that $-(\mathbf{p}-1),$ then it arrives as $t\longrightarrow \infty $ in the
region $\mathcal{R}=\left\{ s_{t}>0\right\} $ in $Q_{1}$ delimitated by the
line $N_{0}A_{0}$ where $s_{t}=0;$ and $\mathcal{R}$ is negatively
invariant, so $\mathcal{T}_{1}$ stays in it. The line $\left\{
z_{t}=0\right\} $ is located under the line $M_{0}A_{0},$ thus $z_{t}<0$ in $%
\mathcal{R}.$ If $\mathcal{T}_{1}$ converges to $M_{0},$ then the region $%
\mathcal{R}_{1}$ delimitated by the two axis and $\mathcal{T}_{1}$ is
bounded with no fixed point inside, and this is contradictory. Then $%
\mathcal{T}_{1}$ is asymptotic to the axis $\left\{ s=0\right\} ,$ and the
solutions are not global. Consider again the region $\mathcal{R}_{1}.$ Then
any trajectory passing by a point of $\mathcal{R}_{1}$ is bounded, and then
converges to $M_{0}$ as $t\longrightarrow \infty ,$ so we get a (2
parameters) family of solutions $w$ in $(r_{0},\infty )$ such that (\ref%
{port}) holds. Indeed the trajectory is tangent to line of direction $(1,-%
\mathbf{q})$ passing by $M_{0},$ which means $\overline{z}\sim -qs,$ and we
have 
\begin{equation*}
\left\{ 
\begin{array}{c}
s_{t}=s(s+\frac{\overline{z}}{\mathbf{p}-1}), \\ 
z_{t}=(\mathbf{N}-\mathbf{p}+\overline{z})(\mathbf{-}\overline{z}-\mathbf{q}%
s).%
\end{array}%
\right. 
\end{equation*}%
Then $s_{t}\sim -\frac{\mathbf{q}}{\mathbf{p}-1}s^{2},$ and by integration $%
s\sim \frac{\mathbf{p}-1}{\mathbf{q}t},$ and $z\sim \mathbf{N}-\mathbf{p,}$
thus by (\ref{wwp}), $w\sim (\frac{\mathbf{p}-1}{\mathbf{q}t})^{\frac{%
\mathbf{p}-1}{\mathbf{q}+1-\mathbf{p}}},$ which is precisely (\ref{port}%
).\medskip 

$\bullet $ Next consider the trajectories located in $Q_{2},$ corresponding
the case for $\varepsilon =-1.$ The line $\left\{ s_{t}=0\right\}
=M_{0}A_{0} $ is located under the line $\left\{ z_{t}=0\right\} $ and the
region $\mathcal{G}$ located between the two lines is negatively invariant.
Any trajectory passing by a point above $\left\{ z_{t}=0\right\} $
necessarily crosses this line since $z$ and $s$ are decreasing, and any
trajectory passing by a point under $\left\{ s_{t}=0\right\} $ cuts this
line, because $s$ and $z$ are increasing. We consider two sets of
trajectories: $\mathcal{U}_{1}$ is the union of trajectories which cut $%
\left\{ z_{t}=0\right\} $ at some point, and $\mathcal{U}_{2}$ is the union
of trajectories which cut $\left\{ s_{t}=0\right\} $ at some point. They are
open in $Q_{2}$, so $\mathcal{U}_{1}\cup \mathcal{U}_{2}\neq Q_{2}.$ Then
there exists some point $P$ such that the trajectory passing by $P$ is
located in $\mathcal{G},$ which is negatively invariant. Then it converges
to the point $M_{0}$ as $t\longrightarrow -\infty .$ So we get solutions $w$
satisfying (\ref{part}).
\end{proof}

\medskip 
\begin{equation*}
\begin{array}{ccc}
\FRAME{itbpF}{2.7068in}{2.7068in}{0in}{}{}{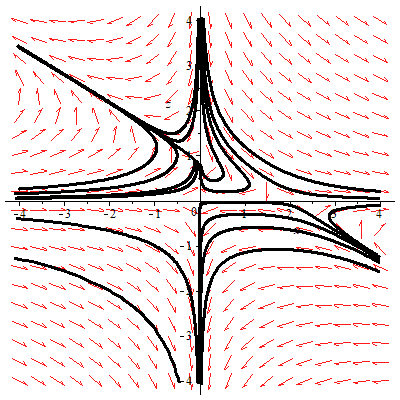}{\special%
{language "Scientific Word";type "GRAPHIC";maintain-aspect-ratio
TRUE;display "USEDEF";valid_file "F";width 2.7068in;height 2.7068in;depth
0in;original-width 2.5584in;original-height 2.5584in;cropleft "0";croptop
"1";cropright "1";cropbottom "0";filename 'sigmap-defin.png';file-properties
"XNPEU";}} &  & \FRAME{itbpF}{2.7068in}{2.7068in}{0in}{}{}{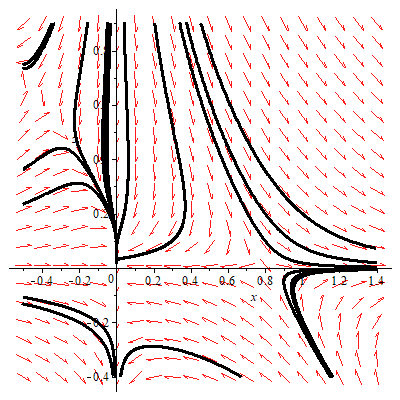}{%
\special{language "Scientific Word";type "GRAPHIC";maintain-aspect-ratio
TRUE;display "USEDEF";valid_file "F";width 2.7068in;height 2.7068in;depth
0in;original-width 2.5584in;original-height 2.5584in;cropleft "0";croptop
"1";cropright "1";cropbottom "0";filename 'sigman.png';file-properties
"XNPEU";}} \\ 
\text{Figure 8: Theorem \ref{sigmap}} &  & \text{Figure 9: Theorem \ref%
{sigman}} \\ 
\mathbf{N}=2.2>\mathbf{p}=-\sigma =1.4,\mathbf{q}=1.1 &  & -\sigma =\mathbf{N%
}=\frac{37}{19}>\mathbf{p}=\frac{29}{19},\mathbf{q}=1.5%
\end{array}%
\end{equation*}

\begin{proof}[Proof of Theorem \protect\ref{sigman}]
We still reduce to the case\textbf{\ }(1) $-\sigma =\mathbf{N}>p,$ see
figure 9. Here the system is 
\begin{equation}
\left\{ 
\begin{array}{c}
s_{t}=s(\frac{\mathbf{p}-\mathbf{N}}{\mathbf{p}-1}+s+\frac{z}{\mathbf{p}-1}),
\\ 
z_{t}=z(-\mathbf{q}s-z).%
\end{array}%
\right.   \label{sih}
\end{equation}

$\bullet $ The point $M_{0}=(s_{0},z_{0})=(\frac{\mathbf{p}-\mathbf{N}}{%
\mathbf{q}+1-\mathbf{p}},\frac{(\mathbf{N}-\mathbf{p})\mathbf{q}}{\mathbf{q}%
+1-\mathbf{p}})$ is in $Q_{2}$, corresponding to particular solutions of the
equation with absorption $\varepsilon =-1.$ $w^{\ast }=a^{\ast }r^{\frac{%
\mathbf{N}-\mathbf{p}}{\mathbf{q}+1-\mathbf{p}}}$; note that $%
\lim_{r\longrightarrow 0}w^{\ast }=0.$

$\bullet $ The point $N_{0}$ coincides with $(0,0)$; the eigenvalues are $%
\rho _{1}=\frac{\mathbf{p}-\mathbf{N}}{\mathbf{p}-1},\rho _{2}=0,$
corresponding eigenvectors $\overrightarrow{v_{1}}=(1,0)$ and $%
\overrightarrow{v_{2}}=(0,1).$ The trajectories associated to $\rho _{1}$
are not admissible. There exist at least one central manifold, relative to $%
\rho _{2},$ tangent to the axis $\left\{ s=0\right\} .$ We are going to
precise it below:

The eigenvalues at point $M_{0}$ are given by (\ref{valp}), their product is 
$-(\mathbf{N}-\mathbf{p})^{2}\mathbf{q}<0.,$ so $M_{0}$ is a saddle point.
There exist four trajectories $\mathcal{T}_{1}^{M},\mathcal{T}_{2}^{M},%
\mathcal{T}_{3}^{M},\mathcal{T}_{4}^{M},$ such that $\mathcal{T}_{1}^{M},%
\mathcal{T}_{2}^{M}$ (resp. $\mathcal{T}_{3}^{M},\mathcal{T}_{4}^{M})$
converge to $M_{0}$ as $t\rightarrow \infty $ (resp.$-\infty $). associated
to $\lambda _{1}<0$ (resp. $\lambda _{2}>0$) with the slope $\rho _{1}=(%
\mathbf{p}-1)(\frac{\lambda _{1}}{s_{0}}-1)>0$ (resp. $\rho _{2}=(\mathbf{p}%
-1)(-\frac{\lambda _{2}}{\left\vert s_{0}\right\vert }-1)<-(\mathbf{p}-1))$.
Then one of the two last trajectories, denoted by $\mathcal{T}_{3}^{M}$ lies
in the bounded region $\mathcal{H}$ of $Q_{2}$ where $s_{t}>0,z_{t}<0$ for
large $t.$ Since this region is positively invariant, the trajectory $%
\mathcal{T}_{3}^{M}$ stays in $\mathcal{H}$, so it is bounded, hence
converges to $(0,0)$ which is the only possible fixed point in $\overline{%
\mathcal{H}}$. Then any trajectory passing by some point of $\mathcal{H}$
converges to $(0,0).$ So there is an infinity of trajectories converging to
this point, staying in $Q_{2}$.\medskip

$\bullet $ The point $A_{0}$ is located at the boundary of $Q_{1}$ and $%
Q_{4}.$ Moreover the eigenvalues at $A_{0}$ are $\mu _{1}=\frac{\mathbf{N}-%
\mathbf{p}}{\mathbf{p}-1}>0$ and $\mu _{2}=\mathbf{q}\frac{\mathbf{p}-%
\mathbf{N}}{\mathbf{p}-1}<0,$ $A_{0}$ is a saddle point. Two trajectories
associated to $\mu _{1}$are located on the line $z=0$ are not admissible.
There are trajectories $\mathcal{T}_{1},\mathcal{T}_{4}$, respectively in $%
Q_{1}$ and $Q_{4}$ converging to $A_{0}$ as $t\longrightarrow \infty $
corresponding to the eigenvector $\overrightarrow{v_{2}}=(\frac{\mu _{1}}{%
\mathbf{p}-1},\mu _{2}-\mu _{1}),$ with the slope $-(\mathbf{p}-1)(\mathbf{q}%
+1),$ the solutions are not global, since there is no fixed point in those
quadrants. Then there exist solutions in $(r_{0},\infty )$ for $\varepsilon
=\pm 1,$ such that $\lim_{r\longrightarrow \infty }r^{\frac{\mathbf{N}-%
\mathbf{p}}{\mathbf{p}-1}}w=k>0.$ Moreover, since $(\mathbf{p}-1)(\mathbf{q}%
+1)>(\mathbf{p}-1),$ $\mathcal{T}_{1}$ is located in the region of $Q_{1}$
where $s_{t}>0$ and $z_{t}<0,$ which is negatively invariant, so it stays in
it. Next consider the region $\mathcal{K}$ in $Q_{1}$ delimitated by the two
axis and $\mathcal{T}_{1}$ (under $\mathcal{T}_{1}$). It is invariant, and $%
z_{t}<0$ in $\mathcal{K}.$ Then consider any trajectory $\mathcal{T}$
passing by a point of $\mathcal{K}.$ It cannot converge to $A_{0}$ because $%
\mathcal{T}_{1}$ is the only trajectory in $Q_{1}$ converging to this point.
Then $\mathcal{T}$ necessarily converges to $(0,0).$ There still exists an
infinity of trajectories converging to this point, staying in $Q_{1}$%
.\medskip 

Thus in $Q_{1}$ (corresponding to $\varepsilon =1)$ as well as in $Q_{2},$
(corresponding to $\varepsilon =-1)$ there is an infinity of trajectories
converging to $(0,0).$ From system (\ref{sih}), since $\lim_{t%
\longrightarrow \infty }\frac{s}{z}=0,$ we get $s_{t}\sim s(\frac{\mathbf{p}-%
\mathbf{N}}{\mathbf{p}-1}),$ and $z_{t}\sim -z^{2}.$ Then by integration, $%
z\sim \frac{1}{t}.\left\vert s\right\vert \leq Ce^{-kt}$ for some $k>0.$
From (\ref{fid}) we get $w^{\frac{\mathbf{q}}{1-\mathbf{p}}}w^{\prime }=(-%
\frac{\mathbf{p}-1}{\mathbf{q}+1-\mathbf{p}}w^{-\frac{\mathbf{q}+1-\mathbf{p}%
}{\mathbf{p}-1}})^{\prime }\sim _{r\longrightarrow \infty }-\varepsilon r^{%
\frac{1-\mathbf{N}}{p-1}}(\ln r)^{\frac{1}{\mathbf{p}-1}},$ then by
integration, either 
\begin{equation*}
w\sim -\varepsilon C(\ln r)^{-\frac{\mathbf{1}}{\mathbf{q}+1-\mathbf{p}}}r^{%
\frac{\mathbf{N}-\mathbf{p}}{\mathbf{q}+1-\mathbf{p}}},\qquad w^{\prime
}\sim -\varepsilon C^{\frac{\mathbf{q}}{\mathbf{p}-1}}(\ln r)^{-\frac{%
\mathbf{1}}{\mathbf{q}+1-\mathbf{p}}}r^{\frac{\mathbf{N}-\mathbf{1-q}}{%
\mathbf{q}+1-\mathbf{p}}},\qquad C=(\frac{\mathbf{q}+1-\mathbf{p}}{\mathbf{N}%
-\mathbf{p}})^{\frac{\mathbf{p}-1}{\mathbf{q}+1-\mathbf{p}}},
\end{equation*}%
or 
\begin{equation*}
\lim_{r\longrightarrow \infty }w=C>0,\qquad w^{\prime }\sim -\varepsilon r^{%
\frac{1-\mathbf{N}}{p-1}}(\ln r)^{\frac{1}{\mathbf{p}-1}}.
\end{equation*}%
The first eventuality is impossible for $\varepsilon =1.$ Let us check if it
is possible for $\varepsilon =-1:$ it would imply that $s=-r\frac{w^{\prime }%
}{w}$ tends to $\mathbf{p}-\mathbf{N}\neq 0,$ which is contradictory. Then
for $\varepsilon =-1,$ for given $r_{0}>0$ there exists an infinity (with 2
parameters) of solutions $w$ in $(r_{0},\infty )$ satisfying (\ref{hec}),
and an infinity (with one parameter) of solutions satisfying (\ref{hoc}).
\end{proof}

\medskip

\begin{proof}[Proof of Theorem \protect\ref{pegaln}]
Here again we reduce to the case (1) $\mathbf{p=N}>-\sigma ,$ see figure 10.
The system reduces to 
\begin{equation*}
\left\{ 
\begin{array}{c}
s_{t}=s(s+\frac{z}{\mathbf{p}-1}), \\ 
z_{t}=z(\mathbf{p}+\sigma -\mathbf{q}s-z).%
\end{array}%
\right. 
\end{equation*}

$\bullet $ The point $M_{0}=(s_{0},z_{0})=(\frac{\mathbf{p}+\sigma }{\mathbf{%
q}+1-\mathbf{p}},-\frac{(\mathbf{p}-1)(\mathbf{p}+\sigma )}{\mathbf{q}+1-%
\mathbf{p}})$ is in $Q_{4},$ corresponding to particular solutions of the
equation with absorption $\varepsilon =-1.$ $w^{\ast }=a^{\ast }r^{-\frac{%
\mathbf{p}+\sigma }{\mathbf{q}+1-\mathbf{p}}}$, which are $\infty $%
-singular.\medskip 

$\bullet $ The point $N_{0}=(\mathbf{p}+\sigma ,0)$ is located at the
boundary of $Q_{1}$ and $Q_{2}$ and admits the eigenvalues $l_{1}=\frac{%
\mathbf{p}+\sigma }{\mathbf{p}-1}>0$ and $l_{2}=-(\mathbf{p}+\sigma )<0,$
with eigenvectors $\overrightarrow{v_{1}}=(\frac{\mathbf{p}}{\mathbf{p}-1},-%
\mathbf{q})$ and $\overrightarrow{v_{2}}=(0,1).$ it is a saddle point; the
trajectories associated to $l_{2}$ are not admissible. There exists two
trajectories, $\mathcal{T}_{1}$ and $\mathcal{T}_{2}$, respectively in $Q_{1}
$ and $Q_{2}$ starting from $N_{0}$ at $-\infty ,$ corresponding to $C^{0}$%
-regular solutions $w.$ The trajectory $\mathcal{T}_{2}$ starts in the
region where $s_{t}<0,z_{t}>0,$ which is positively invariant, so it stays
in it. The trajectory $\mathcal{T}_{1}$ starts in the region where $%
s_{t}>0,z_{t}<0,$ with a slope $-\mathbf{q}\frac{\mathbf{p}-1}{\mathbf{p}},$
above the line $N_{0}M_{0},$ and stays in the region of $Q_{1}$ ahere $%
z_{t}<0,$ and then it is asymptotic to the axis $\left\{ z=0\right\} .$%
\medskip 

$\bullet $ Here the point $A_{0}$ coincides with $(0,0),$ and the
eigenvalues are $\rho _{1}=0,\rho _{2}=\mathbf{N}+\sigma ,$ with
eigenvectors $\overrightarrow{v_{1}}=(1,0)$ and $\overrightarrow{v_{2}}%
=(0,1).$ The trajectories associated to $\rho _{2}$ are not admissible.
There exist at least one central manifold, relative to $\rho _{1},$ tangent
to the axis $\left\{ z=0\right\} .$ We are going to precise it
below.\medskip 

The eigenvalues at point $M_{0}$ are given by (\ref{valp}), their product is 
$s_{0}z_{0}<0,$ then $\lambda _{1}<0<s_{0}<\lambda _{2},$ so $M_{0}$ is a
saddle point, There exist four trajectories $\mathcal{T}_{1}^{M},\mathcal{T}%
_{2}^{M},\mathcal{T}_{3}^{M},\mathcal{T}_{4}^{M},$ such that $\mathcal{T}%
_{1}^{M},\mathcal{T}_{2}^{M}$ (resp. $\mathcal{T}_{3}^{M},\mathcal{T}%
_{4}^{M})$ converge to $M_{0}$ as $t\rightarrow \infty $ (resp.$-\infty $).
associated to $\lambda _{1}<0$ (resp. $\lambda _{2}>0$) with the slope $\rho
_{1}=(\mathbf{p}-1)(\frac{\lambda _{1}}{s_{0}}-1)<0$ (resp. $\rho _{2}=(%
\mathbf{p}-1)(\frac{\lambda _{2}}{s_{0}}-1)>(\mathbf{p}-1)$). Then one of
the two first trajectories, denoted by $\mathcal{T}_{1}^{M}$ lies in the
bounded region $\mathcal{H}$ of $Q_{4}$ where $s_{t}>0,z_{t}<0$ for large $%
t. $ Since this region is negatively invariant, the trajectory $\mathcal{T}%
_{1}^{M}$ stays in $\mathcal{H}$, so it is bounded, hence converges to $%
(0,0) $ as $t\longrightarrow -\infty $ which is the only possible fixed
point in $\overline{\mathcal{H}}$. Moreover any trajectory passing by some
point of $\mathcal{H}$ converges to $(0,0).$ So there is an infinity of
trajectories converging to this point as $t\longrightarrow \infty $, staying
in $Q_{4}$.\medskip

Next consider the other quadrants $Q_{1,}Q_{2}.$ There is no trajectory
converging to $(0,0)$ in $Q_{2}.$ Indeed suppose that a trajectory converges
to $(0,0)$ in $Q_{2}$ as $t\longrightarrow \infty ;$ then it satisfies $%
s_{t}>0,z_{t}<0,$ near $\infty $ which is impossible; if it converges as $%
t\longrightarrow -\infty ,$ then $s_{t}<0,z_{t}>0,$ then it cannot be
tangent to the axis $\left\{ z=0\right\} .$\medskip

Finally consider the quadrant $Q_{1}.\ $The region $\mathcal{K}$ delimitated
by the two axis and the line $N_{0}M_{0}$ is negatively invariant. Then any
trajectory passing by a point of $\mathcal{K}$ converges to $(0,0)$ at $%
-\infty .$ Moreover the region $\mathcal{L}$ delimitated by the two axis and 
$\mathcal{T}_{1}$ is also invariant. Then any trajectory passing by a point
of $\mathcal{L}$ converges to $(0,0)$ at $-\infty .$ Thus there is an
infinity of central manifolds.\medskip

Those trajectories satisfy $\lim_{t\longrightarrow -\infty }\frac{z}{s}=0,$
then $s_{t}\sim s^{2}.$ By integration, $0<s=-\frac{w_{t}}{w}\sim \frac{1}{%
\left\vert t\right\vert }.$ Thus for given $\varepsilon >0,$ there holds $%
C_{1}\left\vert t\right\vert ^{(1+\varepsilon )}\leq w\leq C_{2}\left\vert
t\right\vert ^{(1+\varepsilon )}$ near $-\infty .$ Since $\mathbf{p}=\mathbf{%
N,}$ equation (\ref{sca}) reduces to 
\begin{equation*}
-(N-1)\left\vert w_{t}\right\vert ^{\mathbf{N}-2}w_{tt}=\varepsilon e^{(%
\mathbf{N}+\sigma )t}w^{q}.
\end{equation*}%
Then 
\begin{equation*}
-(\mathbf{N}-1)\left\vert w_{t}\right\vert ^{\mathbf{N}-2-\mathbf{q}}w_{tt}=(%
\frac{\mathbf{N}-1}{\mathbf{q}-\mathbf{N}+1}\left\vert w_{t}\right\vert ^{%
\mathbf{N}-1-\mathbf{q}})_{t}\sim \varepsilon e^{(\mathbf{N}+\sigma
)t}\left\vert t\right\vert ^{q},
\end{equation*}%
hence by integration, either $\lim_{t\longrightarrow -\infty }w_{t}=-c<0,$
or $\frac{\mathbf{N}-1}{\mathbf{q}-\mathbf{N}+1}\left\vert w_{t}\right\vert
^{\mathbf{N}-1-\mathbf{q}}\sim -\frac{\varepsilon }{\mathbf{N}+\sigma }e^{(%
\mathbf{N}+\sigma )t}\left\vert t\right\vert ^{q}$. The last case is
impossible for $\varepsilon =1.$ If $\varepsilon =-1,$ since $\frac{%
\left\vert w_{t}\right\vert }{w}\sim \frac{1}{\left\vert t\right\vert },$ it
implies that $w^{-\frac{\mathbf{q}}{N-1}}\left\vert w_{t}\right\vert =(\frac{%
N-1}{q-N+1}w^{-\frac{q-N+1}{N-1}})_{t}\sim Ce^{\frac{\mathbf{N}+\sigma }{N-1}%
t},$ which by integration contradicts the estimate $C_{1}\left\vert
t\right\vert ^{(1+\varepsilon )}\leq w\leq C_{2}\left\vert t\right\vert
^{(1+\varepsilon )}.$ Then $w\sim _{r\longrightarrow 0}C\left\vert \ln
r\right\vert ,$ and $w^{\prime }\sim _{r\longrightarrow 0}-\frac{C}{r}.$
\end{proof}

\begin{equation*}
\begin{array}{ccc}
\FRAME{itbpF}{2.7068in}{2.7068in}{0in}{}{}{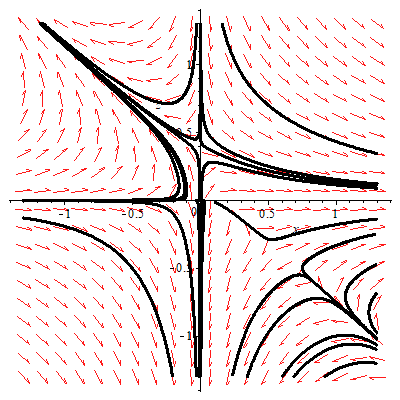}{\special%
{language "Scientific Word";type "GRAPHIC";maintain-aspect-ratio
TRUE;display "USEDEF";valid_file "F";width 2.7068in;height 2.7068in;depth
0in;original-width 2.5584in;original-height 2.5584in;cropleft "0";croptop
"1";cropright "1";cropbottom "0";filename 'regionpegaln.png';file-properties
"XNPEU";}} &  & \FRAME{itbpF}{2.7068in}{2.7068in}{0in}{}{}{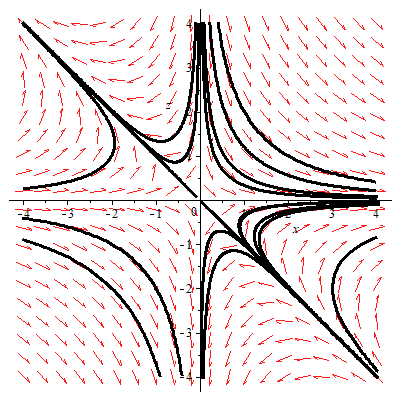%
}{\special{language "Scientific Word";type "GRAPHIC";maintain-aspect-ratio
TRUE;display "USEDEF";valid_file "F";width 2.7068in;height 2.7068in;depth
0in;original-width 2.5584in;original-height 2.5584in;cropleft "0";croptop
"1";cropright "1";cropbottom "0";filename
'pnsigma-defin.png';file-properties "XNPEU";}} \\ 
\text{Figure 10: Theorem \ref{pegaln}} &  & \text{Figure 11: Theorem \ref%
{pnsigma}} \\ 
\mathbf{p=N}=\frac{5}{3}>-\sigma =\frac{19}{15},\mathbf{q}=1.3 &  & \mathbf{%
p=N}=-\sigma =\frac{5}{3},\text{ }\mathbf{q}=1.5%
\end{array}%
\end{equation*}

\begin{proof}[Proof of Theorem \protect\ref{pnsigma}]
Here $\mathbf{p=N}=-\sigma ,$ see figure 11. The system reduces to%
\begin{equation*}
\left\{ 
\begin{array}{c}
s_{t}=s(s+\frac{z}{\mathbf{N}-1}), \\ 
z_{t}=z(-\mathbf{q}s-z).%
\end{array}%
\right. 
\end{equation*}%
The unique fixed point is $(0,0)$, and the two eigenvalues are $0.$ From
Proposition \ref{expli}, there exist solutions such that $s\equiv -\frac{%
\mathbf{N}}{(\mathbf{N}-1)(\mathbf{q}+1)}z,$ corresponding to trajectories
in $Q_{2}$ and $Q_{4}$. We get a family of solutions $w$ given by (\ref{flai}%
). The sign $+$ (resp. $-$) corresponds to the trajectory in $Q_{4}$ (resp. $%
Q_{2}$). It seems that there is no other trajectory converging to $(0,0),$
thus no other solution $w$ can be defined near $0,$ or near $\infty .$%
\medskip 

For $\varepsilon =1$, corresponding to quadrants $Q_{1}$ and $Q_{3}$ all the
trajectories satisfy $s_{t}z_{t}<0,$ hence they cannot converge to the
unique fixed point $(0,0)$ as $t\longrightarrow \pm \infty ,$ from Lemma \ref%
{glo}. Therefore there is no local solution near 0 or $\infty $.
\end{proof}

\end{document}